\pgfplotsset{compat=1.18}
\DeclareOldFontCommand{\rm}{\normalfont\rmfamily}{\mathrm}
\DeclareOldFontCommand{\sf}{\normalfont\sffamily}{\mathsf}
\DeclareOldFontCommand{\tt}{\normalfont\ttfamily}{\mathtt}
\DeclareOldFontCommand{\bf}{\normalfont\bfseries}{\mathbf}
\DeclareOldFontCommand{\it}{\normalfont\itshape}{\mathit}
\DeclareOldFontCommand{\sl}{\normalfont\slshape}{\@nomath\sl}
\DeclareOldFontCommand{\sc}{\normalfont\scshape}{\@nomath\sc}
\newcommand \N   {\mathbb{N}}
\newcommand \R   {\mathbb{R}}
\newcommand{\Uc}{\ensuremath{\mathcal{U}}}
\newcommand{\vertiii}[1]{{\left\vert\kern-0.25ex\left\vert\kern-0.25ex\left\vert #1 
    \right\vert\kern-0.25ex\right\vert\kern-0.25ex\right\vert}}
\newcommand \qrq   {\quad\Rightarrow\quad}
\newcommand{\normt}[1]{{\left\vert\kern-0.25ex\left\vert\kern-0.25ex\left\vert #1 
		\right\vert\kern-0.25ex\right\vert\kern-0.25ex\right\vert}}
\newcommand{\intt}{{\rm int}\,}
\newif\ifMath					
\newif\ifEngi					
\newif\ifAndo              
\newif\ifExercises					
\newif\ifSolutions          
\newif\ifGerman							
\newif\ifEnglish						
\newif\ifnothabil						
\newif\ifFuture							
\newif\ifConf                    
\newif\ifJournal								 
\newif\ifNOTFORBOOK
\newif\ifFullVersion
\newif\ifExludedDueToSpaceReasons
\newcommand{\einsnorm}[2]{\ensuremath{
    \!\!\;\!\!\!\;
    \left\bracevert\!\!\!\!\!\left\bracevert
    \!
		\ifthenelse{\isempty{#2}}{#1}{#1(#2)}
    \!
      \right\bracevert\!\!\!\!\!\right\bracevert
    \!\!\;\!\!\!\;
  }}
\definecolor{blond}{rgb}{0.98, 0.94, 0.75}
\newlength\mytemplen
\newsavebox\mytempbox
\newcommand\mybluebox{%
    \@ifnextchar[
       {\@mybluebox}%
       {\@mybluebox[0pt]}}
\def\@mybluebox[#1]{%
    \@ifnextchar[
       {\@@mybluebox[#1]}%
       {\@@mybluebox[#1][0pt]}}
\def\@@mybluebox[#1][#2]#3{
    \sbox\mytempbox{#3}%
    \mytemplen\ht\mytempbox
    \advance\mytemplen #1\relax
    \ht\mytempbox\mytemplen
    \mytemplen\dp\mytempbox
    \advance\mytemplen #2\relax
    \dp\mytempbox\mytemplen
    \colorbox{blond}{\hspace{1em}\usebox{\mytempbox}\hspace{1em}}}
\let\origd=\d
\renewcommand*\d{
  \relax\ifmmode
    \mathrm{d}%
  \else
    \expandafter\origd
  \fi
}\makeatother
\newcommand{\pushright}[1]{\ifmeasuring@#1\else\omit\hfill$\displaystyle#1$\fi\ignorespaces}
\newcommand{\pushleft}[1]{\ifmeasuring@#1\else\omit$\displaystyle#1$\hfill\fi\ignorespaces}
\newcounter{syscounter}
\newcounter{WPcounter}
\newtheorem{thm}{Theorem}[section]
\newtheorem{prop}{Proposition}[section]
\newtheorem{lem}{Lemma}[section]
\newtheorem{cor}{Corollary}[section]
\newtheorem{exm}{Example}[section]
\newtheorem{dfn}{Definition}[section]
\newtheorem{rem}{Remark}[section]
\renewcommand{\Re}{\operatorname{R}}
\newcommand{\sectiontitle}[1]{#1}
\newcommand{\subsectiontitle}[1]{#1}
\titleformat{\chapter}[display]
  {\normalfont\huge\bfseries}
  {Appendix \thechapter :}
  {20pt}
  {\Huge}
\begin{document}

\frontmatter

\begin{flushright}
 \textit{ } \\
    \hfill  
 \vspace{0.5cm}
 \textit{ } \\
    \hfill  
 \vspace{0.5cm}
 \textit{ } \\
    \hfill  
 \vspace{0.5cm}
 \textit{ } \\
    \hfill  
 \vspace{0.5cm}
 \textit{ } \\
    \hfill  
 \vspace{0.5cm}
 \textit{ } \\
    \hfill  
 \vspace{0.5cm}
 \textit{ } \\
    \hfill  
 \vspace{0.5cm}
 \textit{ } \\
    \hfill  
 \vspace{0.5cm}
 \textit{ } \\
    \hfill  
 \vspace{0.5cm}
 \textit{ } \\
    \hfill  
 \vspace{0.5cm}

    \textit{``Mathematics is the music of reason.’’} \\
    \hfill  James Joseph Sylvester
    
     \vspace{0.5cm}
    
    \textit{``Mathematics is not only real, but it is the only reality.’’} \\
    \hfill  David Hilbert
  
    \vspace{0.5cm}
    
    \textit{``Pure mathematics is, in its way, the poetry of logical ideas.’’} \\
    \hfill  Albert Einstein
    
    \vspace{0.5cm}
    
   \textit{``It is impossible to be a mathematician without being a poet in soul.’’} \\
    \hfill  Sofia Kovalevskaya
    
    \vspace{0.5cm}
    
    \textit{``Mathematics is the most beautiful and powerful creation of the human spirit.’’} \\
    \hfill  Stefan Banach

\end{flushright}
\newpage

\begin{flushright}
 \textit{ } \\
    \hfill  
 \vspace{0.5cm}
 \textit{ } \\
    \hfill  
 \vspace{0.5cm}
 \textit{ } \\
    \hfill  
 \vspace{0.5cm}
 \textit{ } \\
    \hfill  
 \vspace{0.5cm}
 \textit{ } \\
    \hfill  
 \vspace{0.5cm}
 \textit{ } \\
    \hfill  
 \vspace{0.5cm}
 \textit{ } \\
    \hfill  
 \vspace{0.5cm}
 \textit{ } \\
    \hfill  
 \vspace{0.5cm}
 \textit{ } \\
    \hfill  
 \vspace{0.5cm}
 \textit{ } \\
    \hfill  
 \vspace{0.5cm}

\textit{To my heartbeat, to my parents...}\\

\textit{Rahma Heni}\\
\textit{henirahmahenirahma@gmail.com}

\end{flushright}

\title{Input-to-State Stability of time-varying infinite-dimensional systems.}
\author{Rahma Heni}

\chapter*{Preface}
\thispagestyle{empty}

The concept of input-to-state stability (ISS) proposed in the late 1980s by E. Sontag \cite{sontag1989smooth} is one of the central notions in robust nonlinear control. ISS has become indispensable for various branches of nonlinear systems theory, such as robust stabilization of nonlinear systems \cite{FrK96}, design of nonlinear observers \cite{ArK01}, analysis of large-scale networks \cite{DRW07,JTP94}, etc.
To study the robustness of systems with saturation and limitations in actuation and processing rate (which are typically not ISS), a notion of integral input-to-state stability (iISS) has been proposed in \cite{sontag1998comments}.
The equivalence of the ISS and the existence of an ISS Lyapunov function in a dissipative form was proved for time-invariant ordinary differential equations (ODEs) with Lipschitz's right-hand side in \cite{SoW95}. An analogous Lyapunov characterization was shown for iISS in \cite{ASW00}. An important implication of these results is that for smooth enough systems, ISS implies iISS.
For the overview of the ISS theory of ODEs, we refer to the classic survey \cite{Son08} as well as a recent monograph \cite{Mir23}.

Notable efforts have been undertaken to extend the ISS concept to time-varying systems governed by ODEs in the past years, see \cite{ELW00,KaT04,LWC05}.
In particular, in \cite{ELW00} the Lyapunov characterizations of ISS for time-varying nonlinear systems including
periodic time-varying systems have been studied.
As demonstrated in  \cite[p. 3502]{ELW00}, in contrast to time-invariant ODE systems, in the time-variant case, ISS does not necessarily imply iISS, even for systems with a sufficiently smooth right-hand side.

The success of the ISS theory of ODEs and the need for robust stability analysis of partial differential equations (PDEs) motivated the development of ISS theory in the infinite-dimensional setting, see \cite{DaM13,GGH21,jacob2018infinite,JSZ19,karafyllis2016iss,karafyllis2017iss,MiI15b,MiW18b,prieur2012iss,ZhZ18}.
For instance, the Lyapunov method was used in \cite{MiI15b} for analysis of iISS of nonlinear parabolic equations. In \cite{JMP20,MiW18b}, the notion of a non-coercive ISS Lyapunov function has been introduced and it was
demonstrated that the existence of such a function implies ISS for a broad class of nonlinear infinite-dimensional
systems.
In \cite{jacob2018infinite}, the relation between ISS and iISS has been studied for linear infinite-dimensional systems with a possibly unbounded control operator and inputs in general function spaces. 

In \cite{karafyllis2016iss}, the ISS property for 1-D parabolic PDEs with boundary disturbances has been considered. In \cite{ZhZ18}, ISS of a class of semi-linear parabolic PDEs with respect to boundary disturbances has been studied for the first time based on the Lyapunov method.
For an overview of the ISS theory for distributed parameter systems, we refer to \cite{CKP23,KaK19,MiP20}. ISS of control systems with application to robust global stabilization of the chemostat has been studied with the help of vector Lyapunov functions \cite{KKS08}. In \cite{pisano2017iss}, the variable structure
control approach was exploited to design discontinuous feedback control laws for the ISS stabilization of linear reaction-diffusion-advection equations w.r.t. actuator disturbances. \cite{ASW03} presented a new notion of ISS involving infinity norms of input derivatives. In \cite{JTP94}, the notion of ISS was generalized to input-to-state practical stability (ISpS) for finite-dimensional systems. This concept is extremely useful for sample-data control \cite{noroozi2015semiglobal}, control under quantized control \cite{sharon2011input} and the study of interconnections of nonlinear systems employing small-gain theorems \cite{jiang1996lyapunov,JTP94}. For instance, the authors in \cite{jiang1996lyapunov,JTP94} proved that the interconnection of two ISpS systems remains an
ISpS system where they established a Lyapunov-type nonlinear small-gain
theorem based on the construction of an appropriate Lyapunov function.

As demonstrated in \cite{JTP94} and other works, nonlinear small-gain has led to new solutions to several challenging problems in robust nonlinear control, such as stabilization by partial state and output feedback, nonlinear observers, and robust adaptive tracking. \cite{SoW96} presented characterizations of ISpS via stronger ISS property for time-varying finite-dimensional control systems. On the other hand, an equivalence between the ISpS property and the existence of an ISpS-Lyapunov function for finite-dimensional systems was shown in \cite{SoW95b}. \cite{mironchenko2018criteria} explored the characterizations of ISpS for a broad class of infinite-dimensional systems using the uniform limit property and in terms of input-to-state stability. 
We introduce in Chapter \hyperref[ch:chapter1]{1} the central
notion of input-to-state stability and show how to use Lyapunov functions to verify ISS
of several linear and nonlinear hyperbolic and parabolic systems with distributed and
boundary inputs. We introduce the central notion of input-to-state stability and show how to use Lyapunov functions to verify the (L)ISS/iISS of time-varying infinite-dimensional systems. \\
In Chapter \hyperref[ch:chapter2]{2}, We proceed with a well-posedness analysis of time-varying semi-linear evolution equations with locally Lipschitz continuous nonlinearities and piecewise-right continuous inputs. Lyapunov characterization for ISS of linear time-varying systems is presented in Hilbert spaces. We show also how a non-coercive LISS/iISS Lyapunov function can be constructed for time-varying semi-linear systems. Next, the iISS/ISS analysis of two parabolic PDEs is given to illustrate the proposed method. \\
In Chapter \hyperref[ch:chapter3]{3}, the sufficient conditions for ISpS and iISpS of time-varying infinite-dimensional systems via indefinite Lyapunov functions are proved. Also, we investigate the ISpS w.r.t. inputs from $L_{p}-$spaces of time-varying infinite-dimensional systems.\\
Basic facts about comparison
functions and ISS theory of ODEs are collected in Appendix \hyperref[appendix:A]{A}. For a detailed
treatise of finite-dimensional ISS theory. However, we assume that a
reader has basic knowledge of ordinary and partial differential equations as well as of linear
functional analysis and semigroup theory. Some basic definitions and results from the
theory of ordered Banach spaces and from PDE theory are summarized in Appendix \hyperref[appendix:B]{B}. \\ 
In collaboration with Andrii Mironchenko, Fabian Wirth, Hanen Damak, and Mohamed Ali Hammami, our work titled "Lyapunov methods for input-to-state stability of time-varying evolution equations," slated for submission to ESAIM: Control, Optimisation and Calculus of Variations, in 2024, \cite{damak2024inputandri} establishes that (local) input-to-state stability (L)ISS and integral input-to-state stability (iISS) of time-varying infinite-dimensional systems in abstract spaces follows from the existence of an (L)ISS/iISS Lyapunov function. 
Then, we discuss ISS for linear time-varying control systems in Hilbert spaces with bounded input operators. Methods for the construction of non-coercive LISS/iISS Lyapunov functions are presented for a certain class of time-varying semi-linear evolution equations with a family of unbounded linear operators $\{A(t)\}_{t\geq t_{0}}.$
In collaboration with Hanen Damak and Mohamed Ali Hammami \cite{damak2023input}, we have provided a small-gain theorem for time-varying nonlinear interconnected systems.
In collaboration with Hanen Damak and Mohamed Ali Hammami \cite{damak2023input2}, we have proposed the ISpS Lyapunov methodology to make it suitable for the analysis of ISpS w.r.t. inputs from $L_p$-spaces. The understanding of the nature of ISpS will be beneficial for the
development of quantized and sample-data controllers for infinite-dimensional systems and will give further insights into the ISS theory of infinite-dimensional systems in \cite{MiP20,MiW18b,mironchenko2018lyapunov,mironchenko2019non}. \\

\tableofcontents
\clearpage
\chapter*{Abbreviations and Symbols}
\markboth{Abbreviations and Symbols}{Abbreviations and Symbols}
\vspace{\baselineskip}
\vspace{\baselineskip}
\vspace{\baselineskip}
\vspace{\baselineskip}
\vspace{\baselineskip}
\vspace{\baselineskip}
\vspace{\baselineskip}
\vspace{\baselineskip}
\vspace{\baselineskip}
\vspace{\baselineskip}
\vspace{\baselineskip}
\section*{Abbreviations}
\begin{table}[htbp]
\begin{tabular}{@{}p{2.5cm}p{10cm}}
    \mbox{ODE, ODEs} & Ordinary differential equation(s) \\
    \mbox{PDE, PDEs} & Partial differential equation(s) \\
    FC & Forward complete (Definition \ref{FCsys})\\
    BIC & Boundedness-implies-continuation (Definition \ref{bicpr}) \\
    ISS & \mbox{Input-to-state stability/Input-to-state stable (Definition \ref{issdf})} \\
   LISS & Locally input-to-state stable (Definition \ref{lisssys}) \\
   0-UGAS & \mbox{Uniformly globally asymptotically stable at zero (Definition \ref{uniglobasysze})} \\ 
   eISS & Exponential input-to-state stability (Definition \ref{eissdf}) \\
  ISpS & Input-to-state practically stable (Definition \ref{ispssysy}) \\
   0-UGpAS & \mbox{Uniformly globally practically asymptotically stable at zero (Definition \ref{glasypraasystz})} \\  
   \end{tabular}
\end{table}

\newpage

\begin{table}[htbp]
   \begin{tabular}{@{}p{2.5cm}p{10cm}}
   
  CpUAG & \mbox{Completely practical uniform asymptotic gain property (Definition \ref{cpuagg})} \\  
 iISS & Integral input-to-state stability (Definition \ref{rhan23}) \\
 iISpS & Integral-input-to-state practically stable (Definition \ref{def1}) \\
\end{tabular}

\end{table}
\section*{Symbols}
\subsection*{Sets and Numbers}
\begin{table}[htbp]
   \begin{tabular}{@{}p{2cm}p{10cm}}
$\mathbb{N}$ & Set of natural numbers: $N := \{1, 2, . . .\}$ \\
$\mathbb{Z}$  & Set of integer numbers \\
$\mathbb{R}$  & Set of real numbers \\
$\mathbb{Z}_{+}$ & Set of nonnegative integer numbers $= \mathbb{N} \cup \{0\}$ \\
$\mathbb{R}_{+}$ & Set of nonnegative real numbers \\
$\mathbb{C}$ & Set of complex numbers \\
$S^{n}$ & \mbox{$:= \underbrace{S \times . . . \times S}_{\emph{n \  times}} $} \\
$x^{T}$ & \mbox{Transposition of a vector $x \in  \mathbb{R}^{n}$} \\
$I$ & Identity matrix \\
$|\cdot|$ & \mbox{Euclidean norm in the space $\mathbb{R}^{s} , s \in \mathbb{N}$} \\ 
$\|A\|$ & \mbox{For a matrix $A \in \mathbb{R}^{n\times n}$ we denote $\|A\| := \sup_{x\neq0} 
\frac{|Ax|}{|x|}$} \\
$\mathcal{T}$ & \mbox{Set of all the pairs $(t,s)\in \mathbb{R}_+^{2}$ with $t\geq s$} \\
\end{tabular}
\end{table}

\subsection*{Various Notation}

   \begin{tabular}{@{}p{2cm}p{10cm}}
$B_{r}$ & \mbox{Open ball of radius $r$ around $0 \in \mathbb{R}^{n},$ i.e., $\{x \in \mathbb{R}^{n} : 
|x| < r \}$} \\
$B_{r}(Z)$ & \mbox{Open ball of radius $r$ around $Z \subset \mathbb{R}^{n},$ i.e., $\{x \in 
\mathbb{R}^{n} : dist (x, Z) < r \}$} \\
$B_{r, \mathcal{U}}$ & \mbox{Open ball of radius $r$ around $0$ in a normed vector space $\mathcal{U}$ , 
i.e., $\{u \in \mathcal{U} : \|u\|_{\mathcal{U}} < r \}$}\\
$int A$ & \mbox{(Topological) interior of a set $A$ (in a given topology)}\\
$\bar{A}$ & \mbox{Closure of a set $A$ (in a given topology)} \\
$\mu$ & Lebesgue measure on $\mathbb{R}$ \\
\mbox{$t \rightarrow 0^{-}$} & t converges to $0$ from the left \\
\mbox{$t \rightarrow 0^{+}$} & t converges to $0$ from the right \\
\end{tabular}

\newpage
\subsection*{Sequence Spaces}
\begin{table}[htbp]
   \begin{tabular}{@{}p{2cm}p{10cm}}
$\|x\|_{l^{p}}$    & $:= \big{(}\sum_{k=1}^{\infty} |x_{k}|^{p}\big{)}^{\frac{1}{p}},$ for $p \in 
[1,+\infty)$ and $x = (x_{k})$\\
$\|x\|_{l^{\infty}}$  & $:= \sup_{k\in \mathbb{N}} |x_{k} |,$ for $x = (x_{k})$\\
$l^{p}$ & Space of the sequences $x = (x_{k})$ so that $\|x\|_{l^{p}} < \infty$\\
\end{tabular}
\end{table}

\subsection*{Function Spaces}
\begin{table}[htbp]
   \begin{tabular}{@{}p{2cm}p{10cm}}
   $C(X,U)$ & Linear space of continuous functions from $X$ to $U.$
\par \mbox{We associate with this space the map $u\mapsto\|u\|_{C(X,U)}$ $:=
\sup_{x\in X}\|u(x)\|_{U} ,$}
That may attain $+\infty$ as a value \\
$PC(\mathbb{R}_{+},U)$ & \mbox{The space of globally bounded piecewise right continuous functions from 
$\mathbb{R}_{+}$ to $U$} with the norm $\|u\|_{\mathcal{U}}:=\sup_{0\leq s\leq \infty} \|u(s)\|_{U}.$\\
$PC^{1}(\mathbb{R}_+, U)$ & \mbox{denotes the space of
continuous and piecewise right continuously differentiable functions} from $\mathbb{R}_+$ to $U.$\\
$AC(\mathbb{R}_+,U)$ & \mbox{Space of absolutely continuous functions from $\mathbb{R}_+$ to $U$ with 
$\|u\|_{C(\mathbb{R}_+,U)} <\infty$}\\
$C(X)$ & $:= C(X, X)$\\
$C_{0}(\mathbb{R})$ &\mbox{$:= \{ f \in  C(\mathbb{R}) : \forall\varepsilon > 0 $ there is a compact set 
$K_{\varepsilon} \subset \mathbb{R} : | f (s)| <
\varepsilon \forall s \in \mathbb{R}\setminus K_{\varepsilon}\}$} \\
$C^{k}(\mathbb{R}^{n},\mathbb{R}_{+})$ & \mbox{(Where $k\in\mathbb{N} \cup \{ \infty \}, n\in\mathbb{N})$ 
linear space of $k$ times continuously
differentiable functions} from $\mathbb{R}^{n}$ to $\mathbb{R}_{+}.$\\
$L(X,U)$ & Space of bounded linear operators from $X$ to $U$ \\
$L(X)$ & $:= L(X, X)$ \\
$\|f \|_{\infty}$ & $:= ess \sup_{x\geq0}|f(x)|= \inf_{D\subset\mathbb{R}_{+},\mu(D)=0} 
\sup_{x\in\mathbb{R}_{+}\setminus D}|f(x)|$ \\
$L^{\infty}(\mathbb{R}_{+},\mathbb{R}^{m})$ & Set of Lebesgue measurable functions with $\|f\|_{\infty} < 
\infty$ \\

\end{tabular}
\end{table}
\begin{table}[htbp]
\begin{tabular}{@{}p{2cm}p{10cm}}

$L^{\infty}_{loc}(\mathbb{R}_{+},\mathbb{R}^{m})$ & $:= \{u : \mathbb{R}_{+} \rightarrow \mathbb{R}^{m} : 
u_{t} \in L^{\infty}(\mathbb{R}_{+},\mathbb{R}^{m}) \forall t \in \mathbb{R}_{+}\},$ where
$u_{t} (s) := \left\{
               \begin{array}{ll}
                 u(s), & \text{if } s \in [0,t], \\
                 0, & \text{if } s > t.
               \end{array}
             \right.$

\par \mbox{The set of Lebesgue measurable and locally essentially bounded
functions} \\
$\mathcal{U}$ & Space of input functions. \\
$\|f\|_{L^{p}(0,d)}$ & $:=
\big{(} \int_{0}^{d}
|f (x)|^{p}dx\big{)}^{\frac{1}{p}}, \ p \in [1,+\infty)$ \\
$L^{p}(0, d)$ & \mbox{ Space of $p-$th power integrable functions $f : (0, d) \rightarrow \mathbb{R}$ 
with $\|f\|_{L^{p}(0,d)}<\infty, \ p \in [1,+\infty) $ } \\
\end{tabular}
\end{table}
\newpage
\subsection*{Comparison Functions}
\begin{table}[htbp]
   \begin{tabular}{@{}p{2cm}p{10cm}}
$\mathcal{P}$ & \mbox{$:=\{\gamma:\mathbb{R}_+\to \mathbb{R}_+:\gamma \;\;\hbox{is continuous,} 
\gamma(0)=0\; \hbox{and}\;\gamma(r)>0 \;\hbox{for} \;r>0 \}.$}\\
$\mathcal{K}$  & \mbox{$:=\{\gamma\in \mathcal{P}:\gamma \;\;\hbox{is strictly increasing}\}.$}\\
$\mathcal{K}_{\infty}$ &  \mbox{$:=\{\gamma\in \mathcal{K}:\gamma \;\;\hbox{is unbounded}\}.$}\\
$\mathcal{L}$ & \mbox{$:=\{\gamma:\mathbb{R}_+\to \mathbb{R}_+:\gamma \;\;\hbox{is continuous and 
strictly decreasing with} \displaystyle\lim_{t\to \infty}\gamma(t)=0\}.$}\\
$\mathcal{KL}$ &  \mbox{$:=\{\beta\in C(\mathbb{R}_+\times\mathbb{R}_+,\mathbb{R}_+):\beta(\cdot,t)\in 
\mathcal{K}, \forall t\geq0, \beta(r,\cdot)\in \mathcal{L},\forall r> 0\}.$} \\
\end{tabular}
\end{table}

\mainmatter

\chapter{Introduction}
\label{ch:chapter1}
\markboth{INTRODUCTION}{ }
This chapter defines the notion of input-to-state stability (ISS). Furthermore, we introduce ISS Lyapunov 
functions that constitute the primary tool for the ISS analysis, at least in
the context of nonlinear systems. We show that the existence of an ISS Lyapunov function implies ISS. 
Afterward, we show how the ISS can be verified using the Lyapunov method for several basic partial 
differential equations (PDE), such as transport equation, reaction-diffusion equation with distributed 
and boundary controls, Burgers' equation, etc. In this introductory chapter, we use predominantly 
quadratic ISS Lyapunov functions and utilize integral inequalities in Sobolev spaces to verify 
dissipative inequalities for these functions. Deeper methods for ISS analysis will be developed in the 
subsequent chapters.
\section{\sectiontitle{Time-varying control systems}}

We start with abstract axiomatically defined time-varying systems on the state space $X$ in the fashion of \cite{KaJ11}, and recall that $\mathcal{T}$ is the set of all the pairs $(t,s)\in \mathbb{R}_+^{2}$ with $t\geq s.$								
\begin{dfn}\label{csyol} (\cite{mancilla2023characterization})
Consider a triple $\Sigma=(X,\mathcal{U},\phi),$ consisting of the following
components:
\begin{enumerate}
\item[$(i)$] A normed linear space $X,$ called \emph{the state space}, endowed with the
norm $\|\cdot\|_{X}.$
\item[$(ii)$] A set of admissible input values $U,$ which is a nonempty subset of a certain normed linear space.
\item[$(iii)$] A normed linear space of inputs $\mathcal{U} \subset \{f: \mathbb{R}_+ \to U\}$ endowed with a norm $\|\cdot\|_{\mathcal{U}}.$ We assume that the following axiom hold:

The axiom of shift invariance: for all $u\in\mathcal{U} $ and all $\tau\geq0,$ the time shift $u(\cdot+\tau)$ belongs to $\mathcal{U}$ with $\|u\|_{\mathcal{U}}\geq\|u(\cdot+\tau)\|_{\mathcal{U}}.$

\item[$(iv)$] A set $D_{\phi}\subseteq \mathcal{T}\times X\times
  \mathcal{U}$, and map $\phi:D_{\phi}\to X$, called the transition map.
   For all $(t_0,x_0,u) \in \mathbb{R}_+\times X \times
  \mathcal{U},$ there exists a
  $t_m=t_m(t_0,x_0,u) \in (t_0,\infty) \cup\{\infty\}$ with
 $D_{\phi} \cap \bigl( [t_0,\infty)\times (t_0,x_0, u)\bigr)= [t_0,t_m)\times
 (t_0,x_0, u)$. 
\par The interval $[t_0,t_m)$ is called \emph{the maximal interval of
  existence of $t \mapsto \phi(t,t_0,x_0,u)$}, i.e., of the solution corresponding to the initial condition $x(t_0)=x_0$ and the input $u.$
\end{enumerate}
The triple $\Sigma$ is called \emph{a (control) system} if the following properties hold:
\begin{enumerate}
\item[$(\Sigma_{1})$] \emph{Identity property}: for every $(t_0,x_0,u)\in \mathbb{R}_+\times X\times \mathcal{U}$ it holds that $\phi(t_0,t_0,x_0,u)=x_0.$

\item[$(\Sigma_{2})$] \emph{Causality}: for every $(t, t_0, x_0,u)\in D_{\phi},$ and for every $\tilde{u}\in \mathcal{U}$ such that
$\tilde{u}(\tau)=u(\tau)$ for all $\tau\in [t_0,t),$ it holds that $(t,t_0,x_0,\tilde{u})\in D_{\phi}$ and $\phi(t,t_0,x_0,u)=\phi(t,t_0,x_0,\tilde{u}).$
\item[$(\Sigma_{3})$] \emph{Continuity}: for every $(t_0,x_0,u)\in
  \mathbb{R}_+\times X\times \mathcal{U}$ the map $t \mapsto
  \phi(t,t_0,x_0,u)$ is continuous on its maximal interval of existence $[t_0,t_m(t_0,x_0,u))$.
\item[$(\Sigma_{4})$] \emph{The cocycle property}: for all $t_0\geq0,$ all $x_0\in X,$ all $u\in \mathcal{U}$, and all $t\in [t_0,t_m(t_0,x_0,u)),$ we have
$\phi(t,\tau ,\phi(\tau,t_0,x_0,u),u)=\phi(t,t_0,x_0,u),$ for all $\tau\in [t_0,t].$
\end{enumerate}
\end{dfn}
To relate this definition to other concepts available in the literature, consider the
following concept:
\begin{dfn}\label{strfami}
Let $t_0\geq0.$ Let $W(t,s):X\to X, \ t\geq s\geq t_0$ be a family of bounded linear operators. The family 
$\{W(t,s)\}_{t\geq s\geq t_0}$ is called a \emph{strongly continuous evolution family} if the following 
holds:
\begin{enumerate}
    \item[$(i)$] \emph{Identity at Diagonal}: For all $t\geq t_0$ it holds that $W(t,t) = I.$
    \item[$(ii)$] \emph{Semigroup Property}: For all $t\geq r\geq s\geq t_0$ it holds that 
        $W(t,s)=W(t,r)W(r,s).$
    \item[$(iii)$] \emph{Parameter Continuity}: For all $t\geq s\geq t_0$ and for each $x\in X,$ the 
        map $(t,s)\mapsto W(t,s)x$ is continuous.
    \item[$(iv)$] \emph{Uniform Boundedness}: For all $t\geq s\geq t_0,$ there exists a constant 
        $M\geq0$ such that $$\|W(t, s)\|\leq M.$$ 
\end{enumerate}
\end{dfn}
Take the family $\{W(t,s)\}_{t\geq s\geq t_0}$ as in Definition \ref{strfami}. Setting $\mathcal{U} := 
\{0\},$ and defining $\phi(t,s,x,0) := W(t,s)(x),$ one can see that $\Sigma:= (X,\{0\},\phi)$ is a 
control system according
to Definition \ref{csyol}. Indeed, the axioms $(i)-(iv)$ of Definition \ref{csyol}, as well as causality, 
trivially hold. The identity property, continuity of $\phi$ w.r.t. time, as well as the cocycle property, 
correspond directly to the axioms $(i)-(iii)$ of Definition \ref{strfami}.
Abstract linear control systems, considered in Section 2.1.1, are also a special case
of the control systems that we consider here. In a certain sense, Definition \ref{csyol} is a
direct generalization and a unification of the concepts of strongly continuous nonlinear
semigroups with abstract linear control systems.
This class of systems encompasses control systems generated by ordinary differential
equations (ODEs), switched systems, time-delay systems, evolution PDEs, abstract
differential equations in Banach spaces and many others \cite[Chapter 1]{KaJ11b}.
\section{\sectiontitle{Forward completeness}}
\begin{dfn}\label{FCsys}
We say that a control system (as introduced in Definition \ref{csyol}) is \emph{forward complete}(FC) if 
$D_{\phi}=\mathcal{T}\times X\times\mathcal{U}$, that is, for every $(t_0,x_0,u) \in \mathbb{R}_+\times 
X\times \mathcal{U}$ and for all $t\geq t_0,$ the value $\phi(t,t_0,x_0,u)\in X$ is well-defined.
\end{dfn}
Verification of forward completeness for time-varying nonlinear systems is
often a complex task. In this context, a useful property that is satisfied by many time-varying infinite-dimensional control systems is the possibility to prolong bounded solutions to a larger interval.

For a wide class of control systems boundedness of a solution implies the possibility of
prolonging it to a larger interval, see \cite[Chapter 1]{KaJ11b}. Next, we formulate this property
for abstract systems:
\begin{dfn}(\cite{KaJ11})\label{bicpr}
We say that a control system $\Sigma=(X,\mathcal{U},\phi)$ satisfies the
\emph{boundedness-implies-continuation (BIC)} property if for each
$(t_0,x_0,u)\in \mathbb{R}_+\times X \times \mathcal{U},$ with
$t_m=t_m(t_0,x_0,u) < \infty$, and for every $M>0,$ there exists $t\in [t_0,t_m)$ with $\|\phi(t,t_0,x_0,u)\|_{X}> M.$
 \end{dfn}

\section{\sectiontitle{Input-to-state stability}}
For the formulation of stability properties, we use the standard classes of comparison functions, for which we refer to Appendix \hyperref[appendix:A]{A}. We start with the main concept for this work:
\begin{dfn}\label{issdf}
System $\Sigma=(X,\mathcal{U},\phi)$ is called \emph{input-to-state stable (ISS)}, if it is forward 
complete and there exist $\beta\in\mathcal{KL},$ and $\gamma\in \mathcal{K},$ such that for all $x_0\in 
X,$ all $u\in\mathcal{U},$ all $t_0\geq 0,$ and all $t\geq t_0,$ holds that
\begin{equation}\label{vR1}
\|\phi(t,t_0,x_0,u)\|_{X}\leq \beta(\|x_0\|_{X},t-t_0)+\gamma(\|u\|_{\mathcal{U}}).
\end{equation}
The function $\gamma$ is called gain and describes the influence of input on the system. The
map $\beta$ describes the transient behavior of the system.
\end{dfn}
The definition of ISS can be refined in various directions. For example, in the
definition of ISS, we have assumed in advance that the system is forward complete,
and thus the expression (\ref{vR1}) makes sense.

\begin{prop}
Let $\mathcal{U}=PC(\mathbb{R}_+,U).$ System $\Sigma=(X,\mathcal{U},\phi)$ is ISS, if it is forward 
complete and there exist $\beta\in\mathcal{KL},$ and $\gamma\in \mathcal{K},$ such that for all $x_0\in 
X,$ all $u\in\mathcal{U},$ all $t_0\geq 0,$ and all $t\geq t_0,$ holds that
\begin{equation}\label{ISSPR}
\|\phi(t,t_0,x_0,u)\|_{X}\leq \beta(\|x_0\|_{X},t-t_0)+\gamma(\sup_{t_0\leq s\leq\infty}\|u(s)\|_{U}).
\end{equation}
\end{prop}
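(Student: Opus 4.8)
The plan is to show that, when $\mathcal{U} = PC(\mathbb{R}_+,U)$, the estimate \eqref{ISSPR} is in fact equivalent to the defining estimate \eqref{vR1}, so that the proposition reduces to a direct application of Definition \ref{issdf}. The key observation is that by the very definition of the norm on $PC(\mathbb{R}_+,U)$ listed in the symbol table, we have $\|u\|_{\mathcal{U}} = \sup_{0 \leq s \leq \infty}\|u(s)\|_U$, which is precisely the quantity appearing inside $\gamma$ on the right-hand side of \eqref{ISSPR}. Thus \eqref{ISSPR} and \eqref{vR1} are literally the same inequality once the norm is written out, and the hypotheses of the proposition coincide verbatim with the hypotheses in Definition \ref{issdf}.

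Concretely, I would proceed as follows. First, recall that the standing assumption is $\mathcal{U} = PC(\mathbb{R}_+,U)$ with norm $\|u\|_{\mathcal{U}} := \sup_{0 \leq s \leq \infty}\|u(s)\|_U$. Second, assume the system is forward complete and that there exist $\beta \in \mathcal{KL}$ and $\gamma \in \mathcal{K}$ such that \eqref{ISSPR} holds for all $x_0 \in X$, all $u \in \mathcal{U}$, all $t_0 \geq 0$ and all $t \geq t_0$. Third, substitute the definition of $\|\cdot\|_{\mathcal{U}}$ into \eqref{ISSPR}: the term $\gamma\bigl(\sup_{t_0 \leq s \leq \infty}\|u(s)\|_U\bigr)$ becomes $\gamma(\|u\|_{\mathcal{U}})$ after noting that $\sup_{t_0 \leq s \leq \infty}\|u(s)\|_U \leq \sup_{0 \leq s \leq \infty}\|u(s)\|_U = \|u\|_{\mathcal{U}}$ and that $\gamma$ is nondecreasing, which yields $\gamma\bigl(\sup_{t_0 \leq s \leq \infty}\|u(s)\|_U\bigr) \leq \gamma(\|u\|_{\mathcal{U}})$. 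Hence \eqref{ISSPR} implies
\begin{equation*}
\|\phi(t,t_0,x_0,u)\|_X \leq \beta(\|x_0\|_X, t - t_0) + \gamma(\|u\|_{\mathcal{U}}),
\end{equation*}
which is exactly \eqref{vR1}. Since forward completeness is assumed, Definition \ref{issdf} gives that $\Sigma$ is ISS.

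Optionally, to make the statement cleaner, one may also remark on the converse direction (which is not needed for the proposition as stated but clarifies the equivalence): if $\Sigma$ is ISS in the sense of \eqref{vR1}, then using the shift-invariance axiom $(iii)$, for any $t_0 \geq 0$ one has $\|\phi(t,t_0,x_0,u)\|_X = \|\phi(t - t_0, 0, x_0, u(\cdot + t_0))\|_X \leq \beta(\|x_0\|_X, t - t_0) + \gamma(\|u(\cdot + t_0)\|_{\mathcal{U}})$, and $\|u(\cdot + t_0)\|_{\mathcal{U}} = \sup_{s \geq 0}\|u(s + t_0)\|_U = \sup_{s \geq t_0}\|u(s)\|_U$, recovering \eqref{ISSPR}.

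There is essentially no serious obstacle here; the only point requiring a moment of care is making explicit that $\sup_{t_0 \leq s \leq \infty}\|u(s)\|_U \leq \|u\|_{\mathcal{U}}$ together with monotonicity of $\gamma \in \mathcal{K}$ closes the gap in the correct direction. The proposition is really a bookkeeping statement recording that, for the concrete input space $PC(\mathbb{R}_+,U)$, verifying the estimate with the pointwise supremum of the input over $[t_0,\infty)$ is the same as verifying the abstract ISS estimate \eqref{vR1}; no compactness, no approximation, and no semigroup machinery is involved.
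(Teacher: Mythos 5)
Your main argument is correct and is essentially the paper's own proof of the stated direction: since $\sup_{t_0\leq s\leq\infty}\|u(s)\|_U\leq\sup_{0\leq s\leq\infty}\|u(s)\|_U=\|u\|_{\mathcal{U}}$ and $\gamma$ is increasing, the estimate \eqref{ISSPR} implies \eqref{vR1}, and forward completeness then gives ISS by Definition \ref{issdf}. In fact your version is slightly more careful than the paper's one-line ``sufficiency is clear'' remark, which asserts an equality of the two suprema that need not hold (the input may be larger on $[0,t_0)$); your use of the inequality plus monotonicity of $\gamma$ is the correct way to close that gap.

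The one place you diverge from the paper is your optional converse remark, and there you have a genuine error: the identity $\|\phi(t,t_0,x_0,u)\|_X=\|\phi(t-t_0,0,x_0,u(\cdot+t_0))\|_X$ is a time-invariance property, not a consequence of the shift-invariance axiom on $\mathcal{U}$ (which only says the shifted input lies in $\mathcal{U}$ with no larger norm). For the time-varying systems of Definition \ref{csyol} this identity fails in general. The paper instead proves the converse via causality: replace $u$ by $\hat{u}$ equal to $u$ on $[t_0,t]$ and frozen at the value $u(t)$ afterwards, note $\phi(t,t_0,x_0,\hat{u})=\phi(t,t_0,x_0,u)$ by $(\Sigma_2)$, and apply \eqref{vR1} to $\hat{u}$, whose $\mathcal{U}$-norm is controlled by $\sup_{t_0\leq s\leq t}\|u(s)\|_U$. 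Since the proposition as stated only asserts the sufficiency direction, this does not affect the validity of your proof of the claim itself, but the converse as you sketched it should not be relied upon.
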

\begin{proof}
Sufficiency is clear, since $\sup_{t_0\leq s\leq t}\|u(s)\|_{U}=\sup_{t_0\leq 
s\leq\infty}\|u(s)\|_{U}=\|u\|_{\mathcal{U}}.$ 
\par Now, let $\Sigma$ is ISS. Due to causality property of $\Sigma,$ the state $\phi(\tau,x_0,u),\tau\in 
[t_0,t]$ of $\Sigma$ does not depend on the values of $u(s), s > t.$ For arbitrary $t\geq t_0, x_0\in X$ 
and $u\in\mathcal{U},$ consider another input $\hat{u}\in\mathcal{U},$ defined by
$$\hat{u}(\tau) =
\left\{
  \begin{array}{ll}
    u(\tau), \ \ \ \ \ \ \ if \ \tau\in[t_0,t]; \\
    u(t),  \ \ \ \ \ \ \ if \ \tau>t.
  \end{array}
\right.$$
\par The inequality (\ref{vR1}) holds for all admissible inputs, and then il holds also for $\hat{u}.$
Substituting $\hat{u}$ into (\ref{vR1}) and using that $\|\hat{u}\|_{\mathcal{U}}=\sup_{t_0\leq 
s\leq\infty}\|u(s)\|_{U},$ we get (\ref{ISSPR}). The proof is completed.
\end{proof}


To study the stability properties of time-varying control systems with respect to external inputs, we introduce the following notion.
\begin{dfn}\label{lisssys}
System $\Sigma=(X,\mathcal{U},\phi)$ is called \emph{locally input-to-state stable} \emph{(LISS)} if 
there exist $\rho_{x},\rho_u>0$ and $\beta \in \mathcal{KL}, \gamma \in \mathcal{K},$ such that for all 
$x_0: \|x_0\|_{X}\leq\rho_{x},$ all $t_0\geq 0$, and all $u \in \mathcal{U}:\|u\|_{\mathcal{U}}\leq 
\rho_u$ the trajectory $\phi(\cdot,t_0,x_0,u)$ exists globally and for all $t\geq t_0$ it holds that
\begin{equation}\label{rhm21}
 \|\phi(t,t_0,x_0,u)\|_{X}\leq \beta(\|x_0\|_{X},t-t_0)+\gamma(\|u\|_{\mathcal{U}}).
\end{equation}
The control system is ISS, if in the above definition $\rho_x,\rho_u$ can be chosen to be equal to $\infty.$\\ 
\end{dfn}
ISS trivially implies the following property, which plays a key role in the stability
analysis of the systems without disturbances.
\begin{dfn}\label{uniglobasysze}
System $\Sigma=(X,\mathcal{U},\phi)$ is called \emph{uniformly globally asymptotically stable at zero 
(\emph{0-UGAS})} if there exists $\beta \in \mathcal{KL}$
such that for all $x_0\in X$ and all $t_0\geq 0$ the corresponding trajectory $\phi(\cdot,t_0,x_0,0)$ 
exists globally, and for all $t\geq t_0$ it holds that
\begin{equation}\label{rhm1}
\|\phi(t,t_0,x_0,0)\|_{X}\leq \beta(\|x_0\|_{X},t-t_0).
\end{equation}
\end{dfn}
Substituting $u:=0$ into (\ref{rhm21}), we obtain that ISS systems are always 0-UGAS.\\
Finally, a stronger than ISS property of exponential ISS is of interest:
\begin{dfn}\label{eissdf}
System $\Sigma=(X,\mathcal{U},\phi)$ is called \emph{exponentially input-to-state stable (eISS)} if it is 
forward complete and there exist $M,a>0$ and $\gamma\in \mathcal{K},$ such that for all $x_0\in X,$ all 
$u\in\mathcal{U},$ and all $t\geq t_0,$ it holds that
\begin{equation}\label{eISS_condition}
\|\phi(t,t_0,x_0,u)\|_{X}\leq Me^{-a(t-t_0)}\|x_0\|_{X}+\gamma(\|u\|_{\mathcal{U}}).
\end{equation}
Furthermore, if in addition $\gamma$ can be chosen as a linear function, then we call $\Sigma$ 
exponentially ISS with a linear gain.
\end{dfn}

\section{\sectiontitle{Input-to-state practical stability}}
In this section, we delve into the notion of input-to-state practical stability (ISpS) for control systems, examining conditions under which a system remains stable in the presence of inputs.
\begin{dfn}\label{ispssysy}
The control system $\Sigma=(X, \mathcal{U},\phi)$ is called input-to-state practically stable (ISpS) 
w.r.t. inputs in the space $\mathcal{U},$ if it is forward complete and there exist 
$\beta\in\mathcal{KL},$ $\gamma\in \mathcal{K}$ and $r>0,$ such that for all $x_0\in X,$ all
$u\in\mathcal{U}$ all $t_0\geq0,$ and all $t\geq t_0,$ the following holds
\begin{equation}\label{vR1}
\|\phi(t,t_0,x_0,u)\|_{X}\leq \beta(\|x_0\|_{X},t-t_0)+\gamma(\|u\|_{\mathcal{U}})+r.
\end{equation}
If $\Sigma$ is ISpS and $\beta$ can be chosen as $\beta(s,t)=Me^{-at}s$ for all $s,t\in \mathbb{R}_+,$ 
for some $a,M>0,$ then $\Sigma$ is called exponentially ISpS (eISpS) w.r.t. inputs in the space 
$\mathcal{U}.$\\
If ISpS property w.r.t. inputs in the space $\mathcal{U}$ holds with $r=0,$ then $\Sigma$ is called ISS 
w.r.t. inputs in the space $\mathcal{U}.$
\end{dfn}
\begin{prop} Let $\Sigma=(X, \mathcal{U},\phi)$ be a forward complete control system with 
$\mathcal{U}=PC(\mathbb{R}_+,U).$ System $\Sigma$ is ISpS if and only if there exist 
$\beta\in\mathcal{KL},$ $\gamma\in \mathcal{K}$ and $r>0,$ such that for all $x_0\in X,$ all $u\in 
\mathcal{U},$ all $t_0\geq0$ and all $t\geq t_0$ the following holds
\begin{equation}\label{vR1hh}
\|\phi(t,t_0,x_0,u)\|_{X}\leq \beta(\|x_0\|_{X},t-t_0)+\gamma(\sup _{t_0\leq s\leq t}\|u(s)\|_{U})+r.
\end{equation}
\end{prop}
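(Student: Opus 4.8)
The claim is the practical-stability analogue of the Proposition proved just above for ISS, and the argument is essentially the same, so the plan is to adapt that proof verbatim with the extra additive constant $r$ carried along. First I would prove the ``if'' direction: assume the bound \eqref{vR1hh} holds with $\beta\in\mathcal{KL}$, $\gamma\in\mathcal{K}$ and $r>0$. By causality (axiom $(\Sigma_2)$), for fixed $t\geq t_0$ the value $\phi(t,t_0,x_0,u)$ depends only on the restriction $u|_{[t_0,t)}$, hence only on $\sup_{t_0\leq s\leq t}\|u(s)\|_U$ through \eqref{vR1hh}. Since $\sup_{t_0\leq s\leq t}\|u(s)\|_U\leq \sup_{t_0\leq s\leq\infty}\|u(s)\|_U=\|u\|_{\mathcal{U}}$ and $\gamma$ is nondecreasing, \eqref{vR1hh} immediately yields
\begin{equation*}
\|\phi(t,t_0,x_0,u)\|_{X}\leq \beta(\|x_0\|_{X},t-t_0)+\gamma(\|u\|_{\mathcal{U}})+r,
\end{equation*}
which is exactly the ISpS estimate \eqref{vR1}. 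Together with forward completeness (assumed in the hypothesis), this gives ISpS with the same $\beta,\gamma,r$.

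For the ``only if'' direction, suppose $\Sigma$ is ISpS, so there are $\beta\in\mathcal{KL}$, $\gamma\in\mathcal{K}$, $r>0$ with \eqref{vR1} holding for all admissible inputs. Fix $t\geq t_0$, $x_0\in X$, and $u\in\mathcal{U}=PC(\mathbb{R}_+,U)$, and define the truncated-and-frozen input
\begin{equation*}
\hat{u}(\tau)=\begin{cases} u(\tau), & \tau\in[t_0,t],\\ u(t), & \tau>t.\end{cases}
\end{equation*}
Then $\hat u\in PC(\mathbb{R}_+,U)=\mathcal{U}$ (piecewise right continuity is preserved and the function is globally bounded), so the shift-invariance axiom is not even needed here, only that $\hat u$ is admissible. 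One checks $\|\hat u\|_{\mathcal{U}}=\sup_{t_0\leq s\leq\infty}\|\hat u(s)\|_U=\sup_{t_0\leq s\leq t}\|u(s)\|_U$. Applying \eqref{vR1} to $\hat u$ and invoking causality to replace $\phi(t,t_0,x_0,\hat u)$ by $\phi(t,t_0,x_0,u)$ (since $\hat u=u$ on $[t_0,t)$), we obtain \eqref{vR1hh} with the same $\beta$, $\gamma$, and $r$. This completes both directions.

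There is no serious obstacle; the only point requiring a line of care is the verification that $\hat u$ is again an element of $\mathcal{U}=PC(\mathbb{R}_+,U)$ and that its $\mathcal{U}$-norm equals the finite-horizon supremum of $u$ — both are routine. One should also note that the additive $r$ plays no role in either implication beyond being transported unchanged, which is why the statement holds with the \emph{same} constants. This mirrors the ISS proposition above, the only difference being the presence of $r$, and in particular shows that the ISpS property is insensitive to whether one measures the input over $[t_0,\infty)$ or over the finite horizon $[t_0,t]$, a fact that will be convenient in the ISpS analysis of the later chapters.
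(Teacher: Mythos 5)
Your proof is correct and follows essentially the same route as the paper's: the easy direction via $\sup_{t_0\leq s\leq t}\|u(s)\|_U\leq\|u\|_{\mathcal{U}}$, and the converse via the truncated-and-frozen input $\hat u$ combined with causality. The only addition is your explicit check that $\hat u$ remains admissible in $PC(\mathbb{R}_+,U)$, which the paper leaves implicit.
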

\begin{proof}
Sufficiency is clear, since $\displaystyle\sup_{t_0\leq s\leq t}\|u(s)\|_{U}\leq \sup _{0\leq s\leq \infty}\|u(s)\|_{U}=\|u\|_{\mathcal{U}}.$ Now, let $\Sigma$ is ISpS. Due to causality property of $\Sigma,$ the state $\phi(\tau,x_0,u),$ $\tau\in [t_0,t]$ of the system $\Sigma$ does not depend on the values of $u(s),s>t.$ For arbitrary $t_0\geq 0,$ $t\geq t_0,$ $x_0\in X$ and $u\in \mathcal{U},$ consider another input $\widehat{u}\in\mathcal{U},$ defined by
$$\widehat{u}(\tau)=\left\{
                       \begin{array}{ll}
                         u(\tau),\quad \tau\in [t_0,t]; \\
                         u(t),\quad \tau>t.
                       \end{array}
                     \right.$$
The inequality (\ref{vR1}) holds for all admissible inputs, and then it holds also for $\widehat{u}.$ Substituting $\widehat{u}$ into (\ref{vR1}) and using that $\|\widehat{u}\|_{\mathcal{U}}=\displaystyle\sup _{t_0\leq s\leq t}\|u(s)\|_{U},$ we get (\ref{vR1hh}).
\end{proof}
We are also interested in the stability properties of system $\Sigma$ in the absence of inputs.
\begin{dfn}\label{glasypraasystz}
 A system $\Sigma=(X, \mathcal{U},\phi)$ is called uniformly globally practically asymptotically stable at zero (0-UGpAS) if it is forward complete and there exist $\beta\in \mathcal{KL}$ and $r>0,$ such
that for all $x_0\in X,$ all $t_0\geq 0$ and all $t\geq t_0,$ it holds that
\begin{equation}
\|\phi(t,t_0,x_0,0)\|_{X}\leq \beta(\|x_0\|_{X},t-t_0)+r.
\label{vRnnn1}
\end{equation}
The estimate (\ref{vRnnn1}) has been derived under an assumption that $\Sigma$ is uniformly globally asymptotically stable (0-UGAS) if the input $u$ is set to zero and $r=0.$
\end{dfn}
\begin{dfn}\label{cpuagg}
We say that the control system $\Sigma$ satisfies the completely practical uniform asymptotic gain property (CpUAG) if there are
$\beta\in\mathcal{KL},$ $\gamma\in \mathcal{K},$ and $c,\varsigma>0,$ such that for all $x_0\in X,$ all $u\in \mathcal{U},$ all $t_0\geq 0$ and all $t\geq t_0,$ it holds that
$$\|\phi(t,t_0,x_0,u)\|_{X}\leq \beta(\|x_0\|_{X}+c,t-t_0)+\gamma(\|u\|_{\mathcal{U}})+\varsigma.$$
\end{dfn}

\section{\sectiontitle{iISS and iISpS}}
\begin{dfn}\label{rhan23}
Let $\Sigma=(X, \mathcal{U},\phi)$ be a forward complete control system with $\mathcal{U}=PC(\mathbb{R}_+,U).$ System $\Sigma$ is called \emph{integral input-to-state stable (iISS)} if there exist $\alpha \in \mathcal{K}_{\infty},$ $\mu\in \mathcal{K}$ and $\beta\in\mathcal{KL}$, such that for all $x_0\in X,$ all $u\in \mathcal{U},$ all $t_0\geq 0,$ and all $t\geq t_0$ the following holds
\begin{equation}\label{EQua1in}
\|\phi(t,t_0,x_0,u)\|_{X}\leq \beta(\|x_0\|_{X},t-t_0)+\alpha \bigg{(}\int_{t_0} ^{t} \mu(\|u(s)\|_{U})ds\bigg{)}.
\end{equation}
\end{dfn}
The ISS estimate provides an upper bound of the system’s response to the maximal
magnitude of the applied input. In contrast to that, integral ISS gives an upper bound
of the response of the system with respect to a kind of energy fed into the system,
described by the integral in the right-hand side of (\ref{EQua1in}).
\begin{dfn}\label{def1}
A system $\Sigma=(X, \mathcal{U},\phi)$ is called integral-input-to-state practically stable (iISpS) if it is forward complete and there exist $\alpha \in \mathcal{K}_{\infty},$ $\mu\in \mathcal{K},$ $\beta\in\mathcal{KL}$ and $r>0,$ such that for all $x_0\in X,$ all $u\in C(\mathbb{R}_+,U),$ all $t_0\geq 0$ and all $t\geq t_0,$ the following holds
$$\|\phi(t,t_0,x_0,u)\|_{X}\leq \beta(\|x_0\|_{X},t-t_0)+\alpha\left(\int_{ t_0} ^{t} \mu(\|u(s)\|_{U})ds\right)+r.$$
If iISpS property holds with $r=0,$ then  $\Sigma$ is called integral input-to-state stable (iISS).
\end{dfn}



\section{\sectiontitle{ISS Lyapunov functions}}
In this section, we recall the concept of ISS Lyapunov functions. These are crucial for the verification of input-to-state stability of time-varying control systems.

The Lie derivative of $V \in C(\mathbb{R}_+\times X, \mathbb{R}_+)$ at $(t,x)\in \mathbb{R}_+ \times X$ corresponding to the input $u\in \mathcal{U}$ along the corresponding trajectory of $\Sigma=(X,\mathcal{U},\phi)$ is defined by
$$
\dot{V}_{u}(t,x)=D^{+}V(s,\phi(s,t,x,u))|_{s=t}:=\limsup\limits_{h\to0^+}\frac{1}{h}\Big(V(t+h,\phi(t+h,t,x,u))-V(t,x)\Big),
$$
where $D^{+}$ means the right upper Dini derivative.

Recall the notation $B_{r}:=\{x\in X: \|x\|_{X}\leq r\}$.
\begin{dfn}\label{DEFINITION2}
Let $\Sigma=(X,\mathcal{U},\phi)$ be a time-varying control system,
$D\subset X$ with $0\in \intt(D)$. A continuous function $V:\mathbb{R}_+\times D \to \mathbb{R}_+$ 
is called a non-coercive \emph{LISS Lyapunov function in implication form}
for the system $\Sigma$, if there exist $\alpha_2\in\mathcal{K}_{\infty}$, $\kappa\in\mathcal{K},$ constants $r_{1},r_{2}>0$ and a positive definite function $\mu,$ such that $B_{r_1}\subset D,$ and for all $t\geq 0,$ $V(t,0)=0,$ and

\begin{equation}\label{R1EN}
0<V(t,x)\leq \alpha_2(\|x\|_{X}) \quad \forall x\in D\backslash\{0\} \quad \forall t\geq0,
\end{equation}
and for all $t\geq0,$ all $x\in X:$ $\|x\|_{X}\leq r_{1},$ all $u\in \mathcal{U}$: $\|u\|_{\mathcal{U}}\leq r_{2}$ it holds that
\begin{equation}\label{STAvNNv}
\|x\|_{X}\geq\kappa(\|u\|_{\mathcal{U}}) \quad\Longrightarrow\quad \dot{V}_{u}(t,x)\leq -\mu(V(t,x)).
\end{equation}
If additionally, there is $\alpha_1\in\mathcal{K}_{\infty},$ so that
\begin{equation}\label{R1E}
 \alpha_1(\|x\|_{X}) \leq V(t,x)\leq \alpha_2(\|x\|_{X}) \quad \forall x\in D \quad \forall t\geq0,
\end{equation}
then $V$ is called a (coercive) LISS Lyapunov function in implication form
for $\Sigma.$ The function $\kappa$ is called ISS Lyapunov gain for $\Sigma.$

If in the previous definition $D=X,$ $r_1=\infty,$ and $r_2=\infty,$ then
$V$ is called a  non-coercive (coercive, if \eqref{R1E} holds) ISS Lyapunov function in implication form for $\Sigma.$
\end{dfn}

Since Definition \ref{DEFINITION2} presumes that $V$ is merely continuous, the map $t\mapsto V(t,\phi(t,t_0,x_0,u))$ is only continuous in $t$.

The importance of ISS Lyapunov functions is due to the following
result. The proof follows ideas already developed in \cite[p. 441]{Son89} and
extended to an infinite-dimensional, time-invariant setting in
\cite[Theorem 1.5.4]{Mir23b}.
\begin{thm}\label{hjMMMMMljg}
Let $\Sigma=(X,\mathcal{U},\phi)$ be a time-varying control system
satisfying the BIC property. If there exists a
coercive (L)ISS Lyapunov function $V$ for $\Sigma$, then $\Sigma$ is (L)ISS.
\end{thm}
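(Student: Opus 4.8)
The plan is to run the classical argument that converts the dissipation estimate \eqref{STAvNNv} into a $\mathcal{KL}$-bound on $\|\phi(t,t_0,x_0,u)\|_X$ via a scalar comparison principle, carrying the explicit time argument along and using the BIC property to upgrade an a priori bound into global existence. Since the hypotheses on $V$ are uniform in $t$, every constant produced will be independent of the initial time $t_0$, which is exactly what the definition of (L)ISS demands. Fix $(t_0,x_0,u)$ with $\|x_0\|_X\le\rho_x$ and $\|u\|_{\mathcal{U}}\le\rho_u$ (the thresholds $\rho_x,\rho_u$ are fixed in the last step; in the ISS case $\rho_x=\rho_u=\infty$), write $x(\cdot):=\phi(\cdot,t_0,x_0,u)$ on its maximal interval $[t_0,t_m)$ and $y(t):=V(t,x(t))$, which is continuous. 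The cocycle property gives $\phi(s,t,x(t),u)=x(s)$ for $t_0\le t\le s<t_m$, so the right upper Dini derivative of $y$ coincides with $\dot V_u(t,x(t))$; hence, as long as $\|x(t)\|_X\le r_1$ (a vacuous restriction when $r_1=\infty$), \eqref{STAvNNv} yields $D^{+}y(t)\le-\mu(y(t))$ whenever $\|x(t)\|_X\ge\kappa(\|u\|_{\mathcal{U}})$, and by \eqref{R1E} this trigger is implied by $y(t)\ge c:=\alpha_2(\kappa(\|u\|_{\mathcal{U}}))$.

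Next I would invoke the standard scalar comparison principle: for a continuous positive definite $\mu$ there is $\sigma\in\mathcal{KL}$ such that every nonnegative continuous function obeying $D^{+}y\le-\mu(y)$ on an interval satisfies $y(t)\le\sigma(y(t_0),t-t_0)$ there; if necessary one first dominates $\mu$ from below, on the bounded range actually visited by $y$, by a locally Lipschitz positive definite function, so that the comparison ODE $\dot z=-\mu(z)$ is well posed. Combining this (applied on the time set where $y>c$) with the elementary invariance fact that $y(t_1)\le c$ forces $y(t)\le c$ for all later $t$ in $\{\|x\|_X\le r_1\}$ — because at a point where $y=c>0$ one has $\|x\|_X\ge\kappa(\|u\|_{\mathcal{U}})$, hence $D^{+}y\le-\mu(c)<0$ — one obtains, on every subinterval of $[t_0,t_m)$ on which $\|x(\cdot)\|_X\le r_1$,
\[
y(t)\le\max\{\sigma(y(t_0),t-t_0),\,c\}\le\sigma(\alpha_2(\|x_0\|_X),\,t-t_0)+\alpha_2(\kappa(\|u\|_{\mathcal{U}})).
\]
(The degenerate case $\|u\|_{\mathcal{U}}=0$, where $c=0$ and $\kappa(0)=0$ so that the trigger always holds, is immediate because $y$ is then nonincreasing.)

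Then, using $\alpha_1(\|x(t)\|_X)\le y(t)$ from \eqref{R1E} and that $\alpha_1^{-1}$ is increasing with $\alpha_1^{-1}(\max\{a,b\})\le\alpha_1^{-1}(a)+\alpha_1^{-1}(b)$, I would set $\beta(s,\tau):=\alpha_1^{-1}(\sigma(\alpha_2(s),\tau))\in\mathcal{KL}$ and $\gamma(\eps):=\alpha_1^{-1}(\alpha_2(\kappa(\eps)))\in\mathcal{K}$, obtaining $\|\phi(t,t_0,x_0,u)\|_X\le\beta(\|x_0\|_X,t-t_0)+\gamma(\|u\|_{\mathcal{U}})$ on that subinterval. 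In the ISS case $r_1=r_2=\infty$, so this holds on all of $[t_0,t_m)$ with a bounded right-hand side, and BIC forces $t_m=\infty$, giving the ISS estimate. In the LISS case I would choose $\rho_u\in(0,r_2]$ and $\rho_x\in(0,r_1)$ so small that $\beta(\rho_x,0)+\gamma(\rho_u)<r_1$ (possible since both summands vanish with their arguments), and close a bootstrap: with $\tau^{*}:=\sup\{t\in[t_0,t_m):\|x(s)\|_X\le r_1\ \forall s\in[t_0,t]\}$, the bound just derived applies on $[t_0,\tau^{*}]$ and gives $\|x(s)\|_X\le\beta(\rho_x,0)+\gamma(\rho_u)<r_1$ there, so continuity excludes $\tau^{*}<t_m$; hence $x$ stays bounded on $[t_0,t_m)$, BIC again yields $t_m=\infty$, and \eqref{rhm21} holds for all $t\ge t_0$.

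The one step that requires genuine analysis rather than bookkeeping is the scalar comparison principle — extracting $\sigma\in\mathcal{KL}$ from a merely continuous positive definite $\mu$ together with the attendant under-approximation; the transfer from the time-invariant proof of \cite[Theorem 1.5.4]{Mir23b} to the present time-varying setting costs essentially nothing, precisely because the Lyapunov estimates \eqref{R1E}--\eqref{STAvNNv} are uniform in $t$.
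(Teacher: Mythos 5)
Your proposal is correct and follows essentially the same route as the paper's proof: both rest on (i) forward invariance of the sublevel set $\{V\le\alpha_2\circ\kappa(\|u\|_{\mathcal{U}})\}$, established by the strict-decrease argument at the boundary, (ii) the comparison lemma converting $D^{+}y\le-\mu(y)$ into a $\mathcal{KL}$ bound, and (iii) the BIC property to upgrade boundedness into global existence, with the local case closed by keeping the trajectory inside $B_{r_1}$. Your packaging — a single scalar estimate $y(t)\le\max\{\sigma(y(t_0),t-t_0),c\}$ plus an explicit $\tau^{*}$-bootstrap for the LISS radii — is a slightly cleaner presentation of the paper's two-step case split on $x_0\in\Lambda_{u,t_0}$ versus $x_0\notin\Lambda_{u,t_0}$, but the mathematical content is the same.
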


\begin{proof}
Let $V$ be a coercive LISS Lyapunov function for
$\Sigma=(X,\mathcal{U},\phi)$ and let $\alpha_1,\alpha_2,\kappa,\mu,r_1,r_2$
be as in Definition \ref{DEFINITION2}.
Define $\rho_1:=\min\{\alpha_2^{-1}\circ\alpha_1(r_1),\alpha_1(r_1)\}$ and $\rho_2:=\min\{r_2,\kappa^{-1}\circ\alpha_2^{-1}(\rho_1)\}.$
Take an arbitrary control $u\in \mathcal{U},$ with
$\|u\|_{\mathcal{U}}\leq\rho_2,$ and define for $t\geq 0$
$$\Lambda_{u,t}:=\{x\in D: V(t,x)\leq \alpha_2\circ\kappa(\|u\|_{\mathcal{U}})\}.$$

Since $\|u\|_{\mathcal{U}}\leq\rho_2$,  for any $t\geq0,$ and any $x\in
\Lambda_{u,t},$ it holds using \eqref{R1E} that
$$\alpha_1(\|x\|_{X}) \leq V(t,x)\leq \alpha_2\circ\kappa(\|u\|_{\mathcal{U}})\leq\alpha_2\circ\kappa(\rho_2)\leq \rho_1,$$
and so $\|x\|_{X}\leq \alpha_1^{-1}(\rho_1) \leq r_1.$ On the other hand,
if $\|x\|_X\leq\kappa(\|u\|_{\mathcal{U}})$ then for all $t\geq 0$ we
have $V(t,x) \leq \alpha_2(\|x\|_X) \leq
\alpha_2\circ\kappa(\|u\|_{\mathcal{U}})$ and so $x\in \Lambda_{u,t}$.
This implies that $ B_{\kappa(\|u\|_{\mathcal{U}})}\subset \Lambda_{u,t}\subset B_{r_1}\subset D$ for all $t\geq0$.

\noindent {\bf{Step 1:}} We are going to prove that the family $\{\Lambda_{u,t}\}_{t\geq0}$ is forward invariant in the following sense:
\begin{equation}\label{impl1}
 t_0\in\mathbb{R}_+ \wedge x_0\in\Lambda_{u,t_0} \wedge \;
 \vartheta\in\mathcal{U}\wedge\;
 \|\vartheta\|_{\mathcal{U}}\leq\|u\|_{\mathcal{U}}\qrq \forall t\geq t_0
 : \phi(t,t_0,x_0,\vartheta)\in\Lambda_{u,t}.
\end{equation}
Suppose that this implication does not hold for certain $t_0\geq0,$ $x_0\in\Lambda_{u,t_0}$ and $ \vartheta\in\mathcal{U}$ satisfying
$\|\vartheta\|_{\mathcal{U}}\leq\|u\|_{\mathcal{U}}.$ There are two alternatives.


First, it can happen that the maximal existence time $t_m(t_0,x_0,\vartheta)$ of the map $\phi(\cdot,t_0,x_0,\vartheta)$ is finite and $\phi(t,t_0,x_0,\vartheta)\in\Lambda_{u,t}$ for $t\in[t_0,t_m(t_0,x_0,\vartheta)).$ 

However, $\Lambda_{u,t}\subset B_{r_1}$ for all $t\geq0$, and thus the trajectory is uniformly bounded on its whole domain of existence.
Then $t_m(t_0,x_0,\vartheta)$ cannot be the maximal existence time, as the trajectory can be prolonged to a larger interval thanks to the BIC property of $\Sigma.$

 
The second alternative is that there exist $t\in(t_0,t_m(t_0,x_0,\vartheta))$ and $\epsilon>0$ so that 
\[
V(t,\phi(t,t_0,x_0,\vartheta))>\alpha_2\circ\kappa(\|u\|_{\mathcal{U}})+\epsilon.
\]
Let
$$\hat{t}:=\inf\{t\in[t_0,t_m(t_0,x_0,\vartheta)):\; V(t,\phi(t,t_0,x_0,\vartheta))>\alpha_2\circ\kappa(\|u\|_{\mathcal{U}})+\epsilon\}.$$
As $x_0\in\Lambda_{u,t_0}$ and $t\mapsto\phi(t,t_0,x_0,\vartheta)$ is
continuous, we obtain with \eqref{R1E} that
$$\alpha_2(\|\phi(\hat{t},t_0,x_0,\vartheta)\|_{X})\geq V(\hat{t},\phi(\hat{t},t_0,x_0,\vartheta))=\alpha_2\circ\kappa(\|u\|_{\mathcal{U}})+\epsilon\geq\alpha_2\circ\kappa(\|\vartheta\|_{\mathcal{U}}).$$


Due to (\ref{STAvNNv}), it holds that
$$D^{+}V(\hat{t},\phi(\hat{t},t_0,x_0,\vartheta))\leq -\mu(V(\hat{t},\phi(\hat{t},t_0,x_0,\vartheta))).$$


Denote $\omega(t)=V(t,\phi(t,t_0,x_0,\vartheta)).$ Suppose that there is
$(t_k)\subset(\hat{t},\infty),$ such that $t_k\longrightarrow \hat{t}$ as
$k\longrightarrow \infty$ and $\omega(t_k)\geq \omega(\hat{t})$ for all
$k\in \N$. It follows that,
$$0 \leq \limsup\limits_{k\to\infty}\frac{\omega(t_k)-\omega(\hat{t})}{t_k-\hat{t}}\leq D^{+}\omega(\hat{t})\leq-\mu(\omega(\hat{t}))<0,$$
a contradiction. Thus, there is some $\delta>0$ such that $\omega(t)<\omega(\hat{t})=\alpha_2\circ\kappa(\|u\|_{\mathcal{U}})+\epsilon$ for all $t\in(\hat{t},\hat{t}+\delta).$ This contradicts the definition of $\hat{t}.$ The statement (\ref{impl1}) is shown.
We note for further reference that for any initial time $t_0\geq0$ and any initial condition $x_0\in\Lambda_{u,t_0}$ we have
$$\alpha_1(\|\phi(t,t_0,x_0,u)\|_{X})\leq V(t,\phi(t,t_0,x_0,u))\leq\alpha_2\circ\kappa(\|u\|_{\mathcal{U}}) \quad \forall t\geq t_0,$$
which implies that for all $t_0\geq0$ and all $t\geq t_0$
\begin{equation}\label{inequa1}
\|\phi(t,t_0,x_0,u)\|_{X}\leq\gamma(\|u\|_{\mathcal{U}}),
\end{equation}
where $\gamma=\alpha_1^{-1}\circ\alpha_2\circ\kappa.$

\noindent {\bf{Step 2:}} 
Recall that $u\in \mathcal{U},$ with $\|u\|_{\mathcal{U}}\leq\rho_2.$ Pick
an arbitrary initial time $t_0\geq0$ and any initial condition $x_0$ with
$\|x_0\|_{X}\leq\rho_{1}$. The case $x_0\in \Lambda_{u,t_0}$ has been treated in Step 1 of this proof. In Step 2, we consider the case $x_0\notin\Lambda_{u,t_0}$.  Due to the continuity of the flow map $t\mapsto\phi(t,t_0,x_0,u)$ and of the Lyapunov function $V$, there exists $\tau>0$ such that $\phi(t,t_0,x_0,u)\notin\Lambda_{u,t}$ for all $t\in [t_0,t_0+\tau].$

Using the axiom of shift invariance $(iii)$ of $\Sigma$ and employing the
sandwich bounds \eqref{R1E}, we get 
\begin{equation*}
\alpha_2(\|\phi(t,t_0,x_0,u)\|_{X})\geq
V(t,\phi(t,t_0,x_0,u))\geq\alpha_2\circ\kappa(\|u\|_{\mathcal{U}}),\quad t
\in [t_0,t_0+\tau],
\end{equation*}
and using that $\alpha_2\in\mathcal{K}_{\infty},$ we obtain that $\|\phi(t,t_0,x_0,u)\|_{X}\geq\kappa(\|u\|_{\mathcal{U}})$ for $t\in[t_0,t_0+\tau].$

As long as $\|\phi(t,t_0,x_0,u)\|_{X}\leq r_1,$ and $t \in
[t_0,t_0+\tau]$,  we obtain from \eqref{STAvNNv} that
\begin{equation}\label{INE11}
D^{+}V(t,\phi(t,t_0,x_0,u))=\dot{V}_{u}(t,\phi(t,t_0,x_0,u))\leq -\mu(V(t,\phi(t,t_0,x_0,u))).
\end{equation}
Then, for all such $t,$ it holds that $V(t,\phi(t,t_0,x_0,u))\leq V(t_0,x_0),$ which implies that
$$\|\phi(t,t_0,x_0,u)\|_{X}\leq\alpha_1^{-1}\circ\alpha_2(\|x_0\|_{X})\leq\alpha_1^{-1}\circ\alpha_2(\rho_1)\leq r_1.$$
Thus, (\ref{INE11}) is valid for all $t_0\geq 0$ and all $t\in [t_0,t_0+\tau].$\\
Since the map $t \mapsto V(t,\phi(t,t_0,x_0,u))$ is continuous, we can use Lemma \ref{lemmm1} to infer that
there exists
$\hat{\beta}\in \mathcal{KL},$ such that $$V(t,\phi(t,t_0,x_0,u))\leq
\hat{\beta}(V(t_0,x_0),t-t_0),\quad t\in[t_0,t_0+\tau] $$
and hence
\begin{equation}\label{EE11}
\|\phi(t,t_0,x_0,u)\|_{X}\leq \beta(\|x_0\|_{X},t-t_0)\quad  \quad  t\in[t_0,t_0+\tau],
\end{equation}

where $\beta(s,t)=\alpha_1^{-1}\circ \hat{\beta}(\alpha_2^{-1}(s),t)$, $s,t\geq0.$
The estimate (\ref{EE11}) together with the BIC property ensure that the solution can be
prolonged onto an interval that is larger than $[t_0,t_0+\tau],$ and if on this interval $\phi(t,t_0,x_0,u) \notin \Lambda_{u,t},$ then the estimate (\ref{EE11}) holds at this larger interval. This indicates
that $\phi(t,t_0,x_0,u)$ exists at least until the time when it intersects $\Lambda_{u,t}.$
As $ B_{\kappa(\|u\|_{\mathcal{U}})}\subset \Lambda_{u,t}$ for all
$t\geq0$, the estimate \eqref{EE11} implies that there exists $\bar{t},$ such that
$$\bar{t}=\inf\{t\geq t_0:\phi(t,t_0,x_0,u)\in \Lambda_{u,t}\}.$$
By the above reasoning, (\ref{EE11}) holds on $[t_0,\bar{t}].$

From the forward invariance of the family  $\{\Lambda_{u,t}\}_{t\geq0}$, the estimate (\ref{inequa1}) is valid. Hence, from (\ref{inequa1}) and (\ref{EE11}) we conclude that
$$\|\phi(t,t_0,x_0,u)\|_{X}\leq\beta(\|x_0\|_{X},t-t_0)+\gamma(\|u\|_{\mathcal{U}})\quad \forall t_0\geq0\quad  \forall t\geq t_0.$$
Our estimates hold for an arbitrary control $u:$ $\|u\|_{\mathcal{U}}\leq\rho_2.$ Then, combining (\ref{inequa1}) and (\ref{EE11}), we obtain the LISS estimate for $\Sigma$ in $(t,t_0,x_0,u)\in[t_0,\infty)\times\mathbb{R}_+\times B_{\rho_1}\times B_{\rho_2,\mathcal{U}}.$

To prove that the existence of an ISS Lyapunov function ensures that
$\Sigma$ is ISS, one has to argue as above but with $r_1=r_2=\infty.$
\end{proof}

We will now define the notion of (non)-coercive iISS Lyapunov function and prove that the existence of a coercive iISS (resp. coercive ISS) Lyapunov function implies iISS (resp. ISS).
\begin{dfn}
Consider a time-varying control system $\Sigma=(X,\mathcal{U},\phi)$ with the input space $\mathcal{U}=PC(\mathbb{R}_+, U).$
A continuous function $V:\mathbb{R}_+\times X \to \mathbb{R}_+$ is called a non-coercive iISS Lyapunov function for $\Sigma$, if there exist $\alpha_2\in \mathcal{K}_{\infty},$ $\eta\in \mathcal{P}$ and $\chi\in \mathcal{K},$ such that for all $t\geq 0,$ $V(t,0)=0,$ (\ref{R1EN}) holds
and the Lie derivative along the trajectories of the system $\Sigma,$ satisfies
\begin{equation}\label{R2E}
\dot{V}_{u}(t,x)\leq -\eta(\|x\|_{X})+\chi(\|u(t)\|_{U}),
\end{equation}
for all $x\in X,$ all $u\in \mathcal{U},$ and all $t\geq 0.$

If additionally, there is $\alpha_1\in\mathcal{K}_{\infty},$ so that (\ref{R1E}) holds,
then $V$ is called a (coercive) iISS Lyapunov function for $\Sigma.$
Furthermore, if
\begin{equation}\label{R1Ev}
\displaystyle\lim _{\tau \to \infty} \eta(\tau)=\infty \; \;or \; \;\liminf _{\tau \to \infty}\eta(\tau)\geq \lim _{\tau \to \infty}\chi(\tau)
\end{equation}
holds, then $V$ is called an ISS Lyapunov function for $\Sigma$ in dissipative form.
\end{dfn}

The next theorem underlines the importance of iISS and ISS Lyapunov functions.
\begin{thm}\label{hjljg}
Let $\mathcal{U}=PC(\mathbb{R}_+, U)$.
Consider a time-varying control system $\Sigma=(X,\mathcal{U},\phi),$ satisfying the BIC property. If there is a coercive \emph{iISS} (resp. coercive ISS) Lyapunov function in dissipative form for $\Sigma,$ then $\Sigma$ is iISS (resp. ISS).
\end{thm}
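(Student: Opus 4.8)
The plan is to integrate the dissipation inequality \eqref{R2E} along trajectories and split the analysis into the iISS and ISS cases. First I would fix $(t_0,x_0,u)\in\mathbb{R}_+\times X\times\mathcal{U}$ and set $\omega(t):=V(t,\phi(t,t_0,x_0,u))$, which is continuous on the maximal interval of existence. From \eqref{R2E} we get $D^+\omega(t)\le -\eta(\|\phi(t,t_0,x_0,u)\|_X)+\chi(\|u(t)\|_U)\le \chi(\|u(t)\|_U)$, since $\eta\in\mathcal{P}$ is nonnegative. Hence, using a standard comparison/integration argument for the right upper Dini derivative of a continuous function, $\omega(t)\le \omega(t_0)+\int_{t_0}^t\chi(\|u(s)\|_U)\,ds$ for all $t$ in the existence interval. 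Because the right-hand side is finite, $\omega$ stays bounded, so by coercivity $\|\phi(t,t_0,x_0,u)\|_X\le \alpha_1^{-1}(\omega(t))$ is bounded on the existence interval, and the BIC property then rules out a finite maximal existence time; thus $\Sigma$ is forward complete.

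For the iISS case I would use a \enquote{fading memory / two-regime} trick. Pick a time $t^\star\in[t_0,t]$ (to be chosen) and split $[t_0,t]=[t_0,t^\star]\cup[t^\star,t]$. On $[t^\star,t]$ use the crude bound above to get $\|\phi(t,t_0,x_0,u)\|_X\le \alpha_1^{-1}\!\bigl(\omega(t^\star)+\int_{t^\star}^t\chi(\|u(s)\|_U)\,ds\bigr)\le \alpha_1^{-1}\!\bigl(\omega(t^\star)+\int_{t_0}^t\chi(\|u(s)\|_U)\,ds\bigr)$. On $[t_0,t^\star]$, in the region where $\eta$ dominates one obtains decay of $\omega$; more precisely, following the classical iISS argument (e.g.\ as in \cite[Theorem 1.5.4]{Mir23b} / the ODE case), one shows that the \enquote{state part} of $\omega$, i.e.\ the contribution from $\omega(t_0)$, is bounded by a $\mathcal{KL}$-function of $(\omega(t_0),t-t_0)$ up to an additive term controlled by the input energy. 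Concretely, define $\hat\omega(t)$ as the solution of $\dot{\hat\omega}=-\tilde\eta(\hat\omega)$ with $\hat\omega(t_0)=\omega(t_0)$, where $\tilde\eta\in\mathcal{P}$ is chosen with $\tilde\eta(s)\le \eta(\alpha_2^{-1}(s))$; by the comparison principle and a careful bookkeeping of the input term, $\omega(t)\le \max\{\hat\omega(\text{something}),\ \text{function of input energy}\}$, and $\hat\omega$ together with a reparametrisation gives the desired $\beta\in\mathcal{KL}$. Combining the two regimes and applying $\alpha_1^{-1}$ yields
$$\|\phi(t,t_0,x_0,u)\|_X\le \beta(\|x_0\|_X,t-t_0)+\alpha\Bigl(\int_{t_0}^t\mu(\|u(s)\|_U)\,ds\Bigr)$$
with $\mu:=\chi$ and a suitable $\alpha\in\mathcal{K}_\infty$ built from $\alpha_1^{-1}$, $\alpha_2$ and the comparison dynamics; using subadditivity-type estimates $a+b\le\max\{2a,2b\}$ one folds the two contributions into the stated additive form.

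For the ISS case one additionally invokes \eqref{R1Ev}. If $\eta\in\mathcal{K}_\infty$, then $\eta\circ\alpha_2^{-1}\in\mathcal{K}_\infty$ and \eqref{R2E} already has the form of a dissipative ISS Lyapunov inequality in $V$; choosing the Lyapunov gain $\kappa:=\alpha_1^{-1}\circ\eta^{-1}\circ 2\chi$ (or similar), whenever $\|x\|_X\ge\kappa(\|u\|_\mathcal{U})$ one gets $\dot V_u(t,x)\le -\tfrac12\eta(\|x\|_X)\le -\tfrac12\eta\circ\alpha_2^{-1}(V(t,x))$, i.e.\ an implication-form ISS Lyapunov function, and Theorem \ref{hjMMMMMljg} applies. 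If instead only $\liminf_{\tau\to\infty}\eta(\tau)\ge\lim_{\tau\to\infty}\chi(\tau)=:\chi_\infty<\infty$, one truncates: for inputs with $\|u\|_\mathcal{U}$ bounded the term $\chi(\|u(t)\|_U)$ is bounded by $\chi_\infty$ (approximately), and since $\eta$ eventually exceeds it, the sublevel sets of $V$ are still forward invariant outside a ball determined by the input, which again produces an implication-form estimate; a short $\varepsilon$-argument handles the boundary case where the $\liminf$ equals $\chi_\infty$. Then Theorem \ref{hjMMMMMljg} finishes the proof.

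The main obstacle I expect is the iISS part: correctly extracting a genuine $\mathcal{KL}$-bound $\beta(\|x_0\|_X,t-t_0)$ for the state-initial-condition contribution while keeping the input influence purely inside $\alpha\bigl(\int_{t_0}^t\mu(\|u(s)\|_U)\,ds\bigr)$. This requires the comparison-principle / reparametrisation lemma for the scalar ODE $\dot y=-\tilde\eta(y)+v(t)$ with $\tilde\eta\in\mathcal{P}$ (only positive definite, not class $\mathcal{K}_\infty$), and care that the decay estimate is uniform in $t_0$ — which is where time-invariance of the comparison dynamics and the shift-invariance axiom $(iii)$ of $\mathcal{U}$ are used. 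The ISS part is comparatively routine once one recognises that \eqref{R2E} plus \eqref{R1Ev} implies an implication-form inequality and reduces to Theorem \ref{hjMMMMMljg}.
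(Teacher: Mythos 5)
Your overall route coincides with the paper's: integrate the dissipation inequality along trajectories via a comparison principle for the Dini derivative, use coercivity and BIC to get forward completeness and the iISS estimate, and handle the ISS case by converting the dissipative form into an implication-form ISS Lyapunov function and invoking Theorem \ref{hjMMMMMljg}. The two-regime split at $t^\star$ is unnecessary overhead: the single scalar comparison result (Corollary \ref{cor1}) already yields $V(t,\phi(t,t_0,x_0,u))\leq\hat\beta(V(t_0,x_0),t-t_0)+2\int_{t_0}^{t}\chi(\|u(s)\|_U)\,ds$ in one step, after which $\alpha_1^{-1}$ and the weak triangle inequality give the stated additive form.

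There is, however, one genuine gap in your iISS argument: the passage from $-\eta(\|x\|_X)$ to $-\tilde\eta(V(t,x))$. You propose to take $\tilde\eta(s)\le\eta(\alpha_2^{-1}(s))$, but the inequality $\eta(\|x\|_X)\ge\eta(\alpha_2^{-1}(V(t,x)))$ only follows from $\|x\|_X\ge\alpha_2^{-1}(V(t,x))$ if $\eta$ is nondecreasing — and here $\eta$ is merely in $\mathcal{P}$, i.e.\ continuous and positive definite, possibly non-monotone. The same issue undermines your choice $\kappa:=\alpha_1^{-1}\circ\eta^{-1}\circ 2\chi$ in the ISS case, since $\eta^{-1}$ need not exist. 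The paper's fix is the standard factorization: write $\eta(\tau)\ge\varrho(\tau)\xi(\tau)$ with $\varrho\in\mathcal{K}_\infty$ and $\xi\in\mathcal{L}$ (this is always possible for $\eta\in\mathcal{P}$), so that the increasing factor is bounded below by $\varrho(\alpha_2^{-1}(V))$ and the decreasing factor by $\xi(\alpha_1^{-1}(V))$, yielding $\eta(\|x\|_X)\ge\varrho(\alpha_2^{-1}(V))\,\xi(\alpha_1^{-1}(V))\ge\tilde\eta(V)$ for a suitable $\tilde\eta\in\mathcal{P}$. With that repair your argument goes through; without it, the comparison function $\tilde\eta$ you feed into the scalar ODE is not actually a lower bound for the decay rate along trajectories.
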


\begin{proof}
Let $V$ be a coercive iISS Lyapunov function with associated functions $\alpha_1,\alpha_2,\eta,\chi$.
As $\eta\in \mathcal{P},$ it follows from \cite[Proposition
B.1.14]{mironchenko2023input} that there are $\varrho\in\mathcal{K}_{\infty}$ and $\xi\in\mathcal{L}$ such that $\eta(\tau)\geq\varrho(\tau)\xi(\tau)$ for all $\tau\geq0.$ Pick any $\tilde{\eta}\in\mathcal{P}$ satisfying
$$\varrho(\alpha_2^{-1}(\tau))\xi(\alpha_1^{-1}(\tau))\geq \tilde{\eta}(\tau), \quad \tau\geq 0.$$
From (\ref{R1E}) and (\ref{R2E}), we obtain for any $x\in X,$ any $t\geq0$ and any $u\in\mathcal{U}$ that
\begin{align*}
\dot{V}_{u}(t,x)&\leq -\eta(\|x\|_{X})+\chi(\|u(t)\|_{U})\\
&\leq-\varrho(\|x\|_{X})\xi(\|x\|_{X})+\chi(\|u(t)\|_{U}).\\
\intertext{Using $\alpha_2^{-1}(V(t,x)) \leq \|x\|_{X} \leq
  \alpha_1^{-1}(V(t,x))$ we can continue } 
&\leq-\varrho\circ\alpha_2^{-1}(V(t,x))\xi\circ\alpha_1^{-1}(V(t,x))+\chi(\|u(t)\|_{U})\\
&\leq-\tilde{\eta}(V(t,x))+\chi(\|u(t)\|_{U}).
\end{align*}
Now take any input $u\in\mathcal{U},$ any initial time $t\geq0,$ any initial condition $x_0\in X,$ and consider the corresponding solution $\phi(\cdot,t_0,x_0,u)$ defined on $[t_0,t_m(t_0,x_0,u)).$

As $V$ is continuous and $\phi$ is continuous in $t,$ the map $t\mapsto V(t,\phi(t,t_0,x_0,u))$ is continuous, and for all $t_0\geq0$ and all $t\in[t_0,t_m(t_0,x_0,u)),$ it holds that
$$D^{+}V(t,\phi(t,t_0,x_0,u))\leq-\tilde{\eta}(V(t,x))+\chi(\|u(t)\|_{U}).$$
By Corollary \ref{cor1}, there exists a $\hat{\beta}\in \mathcal{KL},$ so that
$$V(t,\phi(t,t_0,x_0,u))\leq \hat{\beta}(V(t_0,x_0),t-t_0)+2\int _{t_0} ^{t} \chi(\|u(s)\|_{U})ds, \quad t_0\geq0, \quad t\in[t_0,t_m(t_0,x_0,u)).$$
With (\ref{R1E}), it holds that
\begin{equation}\label{wxcvs}
\alpha_1(\|x(t)\|_{X})\leq \tilde{\beta}(\|x(t_0)\|_{X},t-t_0)+2\int _{t_0} ^{t} \chi(\|u(s)\|_{U})ds, \quad t_0\geq0, \quad t\in[t_0,t_m(t_0,x_0,u)),
\end{equation}
where
$\tilde{\beta}(s,t)=\hat{\beta}(\alpha_2(s),t)\in \mathcal{KL}\quad \forall s,t\geq0 .$
Since $\alpha_1$ is of class $\mathcal{K}_{\infty}$, it satisfies the triangle inequality $\alpha_1^{-1}(a+b)\leq\alpha_1^{-1}(2a)+\alpha_1^{-1}(2b)$ for any $a,b\in\mathbb{R}_{+},$ we have iISS as in Definition \ref{rhan23} with $\alpha \in \mathcal{K}_{\infty},$ $\mu\in \mathcal{K}$ and $\beta\in\mathcal{KL},$ for all $t_0\geq 0$ and all $t\in [t_0,t_m(t_0,x_0,u))$ with $\alpha(s)=\alpha_1^{-1}(2s),$ $\mu(s)=2 \chi(s)$ and $\beta(s,r)=\alpha_1^{-1}(2\tilde{\beta}(s,r)).$
Estimate (\ref{wxcvs}) together with the BIC property, implies that $t_m(t_0,x_0,u)= \infty,$ and hence
system $\Sigma$ is iISS.

Suppose that $V$ is an ISS Lyapunov function in dissipative form, i.e., (\ref{R2E}) holds. Arguing as in the proof of \cite[Theorem $3.2$]{damak2021input}, we obtain that
$V$ is an ISS Lyapunov function of $\Sigma$ in implication form. Consequently, $\Sigma$ is ISS.
\end{proof}

\chapter{Lyapunov Methods for ISS of Time-Varying Evolution Equations}
\label{ch:chapter2}
\markboth{LYAPUNOV METHODS FOR ISS OF TIME-VARYING EVOLUTION EQUATIONS}{ }

\textbf{Time-varying infinite-dimensional systems.}
 In \cite{ZhZ18}, ISS of a class of semi-linear parabolic PDEs with respect to boundary disturbances has been studied for the first time based on the Lyapunov method.
For an overview of the ISS theory for distributed parameter systems, we refer to \cite{CKP23,KaK19,MiP20}. Much less attention has been devoted to the ISS
of time-varying infinite-dimensional systems \cite{damak2021input,hanen2022input}.
For instance, in \cite{damak2021input}, a direct Lyapunov theorem was shown for a class of time-varying semi-linear evolution equations in Banach spaces with Lipschitz continuous nonlinearities.

The analysis of ISS for time-varying systems is more involved than that of time-invariant systems, even for linear systems. Consider an abstract Cauchy problem of the form

$$\dot{x}(t)=A(t)x(t),\quad x(t_0)=x_0, \quad t\geq t_0. $$

If $A$ does not depend on time, Hille-Yosida Theorem \cite[Theorem 3.1]{Paz83} gives a characterization of the infinitesimal
generators of strongly continuous semigroups, and hence, it characterizes all
well-posed autonomous abstract Cauchy problems.
At the same time, there is no extension of the Hille-Yosida theorem to the time-variant case.
The conditions that ensure that $\{A(t)\}_{t\geq0}$ generates an evolution family are well-understood if $\{A(t)\}_{t\geq0}$ is a family of bounded operators. However, when the operators $A(t)$ are unbounded, it is a delicate matter to prove the well-posedness of an abstract Cauchy problem. We refer the reader to
\cite{carmen,Paz83} and the references therein for more information. There are almost no results on ISS and iISS of time-varying systems with unbounded $A(t)$.

\textbf{Contribution.} Motivated by the foregoing discussion, this chapter investigates the (L)ISS/iISS property of time-varying infinite-dimensional systems in terms of (L)ISS/iISS Lyapunov functions. We develop conditions for the well-posedness of time-varying semi-linear evolution equations in Banach spaces. We show the stability results for evolution operators in Banach spaces. On the other hand, we indicate that for each input-to-state stable linear time-varying control system with a bounded linear part, there is a coercive Lyapunov function. Also, we develop a method for the construction of a non-coercive ISS Lyapunov function for a class of linear time-varying control systems where the operators in the family $\{A(t)\}_{t\geq0}$ are unbounded. Next, we derive a novel construction of non-coercive LISS/iISS Lyapunov functions for a certain class of time-varying semi-linear systems with the associated nominal system being linear and unbounded that depends on the time.
\section{\sectiontitle{Time-varying semi-linear evolution equations}}
In the previous section a general ISS formalism for abstract control
systems has been  introduced. We now proceed to the analysis of time-varying semi-linear systems in Banach spaces.

Let $X$ and $U$ be Banach spaces, and the input space be $\mathcal{U}=PC(\mathbb{R}_{+}, U).$ We consider time-varying semi-linear evolution equations of the form:
\begin{equation}
\label{R1}
\left\lbrace
\begin{array}{l}
\dot{x}(t)=A(t)x(t)+\Psi(t,x(t),u(t)),\qquad t\ge t_0\ge0,\\
x(t_0)=x_0,
\end{array}\right.
\end{equation}
where $x\in X$ is the state of the system, $u\in \mathcal{U}$ is the
control input, $t_0 \ge 0$ is the initial time, $x_0$ is the initial
state, $\Psi:\mathbb{R}_{+}\times X\times U \to X$ is a nonlinear map for
which we will specify the assumptions below. In addition,
$\{A(t):D(A(t)) \subset X \rightarrow X\}_{t \geq t_0\geq 0}$ is a family
of linear operators such that for all $t\geq 0,$ the domain $D(A(t))$ of
$A(t)$ is
 dense in $X.$ We assume that
$\{A(t)\}_{t\geq0}$ generates a strongly continuous evolution family
$\{W(t,s)\}_{t\geq s\geq0},$ that is, for all $t\geq s\geq t_0\ge0$ there
exists a bounded linear operator $W(t,s):X\to X$ and the following
properties hold:
\begin{enumerate}
\item[$(i)$] $W(s,s)=I,$ \quad $W(t,s)=W(t,r)W(r,s)$ for all $t\geq r\geq s\geq t_0.$
\item[$(ii)$] $(t,s)\mapsto W(t,s)$ is strongly continuous for $t\geq s\geq t_0,$ that is for each $x \in X$ the mapping $(t,s)\mapsto W(t, s)x$ is continuous.
\item[$(iii)$] For all $t\geq s\geq t_0$ and all $\upsilon\in D(A(s)),$ we
  have $W(t,s)v \in D(A(t))$ and 
$$\frac{\partial}{\partial t}\big(W(t,s)\upsilon\big)=A(t)W(t,s)\upsilon,$$
$$\frac{\partial}{\partial s}\big(W(t,s)\upsilon\big)=-W(t,s)A(s)\upsilon.$$
\end{enumerate}
\begin{rem}
Provided it exists, the evolution family $\{W(t,s)\}_{t\geq s\geq0}$ is uniquely determined by the family $\{A(t)\}_{t\geq0},$ see e.g., \cite[p. 58]{carmen}.
\end{rem}
In the next definition, we extend the concept of a classical solution, as defined in \cite[Definition 2.1 p. 105]{Paz83}, to encompass piecewise classical solutions for (\ref{R1}).
\begin{dfn}\label{rrnnann23}
Let $x_0\in X,$ $u\in\mathcal{U}$ and $t_0\ge 0$ be given. A function $x:[t_0,\infty)\rightarrow X$ is called a \emph{piecewise classical solution} of (\ref{R1}) on $[t_0,\infty)$ if $x(\cdot)$ is continuous on $[t_0,\infty),$ piecewise continuously differentiable on $(t_0,\infty)$ and $x(t)$ satisfies (\ref{R1}) 
at all points of differentiability.
\end{dfn}

\subsection{\subsectiontitle{Semilinear evolution equations as control systems}}
In this section, we show under appropriate Lipschitz continuity
assumptions that the semi-linear system (\ref{R1}) generates a well-posed control system.
\begin{dfn}
We call $\Psi:\mathbb{R}_+\times X\times U\longrightarrow X$ \emph{locally Lipschitz continuous in $x,$ uniformly in $t$ and $u$ on bounded subsets} if for every $\tilde{t}\geq0$ and $c\geq 0,$ there is a constant $K(c,\tilde{t}),$ such that for all $x,y\in X:\|x\|_{X},\|y\|_{X}\leq c$, all $t\in [0,\tilde{t}]$, and all $u\in \mathcal{U}$: $\|u\|_{\mathcal{U}}\leq c,$ it holds that
\begin{equation}\label{bvbnbpp}
\|\Psi(t,x,u)-\Psi(t,y,u)\|_{X}\leq K(c,\tilde{t})\|x-y\|_{X}.
\end{equation}
\end{dfn}

The following assumption will be needed in the remainder of the paper
\begin{description}
\item[$(\mathcal{H}_1)$] The nonlinearity $\Psi$ is continuous in $t$ and $u$ and locally Lipschitz continuous in $x$, uniformly in $t$ and $u$ on bounded sets.
\end{description}
Now, we introduce the concept of mild solutions for (\ref{R1}) and explore some associated properties.
\begin{dfn}
Assume $(\mathcal{H}_1)$. Let $0 \leq t_0 < t_1 \leq \infty$. A continuous function
$x:[t_0,t_1)\rightarrow X$ is called a \emph{(mild) solution} of
(\ref{R1}) on the interval $[t_0,t_1)$ for the initial condition
$x(t_0)=x_0\in X,$ and the input $u\in \mathcal{U}$, if $x$ satisfies the integral equation
\begin{equation}\label{bvbbLL4}
x(t) = W(t,t_0)x_0 + \int _{t_0} ^{t} W(t,s)\Psi(s,x(s),u(s))ds, \quad t
\in [ t_0,t_1 ).
\end{equation}
\end{dfn}
Let $x_1, x_2$ be solutions of (\ref{R1}) defined on the intervals $[t_0, \tau_1)$
and $[t_0, \tau_2),$ respectively, $\tau_1, \tau_2 > t_0.$ We call $x_2$ an extension of $x_1$ if $\tau_2 > \tau_1,$ and
$x_2(t) = x_1(t)$ for all $t \in [t_0, \tau_1).$
A mild solution $x$ of (\ref{R1}) is called \emph{maximal} if there is no
solution of (\ref{R1}) that extends $x$. The solution is called \emph{global} if $x$ is defined on $[t_0,\infty).$

A central property of the system (\ref{R1}) is
\begin{dfn}\label{phi}
The system (\ref{R1}) is called \emph{well-posed} if for every initial time $t_0\geq0$, every initial condition $x_0\in X$ and every control input $u \in \mathcal{U},$ there exists a unique maximal solution that we denote by $$\phi(\cdot,t_0,x_0, u):[t_0, t_m(t_0,x_0,u))\to X.$$ 
In this case $t_m=t_m(t_0,x_0,u)$ is called the \emph{maximal existence time} of the solution corresponding to $(t_0,x_0,u).$
\end{dfn}
The map $\phi$ defined in Definition \ref{phi}, describing the evolution of the system (\ref{R1}), is called \emph{the flow map}, or just \emph{flow}. The domain of definition of the flow $\phi$ is
$$D_{\phi}\subseteq\mathcal{T}\times X\times\mathcal{U}.$$
The following result gives sufficient conditions for the well-posedness of
the system (\ref{R1}). It is a minor extension of a result by
Pazy, \cite[Theorem 6.1.4, p.185]{Paz83}, to the situation when the inputs are piecewise continuous.
\begin{prop}\label{propooo1}
If Assumption $(\mathcal{H}_1)$ holds, then system (\ref{R1}) is a well-posed control system with the BIC property.
\end{prop}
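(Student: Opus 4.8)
The plan is to establish the claimed properties by combining a local existence–uniqueness argument (contraction mapping on the integral equation \eqref{bvbbLL4}) with a continuation/blow-up alternative, and then verifying the axioms $(\Sigma_1)$–$(\Sigma_4)$ of Definition \ref{csyol} together with the BIC property of Definition \ref{bicpr}. The substantive analytic content is almost entirely contained in Pazy's Theorem 6.1.4; the only genuine novelty is handling the fact that the inputs $u \in \mathcal{U} = PC(\mathbb{R}_+, U)$ are merely piecewise right-continuous rather than continuous, so I would emphasize where this matters and where it does not.

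First I would prove \emph{local existence and uniqueness}. Fix $(t_0, x_0, u)$. Since $u$ is piecewise right-continuous and globally bounded, on a small interval $[t_0, t_0 + \delta]$ we have $\|u(s)\|_U \le \|u\|_{\mathcal{U}} =: c$, and $s \mapsto W(t,s)\Psi(s, x(s), u(s))$ is integrable (the integrand is bounded, using strong continuity and uniform boundedness of $W$ from Definition \ref{strfami}(iii)–(iv), the continuity of $\Psi$ in $(t,u)$ except at the finitely many jump points of $u$, and the local Lipschitz bound \eqref{bvbnbpp}). I would set up the usual fixed-point map on $C([t_0, t_0+\delta], X)$ sending $x(\cdot)$ to $W(\cdot, t_0)x_0 + \int_{t_0}^{\cdot} W(\cdot,s)\Psi(s,x(s),u(s))\,ds$ on a closed ball around the constant function $x_0$; the Lipschitz estimate $\|\Psi(s,x,u(s)) - \Psi(s,y,u(s))\|_X \le K(c,\tilde t)\|x-y\|_X$ (which holds with a $t$-uniform constant on $[0,\tilde t]$ by $(\mathcal{H}_1)$) makes this map a contraction for $\delta$ small enough, giving a unique mild solution on a short interval. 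Uniqueness on any common interval of existence follows by a Gronwall argument from the same Lipschitz bound.

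Next I would patch local solutions to obtain a unique \emph{maximal} solution on $[t_0, t_m(t_0,x_0,u))$, defining $\phi(\cdot, t_0, x_0, u)$ and $D_\phi$ accordingly; the structural condition on $D_\phi$ in Definition \ref{csyol}(iv) is automatic from maximality. Then I would verify the control-system axioms: $(\Sigma_1)$ is immediate from \eqref{bvbbLL4} at $t = t_0$; $(\Sigma_3)$ (continuity in $t$) is built into the definition of a mild solution; $(\Sigma_2)$ (causality) follows because \eqref{bvbbLL4} on $[t_0,t)$ only involves $u|_{[t_0,t)}$, so two inputs agreeing there produce the same solution; and $(\Sigma_4)$ (cocycle) follows from the evolution-family semigroup property $W(t,s) = W(t,r)W(r,s)$ together with uniqueness, by checking that $\phi(\cdot, \tau, \phi(\tau, t_0, x_0, u), u)$ solves the same integral equation as $\phi(\cdot, t_0, x_0, u)$ on $[\tau, t_m)$. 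For the BIC property I would argue by contradiction: if $t_m < \infty$ and $\limsup_{t \to t_m^-} \|\phi(t,t_0,x_0,u)\|_X < \infty$, then on $[t_0, t_m)$ the solution stays in a ball of some radius $c$, the integrand in \eqref{bvbbLL4} is uniformly bounded (here one also uses that $u$ is globally bounded), so $\phi(t,t_0,x_0,u)$ has a limit as $t \to t_m^-$ by Cauchy's criterion; restarting the local existence argument from $t_m$ extends the solution beyond $t_m$, contradicting maximality.

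The main obstacle — really the only place requiring care beyond quoting Pazy — is the \emph{piecewise} nature of $u$: one must check that the integral in \eqref{bvbbLL4} is well-defined (the integrand is bounded and measurable, with discontinuities only at the jump points of $u$, hence Bochner integrable), that the contraction estimate uses a Lipschitz constant that is uniform in $t$ over the relevant compact time interval (guaranteed by $(\mathcal{H}_1)$, which builds in exactly this uniformity), and that solutions can be continued across the jump points of $u$ — which they can, since a jump of $u$ at time $\hat t$ only changes the integrand on a set that does not affect the (continuous) solution, and one simply restarts the fixed-point argument at $\hat t$. Because $u$ has only finitely many discontinuities on any compact interval, this patching terminates, and the whole construction goes through exactly as in the continuous-input case.
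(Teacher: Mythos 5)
Your proposal is correct, and its overall architecture coincides with the paper's: both proofs split the time axis at the (locally finite) discontinuities of $u$, obtain a solution on each interval of continuity, and patch the pieces together using the evolution-family identity $W(t,s)=W(t,r)W(r,s)$ to check that the concatenation still solves \eqref{bvbbLL4}; BIC then comes from the fact that each local solution can be continued as long as it stays bounded. The one genuine difference is how local existence on each subinterval is discharged. The paper extends $u\vert_{[t_k,t_{k+1})}$ to a \emph{continuous} function on a slightly larger interval and then invokes Pazy's Theorem 6.1.4 verbatim as a black box (causality makes the extension harmless), whereas you rerun the contraction-mapping argument directly with the piecewise right-continuous integrand, which forces you to verify Bochner integrability and the uniform-in-$t$ Lipschitz estimate yourself. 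Both routes are sound; the paper's continuous-extension device buys you freedom from re-checking these measurability details, while your version is more self-contained and makes the role of $(\mathcal{H}_1)$ explicit. Two minor remarks: your verification of the axioms $(\Sigma_1)$--$(\Sigma_4)$, in particular the cocycle property, is not needed for this proposition (the paper defers it to the subsequent theorem), though it is not wrong to include it; and in the Cauchy-criterion step of your BIC argument you should note that the middle term $\int_{t_0}^{t}[W(t',s)-W(t,s)]\Psi(s,x(s),u(s))\,ds$ is handled by dominated convergence using the strong continuity and uniform boundedness of $W$, since the variation of $t$ appears both in the integrand and in the upper limit.
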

\begin{proof}
Let $t_0\geq 0,$ $x_0\in X,$ and $u\in PC(\mathbb{R}_{+}, U).$ Since $u$ is piecewise right continuous, there is an increasing sequence of discontinuities $(t_k)_{k\in \mathbb{N}}$ of $u$ without accumulation points.
We will construct the solution in an iterative fashion. Extend $u_{\vert [t_0,t_1)}$ in an arbitrary manner to a continuous function $\tilde{u}_0$ defined on
$[t_0,t_1+1]$ with $u_{\vert [t_0,t_1)} = \tilde{u}_{0,\vert [t_0,t_1)}$.  By an application
of the existence result \cite[Theorem 6.1.4, p.185]{Paz83}, there is $\tilde{t}\in (t_0,t_1+1),$ such that there is a unique
(maximal) mild solution of (\ref{R1}) on $[t_0,\tilde{t}),$ corresponding
to the initial condition $x_0$ and the input $\tilde{u}_0$.
If $\tilde{t}\leq t_1$, then $\tilde{t}$ is the maximal existence time
corresponding to $(t_0,x_0,u)$, as $u$ and $\tilde{u}$ coincide on
$[t_0,\tilde{t})$. Thus
$t_m=t_m(t_0,x_0,u)=\tilde{t}$. Otherwise, $x_1 :=
\phi(t_1,t_0,x_0,\tilde{u}) = \phi(t_1,t_0,x_0,u)$ exists. We may then
repeat the argument on the interval $[t_1, t_2 +1 ]$, again using a
continuous extension $\tilde{u}_1$ of $u_{\vert [t_1,t_2]} $ to that interval. 

Let $x: [t_0, \min \{ t_2 , t_m(t_1,x_1,\tilde{u}_1))\to X$ be defined by 
\begin{eqnarray*}
x(t):=
\begin{cases}
\phi(t,t_0,x_0,u), \quad & t \in [ t_0,t_1 ],\\
\phi(t,t_1,x_1,\tilde{u}_1) = \phi(t,t_1,x_1,u),\quad & t \in (t_1,\min \{ t_2 , t_m(t_1,x_1,\tilde{u}_1)).
\end{cases}
\end{eqnarray*}
In the equality in the definition of $x$, we have used that $u$ and $\tilde{u}_1$ coincide on $[t_1,t_2]$.

Then $x$ is continuous, clearly a solution of the integral equation on
$[t_0,t_1 ]$ and we have for $t_1 \leq t < \min \{ t_2 , t_m
\}$ that
\begin{align*}
    x(t) &= W(t,t_1) x_1 + \int_{t_1}^t
    W(t,s)\Psi(s,\phi(s,t_1,x_1,u),u(s)) ds \\
		&=  W(t,t_1) \left(
      W(t_1,t_0)x_0 + \int_{t_0}^{t_1}W(t_1,s)\Psi(s,\phi(s,t_0,x_0,u),u(s))ds \right)  + \int_{t_1}^t
    W(t,s)\Psi(s,x(s),u(s)) ds \\
		&=  W(t,t_0) x_0 + \int_{t_0}^{t}W(t,s)\Psi(s,x(s),u(s))ds.
\end{align*}
This shows that indeed $x$ is a mild solution on $[t_0, \min \{ t_2 , t_m(t_1,x_1,\tilde{u}_1))$.
By applying this reasoning iteratively, we obtain a unique maximal mild solution
$\phi(\cdot,t_0,x_0,u)$ on an interval $[t_0,t_m)$. If $u$ only has finitely many
discontinuities the same arguments can be applied on the final infinite interval of
continuity. By construction and \cite[Theorem 6.1.4, p.185]{Paz83}, each solution can be continued as long it
remains bounded so that the BIC property is guaranteed.
\end{proof}
Now we show that well-posed systems of the form (\ref{R1}) are a special case of general control systems introduced in Definition \ref{csyol}.
\begin{thm}
Let (\ref{R1}) be well-posed. Then the triple $(X,\mathcal{U},\phi)$, where $\phi$ is a flow map of (\ref{R1}), constitutes a control system in the sense of Definition \ref{csyol}.
\end{thm}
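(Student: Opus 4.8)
The plan is to check, one by one, that the triple $(X,\mathcal{U},\phi)$ — with $\mathcal{U}=PC(\mathbb{R}_{+},U)$ and $\phi$ the flow map of the well-posed system (\ref{R1}) — satisfies every requirement of Definition \ref{csyol}. The structural items are essentially immediate. Items $(i)$ and $(ii)$ hold since $X$ and $U$ are Banach (hence normed linear) and $U$ is nonempty; item $(iii)$ holds since $PC(\mathbb{R}_{+},U)$ with $\|u\|_{\mathcal{U}}=\sup_{s\ge 0}\|u(s)\|_{U}$ is a normed linear space (sums and scalar multiples of globally bounded, piecewise right-continuous functions are again of this type), and the axiom of shift invariance follows because for $u\in\mathcal{U}$ and $\tau\ge 0$ the function $u(\cdot+\tau)$ is again globally bounded and piecewise right continuous — the discontinuity set is merely translated, right-continuity and the absence of accumulation points being preserved — with $\|u(\cdot+\tau)\|_{\mathcal{U}}=\sup_{s\ge\tau}\|u(s)\|_{U}\le\|u\|_{\mathcal{U}}$. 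For item $(iv)$ one takes $D_{\phi}:=\{(t,t_0,x_0,u)\in\mathcal{T}\times X\times\mathcal{U}: t_0\le t<t_m(t_0,x_0,u)\}$, with $t_m$ the maximal existence time of Definition \ref{phi}; well-posedness supplies, for every datum $(t_0,x_0,u)$, a unique maximal mild solution defined precisely on $[t_0,t_m)$ with $t_m>t_0$, which is exactly the fibrewise structure required.

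The identity property $(\Sigma_1)$ and continuity $(\Sigma_3)$ are then read off at once: setting $t=t_0$ in the integral equation (\ref{bvbbLL4}) empties the integral and leaves $\phi(t_0,t_0,x_0,u)=W(t_0,t_0)x_0=x_0$, while $(\Sigma_3)$ is built into the definition of a mild solution. The real content lies in causality $(\Sigma_2)$ and the cocycle property $(\Sigma_4)$, which I would establish by the same device: write down a candidate, check that it solves (\ref{bvbbLL4}) for the relevant data, and invoke uniqueness of mild solutions to identify it with $\phi$. For $(\Sigma_2)$: if $(t,t_0,x_0,u)\in D_{\phi}$ and $\tilde u\in\mathcal{U}$ coincides with $u$ on $[t_0,t)$, then $x:=\phi(\cdot,t_0,x_0,u)$ restricted to $[t_0,t)$ still solves (\ref{bvbbLL4}) with input $\tilde u$, because for $r<t$ the integral only sees $u$ on $[t_0,r)$; uniqueness forces $\phi(\cdot,t_0,x_0,\tilde u)$ to extend $x$, and since $x$ is bounded on $[t_0,t)$ the BIC property forbids $t_m(t_0,x_0,\tilde u)=t$, so in fact $t_m(t_0,x_0,\tilde u)>t$, whence $(t,t_0,x_0,\tilde u)\in D_{\phi}$ and $\phi(t,t_0,x_0,\tilde u)=x(t)=\phi(t,t_0,x_0,u)$. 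For $(\Sigma_4)$: given $(t,t_0,x_0,u)\in D_{\phi}$ and $\tau\in[t_0,t]$, put $x:=\phi(\cdot,t_0,x_0,u)$ and split the integral in (\ref{bvbbLL4}) at $\tau$; using the evolution-family law $W(r,s)=W(r,\tau)W(\tau,s)$ for $s\le\tau\le r$, and pulling the bounded operator $W(r,\tau)$ through the Bochner integral, one rewrites the solution identity for $r\in[\tau,t_m(t_0,x_0,u))$ as $x(r)=W(r,\tau)\bigl(W(\tau,t_0)x_0+\int_{t_0}^{\tau}W(\tau,s)\Psi(s,x(s),u(s))\,ds\bigr)+\int_{\tau}^{r}W(r,s)\Psi(s,x(s),u(s))\,ds=W(r,\tau)x(\tau)+\int_{\tau}^{r}W(r,s)\Psi(s,x(s),u(s))\,ds$. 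Hence $x$ restricted to $[\tau,t_m(t_0,x_0,u))$ is a mild solution with data $(\tau,x(\tau),u)$; by uniqueness and maximality it coincides with $\phi(\cdot,\tau,x(\tau),u)$, and because $t<t_m(t_0,x_0,u)\le t_m(\tau,x(\tau),u)$ we conclude $\phi(t,\tau,\phi(\tau,t_0,x_0,u),u)=x(t)=\phi(t,t_0,x_0,u)$.

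I expect the main obstacle to be bookkeeping rather than a genuinely hard idea: one has to propagate the maximal existence times carefully through the uniqueness arguments (ensuring the truncated, respectively re-initialised, candidate is a mild solution on an interval that still contains the point $t$), and to note that altering the input on the single instant $\{t\}$ — a Lebesgue-null set — does not change the Bochner integral in (\ref{bvbbLL4}). All of this is subsumed by the well-posedness assumed in the statement, together with the BIC property established in Proposition \ref{propooo1}, so the argument needs no new machinery.
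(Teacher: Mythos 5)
Your proposal is correct and follows essentially the same route as the paper: the structural axioms are read off from well-posedness and the definition of $PC(\mathbb{R}_+,U)$, and the cocycle property is obtained from the splitting identity $W(r,s)=W(r,\tau)W(\tau,s)$ in the integral equation (\ref{bvbbLL4}) together with uniqueness of mild solutions. The only (harmless, arguably cleaner) difference is the direction of the uniqueness argument in the cocycle step --- you show the restriction of $\phi(\cdot,t_0,x_0,u)$ to $[\tau,t_m)$ solves the re-initialised integral equation and hence coincides with $\phi(\cdot,\tau,\phi(\tau,t_0,x_0,u),u)$, which yields $t_m(\tau,\phi(\tau,t_0,x_0,u),u)\ge t_m(t_0,x_0,u)>t$ directly, whereas the paper glues the two pieces into a candidate solution from $(t_0,x_0)$ and then needs the BIC property to conclude $\tau'>t$; you also spell out causality explicitly, which the paper dismisses as holding ``by construction.''
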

\begin{proof}
By construction, all the axioms in the definition of a control system are
fulfilled. We only need to check the cocycle property. Let $\phi$ be the
flow map and consider an initial time $t_0\geq 0$, an initial condition
$x_0\in X$, and an input $u \in \mathcal{U}$.
Let $t \in (t_0,t_m(t_0,x_0,u))$, $\tau\in [t_0,t]$ and consider the solution corresponding to the
initial condition $x(\tau) = \phi(\tau,t_0,x_0,u)$ and the input $u$. By
well-posedness of the system, there exists a unique solution corresponding
to these data on a maximal time interval $[\tau, \tau')$. Define a function $x
: [t_0, \min \{\tau',t \})\to X$, by $x(s) = \phi(s,t_0,x_0,u)$, $s\in [t_0,\tau ]$,
and $x(s) =  \phi(s,\tau,\phi(\tau,t_0,x_0,u),u)$, $s\in [\tau, \min \{\tau',t \})$. Then $x$ is continuous, clearly solves the integral equation
(\ref{bvbbLL4}) on the interval $[t_0,\tau ]$ and for $s \in [\tau, \min
\{\tau',t \})$ we have
\begin{multline*}
    x(s) = \phi(s,\tau,\phi(\tau,t_0,x_0,u),u) =
    W(s,\tau)\phi(\tau,t_0,x_0,u)+\int_{\tau}^{s}W(s,r)\Psi(r,\phi(r,\tau,\phi(\tau,t_0,x_0,u),u),u(r))dr
    \\ 
    = W(s,\tau)\bigg{[}W(\tau,t_0)x_0+\int_{t_0}^{\tau}W(\tau,r)\Psi(r,\phi(r,t_0,x_0,u),u(r))dr \bigg{]}
+\int _{\tau}^{s} W(s,r)\Psi(r,x(r),u(r))dr \\
= W(s,t_0)x_0+\int _{t_0}^{s} W(s,r)\Psi(r,x(r),u(r))dr.
\end{multline*}

This shows that $x$ is a solution for the initial values $(t_0, x_0)$ and
the input $u$ on the interval  $[t, \min \{\tau',t \})$. As this solution
is unique, $x$ coincides with $\phi(\cdot, t_0,x_0,u)$ on $[t, \min
\{\tau',t \})$. The BIC property then implies that $\tau' > t$
and the cocycle property holds, because for $
\phi(t,\tau,\phi(\tau,t_0,x_0,u),u)= x(t) = \phi(t,t_0,x_0,u)$.
\end{proof}

\subsection{\subsectiontitle{Stability of evolution families}}
In this subsection, we recall different types of stability for strongly continuous evolution families $\{W(t,t_0)\}_{t\geq t_0\geq0}$. These stability concepts include uniform stability, uniform attractiveness, uniform asymptotic stability, and uniform exponential stability. We also provide a proposition and an example that help to clarify the relationships between these different forms of stability.

\begin{dfn}
\label{def:StabAttr}
\cite[p. 112]{DaK74} \cite[Def 36.9 p. 174]{Hah67} \cite[Def 3.4 p. 60]{carmen}
A strongly continuous evolution family $\{W(t,t_0)\}_{t\geq t_0\geq0}$ is said to be:
\begin{enumerate}
\item[$(i)$] \emph{Uniformly stable} if there is $0<N<\infty$ such that
  for all $t\geq t_0 \geq0$ we have $\|W(t,t_0)\|\leq N.$
\item[$(ii)$] \emph{Uniformly attractive} if for all $\epsilon>0$, there exist $T=T(\epsilon)$ such that, for all $t_0\geq0, \ \|W(t,t_0)\| \leq \epsilon,$ for all $t\geq t_0+T$.
\item[$(iii)$] \emph{Uniformly asymptotically stable} if $\{W(t,t_0)\}_{t\geq t_0\geq0}$ is uniformly stable and uniformly attractive.
\item[$(iv)$] \emph{Uniformly exponentially stable} if there exist $k,\omega>0,$ such that $\|W(t,t_0)\|\leq ke^{-\omega(t-t_0)} $ holds for each $t_0\geq0$ and all $t\geq t_0$.
\end{enumerate}
\end{dfn}
The next proposition and Example~\ref{ex:Uniform-attractivity-and-uniform-stability} clarify the
relationships between these notions. These results are presumably
well-known in the theory of linear
time-varying systems. Nevertheless, we could not find the statements of
these results in the literature and the result is presented for completeness.
\begin{prop}
\label{prop:Uniform-attractivity-and-uniform-stability}
Assume that
\begin{eqnarray}
\sup_{t_0\geq 0,\ t\in[t_0,t_0+1]}\|W(t,t_0)\|=K<\infty.
\label{eq:Finite-Bohl-exponent}
\end{eqnarray}
If $\{W(t,t_0)\}_{t\geq t_0\geq0}$ is uniformly attractive, then it is uniformly stable and hence uniformly asymptotically stable.
\end{prop}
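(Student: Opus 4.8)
The plan is to decompose the evolution, for a fixed initial time $t_0$, into a \emph{transient} regime $t\in[t_0,t_0+T]$ and an \emph{asymptotic} regime $t\ge t_0+T$, to bound the asymptotic part directly by uniform attractivity, and to bound the transient part by chaining the short-time estimate \eqref{eq:Finite-Bohl-exponent}. The decisive feature is that uniform attractivity lets the length $T$ of the transient window be chosen \emph{independently of $t_0$}, so that the number of factors appearing in the chaining is bounded by a constant depending only on $T$ (and $K$).

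First I would record that $K\ge 1$: since $W(t_0,t_0)=I$ and $t_0\in[t_0,t_0+1]$ for every $t_0\ge0$, the bound \eqref{eq:Finite-Bohl-exponent} forces $\|I\|=1\le K$ (the degenerate case $X=\{0\}$ being trivial). Then I would invoke uniform attractivity with $\epsilon=1$ to obtain $T:=T(1)>0$ with
$$\|W(t,t_0)\|\le1 \qquad \text{for all } t_0\ge0 \text{ and all } t\ge t_0+T,$$
and set $n:=\lceil T\rceil\in\N$.

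It then remains to bound $\|W(t,t_0)\|$ for $t_0\le t\le t_0+n$. Writing $t-t_0=k+r$ with $k:=\lfloor t-t_0\rfloor\in\{0,1,\dots,n\}$ and $r\in[0,1)$, repeated use of the semigroup property $W(t,s)=W(t,\tau)W(\tau,s)$ gives
$$W(t,t_0)=W(t_0+k+r,\,t_0+k)\,W(t_0+k,\,t_0+k-1)\cdots W(t_0+1,\,t_0),$$
a product of at most $k+1\le n+1$ operators, each of the form $W(b,a)$ with $0\le a\le b\le a+1$ and hence of norm at most $K$ by \eqref{eq:Finite-Bohl-exponent}; therefore $\|W(t,t_0)\|\le K^{k+1}\le K^{n+1}$, using $K\ge1$. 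Combining the two regimes, $\|W(t,t_0)\|\le N:=K^{n+1}$ for all $t\ge t_0\ge0$, so $\{W(t,t_0)\}_{t\ge t_0\ge0}$ is uniformly stable; together with the assumed uniform attractivity this is exactly uniform asymptotic stability in the sense of Definition~\ref{def:StabAttr}$(iii)$.

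The step requiring the most care — rather than a genuine obstacle — is the chaining: one must check that all intermediate instants $t_0+j$ lie in $[0,\infty)$ so the operators are defined, that the product is read in the correct (non-commutative) order, and, most importantly, that the number of factors $k+1$ is bounded by $n+1$ \emph{uniformly in $t_0$}, which is precisely the place where uniform rather than merely pointwise attractivity is used. Everything else is routine estimation.
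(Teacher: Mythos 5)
Your proof is correct and follows essentially the same route as the paper's: obtain a transient window of uniformly bounded length $T$ from uniform attractivity, chain the semigroup property over unit subintervals using \eqref{eq:Finite-Bohl-exponent} to bound the transient regime by a power of $K$, and bound the tail directly by attractivity. The only cosmetic differences are that you fix $\epsilon=1$ where the paper keeps an arbitrary $\kappa$ and takes $N=\max(K^T,\kappa)$, and your bookkeeping of the number of factors ($k+1\le n+1$ versus the paper's $K^r\le K^T$) is equivalent.
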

\begin{proof}
Let $\kappa>0.$ By the uniform attractivity of $\{W(t,t_0)\}_{t\geq t_0\geq0}$, there exist $T=T(\kappa)>0,$ such that $\|W(t,t_0)\|\leq \kappa$, for all $t_0\geq0$ and all $t\geq t_0+T.$ By taking a larger $T$ if needed, we can assume that $T \in \mathbb{N}.$
Using \eqref{eq:Finite-Bohl-exponent} and the property $(i)$ of $\{W(t,t_0)\}_{t\geq t_0\geq0}$, we have for any $r\in\{1,\ldots,T\}$, 
and any $t\in [t_0+r-1,t_0+r]$ that
$$ W(t,t_0)=W(t,t_0+r-1)W(t_0+r-1,t_0+r-2)...W(t_0+2,t_0+1)W(t_0+1,t_0),$$
and then for any $t\in [t_0+r-1,t_0+r]$
\begin{align*}
\|W(t,t_0)\| &\leq  \|W(t,t_0+r-1)\|\|W(t_0+r-1,t_0+r-2)\|\\
& \qquad \ldots \|W(t_0+2,t_0+1)\|\|W(t_0+1,t_0)\|< K^r<\infty.
\end{align*}
As $K\geq 1$ (consider $t=t_0$ in \eqref{eq:Finite-Bohl-exponent}), this ensures that
\begin{eqnarray}
\sup_{t_0\geq 0,\ t\in[t_0,t_0+T]}\|W(t,t_0)\|\leq K^T<\infty.
\label{eq:Finite-Bohl-exponentT}
\end{eqnarray}
Choosing $ N=\max\left(K^T,\kappa\right) $, we obtain $\|W(t,t_0)\|\leq
N$, for all $t\geq t_0\geq0$. Hence, $\{W(t,t_0)\}_{t\geq t_0\geq0}$ is uniformly stable, and overall it is uniformly asymptotically stable.
\end{proof}
In view of \cite[Theorem 4.2]{DaK74}, the condition \eqref{eq:Finite-Bohl-exponent} is equivalent to the finiteness of the upper Bohl exponent of \eqref{linear}. This condition holds, e.g., if $A(\cdot)$ is continuous and uniformly bounded on $[0,\infty)$.
Without finiteness of the upper Bohl exponent,
Proposition~\ref{prop:Uniform-attractivity-and-uniform-stability} does not
necessarily hold, in contrast to the time-invariant linear systems, where
strong stability of a $C_0$-semigroup (i.e. non-uniform global
attractivity of a dynamical system) always implies uniform stability. We
illustrate this fact by means of the Example
\ref{ex:Uniform-attractivity-and-uniform-stability}:
\begin{exm}\label{ex:Uniform-attractivity-and-uniform-stability}
In view of \cite[Theorem 4.2]{DaK74}, the condition \eqref{eq:Finite-Bohl-exponent} is equivalent to the finiteness of the upper Bohl exponent of \eqref{linear}. This condition holds, e.g., if $A(\cdot)$ is continuous and uniformly bounded on $[0,\infty)$.
Without finiteness of the upper Bohl exponent, Proposition~\ref{prop:Uniform-attractivity-and-uniform-stability} does not necessarily hold, in contrast to the time-invariant linear systems, where strong stability of a $C_0$-semigroup (i.e. non-uniform global attractivity of a dynamical system) always implies uniform stability. We illustrate this fact by means of an example.
We construct a uniformly attractive system for which the evolution operator $W(t,t_0)$ is not uniformly bounded. It suffices to consider a one-dimensional system. Define $A:\mathbb{R}_{+}\to\mathbb{R}$ by setting for $k=0,1,2,\ldots$
\begin{equation*}
A(t):=
\begin{cases}
-2\ln \left(2(k+1)^2\right), \quad & t\in [k, k+\frac{1}{2}),\\
\phantom {-} 2\ln(k+1),\quad & t\in [k+\frac{1}{2}, k+1)\,.
\end{cases}
\end{equation*}
With this, we have for all $k\geq 0$ the following observations:
\begin{equation*}
\sup_{s\in[0,1]}W\Big(k+1,k+s\Big)=W\Big(k+1,k+\frac{1}{2}\Big)=e^{2\ln(k+1)\frac{1}{2}}=k+1\,.
\end{equation*}
In particular, the associated system is not uniformly stable. On the other hand,
\begin{equation*}
W(k+1,k)=e^{2\ln (k+1)\frac{1}{2}}e^{-2\ln(2(k+1)^2)\frac{1}{2}}=(k+1)\cdot\frac{1}{2(k+1)^2}=\frac{1}{2(k+1)}\,,
\end{equation*}
and
\begin{equation*}
\sup_{s\in[0,1]}W(k+s,k)=W(k,k)=1.
\end{equation*}
We claim that the system is uniformly attractive. To this end, fix $\varepsilon >0$ and choose $m\in \mathbb{N}$ such that
\begin{equation*}
\frac{1}{2m}<\varepsilon.
\end{equation*}
We claim that $T(\varepsilon)=m+1$ is a suitable choice to satisfy the item 2 of Definition~\ref{def:StabAttr}.
Fix an arbitrary $t_0=k_0+\delta_0$ with $k_0\in\mathbb{N}\cup\{0\}$, and $0\leq\delta_0<1.$ Now pick any $t\ge t_0+m+1.$ Then, $t=k_0+M+s,$ where $M\ge m+1,$ and $s\in[0,1].$ Then,
\begin{align*}
W(t,t_0)= & W(t, k_0+M)W(k_0+M, k_0+M-1)
           \cdots
          W(k_0+2,k_0+1) W(k_0+1, t_0)
\end{align*}
and we obtain
\begin{equation*}
\left|W(t,t_0)\right|\leq 1 \cdot\frac{1}{2(k_0+M)}\cdots\frac{1}{2(k_0+2)}(k_0+1)<\frac{1}{2^m}<\varepsilon\,.
\end{equation*}
This proves the uniform attractivity.
\end{exm}
The following lemma is essential for establishing a fundamental connection between the stability of evolution families and the behavior of time-varying linear systems.
\begin{lem}
\label{lemmmcc1}\cite[Lemma $4.1$]{damak2021input}
Let $\{W(t,s)\}_{t\geq s\geq0}$ be a strongly continuous evolution family
with generating family $\{ A(t) \}_{t\geq 0}$. 
The following statements are equivalent:
\begin{enumerate}
\item[$(i)$] The family $\{W(t,s)\}_{t\geq s\geq0}$ is uniformly asymptotically stable.
\item[$(ii)$] The time-varying linear system \begin{equation}\label{linear}
\dot{x}=A(t)x,\quad x(t_0)=x_0,
\end{equation}
 is uniformly exponentially stable, that is, there exist $k,\omega > 0,$ such that
$$\|x(t)\| \leq k\|x_0\|e^{-\omega(t-t_0)}$$ holds for all $t_0\geq0,$ all $x_0 \in X,$ and all $t \geq t_0.$

\item[$(iii)$]The family $\{W(t,s)\}_{t\geq s\geq0}$ is uniformly exponentially stable.
\end{enumerate}
\end{lem}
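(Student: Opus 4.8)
The plan is to prove $(iii)\Rightarrow(ii)\Rightarrow(iii)$ and $(iii)\Rightarrow(i)$ essentially for free, and to concentrate the actual work on $(i)\Rightarrow(iii)$. For the \emph{equivalence of $(ii)$ and $(iii)$}: for $x_0\in D(A(t_0))$ the map $t\mapsto W(t,t_0)x_0$ solves \eqref{linear} by property $(iii)$ of the generating family, and by density of $D(A(t_0))$ in $X$ together with strong continuity this identifies the solution of \eqref{linear} through $(t_0,x_0)$ with $W(t,t_0)x_0$ for every $x_0\in X$. Hence $\|x(t)\|=\|W(t,t_0)x_0\|\le\|W(t,t_0)\|\,\|x_0\|$, so $(iii)$ gives $(ii)$ with the same constants $k,\omega$; conversely $\|W(t,t_0)\|=\sup_{\|x_0\|\le 1}\|W(t,t_0)x_0\|\le ke^{-\omega(t-t_0)}$, so $(ii)$ gives $(iii)$.

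For \emph{$(iii)\Rightarrow(i)$}: suppose $\|W(t,t_0)\|\le ke^{-\omega(t-t_0)}$ for all $t\ge t_0\ge 0$. Taking $t=t_0$ yields $1=\|W(t_0,t_0)\|\le k$, so $\|W(t,t_0)\|\le k$ for all $t\ge t_0\ge 0$, i.e.\ uniform stability; and given $\varepsilon>0$, choosing $T=T(\varepsilon)$ with $ke^{-\omega T}\le\varepsilon$ gives $\|W(t,t_0)\|\le\varepsilon$ whenever $t\ge t_0+T$, uniformly in $t_0$, i.e.\ uniform attractivity. Thus $\{W(t,t_0)\}$ is uniformly asymptotically stable.

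The core step is \emph{$(i)\Rightarrow(iii)$}. Assume uniform asymptotic stability. By uniform stability there is $N<\infty$ with $\|W(t,t_0)\|\le N$ for all $t\ge t_0\ge 0$, and applying uniform attractivity with $\varepsilon=\tfrac12$ gives $T>0$ with $\|W(t,t_0)\|\le\tfrac12$ for all $t\ge t_0+T$, uniformly in $t_0$. Fix $t\ge t_0\ge 0$ and write $t-t_0=nT+\sigma$ with $n\in\N\cup\{0\}$ and $\sigma\in[0,T)$. Using the semigroup property $W(t,s)=W(t,r)W(r,s)$ repeatedly, I would factor
$$W(t,t_0)=W(t,t_0+nT)\,W(t_0+nT,t_0+(n-1)T)\cdots W(t_0+T,t_0),$$
which has $n$ factors of the form $W\big(t_0+jT,t_0+(j-1)T\big)$, each of operator norm $\le\tfrac12$ by the choice of $T$, and one leading factor $W(t,t_0+nT)$ of norm $\le N$. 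Submultiplicativity of the operator norm then yields $\|W(t,t_0)\|\le N\,2^{-n}$. Since $n>\tfrac{t-t_0}{T}-1$, we get $2^{-n}<2\cdot 2^{-(t-t_0)/T}=2\,e^{-\omega(t-t_0)}$ with $\omega:=\tfrac{\ln 2}{T}>0$, hence $\|W(t,t_0)\|\le 2N\,e^{-\omega(t-t_0)}$ for all $t\ge t_0\ge 0$, which is exactly $(iii)$ with $k:=2N$. This is the same bootstrapping device already used in the proof of Proposition~\ref{prop:Uniform-attractivity-and-uniform-stability}.

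I do not expect a genuine obstacle. The only points requiring care are that uniform attractivity must be used in its full ``for all $t\ge t_0+T$'' form rather than merely at $t=t_0+T$, and that the non-integer remainder $W(t,t_0+nT)$ is controlled precisely by the uniform stability bound $N$ — both being available because uniform asymptotic stability bundles uniform stability together with uniform attractivity (so, unlike in Proposition~\ref{prop:Uniform-attractivity-and-uniform-stability}, no extra hypothesis such as \eqref{eq:Finite-Bohl-exponent} is needed here).
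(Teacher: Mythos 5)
Your proof is correct. Note that the paper itself gives no proof of this lemma --- it is quoted verbatim from \cite[Lemma 4.1]{damak2021input} --- so there is nothing internal to compare against; your argument is a valid self-contained substitute. The logic closes properly ($(ii)\Leftrightarrow(iii)$, $(iii)\Rightarrow(i)$, $(i)\Rightarrow(iii)$), the identification of the mild solution of \eqref{linear} with $W(t,t_0)x_0$ via density of $D(A(t_0))$ is the right way to get $(ii)\Leftrightarrow(iii)$, and the core step $(i)\Rightarrow(iii)$ is the standard factorization--bootstrapping device (the same one the paper uses in Proposition~\ref{prop:Uniform-attractivity-and-uniform-stability}), applied correctly: each of the $n$ inner factors is controlled by uniform attractivity with $\varepsilon=\tfrac12$ in its full ``for all $t\ge t_0+T$'' form, and the fractional remainder by the uniform stability bound $N$.
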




\section{\sectiontitle{Lyapunov criteria for ISS of time-varying linear systems}}
\

We now consider a special case of (\ref{R1}), namely linear systems on a Banach space $X$ of the form: 
\begin{equation}\label{unb}
\left\lbrace
\begin{array}{l}
\dot{x}(t)=A(t)x(t)+B(t)u(t), \qquad t\ge t_0\ge0,\\
x(t_0)=x_0,
\end{array}\right.
\end{equation}
where $B\in C(\mathbb{R}_{+},L(U,X)),$ with $\displaystyle
\sup_{t\geq0}\|B(t)\|<\infty.$ We assume that the family $\{ A(t)
\}_{t\geq 0}$ generates a strongly continuous evolution family $\{W(t,s)\}_{t\geq s\geq0}$. As in the general case, we assume that inputs belong to the space $\mathcal{U}=PC(\mathbb{R}_{+}, U).$
Since $B$ is continuous and $u$ is piecewise right continuous, $\Psi:(t,x,u)\mapsto B(t)u(t)$ satisfies the assumption $(\mathcal{H}_1)$ and, according to Proposition \ref{propooo1}, there is a unique global mild solution $\phi(\cdot,t_0,x_0,u)$ of system (\ref{unb}) for any data $(t_0,x_0,u).$
By definition, it has the form
\begin{equation}\label{b5484}
\phi(t,t_0,x_0,u)= W(t,t_0)x_0+\int_{t_0}^{t}W(t,\tau)B(\tau)u(\tau)d\tau.
\end{equation}

\subsection{\subsectiontitle{Uniformly bounded and continuous $A$}}
In this part, we concentrate on the case of Hilbert spaces $X$ endowed with the scalar product $\langle \cdot,\cdot\rangle.$
We start with a Lyapunov characterization of the ISS property for the
system \eqref{unb} for the case that $A: \R_+ \to L(X)$ is continuous in
the uniform operator topology. Under these assumptions, the mild solutions
of \eqref{unb} exist and are piecewise classical solutions in the sense of Definition \ref{rrnnann23}.

\begin{prop}\label{prop:mild-is-classical}
Let $A: \R_+ \to L(X)$ be continuous and uniformly bounded in the uniform operator topology. Then, for every $u\in \mathcal{U},$ every initial time $t_0$ and every initial condition $x_0,$ the 
mild solution $\phi(\cdot,t_0,x_0,u)$ given by (\ref{b5484}) belongs to $PC^{1}([t_0,\infty),X)$ and is the unique piecewise classical solution of (\ref{unb}).
\end{prop}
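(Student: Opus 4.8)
The plan is to show that under the stated hypotheses the variation-of-parameters formula (\ref{b5484}) defines a function that is continuous everywhere on $[t_0,\infty)$, continuously differentiable on each subinterval of continuity of $u$, and satisfies (\ref{unb}) at every point of differentiability; uniqueness will then follow from Proposition \ref{propooo1}. First I would record the elementary fact that when $A:\R_+\to L(X)$ is continuous in the uniform operator topology and uniformly bounded, the generated evolution family $\{W(t,s)\}_{t\geq s\geq 0}$ is given by the (uniformly convergent) Peano--Baker series, so that $W(\cdot,\cdot)$ is jointly continuous and, crucially, $C^1$ in \emph{both} arguments in the operator-norm topology, with $\partial_t W(t,s) = A(t)W(t,s)$ and $\partial_s W(t,s) = -W(t,s)A(s)$ holding on all of $\{t\geq s\}$, not merely on $D(A(s))$. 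This removes every domain subtlety: property $(iii)$ of the evolution family in Section 2.1 holds for all $\upsilon\in X$.

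Next I would fix a point $\bar t > t_0$ at which $u$ is continuous (every point of $[t_0,\infty)$ except the countably many right-continuous jump points of $u\in PC(\R_+,U)$) and differentiate (\ref{b5484}) there. Write
\begin{equation*}
\phi(t,t_0,x_0,u)= W(t,t_0)x_0+\int_{t_0}^{t}W(t,\tau)B(\tau)u(\tau)\,d\tau .
\end{equation*}
The first term is differentiable with derivative $A(t)W(t,t_0)x_0$ by the remark above. For the integral term I would use the standard Leibniz-type argument: split $\frac{d}{dt}\int_{t_0}^t W(t,\tau)g(\tau)\,d\tau$ into the boundary contribution $W(t,t)g(t) = g(t)$ and the parameter-derivative contribution $\int_{t_0}^t \partial_t W(t,\tau)g(\tau)\,d\tau = \int_{t_0}^t A(t)W(t,\tau)g(\tau)\,d\tau = A(t)\int_{t_0}^t W(t,\tau)g(\tau)\,d\tau$, where $g(\tau):=B(\tau)u(\tau)$, which is continuous at $\bar t$ since $B\in C(\R_+,L(U,X))$ and $u$ is continuous at $\bar t$. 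Justifying the interchange of $d/dt$ with the integral requires only local uniform bounds on $\|W\|$ and $\|A(t)\|$ on compact time intervals, which hold by continuity and uniform boundedness. Adding the two pieces gives
\begin{equation*}
\dot\phi(\bar t) = A(\bar t)W(\bar t,t_0)x_0 + A(\bar t)\int_{t_0}^{\bar t}W(\bar t,\tau)B(\tau)u(\tau)\,d\tau + B(\bar t)u(\bar t) = A(\bar t)\phi(\bar t,t_0,x_0,u) + B(\bar t)u(\bar t),
\end{equation*}
which is exactly (\ref{unb}); moreover $\bar t\mapsto \dot\phi(\bar t)$ is continuous on each interval of continuity of $u$ by continuity of $A$, $W$, $B$, $u$, so $\phi\in PC^1([t_0,\infty),X)$. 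Continuity of $\phi$ on all of $[t_0,\infty)$ (including the jump points) is already known since $\phi$ is a mild solution.

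Finally, for uniqueness: by Proposition \ref{propooo1} the system (\ref{unb}) — being an instance of (\ref{R1}) with $\Psi(t,x,u)=B(t)u(t)$ satisfying $(\mathcal{H}_1)$ — has a unique (global) mild solution for each data $(t_0,x_0,u)$, and any piecewise classical solution is in particular a mild solution (integrate (\ref{unb}) against the evolution family using property $(iii)$, via a Duhamel/variation-of-parameters computation on each interval of differentiability and continuity at the junctions), hence coincides with $\phi$. I expect the only mildly delicate point to be the rigorous justification of differentiating under the integral sign at $\bar t$ and the handling of the left/right endpoints at the discontinuity points of $u$ — but since $u$ is right-continuous and the discontinuities have no accumulation point, one argues on each half-open interval $[t_k,t_{k+1})$ separately and the one-sided derivatives match the claimed formula with $u(\bar t)$ the right limit, which is consistent with the $PC^1$ convention of the paper.
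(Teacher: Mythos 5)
Your proposal is correct and follows essentially the same route as the paper's proof, which simply cites Pazy (Theorems 5.1 and 5.2, p.~127--128, for the operator-norm differentiability of $W(t,s)$ when $A$ is bounded and continuous, and the argument of Equation (1.18), p.~129, for differentiating the variation-of-constants formula); you have merely written out in full the Leibniz-rule computation and the uniqueness step that the paper outsources to those references. The only addition on your side is the explicit care at the jump points of $u$, which is a worthwhile detail but not a different method.
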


\begin{proof}
By \cite[Theorem 5.1, p. 127]{Paz83}, for each initial time $t_0$ and each initial condition $x_0$, there is a 
unique classical solution $x(\cdot)$ for the (undisturbed) system (\ref{linear}). Defining
$$x(t):=W(t,t_0)x_0,\quad t\geq t_0,$$
we construct an evolution family $\{W(t,t_0)\}_{t\geq t_0\geq0}: X \to  X$ of bounded operators, generated by $\{A(t)\}_{t\geq0}.$ Furthermore, according to \cite[Theorem 5.2, p. 128]{Paz83}, we have the following formulas for the partial derivatives of $W$ taken in $L(X):$
$$\frac{\partial}{\partial t}W(t,s)=A(t)W(t,s),\quad \frac{\partial}{\partial s}W(t,s)=-W(t,s)A(s).$$
The result follows using arguments as in \cite[Equation (1.18), p. 129]{Paz83}.
\end{proof}
For the Lyapunov analysis of linear systems, we recall the following notions.
A self-adjoint operator $S \in L(X)$ is said to be positive, if $\langle Sx,x\rangle >0$ for all $x\in X\backslash\{0\}.$ We call an operator-valued function $P:\mathbb{R}_{+}\to L(X)$ positive if $P(t)$ is self-adjoint and positive for any $t\geq0$.
An operator-valued function $P:\mathbb{R}_{+}\to L(X)$ is called coercive,
if it is positive and there exists $\mu>0,$ such that
$$\langle P(t)x,x\rangle\geq \mu\|x\|^{2}_{X} \quad \forall x\in X.$$
The following result guaranteeing the existence of a coercive solution for operator Lyapunov equations will be helpful in the sequel. It is related to analogous characterizations of exponential dichotomy given in \cite[Corollary 4.48]{carmen}.
\begin{prop}\label{hmfdrr}\cite[Theorem $5.2$]{damak2021input}.
Let $A : \R_+ \to L(X)$ be continuous and uniformly bounded in the uniform
operator topology.
Then, the evolution operator generated by $\{A(t)\}_{t\geq0}$ is uniformly exponentially stable if and only if there exists a continuously differentiable, bounded, coercive positive operator-valued function $P:\mathbb{R}_{+}\to L(X)$, satisfying the Lyapunov equality
\begin{equation}\label{R2}
A(t)^*P(t)+ P(t)A(t)+\dot{P}(t)=-I,\quad t\geq 0.
\end{equation}
\end{prop}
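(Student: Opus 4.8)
The plan is to prove the two implications separately, with the quadratic form $V(t,x)=\langle P(t)x,x\rangle$ serving as the bridge. For \textbf{sufficiency}, assume a bounded, coercive $P\in C^1(\mathbb{R}_+,L(X))$ solving \eqref{R2} exists, and set $M:=\sup_{t\geq0}\|P(t)\|<\infty$, while $\mu>0$ denotes its coercivity constant, so that $\mu\|x\|_X^2\leq\langle P(t)x,x\rangle\leq M\|x\|_X^2$ for all $t,x$. By Proposition~\ref{prop:mild-is-classical} the solution $x(t)=W(t,t_0)x_0$ of \eqref{linear} is a piecewise classical solution, hence locally $C^1$ in $t$; differentiating $t\mapsto\langle P(t)x(t),x(t)\rangle$ and inserting \eqref{R2} gives
\[
\frac{d}{dt}\langle P(t)x(t),x(t)\rangle=\langle(\dot P(t)+A(t)^*P(t)+P(t)A(t))x(t),x(t)\rangle=-\|x(t)\|_X^2\leq-\tfrac1M\,\langle P(t)x(t),x(t)\rangle .
\]
Integrating this differential inequality yields $\langle P(t)x(t),x(t)\rangle\leq e^{-(t-t_0)/M}\langle P(t_0)x_0,x_0\rangle$, whence $\|W(t,t_0)x_0\|_X\leq\sqrt{M/\mu}\;e^{-(t-t_0)/(2M)}\|x_0\|_X$ for all $t\geq t_0\geq0$; since $x_0$ is arbitrary this is precisely uniform exponential stability of $\{W(t,s)\}$.

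For \textbf{necessity}, suppose $\|W(t,s)\|\leq ke^{-\omega(t-s)}$ for all $t\geq s\geq0$, and put $a:=\sup_{t\geq0}\|A(t)\|<\infty$. Define
\[
P(t):=\int_t^{\infty}W(s,t)^*W(s,t)\,ds ,\qquad t\geq0 .
\]
The exponential bound gives $\|P(t)\|\leq\int_t^\infty k^2e^{-2\omega(s-t)}\,ds=k^2/(2\omega)$, so $P$ is bounded; $P(t)$ is clearly self-adjoint, and since $\langle P(t)x,x\rangle=\int_t^\infty\|W(s,t)x\|_X^2\,ds$ with $s\mapsto W(s,t)x$ continuous and $W(t,t)x=x$, it is positive. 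Coercivity comes from an a priori lower estimate near the diagonal: $y(s):=W(s,t)x$ solves $\dot y=A(s)y$, $y(t)=x$, so Gronwall's inequality gives $\|y(s)\|_X\leq\|x\|_Xe^{a(s-t)}$, and reinserting this bound into $\|y(s)\|_X\geq\|x\|_X-a\int_t^s\|y(r)\|_X\,dr$ yields $\|y(s)\|_X\geq\|x\|_X(2-e^{a(s-t)})\geq\tfrac12\|x\|_X$ whenever $s-t\leq\tau_0:=a^{-1}\ln(3/2)$. Hence $\langle P(t)x,x\rangle\geq\int_t^{t+\tau_0}\|y(s)\|_X^2\,ds\geq\tfrac{\tau_0}{4}\|x\|_X^2$, uniformly in $t$.

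It remains to show that $P\in C^1$ and that it solves \eqref{R2}. Using $W(s,t)=W(s,t+h)W(t+h,t)$ for $s\geq t+h$, one obtains the identity
\[
P(t)=\int_t^{t+h}W(s,t)^*W(s,t)\,ds+W(t+h,t)^*\,P(t+h)\,W(t+h,t),
\]
so that $\tfrac1h(P(t+h)-P(t))=\tfrac1h\bigl(P(t+h)-W(t+h,t)^*P(t+h)W(t+h,t)\bigr)-\tfrac1h\int_t^{t+h}W(s,t)^*W(s,t)\,ds$. As $h\to0^+$ the last term tends to $W(t,t)^*W(t,t)=I$ by continuity of the integrand; and since $A(\cdot)\in L(X)$ is continuous, Proposition~\ref{prop:mild-is-classical} (via \cite[Theorem 5.2, p.~128]{Paz83}) gives $W(t+h,t)=I+hA(t)+o(h)$ in operator norm, which together with the uniform bound and norm-continuity of $P$ (the latter following from the integral representation and the exponential bound) shows the first term tends to $-(A(t)^*P(t)+P(t)A(t))$. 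Hence $\dot P(t)=-I-A(t)^*P(t)-P(t)A(t)$, i.e.\ \eqref{R2}, and $\dot P$ is continuous because $A$ and $P$ are; a symmetric computation with $h<0$ handles the left derivative.

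The step I expect to be the \textbf{main obstacle} is the rigorous treatment of the necessity direction: (i) establishing coercivity of $P$, where the uniform bound $a=\sup_t\|A(t)\|<\infty$ is indispensable to keep $\|W(s,t)x\|_X$ bounded away from $0$ for $s$ near $t$ (without it $P$ can fail to be coercive, in the spirit of the pathology in Example~\ref{ex:Uniform-attractivity-and-uniform-stability}); and (ii) upgrading the pointwise differentiability computation to genuine $C^1$-regularity of $P$ in the uniform operator topology and reading off \eqref{R2}, for which the evolution-property splitting of $P(t)$ displayed above, the first-order expansion of $W(t+h,t)$, and the uniform bound $\|P(t)\|\leq k^2/(2\omega)$ are the crucial ingredients.
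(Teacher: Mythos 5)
Your proof is correct, and for the direction ``uniformly exponentially stable $\Rightarrow$ $P$ exists'' it uses exactly the paper's candidate $P(t)=\int_t^\infty W(\tau,t)^*W(\tau,t)\,d\tau$. The difference is in how much is done by hand: the paper delegates the converse implication entirely to \cite[Theorem 5.2]{damak2021input}, cites \cite[Lemma 5.1]{damak2021input} for coercivity, and obtains $\dot P$ by differentiating under the integral sign using $\tfrac{\partial}{\partial t}W(\tau,t)=-W(\tau,t)A(t)$; you instead give a self-contained argument throughout. Your sufficiency direction is the standard quadratic-Lyapunov integration of $\dot V=-\|x\|_X^2\le -V/M$, which is what the cited reference does as well. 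Your coercivity proof via the two-sided Gronwall bound $\|W(s,t)x\|_X\ge(2-e^{a(s-t)})\|x\|_X$ near the diagonal is the standard mechanism behind the cited lemma and correctly identifies $\sup_t\|A(t)\|<\infty$ as the hypothesis that makes it work. Your computation of $\dot P$ via the splitting $P(t)=\int_t^{t+h}W(s,t)^*W(s,t)\,ds+W(t+h,t)^*P(t+h)W(t+h,t)$ together with the expansion $W(t+h,t)=I+hA(t)+o(h)$ is a legitimate alternative to the paper's differentiation under the integral; it avoids having to justify the interchange of limit and integral at the cost of needing norm-continuity of $t\mapsto P(t)$, which you correctly note follows from the integral representation and the exponential bound. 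Both routes are sound; yours buys a fully self-contained proposition at the price of a somewhat longer argument.
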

\begin{proof}
According to our assumption and arguments at the beginning of the
subsection, the family $\{A(t)\}_{t\geq0}$ generates a strongly continuous
evolution family of bounded operators $\{W(t,s)\}_{t\geq s\geq0}$. Assume
that $\{W(t,t_0)\}_{t\geq t_0\geq0}$ is uniformly exponentially stable. Define the operator-valued function $P$ by
$$P(t)=\int_{t}^{\infty}W(\tau,t)^{*}W(\tau,t)d\tau.$$
The integral converges by the uniform exponential stability.
 Thanks to the continuity of $t\mapsto A(t)$, the map $(\tau,t)\mapsto
 W(\tau,t)$ is uniformly continuous by \cite[Theorem 5.2, p.128]{Paz83},
 and thus $P$ is continuously differentiable. $P$ is coercive due to \cite[Lemma 5.1]{damak2021input}.
 Thus, we can compute
 $\dot{P}$ as in \cite[Theorem $5.2$]{damak2021input} and obtain
$$\dot{P}(t)= \int _{t} ^{\infty} W(\tau,t)\frac{\partial}{\partial
  t}W(\tau,t)d\tau+\int_{t}^{\infty }\frac{\partial}{\partial
  t}W(\tau,t)^*W(\tau,t)d\tau-I = -P(t)A(t) - A(t)^*P(t)+   -I. $$

For the converse direction see \cite[Theorem 5.2]{damak2021input}.
\end{proof}
Next, we give exhaustive criteria for ISS of time-varying linear systems (\ref{unb}) with a family of the linear uniformly bounded operators 
$\{A(t)\}_{t\geq0}.$
\begin{thm} Let $A: \R_+ \to L(X)$ be continuous and uniformly bounded in the uniform
operator topology with associated evolution operator $\{W(t,t_0)\}_{t\geq t_0 \geq 0}$. Then we have 
\begin{itemize}
\item[$(i)$] (\ref{unb}) is ISS $\quad\Leftrightarrow\quad$ (\ref{unb}) is
  0-UGAS $\quad\Leftrightarrow\quad$ (\ref{unb}) is iISS.
\end{itemize}
And each of the  following statements is equivalent to (\ref{unb}) being ISS:
\begin{itemize}
\item[$(ii)$] The evolution operator $\{W(t,t_0)\}_{t\geq t_0 \geq 0}$ is uniformly asymptotically stable.
\item[$(iii)$] The evolution operator $\{W(t,t_0)\}_{t\geq t_0 \geq 0}$ is uniformly exponentially stable.
\item[$(iv)$] If $P(t)$ satisfies (\ref{R2}), then the function $V: \mathbb{R}_{+}\times X \to \mathbb{R}_{+}$ defined by
\begin{equation}\label{R1Ev}
V(t,x)=\langle P(t)x,x\rangle, \quad t\geq0, \quad x\in X,
\end{equation}
is an ISS Lyapunov function for (\ref{unb}).
\end{itemize}
\end{thm}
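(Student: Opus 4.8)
The plan is to prove the cycle
$$\text{ISS}\ \Rightarrow\ \text{0-UGAS}\ \Rightarrow\ (ii)\ \Leftrightarrow\ (iii)\ \Rightarrow\ (iv)\ \Rightarrow\ \text{ISS},$$
and then to adjoin \emph{iISS} to this equivalence class via the extra arc $(iii)\Rightarrow\text{iISS}\Rightarrow\text{0-UGAS}$. The easy arcs come first. Substituting $u\equiv 0$ into the ISS estimate (\ref{rhm21}) gives $\text{ISS}\Rightarrow\text{0-UGAS}$, and substituting $u\equiv 0$ into (\ref{EQua1in}) and using $\alpha(0)=0$ gives $\text{iISS}\Rightarrow\text{0-UGAS}$. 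For $\text{0-UGAS}\Rightarrow(ii)$ I would use that, for $u\equiv 0$, the representation (\ref{b5484}) collapses to $\phi(t,t_0,x_0,0)=W(t,t_0)x_0$, so $\text{0-UGAS}$ yields $\beta\in\mathcal{KL}$ with $\|W(t,t_0)x_0\|_X\le\beta(\|x_0\|_X,t-t_0)$ for all $x_0,t_0,t$; taking the supremum over $\|x_0\|_X\le1$ and using monotonicity of $\beta(\cdot,\tau)$ gives $\|W(t,t_0)\|\le\beta(1,t-t_0)$, which (since $\beta(1,\cdot)\in\mathcal{L}$) is bounded by $\beta(1,0)$ and tends to $0$ as $t-t_0\to\infty$, i.e.\ $\{W(t,t_0)\}_{t\ge t_0\ge0}$ is uniformly asymptotically stable. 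Finally $(ii)\Leftrightarrow(iii)$ is exactly Lemma \ref{lemmmcc1}.

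The heart of the argument is $(iii)\Rightarrow(iv)$. By Proposition \ref{hmfdrr}, uniform exponential stability of $\{W(t,t_0)\}$ gives a continuously differentiable, bounded, coercive positive operator-valued function $P:\mathbb{R}_+\to L(X)$ solving the Lyapunov equality (\ref{R2}); I would work with any such $P$ and set $V(t,x)=\langle P(t)x,x\rangle$. Boundedness and coercivity of $P$ yield $\mu\|x\|_X^2\le V(t,x)\le M\|x\|_X^2$ with $M=\sup_{t\ge0}\|P(t)\|$, which is the sandwich bound (\ref{R1E}) (hence also (\ref{R1EN}) and $V(t,0)=0$) with $\alpha_1(r)=\mu r^2$ and $\alpha_2(r)=Mr^2$, both in $\mathcal{K}_{\infty}$. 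By Proposition \ref{prop:mild-is-classical} the mild solution is a piecewise classical solution and $u\in PC(\mathbb{R}_+,U)$ is right-continuous, so $s\mapsto V(s,\phi(s,t,x,u))$ is piecewise $C^1$ and its right derivative at $s=t$ coincides with $\dot V_u(t,x)$. Using $\dot x=A(t)x+B(t)u(t)$, self-adjointness of $P(t)$, and (\ref{R2}),
$$\dot V_u(t,x)=\big\langle\big(\dot P(t)+A(t)^*P(t)+P(t)A(t)\big)x,x\big\rangle+2\,\re\langle P(t)B(t)u(t),x\rangle=-\|x\|_X^2+2\,\re\langle P(t)B(t)u(t),x\rangle.$$
Bounding the cross term by $2Mb\|u(t)\|_U\|x\|_X$, where $b=\sup_{t\ge0}\|B(t)\|$, and applying Young's inequality gives
$$\dot V_u(t,x)\le-\tfrac12\|x\|_X^2+2(Mb)^2\|u(t)\|_U^2,$$
which is the dissipation inequality (\ref{R2E}) with $\eta(r)=\tfrac12 r^2$ and $\chi(r)=2(Mb)^2 r^2$. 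Since $\eta\in\mathcal{K}_{\infty}$ is unbounded, $V$ is a coercive ISS Lyapunov function in dissipative form; since moreover $\eta\in\mathcal{P}$ and $\chi\in\mathcal{K}$, the same $V$ is a coercive iISS Lyapunov function. This proves $(iv)$ and simultaneously supplies the Lyapunov functions needed to close the cycle.

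To finish, I would invoke that system (\ref{unb}) has the BIC property by Proposition \ref{propooo1}, so Theorem \ref{hjljg} applies: the coercive ISS Lyapunov function in dissipative form constructed above yields $(iv)\Rightarrow\text{ISS}$, and the coercive iISS Lyapunov function yields $(iii)\Rightarrow\text{iISS}$. Chaining all arcs shows that $\text{ISS}$, $\text{0-UGAS}$, $\text{iISS}$, $(ii)$, $(iii)$, and $(iv)$ are equivalent, as claimed. The main obstacle is the Lie-derivative computation in $(iii)\Rightarrow(iv)$: one must justify that the abstract Dini derivative $\dot V_u(t,x)$ equals the classical time derivative of $V$ along the trajectory (this is precisely where Proposition \ref{prop:mild-is-classical} and the right-continuity of $u$ enter), and then keep careful track of the adjoint terms so that the Lyapunov equality (\ref{R2}) collapses the homogeneous part to $-\|x\|_X^2$. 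Everything else is a standard reshuffling of implications together with Lemma \ref{lemmmcc1} and Theorem \ref{hjljg}.
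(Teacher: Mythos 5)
Your proof is correct, and for the central implication $(iii)\Rightarrow(iv)$ it coincides with the paper's argument: invoke Proposition \ref{hmfdrr} to get $P$, take $V(t,x)=\langle P(t)x,x\rangle$, derive the sandwich bounds from boundedness and coercivity of $P$, differentiate along piecewise classical solutions (justified via Proposition \ref{prop:mild-is-classical}), collapse the homogeneous part to $-\|x\|_X^2$ via the Lyapunov equality \eqref{R2}, and finish with Cauchy--Schwarz and Young. Where you genuinely diverge is in how the remaining equivalences are obtained. The paper simply cites \cite[Theorem 4.1, Corollary 4.1, Lemma 4.1]{damak2021input} for the equivalences in $(i)$ and for their equivalence with $(ii)$ and $(iii)$, and uses Theorem \ref{hjMMMMMljg} (implication form) for $(iv)\Rightarrow$ ISS. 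You instead close a self-contained cycle: ISS $\Rightarrow$ 0-UGAS by setting $u\equiv0$; 0-UGAS $\Rightarrow(ii)$ by the direct observation that $\phi(t,t_0,x_0,0)=W(t,t_0)x_0$, so $\|W(t,t_0)\|\le\beta(1,t-t_0)$, which gives both uniform stability and uniform attractivity; $(ii)\Leftrightarrow(iii)$ by Lemma \ref{lemmmcc1}; and you extract iISS from the \emph{same} $V$, noting that the dissipation estimate with $\eta(r)=\frac12 r^2$ and $\chi(r)=2(Mb)^2r^2$ makes $V$ simultaneously a coercive ISS Lyapunov function in dissipative form and a coercive iISS Lyapunov function, so Theorem \ref{hjljg} delivers both ISS and iISS at once. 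Your route buys independence from the external reference and makes explicit why iISS sits in the same equivalence class (the paper's route via ``$L^1$-ISS'' is only sketched); the paper's route is shorter on the page. One caveat applies equally to both treatments: statement $(iv)$ is phrased as a conditional (``if $P$ satisfies \eqref{R2}\dots''), so the arc $(iv)\Rightarrow$ ISS tacitly presupposes that such a $P$ exists; this is an imprecision of the theorem statement rather than of your argument.
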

\begin{proof}
The equivalences in $(i)$ and the equivalence of each of the statements in
(i) to (ii) and (v) are covered by \cite[Theorem 4.1]{damak2021input},
\cite[Corollary 4.1]{damak2021input}, \cite[Lemma 4.1]{damak2021input}. It
follows from Theorem \ref{hjMMMMMljg} that (iv) implies that (\ref{unb}) is ISS.

$(iii)\Longrightarrow (iv).$ As the evolution operator $\{W(t,t_0)\}_{t\geq t_0 \geq 0}$ is uniformly exponentially stable, according to Proposition \ref{hmfdrr}, there is an operator-valued function $P$ satisfying (\ref{R2}). Consider $V:\mathbb{R}_{+}\times X \to \mathbb{R}_{+}$ as defined in (\ref{R1Ev}). As $P$ is continuous, $V$ is continuous as well. Since $P$ is bounded and coercive, for some $\mu_1,p> 0$, it holds that
$$\mu_1\|x\|^{2}_{X}\leq V(t,x)\leq p\|x\|^{2}_{X}\quad \forall x \in X\quad \forall t\geq 0,$$
where $p=\displaystyle \sup_{t\in \mathbb{R}_{+}}\|P(t)\|$. Thus, (\ref{R1E}) holds.

For any given $x_0\in X,$ for any $u\in\mathcal{U},$ any $t_0\geq0,$ and
since the corresponding mild solution $x(t)=\phi(t,t_0,x_0,u)$ is a
piecewise classical solution by Proposition~\ref{prop:mild-is-classical},
it is piecewise continuously differentiable. Hence, with the exception of
countably many points the Lie derivative of $V$ along the trajectories of
system (\ref{unb}) satisfies (omitting the argument $t$ for legibility)
\begin{align*}
\dot{V}_{u}(t,x)&=\langle\dot{P}(t)x,x\rangle+\langle P(t)\dot{x},x\rangle +\langle P(t)x,\dot{x}\rangle\\
&=\langle \dot{P}(t)x,x\rangle+ \langle P(t)[A(t)x+B(t)u],x(t)\rangle+\langle P(t)x, A(t)x+B(t)u\rangle\\
&=\langle \dot{P}(t)x,x\rangle+\langle P(t)A(t)x,x\rangle+\langle
P(t)B(t)u,x\rangle +\langle P(t)x,A(t)x\rangle+\langle P(t)x,B(t)u\rangle.
\end{align*}
Since $\langle P(t)x,A(t)x\rangle=\langle A(t)^*P(t)x,x\rangle$
by applying the Lyapunov equation (\ref{R2}) 
and using the Cauchy-Schwarz inequality, we obtain
$$\dot{V}_{u}(t,x)\leq -\|x\|^{2}_{X}+2 p\|x\|_{X}\|B(t)\|\|u\|_{\mathcal{U}}.$$
Utilizing Young's inequality we have for any $\eta>0$
$$\dot{V}_{u}(t,x)\leq -\|x\|^{2}_{X}+ p\tilde{b}\eta\|x\|^{2}_{X}+\frac{p\tilde{b}}{\eta}\|u\|^{2}_{\mathcal{U}},$$
where $\tilde{b}=\sup_{t\in \mathbb{R_{+}}}\|B(t)\|.$ Choosing
$\eta<(p\tilde{b})^{-1}$ this shows that $V$ is an ISS Lyapunov function for (\ref{unb}).
\end{proof}


\subsection{\subsectiontitle{Family of unbounded generators}}

In the remainder, all vector spaces will assumed to be Banach spaces.
In this subsection, we extend the analysis of ISS Lyapunov functions to
systems where the operators in the family $\{A(t): D(A(t))\subset X\to
X\}_{t\geq 0}$ are generally unbounded. 
As before $\mathcal{U}=PC(\mathbb{R}_+, U)$ will be used throughout.
Again we obtain that uniform exponential stability of the evolution family
$\{W(t,\tau)\}_{t\geq \tau\geq0}$, characterizes ISS 
and allows for the construction of noncoercive Lyapunov functions for systems with unbounded operators $A(\cdot)$.
The main contribution of this subsection is Theorem \ref{hdkmnmlo}, which establishes the equivalence between various stability concepts for the system (\ref{unb}) including ISS, 0-UGAS, iISS, and the uniform exponential stability of the evolution operator. Theorem \ref{hdkmnmlo} also proposes a non-coercive ISS Lyapunov function for the system.
\

We start with a simple lemma. 
\begin{lem}\label{hnlkfrankm}
Let $B\in C(\mathbb{R}_{+},L(X))$ with $\sup_{t\in \mathbb{R_{+}}}\|B(t)\| < \infty,$ and $\{W(t,s)\}_{t\geq s\geq0}$ be a strongly continuous evolution family. Then, for any $u\in\mathcal{U}$ and any $t\geq0$ it holds that 
$$ \displaystyle\lim _{h \to 0^{+}}\frac{1}{h}\int_{t}^{t+h}W(t,s)B(s)u(s)ds=B(t)u(t). $$
\end{lem}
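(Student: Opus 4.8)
The plan is to show that the difference quotient converges to $B(t)u(t)$ by splitting it as
\[
\frac{1}{h}\int_{t}^{t+h}W(t,s)B(s)u(s)\,ds - B(t)u(t)
= \frac{1}{h}\int_{t}^{t+h}\Big(W(t,s)B(s)u(s) - B(t)u(t)\Big)ds,
\]
and then estimating the integrand uniformly for $s$ close to $t$. The key observation is that $u \in PC(\mathbb{R}_+,U)$ is right-continuous at $t$, so $u(s)\to u(t)$ as $s\to t^+$; that $B$ is continuous, so $B(s)\to B(t)$ in operator norm; and that $s\mapsto W(t,s)$ is strongly continuous with $W(t,t)=I$, so $W(t,s)x \to x$ for each fixed $x\in X$.

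The main technical point is to control the composition $W(t,s)B(s)u(s)$, since strong continuity of $W(t,s)$ alone does not give norm convergence of $W(t,s)y_s$ when $y_s$ also varies. First I would write
\[
W(t,s)B(s)u(s) - B(t)u(t) = W(t,s)\big(B(s)u(s) - B(t)u(t)\big) + \big(W(t,s) - I\big)B(t)u(t).
\]
For the first term, I use uniform boundedness of $W$ (property $(iv)$ of Definition \ref{strfami}, or the local bound from $W(t,t)=I$ and strong continuity via the uniform boundedness principle): there is $M$ with $\|W(t,s)\|\leq M$ for $s$ in a neighborhood of $t$. Then $\|W(t,s)(B(s)u(s)-B(t)u(t))\|_X \leq M\big(\|B(s)-B(t)\|\,\|u(s)\|_U + \|B(t)\|\,\|u(s)-u(t)\|_U\big)$, which tends to $0$ as $s\to t^+$ because $u$ is right-continuous at $t$ (hence bounded near $t$ on the right) and $B$ is norm-continuous. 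For the second term, $y := B(t)u(t)$ is a fixed element of $X$, so $\|(W(t,s)-I)y\|_X = \|W(t,s)y - W(t,t)y\|_X \to 0$ as $s\to t^+$ by strong continuity of $(t,s)\mapsto W(t,s)y$.

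Combining, for every $\varepsilon>0$ there is $\delta>0$ such that $\|W(t,s)B(s)u(s) - B(t)u(t)\|_X < \varepsilon$ for all $s\in(t,t+\delta)$, and therefore, for $0<h<\delta$,
\[
\left\|\frac{1}{h}\int_{t}^{t+h}W(t,s)B(s)u(s)\,ds - B(t)u(t)\right\|_X
\leq \frac{1}{h}\int_{t}^{t+h}\big\|W(t,s)B(s)u(s) - B(t)u(t)\big\|_X\,ds < \varepsilon,
\]
which proves the claim. I expect the only genuine obstacle to be the bookkeeping around the right-continuity of $u$ at $t$ (ensuring local boundedness of $\|u(s)\|_U$ on a right neighborhood of $t$, which follows from $u$ being piecewise right-continuous) and justifying the uniform local bound on $\|W(t,s)\|$; both are routine given the standing hypotheses.
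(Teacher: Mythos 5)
The paper states this as ``a simple lemma'' and gives no proof at all, so there is nothing to compare your argument against; I assess it on its own. Your proof is correct and, importantly, it isolates and resolves the one genuinely non-trivial point: strong continuity of the evolution family does not let you pass to the limit in the composition $W(\cdot,\cdot)B(s)u(s)$ directly, because the vector being acted on moves with $s$. Your decomposition
$W B(s)u(s)-B(t)u(t)=W\bigl(B(s)u(s)-B(t)u(t)\bigr)+(W-I)B(t)u(t)$,
combined with the uniform (or locally uniform, via the uniform boundedness principle) bound on $\|W\|$, the operator-norm continuity of $B$, the right-continuity of $u$ at $t$, and strong continuity applied to the \emph{fixed} vector $B(t)u(t)$, is exactly the standard and correct way to do this, and the final averaging step is fine.

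One caveat, which concerns the statement rather than your argument: as written, the integrand involves $W(t,s)$ for $s\in(t,t+h]$, i.e.\ with the second argument exceeding the first, which is outside the domain $t\ge s$ on which the evolution family is defined (Definition \ref{strfami}); this is almost certainly a typo for $W(t+h,s)$, which is what is actually needed in the proof of Theorem \ref{hdkmnmlo} (where $W(\tau,s)=W(\tau,t+h)W(t+h,s)$ for $\tau\ge t+h\ge s\ge t$). Your proof transfers verbatim to the corrected statement: replace $W(t,s)$ by $W(t+h,s)$ throughout, and for the second term use joint strong continuity of $(t',s')\mapsto W(t',s')y$ at $(t,t)$ together with $W(t,t)y=y$ to get $\|(W(t+h,s)-I)B(t)u(t)\|_X<\varepsilon$ uniformly for $s\in[t,t+h]$ once $h$ is small. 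With that reading, your proof is complete.
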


%
%
%
%

Next, we generalize our results about ISS Lyapunov functions to the case of the unbounded operator $A(\cdot)$ and derive a constructive converse Lyapunov theorem for (\ref{unb}) with a bounded input operator $\{B(t)\}_{t\geq0}$. 
Our construction \eqref{eq:non-coercive ISS-LF-linsys} is motivated by a corresponding construction for linear time-invariant systems, see \cite{mironchenko2018lyapunov}.

\begin{thm}\label{hdkmnmlo}
Let the family $\{A(t)\}_{t\geq0}$ be the generator of a strongly
continuous evolution family $\{W(t,s)\}_{t\geq s\geq0}.$ Assume
$B\in C(\mathbb{R}_{+},L(U,X)),$ with $\|B\|_{\infty}=\sup_{t\in \mathbb{R_{+}}}\|B(t)\|<\infty.$ The following statements are equivalent for the system (\ref{unb}):
\begin{enumerate}
\item[$(i)$] (\ref{unb}) is ISS.
\item[$(ii)$] (\ref{unb}) is 0-UGAS.
\item[$(iii)$] (\ref{unb}) is iISS.
\item[$(iv)$] The evolution operator $\{W(t,t_0)\}_{t\geq t_0 \geq 0}$ is uniformly asymptotically stable.
\item[$(v)$] The evolution operator $\{W(t,t_0)\}_{t\geq t_0 \geq 0}$ is uniformly exponentially stable.
\item[$(vi)$] The function $V:\mathbb{R}_{+} \times X \rightarrow \mathbb{R}_{+},$ defined by
\begin{eqnarray}\label{rrrrhnlkfn}
V(t,x)=\int_{t}^{\infty}\|W(\tau,t)x\|_{X}^{2}d\tau,
\label{eq:non-coercive ISS-LF-linsys}
\end{eqnarray}
is \emph{a non-coercive ISS Lyapunov function} for (\ref{unb}) which is locally Lipschitz continuous. Moreover, for all $t\geq0,$ all $x\in X,$ all $u\in\mathcal{U},$ and all $\eta>0$ it holds that 
\begin{equation}\label{rhmnhlh}
\dot{V}_{u}(t,x)\leq-\|x\|_{X}^{2}+\frac{\eta k^{2}}{2w}\|x\|_{X}^{2}+\frac{k^{2}}{2\eta w}\|B\|_{\infty}\|u(t)\|_{U}^{2},
\end{equation}
where $k,w>0$ are so that
\begin{equation}\label{rhklet}
\|W(t,t_0)\|\leq k e^{-w(t-t_0)} \quad \forall t_0\geq0 \quad t\geq t_0.
\end{equation}
\end{enumerate}
\end{thm}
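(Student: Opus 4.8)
The plan is to prove the cycle of equivalences by a combination of previously established results and a direct verification that $V$ in \eqref{eq:non-coercive ISS-LF-linsys} is a non-coercive ISS Lyapunov function. The implications $(i)\Rightarrow(ii)$ and $(i)\Rightarrow(iii)$ are trivial: substituting $u=0$ yields $0$-UGAS, and an ISS estimate with a $\mathcal{K}$-gain on $\|u\|_\mathcal{U}=\sup\|u(s)\|_U$ gives an iISS estimate after observing $\|u\|_\mathcal{U}\le \int_{t_0}^t\mu(\|u(s)\|_U)\,ds$ fails in general — so instead I would route iISS directly, deriving it as a by-product of the Lyapunov function in $(vi)$, exactly as in Theorem \ref{hjljg}. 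The equivalence $(iv)\Leftrightarrow(v)$ is Lemma \ref{lemmmcc1}. The heart of the linear argument is that uniform exponential stability of $\{W(t,s)\}$ plus the variation-of-constants formula \eqref{b5484} gives, via $\|\phi(t,t_0,x_0,u)\|_X\le ke^{-w(t-t_0)}\|x_0\|_X + \frac{k}{w}\|B\|_\infty\|u\|_\mathcal{U}$, that $(v)\Rightarrow(i)$; this is a short direct estimate on the integral term. Thus the scheme I would carry out is $(v)\Rightarrow(i)\Rightarrow(ii)$, $(ii)\Rightarrow(iv)$ (0-UGAS of the linear undisturbed flow is precisely uniform asymptotic stability of $\{W(t,s)\}$), $(iv)\Leftrightarrow(v)$ by Lemma \ref{lemmmcc1}, and then $(v)\Rightarrow(vi)\Rightarrow(i)$ and $(vi)$, together with the dissipation estimate \eqref{rhmnhlh}, yielding $(iii)$ along the lines of Theorem \ref{hjljg}.

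The main work is $(v)\Rightarrow(vi)$: showing the candidate $V(t,x)=\int_t^\infty\|W(\tau,t)x\|_X^2\,d\tau$ is well-defined, non-coercive, locally Lipschitz, and satisfies \eqref{rhmnhlh}. Well-definedness and the upper bound come from \eqref{rhklet}: $V(t,x)\le\int_t^\infty k^2e^{-2w(\tau-t)}\|x\|_X^2\,d\tau=\frac{k^2}{2w}\|x\|_X^2$, so $V(t,x)\le\alpha_2(\|x\|_X)$ with $\alpha_2(r)=\frac{k^2}{2w}r^2$, and positivity for $x\neq0$ follows since $W(t,t)=I$ makes the integrand positive near $\tau=t$ by continuity. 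Local Lipschitz continuity in $x$ (uniformly in $t$) follows from $\big|\|W(\tau,t)x\|^2-\|W(\tau,t)y\|^2\big|\le \|W(\tau,t)(x-y)\|\,(\|W(\tau,t)x\|+\|W(\tau,t)y\|)\le k^2e^{-2w(\tau-t)}\|x-y\|(\|x\|+\|y\|)$ and integrating; continuity in $t$ uses parameter continuity of the evolution family together with dominated convergence using the exponential majorant.

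The genuine obstacle is computing $\dot V_u(t,x)$ for the unbounded case, since $V$ is only continuous and we cannot differentiate under the integral naively. I would proceed as follows: along the mild solution, use the cocycle-type change of variables $W(\tau,t+h)\phi(t+h,t,x,u) = W(\tau,t)x + \int_t^{t+h}W(\tau,s)B(s)u(s)\,ds$ (valid by \eqref{b5484} applied inside $W(\tau,\cdot)$), so that
\begin{align*}
V(t+h,\phi(t+h,t,x,u)) &= \int_{t+h}^\infty\Big\|W(\tau,t)x+\int_t^{t+h}W(\tau,s)B(s)u(s)\,ds\Big\|_X^2 d\tau.
\end{align*}
Expanding the square, the leading term is $\int_{t+h}^\infty\|W(\tau,t)x\|_X^2 d\tau = V(t,x)-\int_t^{t+h}\|W(\tau,t)x\|_X^2 d\tau$, whose difference quotient tends to $-\|x\|_X^2$ as $h\to0^+$ since $W(t,t)x=x$. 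The cross term, after swapping the order of integration and using Lemma \ref{hnlkfrankm}-type estimates with the exponential bound, contributes $2\int_t^\infty\langle W(\tau,t)x, W(\tau,t)B(t)u(t)\rangle\,d\tau$ in the limit (here I would work in the Hilbert-space inner product if $X$ is Hilbert, or use the duality-map argument of \cite{damak2021input} in the Banach case), bounded in modulus by $2\cdot\frac{k^2}{2w}\|x\|_X\|B\|_\infty\|u(t)\|_U$; the quadratic-in-$h$ remainder vanishes. Applying Young's inequality $2ab\le \eta a^2+\eta^{-1}b^2$ with $a=\|x\|_X$, $b=\|B\|_\infty\|u(t)\|_U$ and the factor $\frac{k^2}{2w}$ then gives exactly \eqref{rhmnhlh}. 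Choosing $\eta$ small enough that $1-\frac{\eta k^2}{2w}>0$ shows $V$ satisfies \eqref{R2E} with $\eta(\|x\|_X)=(1-\frac{\eta k^2}{2w})\|x\|_X^2$ positive definite and $\chi(r)=\frac{k^2}{2\eta w}\|B\|_\infty r^2\in\mathcal{K}$, and since $\lim_{\tau\to\infty}\eta(\tau)=\infty$, condition \eqref{R1Ev} holds, so $V$ is a (non-coercive) ISS Lyapunov function in dissipative form; the dissipation inequality also directly yields the iISS estimate of $(iii)$ as in Theorem \ref{hjljg}, and $(vi)\Rightarrow(i)$ follows from the implication-form reduction used there. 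This closes the cycle.
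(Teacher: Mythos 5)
Your treatment of the substantive implication $(v)\Rightarrow(vi)$ is essentially the paper's own argument: the same candidate $V$, the same identity $W(\tau,t+h)\phi(t+h,t,x,u)=W(\tau,t)x+\int_t^{t+h}W(\tau,s)B(s)u(s)\,ds$, the same split into a term whose difference quotient gives $-\|x\|_X^2$ and a cross term handled by dominated convergence together with Lemma \ref{hnlkfrankm}, followed by Young's inequality; the local Lipschitz estimate is also identical. (One small remark: the paper never needs an inner product or a duality map for the cross term — it simply bounds $\|a+b\|^2\le\|a\|^2+\|b\|^2+2\|a\|\,\|b\|$, which works in any Banach space, so your Hilbert/duality caveat is unnecessary.) Your routing of the "soft" implications also differs from the paper's in a reasonable way: you replace the citation of the external $L^1$-ISS result by a direct variation-of-constants estimate for $(v)\Rightarrow(i)$, which is self-contained and correct.

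However, there is a genuine gap in how you close the cycle from $(vi)$. You propose $(vi)\Rightarrow(i)$ and $(vi)\Rightarrow(iii)$ "along the lines of Theorem \ref{hjljg}" and via "the implication-form reduction". But Theorems \ref{hjMMMMMljg} and \ref{hjljg} both require a \emph{coercive} Lyapunov function: their proofs use the lower sandwich bound $\alpha_1(\|x\|_X)\le V(t,x)$ to convert decay of $V$ into decay of $\|x\|_X$. The function $V$ in \eqref{eq:non-coercive ISS-LF-linsys} is only non-coercive (it satisfies \eqref{R1EN}, not \eqref{R1E}), so neither theorem applies, and your chain never returns from $(vi)$ to the rest of the equivalences. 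The paper closes this loop differently: $(vi)\Rightarrow(v)$ is obtained by a Datko-type argument (the finiteness and quadratic upper bound of $\int_t^\infty\|W(\tau,t)x\|_X^2\,d\tau$ forces uniform exponential stability of the evolution family, as in \cite{datko1972uniform}), and then $(v)$ feeds back into the already-established implications. You would need either this Datko step or a genuine non-coercive direct Lyapunov theorem (which this paper does not prove) to make your scheme work. A second, much smaller omission: you never exhibit an implication \emph{out of} $(iii)$; the trivial observation that $(iii)\Rightarrow(ii)$ by setting $u\equiv 0$ suffices and should be stated.
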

\begin{proof}
$(i)\Longleftrightarrow(ii)$. This follows from \cite[Theorem 4.1]{damak2021input}. 

$(iii)\Longrightarrow (ii)$. Evident.

$(i)\Longrightarrow(iii)$. This follows as ISS implies so-called $L^1$-ISS (see again \cite[Theorem 4.1]{damak2021input}).

$(iv)\Longleftrightarrow (v)$ is contained in Lemma~\ref{lemmmcc1}. 

$(iii)\Longrightarrow (iv)$ follows directly from the definition of iISS.

$(v) \Longrightarrow (iv)$ is easy to establish using the exponential
bounds for the evolution operator and the variation of the constants formula.

$(vi) \Longrightarrow (v)$. The proof of this implication is analogous to the proof of \cite[Theorem 1]{datko1972uniform}.

$(v)\Longrightarrow(vi)$. 
 The evolution operator $\{W(t,t_0)\}_{t\geq t_0 \geq 0}$ is uniformly exponentially stable, i.e, there exist $k,w>0$ such that (\ref{rhklet}) holds.
Consider $V:\mathbb{R}_{+} \times X \rightarrow \mathbb{R}_{+},$ as defined in (\ref{eq:non-coercive ISS-LF-linsys}). 
We have for all $t\geq0$ and all $x\in X$ 
\begin{align*}
V(t,x)&\leq\int_{t}^{\infty}\|W(\tau,t)\|^{2}\|x\|^{2}_{X}d\tau \leq\frac{k^{2}}{2w}\|x\|^{2}_{X}.
\end{align*}
Let $V(t,x)=0.$ Then, $\|W(\tau,t)x\|_{X}=0$ for all $\tau\geq t$ and all $t\geq0.$ By the strong continuity of $\{W(t,s)\}_{t\geq s},$ we get that $x=0$ and hence (\ref{R1EN}) holds.  
Next we pick any $x\in X,$ $t\geq0$, $u\in\mathcal{U}$, and estimate the
Lie derivative of $V$ as  
\begin{align*}
\dot{V}_{u}(t,x)&=\displaystyle\limsup _{h \to 0^{+}}\frac{1}{h}\big{(}V(t+h,\phi(t+h,t,x,u))-V(t,x)\big{)}\\
&=\displaystyle\limsup _{h \to 0^{+}}\frac{1}{h}\bigg{(}\int_{t+h}^{\infty}\|W(\tau,t+h)\phi(t+h,t,x,u)\|_{X}^{2}d\tau-\int_{t}^{\infty}\|W(\tau,t)x\|_{X}^{2}d\tau\bigg{)}.
\end{align*}
Since 
\begin{align*}
W(\tau,t+h)\phi(t+h,t,x,u) 
&= W(\tau,t+h)\Big(W(t+h,t)x_0+\int_{t}^{t+h}W(t+h,s)B(s)u(s)ds\Big)\\
&= W(\tau,t)x_0+\int_{t}^{t+h}W(\tau,s)B(s)u(s)ds,
\end{align*}
we obtain
\begin{align*}
\dot{V}_{u}(t,x)&\leq\displaystyle\limsup _{h \to 0^{+}}\frac{1}{h}\bigg{(}\int_{t+h}^{\infty}\Big\|W(\tau,t)x+\int_{t}^{t+h}W(\tau,s)B(s)u(s)ds\Big\|_{X}^{2}d\tau- \int_{t}^{\infty}\|W(\tau,t)x\|_{X}^{2}d\tau\bigg{)}\\
&\leq\displaystyle\limsup _{h \to 0^{+}}\frac{1}{h}\bigg{(}\Big{(}\int_{t+h}^{\infty}\|W(\tau,t)x\|_{X}^{2}-\int_{t}^{\infty}\|W(\tau,t)x\|_{X}^{2}d\tau\Big{)} \\
&\ \ \ \ \ \ \ \ +\int_{t+h}^{\infty}\Big{(}\Big\|\int_{t}^{t+h}W(\tau,s)B(s)u(s)ds\Big\|_{X}^{2}+2\|W(\tau,t)x\|_{X}\Big\|\int_{t}^{t+h}W(\tau,s)B(s)u(s)ds\Big\|_X \Big{)}d\tau\bigg{)}\\
&\leq J_{1}+J_{2},
\end{align*}
where
 $$J_1=\displaystyle\limsup _{h \to 0^{+}}\frac{1}{h}\Big{(}\int_{t+h}^{\infty}\|W(\tau,t)x\|_{X}^{2}-\int_{t}^{\infty}\|W(\tau,t)x\|_{X}^{2}d\tau\Big{)},$$
 and
$$J_2=\displaystyle\limsup _{h \to 0^{+}}\frac{1}{h}\int_{t+h}^{\infty}\bigg{(}\Big\|\int_{t}^{t+h}W(\tau,s)B(s)u(s)ds\Big\|_{X}^{2} + 2\|W(\tau,t)x\|_{X}\Big\|\int_{t}^{t+h}W(\tau,s)B(s)u(s)ds\Big\|_{X} \bigg{)}d\tau.$$
We have $$J_1=\displaystyle\limsup _{h \to 0^{+}}\frac{1}{h}\bigg{(}-\int_{t}^{t+h}\|W(\tau,t)x\|_{X}^{2}d\tau\bigg{)}=-\|x\|_{X}^{2}.$$

Now we proceed with $J_2:$
\begin{align*}
J_2&\leq\displaystyle\limsup _{h \to 0^{+}}\int_{t}^{\infty}\frac{1}{h}\Big\|\int_{t}^{t+h}W(\tau,s)B(s)u(s)ds\Big\|_{X}^{2}d\tau+\displaystyle\limsup _{h \to 0^{+}}\int_{t}^{\infty}2\|W(\tau,t)x\|_{X}\Big\|\frac{1}{h}\int_{t}^{t+h}W(\tau,s)B(s)u(s)ds\Big\|_{X}d\tau.
\end{align*}

The limit of the first term equals zero since
\begin{multline*}
\limsup _{h \to
  0^{+}}\frac{1}{h}\int_{t+h}^{\infty}\Big\|\int_{t}^{t+h}W(\tau,s)B(s)u(s)ds\Big\|_{X}^{2}d\tau
\leq k^{2} \|u\|^2_{\mathcal{U}}\|B\|^2_{\infty} \limsup _{h \to 0^{+}} \int_{t+h}^{\infty}\frac{1}{h} 
\left(\int_{t}^{t+h} e^{-w (\tau-s)} ds \right)^2
d\tau\\
= k^{2} \|u\|^2_{\mathcal{U}}\|B\|^2_{\infty} \limsup _{h \to 0^{+}}
 \frac{\left( e^{wh}-1\right)^2 }{hw^2} \int_{t+h}^{\infty} e^{-2w(\tau-t)}
   d\tau = 0.  
\end{multline*}
To bound the limit of the second term, note that for each fixed $\tau$
\begin{align*}
2\|W(\tau,t)x\|_{X}\left\|\frac{1}{h}\int_{t}^{t+h}W(\tau,s)B(s)u(s)ds\right\|_{X}
&\leq 2k^{2}e^{-2w(\tau-t)}\|x\|_{X}\|B\|_{\infty}\|u\|_{\mathcal{U}}
\frac{1}{wh}\left(e^{wh -1 }\right) .
\end{align*}
Applying the dominated convergence theorem and Lemma \ref{hnlkfrankm}, we obtain 
\begin{align}\label{jhgfyjk}
J_2&=\displaystyle\limsup _{h \to 0^{+}}\int_{t}^{\infty}2\|W(\tau,t)x\|_{X}\Big\|W(\tau,t)\int_{t}^{t+h}\frac{1}{h}W(t,s)B(s)u(s)ds\Big\|_{X}d\tau \notag\\
&=\int_{t}^{\infty}2\|W(\tau,t)x\|_{X}\|W(\tau,t)B(t)u(t)\|_{X}d\tau.
\end{align}
By using Young's inequality we obtain for any $\eta> 0$ that
\begin{equation}\label{nai23mr}
J_2\leq\int_{t}^{\infty}\eta\|W(\tau,t)x\|_{X}^{2}+\frac{1}{\eta}\|W(\tau,t)B(t)u(t)\|_{X}^{2}d\tau.
\end{equation}
Then by (\ref{rhklet}), we have
\begin{align*}
J_2&\leq\frac{\eta k^{2}}{2w}\|x\|^{2}_{X}+\frac{k^{2}}{2\eta w}\|B(t)\|^{2}\|u(t)\|^{2}_{U}\\
&\leq\frac{\eta k^{2}}{2w}\|x\|^{2}_{X}+\frac{k^{2}}{2\eta w}\|B\|^{2}_{\infty}\|u(t)\|^{2}_{U}.
\end{align*}
Therefore, 
$$\dot{V}_{u}(t,x)\leq -\|x\|_{X}^{2} + \frac{\eta k^{2}}{2w}\|x\|^{2}_{X} + \frac{k^{2}}{2\eta w}\|B\|^{2}_{\infty}\|u(t)\|^{2}_{U}.$$
Overall, for all $x\in X,$ all $u\in\mathcal{U},$ all $t\geq0,$ and all $\eta>0$
we obtain that the inequality (\ref{rhmnhlh}) holds. Considering $\eta <\frac{2w}{k^{2}}$ this shows that V is a non-coercive ISS Lyapunov function in dissipative form for (\ref{unb}). 

For the local Lipschitz continuity of $V,$ pick any $r>0,$ any $x,y\in X$ with $\|x\|_{X},\|y\|_{X}<r$ and any $t\geq0.$ We have: 
\begin{align*}
|V(t,x)-V(t,y)|&=\bigg{|}\int_{t}^{\infty}\|W(\tau,t)x\|_{X}^{2} - \|W(\tau,t)y\|_{X}^{2}d\tau\bigg{|}\\
&\leq\int_{t}^{\infty}\big{|}\|W(\tau,t)x\|_{X}^{2}-\|W(\tau,t)y\|_{X}^{2}\big{|}d\tau\\
&=\int_{t}^{\infty}\bigg{|}\big{(}\|W(\tau,t)x\|_{X}-\|W(\tau,t)y\|_{X}\big{)}\big{(}\|W(\tau,t)x\|_{X}+\|W(\tau,t)y\|_{X}\big{)}\bigg{|}d\tau.
\end{align*}
Since $\big{|}\|a\|_{X}-\|b\|_{X}\big{|}\leq\|a-b\|_{X},$ 
for all $a,b\in X$ and since $\|x\|_{X},\|y\|_{X}<r,$ one has by (\ref{rhklet}) that 
\begin{align*}
|V(t,x)-V(t,y)|&\leq\int_{t}^{\infty}2kr\|W(\tau,t)(x-y)\|_{X}d\tau\\
&\leq2kr\int_{t}^{\infty}ke^{-w(\tau-t)}\|x-y\|_{X}d\tau\\
&\leq\frac{2k^2r}{w}\|x-y\|_{X},
\end{align*}
which shows that $V$ is locally Lipschitz.

\end{proof}

\section{\sectiontitle{Lyapunov methods for LISS and iISS of semi-linear systems}}

We reformulate the system (\ref{R1}) in the following form on a Banach space $X$:
\begin{equation}\label{psiunb}
\left\lbrace
\begin{array}{l}
\dot{x}(t)=A(t)x(t)+B(t)u(t)+\psi(t,x(t),u(t)),\qquad t\geq t_0 \geq 0,\\
x(t_0)=x_0,
\end{array}\right.
\end{equation}
where $\psi:\mathbb{R}_{+} \times X\times U\rightarrow X$ satisfies the
Assumption $(\mathcal{H}_1)$ and $B\in C(\mathbb{R}_{+},L(U,X)),$ with
$\displaystyle \sup_{t\geq0}\|B(t)\|<\infty.$ As in the case of system (\ref{R1}), we assume that inputs belong to the space $\mathcal{U}=PC(\mathbb{R}_+,U).$
In this section, we prove two Lyapunov results. In Theorem \ref{hndrhn23},
we construct a (non-)coercive LISS Lyapunov function for the system
(\ref{psiunb}). In Theorem \ref{hndkrnm}, under a certain condition on $\psi,$ we construct a (non-)coercive iISS Lyapunov function for the system (\ref{psiunb}). 
Since $B$ is continuous and $u$ is piecewise continuous, $\Psi(t,x,u)=B(t)u+\psi(t,x,u)$ satisfies the Assumption $(\mathcal{H}_1)$ and according to Proposition \ref{propooo1}, there is a unique maximal mild solution $\phi(\cdot,t_0,x_0,u)\in C([t_0,t_m],X)$ of system (\ref{unb}) for any data $(t_0,x_0,u),$
where $0<t_m=t_m(t_0,x_0,u)\leq\infty.$

\subsection{\subsectiontitle{Constructions of LISS Lyapunov functions}}

In this subsection, we provide a method for the construction of
non-coercive LISS Lyapunov functions for the system (\ref{psiunb}) under a
local linear boundedness assumption.

\begin{description}
\item[$(\mathcal{H}_2)$]  For each $a>0$, there exist $b>0$ and (sufficiently small) $\rho>0$ such that for all $x \in X,$ all $t\geq 0,$ all $u\in \mathcal{U}$ satisfying $\|x\|_{X},\|u\|_{\mathcal{U}}\leq\rho$ it holds that
\begin{equation}
\label{psiiii}
\|\psi(t,x,u)\|_{X} \leq a\|x\|_{X}+b\|u\|_{\Uc}.
\end{equation}
\end{description}

In the following theorem, we outline the conditions for constructing a non-coercive LISS Lyapunov function for the system (\ref{psiunb}).

\begin{thm}\label{hndrhn23}
Let the family $\{A(t)\}_{t\geq0}$ be the generator of a strongly
continuous evolution family $\{W(t,s)\}_{t\geq s\geq0}.$ Assume
$B\in C(\mathbb{R}_{+},L(U,X)),$ with $\|B\|_{\infty}=\sup_{t\in \mathbb{R_{+}}}\|B(t)\|<\infty.$
Consider system
(\ref{psiunb}) and assume that Assumptions $(\mathcal{H}_1)$ and $(\mathcal{H}_2)$ hold. 
If (\ref{unb}) is 0-UGAS, then the function $V:\mathbb{R}_{+} \times X \rightarrow \mathbb{R}_{+},$ defined as in (\ref{rrrrhnlkfn}) by
$$V(t,x)=\int_{t}^{\infty}\|W(\tau,t)x\|_{X}^{2}d\tau,$$
is a non-coercive LISS Lyapunov function for (\ref{psiunb}).
\end{thm}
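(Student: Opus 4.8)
The plan is to bootstrap from Theorem \ref{hdkmnmlo}. Since (\ref{unb}) is 0-UGAS, the equivalences $(ii)\Leftrightarrow(v)\Leftrightarrow(vi)$ of Theorem \ref{hdkmnmlo} apply: the evolution family is uniformly exponentially stable, so fix $k,w>0$ with $\|W(t,t_0)\|\leq ke^{-w(t-t_0)}$ for all $t\geq t_0\geq0$, and the very function $V(t,x)=\int_t^\infty\|W(\tau,t)x\|_X^2\,d\tau$ is already a non-coercive ISS Lyapunov function for the \emph{linear} system (\ref{unb}). In particular $V$ is locally Lipschitz, $V(t,0)=0$, and $0<V(t,x)\leq\frac{k^2}{2w}\|x\|_X^2$ for all $t\geq0$ and all $x\in X\setminus\{0\}$. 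Hence every requirement of Definition \ref{DEFINITION2} except the dissipation implication is inherited for free, and the whole task reduces to re-estimating $\dot{V}_{u}(t,x)$ along the trajectories of the \emph{nonlinear} system (\ref{psiunb}) for $\|x\|_X$ and $\|u\|_{\mathcal{U}}$ small.

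For that step I would repeat, almost line by line, the computation of the implication $(v)\Rightarrow(vi)$ in the proof of Theorem \ref{hdkmnmlo}, the only change being that the mild-solution formula for (\ref{psiunb}) together with the evolution property yields
\[
W(\tau,t+h)\phi(t+h,t,x,u)=W(\tau,t)x+\int_t^{t+h}W(\tau,s)g(s)\,ds,\qquad g(s):=B(s)u(s)+\psi\bigl(s,\phi(s,t,x,u),u(s)\bigr).
\]
Restricting to $\|x\|_X\leq\rho$ and $\|u\|_{\mathcal{U}}\leq\rho$ with $\rho$ furnished by $(\mathcal{H}_2)$, continuity of the flow (Proposition \ref{propooo1}) together with $(\mathcal{H}_2)$ makes $s\mapsto g(s)$ bounded on a right neighbourhood of $t$ and right-continuous at $t$, with $g(t)=B(t)u(t)+\psi(t,x,u(t))$. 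Expanding $\|a+b\|_X^2\leq(\|a\|_X+\|b\|_X)^2$, the $\|W(\tau,t)x\|_X^2$ part produces $-\|x\|_X^2$ exactly as the term $J_1$ there; the remaining terms are disposed of by the identical dominated-convergence argument combined with the analogue of Lemma \ref{hnlkfrankm} applied to $g$ (which has the same structural properties as $s\mapsto B(s)u(s)$), so that, exactly as in the passage leading to (\ref{nai23mr})--(\ref{rhmnhlh}), for every $\eta>0$
\[
\dot{V}_{u}(t,x)\leq-\|x\|_X^2+\int_t^\infty 2\|W(\tau,t)x\|_X\,\|W(\tau,t)g(t)\|_X\,d\tau\leq-\|x\|_X^2+\frac{\eta k^2}{2w}\|x\|_X^2+\frac{k^2}{2\eta w}\|g(t)\|_X^2 .
\]

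It remains to absorb the nonlinearity. By $(\mathcal{H}_2)$ one has $\|g(t)\|_X\leq a\|x\|_X+(b+\|B\|_\infty)\|u\|_{\mathcal{U}}$, hence $\|g(t)\|_X^2\leq 2a^2\|x\|_X^2+2(b+\|B\|_\infty)^2\|u\|_{\mathcal{U}}^2$. First fix $\eta:=\frac{w}{2k^2}$, so the middle term above becomes $\frac14\|x\|_X^2$; then apply $(\mathcal{H}_2)$ with $a$ so small that $\frac{k^2a^2}{\eta w}<\frac14$, and let $\rho,b$ be the associated data. The coefficient of $\|x\|_X^2$ is then $\leq-\frac12$, giving $\dot{V}_{u}(t,x)\leq-\frac12\|x\|_X^2+C\|u\|_{\mathcal{U}}^2$ for $\|x\|_X\leq\rho$, $\|u\|_{\mathcal{U}}\leq\rho$, with $C:=\frac{k^2(b+\|B\|_\infty)^2}{\eta w}$. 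Combining this with $V(t,x)\leq\frac{k^2}{2w}\|x\|_X^2$ yields the implication form of Definition \ref{DEFINITION2}: picking $\kappa\in\mathcal{K}$ with $\kappa(r)=2\sqrt{C}\,r$ (any larger $\mathcal{K}$-function also works) and $\mu(s):=\frac{w}{2k^2}s$, whenever $\|x\|_X\geq\kappa(\|u\|_{\mathcal{U}})$ we get $C\|u\|_{\mathcal{U}}^2\leq\frac14\|x\|_X^2$, hence $\dot{V}_{u}(t,x)\leq-\frac14\|x\|_X^2\leq-\mu(V(t,x))$. Taking $D:=X$, $r_1:=r_2:=\rho$, $\alpha_2(s):=\frac{k^2}{2w}s^2$, and using the continuity, positivity and upper bound of $V$ inherited from Theorem \ref{hdkmnmlo}, this is exactly the definition of a non-coercive LISS Lyapunov function for (\ref{psiunb}).

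The main obstacle is bookkeeping rather than conceptual: carefully re-running the $\limsup$/dominated-convergence estimates of Theorem \ref{hdkmnmlo} with $g$ in place of $B(\cdot)u(\cdot)$, which needs the local boundedness and right-continuity at $s=t$ of $s\mapsto\psi(s,\phi(s,t,x,u),u(s))$ — available precisely on the region where $(\mathcal{H}_2)$ applies, by continuity of the flow. The one genuinely new ingredient is the constant tuning at the end: $(\mathcal{H}_2)$ trades an arbitrarily small coefficient $a$ for an arbitrarily small radius $\rho$, and one has to check that a sufficiently small $a$ keeps the $\|x\|_X^2$-coefficient negative after the Young split — this is exactly why only a \emph{local} (LISS, not ISS) Lyapunov function is obtained.
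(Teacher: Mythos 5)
Your proposal is correct and follows essentially the same route as the paper: both re-run the Dini-derivative computation of Theorem \ref{hdkmnmlo} with $B(t)u(t)$ replaced by $B(t)u(t)+\psi(t,x(t),u(t))$, invoke $(\mathcal{H}_2)$ on the small ball, and tune the constants so that the coefficient of $\|x\|_{X}^{2}$ stays negative. The only divergence is in the endgame algebra: the paper keeps the cross term $\|x\|_{X}\|u\|_{\mathcal{U}}$ and absorbs it with the nonlinear gain $\kappa(r)=\sqrt{r}$ (leaving a $\rho\|x\|_{X}^{2}$ remainder that forces $\rho$ small as well), whereas you split it by Young's inequality and use a linear gain, which has the minor advantage that negativity of the $\|x\|_{X}^{2}$-coefficient depends only on the choice of $a$.
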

\begin{proof}
%
%
%
%
Let (\ref{unb}) is 0-UGAS and pick $u\equiv0.$ According to Theorem
\ref{hdkmnmlo}, the evolution family $\{W(t,s)\}_{t\geq s\geq0}$ generated
by $\{A(t)\}_{t\geq0}$ is uniformly exponentially stable, that is there
exist $k,w>0$ such that $\|W(t,t_0)\|\leq ke^{-\omega(t-t_0)} $ holds for
all $t\geq t_0 \geq 0$.

Pick $a,b,\rho>0$ as in Assumption $(\mathcal{H}_2)$. For all $x \in X,$
all $u\in \Uc$ with $\|x\|_{X}, \|u\|_{\mathcal{U}}\leq\rho$, we apply a
similar analysis as in the proof of Theorem \ref{hdkmnmlo}, with the distinction that instead of $B(t)u(t)$, we consider $B(t)u(t) + \psi(t,x(t),u(t))$ in (\ref{jhgfyjk}).
Then, we obtain that 
\begin{align*}
\dot{V}_{u}(t,x)&\leq -\|x\|_{X}^{2}+\int_{t}^{\infty}2\|W(\tau,t)x\|\Big\|W(\tau,t)\Big(B(t)u(t)+\psi(t,x(t),u(t))\Big)\Big\|_{X}d\tau\\
&\leq -\|x\|_{X}^{2}+2\int_{t}^{\infty}\|W(\tau,t)\|^{2}\|x\|_{X}\|B(t)u(t)+\psi(t,x(t),u(t))\|_{X}d\tau\\
&\leq -\|x\|_{X}^{2}+\frac{k^{2}}{w}\|x\|_{X}\big{(}\|B(t)\|\|u(t)\|_{U}+\|\psi(t,x(t),u(t))\|_{X}\big{)}.
\end{align*}
 We continue the above estimates: 
\begin{align*}
\dot{V}_{u}(t,x)
&\leq -\|x\|_{X}^{2}+\frac{k^{2}}{w}\|x\|_{X}\bigg{(}\|B\|_{\infty}\|u(t)\|_{U}+a\|x\|_{X}+b\|u(t)\|_{U}\bigg{)}\\
&\leq\big{(} -1+\frac{k^{2}}{w}a\big{)}\|x\|_{X}^{2}+\frac{k^{2}}{w}\|x\|_{X}\big{(}\|B\|_{\infty}+b\big{)}\|u\|_{\mathcal{U}},
\end{align*}
where $\|B\|_{\infty}=\sup_{t\geq0}\|B(t)\|.$
Define $\kappa\in\mathcal{K} $ by $\kappa(r)=\sqrt{r},$ $r\geq0$. 
Whenever $\|u\|_{\mathcal{U}}\leq \kappa^{-1}(\|x\|_{X})=\|x\|^{2}_{X},$ we yield
\begin{align*}
\dot{V}_{u}(t,x)
&\leq\big{(} -1+\frac{k^{2}}{w}a\big{)}\|x\|_{X}^{2}+\frac{k^{2}}{w}\big{(}\|B\|_{\infty}+b\big{)}\|x\|_{X}^{3}\\
&\leq\big{(} -1+\frac{k^{2}}{w}a\big{)}\|x\|_{X}^{2}+\frac{k^{2}}{w}\rho\big{(}\|B\|_{\infty}+b\big{)}\|x\|_{X}^{2}.
\end{align*}
Since $a$ and $\rho$ can be chosen arbitrarily small, the right-hand side can be estimated from above by some negative quadratic function of $\|x\|_{X}$.
According to Theorem \ref{hjMMMMMljg}, $V$ is a non-coercive LISS Lyapunov function for (\ref{psiunb}).


\end{proof}
\subsection{\subsectiontitle{Constructions of iISS Lyapunov functions}}

Now, we turn our attention to time-varying bilinear systems of the form (\ref{psiunb}). 
In \cite{MiI16}, the equivalence between uniform global asymptotic stability and iISS was shown for bilinear time-invariant infinite-dimensional control systems. Additionally, in \cite{mironchenko2015note} a method for the construction of non-coercive iISS Lyapunov functions was presented when the state is a Banach space. It was further extended in \cite{damak2021input} to bilinear time-varying infinite-dimensional control systems, where the operators in the family $\{A(t)\}_{t\geq0}$ are bounded. We will generalize these results to time-varying bilinear infinite-dimensional control systems where the operators in the family $\{A(t)\}_{t\geq0}$ are unbounded. Together with the results from \cite[Proposition 5]{mironchenko2015note}, we establish Lyapunov characterization of iISS for (\ref{psiunb}). For this purpose, we impose the following assumption
\begin{description}
\item[$(\mathcal{H}_3)$]  There exist $\gamma>0$ and $\delta\in \mathcal{K},$ so that for all $x\in X$, all $u\in \mathcal{U},$ and all $t\geq 0,$ we have
\begin{equation}\label{ibiISS}
\|\psi(t,x,u)\|_{X}\leq\gamma\|x\|_{X}\delta(\|u\|_{\mathcal{U}}).
\end{equation}
\end{description}

\begin{thm}\label{hndkrnm}
Let the family $\{A(t)\}_{t\geq0}$ be the generator of a strongly
continuous evolution family $\{W(t,s)\}_{t\geq s\geq0}.$ Assume
$B\in C(\mathbb{R}_{+},L(U,X)),$ with $\|B\|_{\infty}=\sup_{t\in \mathbb{R_{+}}}\|B(t)\|<\infty.$
Consider system
(\ref{psiunb}) and
let Assumptions $(\mathcal{H}_1)$ and $(\mathcal{H}_3)$ hold.
Assume that the evolution operator $\{W(t,s)\}_{t\geq s\geq0}$ is uniformly exponentially stable and 
let $V$ be defined as in (\ref{rrrrhnlkfn}). 
If $\{W(t,s)\}_{t\geq s\geq0}$ satisfies
\begin{equation}
\label{nmin23}
\|W(t,s)x\|_{X}\geq Me^{-\lambda(t-s)}\|x\|_{X},\qquad x \in X,\quad t \ge s\ge 0,
\end{equation}
then
\begin{equation}\label{rhnkdrnhp}
Z(t,x):=\ln(1+V(t,x)),\qquad x \in X,\quad t\geq0,
\end{equation}
is a coercive iISS Lyapunov function for (\ref{psiunb}). 
In particular,  (\ref{psiunb}) is iISS.
\end{thm}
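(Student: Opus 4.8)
The plan is to use the classical device for integral-ISS of bilinear systems: take $Z=\ln(1+V)$ with $V$ the quadratic-type Lyapunov function of Theorem~\ref{hdkmnmlo}. Taking the logarithm will convert the ``bad'' bilinear term $\delta(\|u(t)\|_U)\|x\|_X^2$ appearing in $\dot V_u$ into a term depending on $\|u(t)\|_U$ only, once one divides by $1+V$, which (thanks to the lower bound \eqref{nmin23}) grows like $\|x\|_X^2$. First I would record the sandwich bounds: uniform exponential stability gives $\|W(\tau,t)\|\le ke^{-w(\tau-t)}$, hence $V(t,x)\le\tfrac{k^2}{2w}\|x\|_X^2$ for all $t\ge0$, $x\in X$, while \eqref{nmin23} gives $V(t,x)\ge\int_t^\infty M^2e^{-2\lambda(\tau-t)}\|x\|_X^2\,d\tau=\tfrac{M^2}{2\lambda}\|x\|_X^2$. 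Consequently $Z(t,0)=0$, $Z(t,x)>0$ for $x\ne0$, and
\[
\ln\!\Big(1+\tfrac{M^2}{2\lambda}\|x\|_X^2\Big)\le Z(t,x)\le\ln\!\Big(1+\tfrac{k^2}{2w}\|x\|_X^2\Big),\qquad t\ge0,\ x\in X,
\]
where both outer functions belong to $\mathcal{K}_{\infty}$; thus $Z$ is continuous and coercive, i.e. it satisfies \eqref{R1EN} and \eqref{R1E}.

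Next I would obtain the dissipation inequality for $V$. Repeating the Lie-derivative computation of Theorem~\ref{hdkmnmlo} with $B(t)u(t)$ replaced by $B(t)u(t)+\psi(t,x,u(t))$ — exactly as in the proof of Theorem~\ref{hndrhn23}, using Lemma~\ref{hnlkfrankm} together with the dominated convergence theorem to pass to the limit $h\to0^+$ — one gets
\[
\dot{V}_{u}(t,x)\le -\|x\|_X^2+\int_t^\infty 2\|W(\tau,t)x\|_X\,\big\|W(\tau,t)\big(B(t)u(t)+\psi(t,x,u(t))\big)\big\|_X\,d\tau .
\]
Estimating $\|W(\tau,t)\cdot\|_X\le ke^{-w(\tau-t)}\|\cdot\|_X$, integrating over $\tau$, and invoking $(\mathcal{H}_3)$ yields
\[
\dot{V}_{u}(t,x)\le -\|x\|_X^2+\frac{k^2\|B\|_\infty}{w}\,\|x\|_X\|u(t)\|_U+\frac{k^2\gamma}{w}\,\delta(\|u(t)\|_U)\,\|x\|_X^2 .
\]

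It remains to transfer this to $Z$ and conclude. Since $s\mapsto\ln(1+s)$ is $C^1$ and increasing and $t\mapsto V(t,\phi(t,t_0,x_0,u))$ is continuous, the mean value theorem gives $\dot{Z}_{u}(t,x)=\dot{V}_{u}(t,x)/(1+V(t,x))$. Dividing the last inequality by $1+V(t,x)$: the first term is at most $-\|x\|_X^2/(1+\tfrac{k^2}{2w}\|x\|_X^2)=:-\eta(\|x\|_X)$ with $\eta\in\mathcal{P}$ (using $V(t,x)\le\tfrac{k^2}{2w}\|x\|_X^2$); in the second term use $1+V(t,x)\ge 1+\tfrac{M^2}{2\lambda}\|x\|_X^2\ge M\sqrt{2/\lambda}\,\|x\|_X$, so $\|x\|_X/(1+V(t,x))\le\sqrt{\lambda}/(\sqrt2\,M)$; in the third term (which vanishes at $x=0$) use $1+V(t,x)\ge V(t,x)\ge\tfrac{M^2}{2\lambda}\|x\|_X^2$, so $\|x\|_X^2/(1+V(t,x))\le 2\lambda/M^2$. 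Altogether
\[
\dot{Z}_{u}(t,x)\le -\eta(\|x\|_X)+\chi(\|u(t)\|_U),\qquad \chi(r):=\frac{k^2\|B\|_\infty\sqrt\lambda}{\sqrt2\,wM}\,r+\frac{2\lambda k^2\gamma}{wM^2}\,\delta(r),
\]
with $\chi\in\mathcal{K}$. Hence $Z$ is a coercive iISS Lyapunov function in dissipative form, and since \eqref{psiunb} satisfies the BIC property by Proposition~\ref{propooo1}, Theorem~\ref{hjljg} gives that \eqref{psiunb} is iISS.

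The main obstacle is the second step: as in the proof of Theorem~\ref{hdkmnmlo}, one must carefully justify the interchange of $\limsup_{h\to0^+}$ with the improper $\tau$-integral in the estimate of $\dot V_u$, and identify the limit $\tfrac1h\int_t^{t+h}W(t,s)\psi(s,\phi(s,t,x,u),u(s))\,ds\to\psi(t,x,u(t))$; this rests on the continuity of $s\mapsto\phi(s,t,x,u)$ at $s=t$, the right-continuity of $u$, and $(\mathcal{H}_1)$ to produce a dominating function on a right neighbourhood of $t$. Everything else — the sandwich bounds, the comparison-function bookkeeping, and the application of Theorem~\ref{hjljg} — is routine.
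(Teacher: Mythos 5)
Your proposal is correct and follows essentially the same route as the paper: the sandwich bounds $\tfrac{M^2}{2\lambda}\|x\|_X^2\le V(t,x)\le\tfrac{k^2}{2w}\|x\|_X^2$, the Lie-derivative computation of Theorem~\ref{hdkmnmlo} with $B(t)u(t)$ replaced by $B(t)u(t)+\psi(t,x,u)$, division by $1+V$, and an appeal to Theorem~\ref{hjljg}. The only (harmless) difference is bookkeeping: the paper first applies Young's inequality and $\|a+b\|^2\le2\|a\|^2+2\|b\|^2$ so the input terms appear as $\|u(t)\|_U^2$ and $\delta^2(\|u(t)\|_U)$, whereas you keep the cross term and bound $\|x\|_X/(1+V)$ and $\|x\|_X^2/(1+V)$ directly via the lower sandwich bound, arriving at an equivalent dissipation inequality.
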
%
\begin{proof}
In view of \eqref{nmin23}, we have the following sandwich bounds:
\begin{align}
\label{eq:Sandwich-bounds-V}
\frac{2\gamma^{2}\lambda}{ M^{2}} \|x\|^{2}_{X} \leq V(t,x)&\leq\int_{t}^{\infty}\|W(\tau,t)\|^{2}\|x\|^{2}_{X}d\tau \leq\frac{k^{2}}{2w}\|x\|^{2}_{X},\qquad x \in X,\quad t\geq 0.
\end{align}

Defining $Z$ as in (\ref{rhnkdrnhp}) yields for any $u\in\mathcal{U}$ and any $t\geq 0$ the estimate:
\begin{eqnarray}\label{hkjkklhjj}
  \dot{Z}_{u}(t,x) &=& \frac{1}{1+V(t,x)} \dot{V}_{u}(t,x).
\end{eqnarray}
Using a similar analysis as in the proof of Theorem \ref{hdkmnmlo}, with the difference that instead of $B(t)u(t)$, we consider $B(t)u(t) + \psi(t,x(t),u(t))$ in (\ref{nai23mr}), we obtain the following estimate for any $\eta > 0$
$$
\dot{V}_{u}(t,x)\leq-\|x\|_{X}^{2}+\int_{t}^{\infty}\eta\|W(\tau,t)x\|_{X}^{2}d\tau+\frac{1}{\eta}\int_{t}^{\infty}\|W(\tau,t)\|_{X}^{2}\|B(t)u(t)+\psi(t,x(t),u(t)\|_{X}^{2}d\tau.
$$
Since the evolution family $\{W(t,s)\}_{t\geq s\geq0}$ generated by $\{A(t)\}_{t\geq0}$ is uniformly exponentially stable, there exist $k,w>0$ such that $\|W(t,t_0)\|\leq ke^{-\omega(t-t_0)} $ holds. It follows that
$$
\dot{V}_{u}(t,x)\leq\bigg{(}\frac{nk^{2}}{2w}-1\bigg{)}\|x\|_{X}^{2}+\frac{k^{2}}{2\eta w}\|B(t)u(t)+\psi(t,x(t),u(t)\|_{X}^{2}.
$$
Using the inequality $\|a+b\|_{X}^{2}\leq2\|a\|_{X}^{2}+2\|b\|_{X}^{2},$
for all $a, b\in X,$ we obtain
\begin{align*}
\dot{V}_{u}(t,x)&\leq\bigg{(}\frac{nk^{2}}{2w}-1\bigg{)}\|x\|_{X}^{2}+\frac{k^{2}}{\eta w} \big{(}\|B(t)\|^{2}\|u(t)\|^{2}_{U} + \|\psi(t,x(t),u(t)\|_{X}^{2} \big{)} \\
&\leq\bigg{(}\frac{nk^{2}}{2w}-1\bigg{)}\|x\|_{X}^{2}+\frac{k^{2}}{\eta w} \Big{(}\|B\|^{2}_{\infty}\|u(t)\|^{2}_{U}+{\gamma}^{2} \|x\|_{X}^{2}\delta^{2}(\|u(t)\|_{U})\Big{)}.
\end{align*}
Now (\ref{hkjkklhjj}) yields
\begin{equation}\label{hkdrharh32}
 \dot{Z}_{u}(t,x)= \frac{1}{1+V(t,x)}\bigg{(}\bigg{(}\frac{nk^{2}}{2w}-1\bigg{)}\|x\|_{X}^{2}+\frac{k^{2}}{\eta w} \Big{(}\|B\|^{2}_{\infty}\|u(t)\|^{2}_{U}+{\gamma}^{2} \|x\|_{X}^{2}\delta^{2}(\|u(t)\|_{U})\Big{)}\bigg{)}.
\end{equation}
Pick $\eta\in(0,\frac{2w}{k^{2}}),$ such that $\frac{nk^{2}}{2w}-1<0$ and due to (\ref{hkdrharh32}), we get
\begin{align*}
\dot{Z}_{u}(t,x)&\leq\bigg{(}\frac{nk^{2}}{2w}-1\bigg{)} \bigg{(} \frac{\|x\|_{X}^{2}}{1+\frac{k^{2}}{2 w}\|x\|_{X}^{2}}\bigg{)}+\frac{k^{2}}{\eta w}\|B\|^{2}_{\infty}\|u(t)\|^{2}_{U}+\gamma^{2}\bigg{(}\frac{\|x\|_{X}^{2}}{1+V(t,x)}\bigg{)}\delta^{2}(\|u(t)\|_{U})
\end{align*}
Using sandwich bounds \eqref{eq:Sandwich-bounds-V}, we have
\begin{align*}
\dot{Z}_{u}(t,x)&\leq\bigg{(}\frac{nk^{2}}{2w}-1\bigg{)} \bigg{(} \frac{\|x\|_{X}^{2}}{1+\frac{k^{2}}{2 w}\|x\|_{X}^{2}}\bigg{)}+\frac{k^{2}}{\eta w}\|B\|^{2}_{\infty}\|u(t)\|^{2}_{U}+\gamma^{2}\frac{\|x\|_{X}^{2}}{1+\frac{M^{2}}{2\lambda}\|x\|_{X}^{2}}\delta^{2}(\|u(t)\|_{U})\\
&\leq\bigg{(}\frac{nk^{2}}{2w}-1\bigg{)} \bigg{(} \frac{\|x\|_{X}^{2}}{1+\frac{k^{2}}{2 w}\|x\|_{X}^{2}}\bigg{)}+\frac{k^{2}}{\eta w}\|B\|^{2}_{\infty}\|u(t)\|^{2}_{U}+\frac{2\gamma^{2}\lambda}{ M^{2}}\delta^{2}(\|u(t)\|_{U}).
\end{align*}
This establishes the dissipativity inequality for $V$. In view of \eqref{eq:Sandwich-bounds-V}, the map $Z$ satisfies the sandwich bounds. 
Hence, $Z$ is a coercive iISS Lyapunov function for (\ref{psiunb}).
According to Theorem \ref{hjljg}, (\ref{psiunb}) is iISS.
\end{proof}

\section{\sectiontitle{Examples}}


In this subsection, we will analyze two examples showing the applicability of our methods.
\begin{exm}
Consider
the controlled time-varying Kuramoto-Sivashinsky (KS) equation
\begin{equation}\label{1EX}
\displaystyle\frac{\partial x(z,t)}{\partial t}=-\displaystyle\frac{\partial^{4} x(z,t)}{\partial z^{4}}-\varrho \displaystyle\frac{\partial^{2} x(z,t)}{\partial z^{2}}-\mu(t)x(z,t)+\frac{x(z,t)|\sin(t)|}{1+e^{-z t}x^{2}(z,t)}u(z,t),\quad z\in(0,1),\ t\geq t_0>0,\\
\end{equation}
where $t_0$ is the initial time, $\varrho > 0$ is known as the anti-diffusion parameter, $\mu:\mathbb{R}_+\to \mathbb{R}_+$ is a continuously differentiable function.
Consider (\ref{1EX}) with homogenous Dirichlet boundary conditions:
\begin{equation}\label{R1jjd}
x(0,t)=\displaystyle\frac{\partial x}{\partial z}(0,t)=x(1,t)=\displaystyle\frac{\partial x}{\partial z}(1,t)=0,\quad t\geq t_0>0.
\end{equation}
Let $X=L^{2}(0,1)$ and $U=PC(0,1).$

Define 
$$
B:=-\displaystyle\frac{\partial^{4}}{\partial z^{4}}-\varrho \displaystyle\frac{\partial^{2}}{\partial z^{2}},
$$
with domain $D(B)=H^{4}(0,1)\cap H_0^{2}(0,1)$. 

Define the family of operators $\{A(t)\}_{t\geq 0}$ by
$$\displaystyle A(t)f:=Bf-\mu(t)f,$$
with domain $D(A(t))=D(B),\; t\geq0$.

Also, let
$$
\Psi(t,x(z,t),u(z,t)):=\frac{x(z,t)|\sin(t)|}{1+e^{-z t}x^{2}(z,t)}u(z,t).
$$
It is well known from \cite{cerpa2010null,cerpa2011local,liu2001stability} that $B$ is the infinitesimal generator of a strongly
continuous semigroup on $L^{2}(0,1)$ that we denote by $S(t)$. 

Furthermore, one can verify directly that $\{A(t)\}_{ t\geq 0}$ generates a strongly continuous evolution family $\{W(t,s)\}_{t\geq s\geq 0}\subset L(X)$ of the form:
$$
W(t,s)x=S(t-s)\bigg{(}\exp\left(-\int_{s}^{t}\mu(r)dr\right)\bigg{)}x \quad \forall t\geq s\geq 0 \quad \forall x\in X.
$$

It follows from Proposition \ref{propooo1} that the system (\ref{1EX}) has a unique maximal mild solution $\phi(\cdot,0,x_0,u)$ and this mild solution is even a piecewise classical solution satisfying the integral equation (\ref{bvbbLL4}).

For a given $\varrho \in \mathbb{R},$ it can be demonstrated, as shown in \cite{liu2001stability}, that the operator $-B$ with the Dirichlet boundary conditions
(\ref{R1jjd}) has a countable sequence of eigenvalues $(\sigma_n)_{n\in \mathbb{N}},$ such that $\displaystyle\lim_{n\rightarrow\infty}\sigma_n=\infty.$
We define
$$
\sigma(\varrho )=\min_{n \in \N}\sigma_n(\varrho).
$$
Take
$$
Z(t,x)=(1+e^{-t}) \int_{0} ^{1 }x^{2}(z)dz,\quad x\in L^{2}(0,1),\ t\geq 0.
$$
Obviously,
$$
\|x\|_{L^{2}(0,1)}^{2}\leq Z(t,x)\leq 2\|x\|_{L^{2}(0,1)}^{2} \quad \forall (t,x)\in \mathbb{R}_{+}\times L^{2}(0,1).$$

For smooth $x\in \mathcal{D}$ and $u\in\Uc$, the Lie derivative of $Z$ along the solution of the system (\ref{1EX}) leads to
\begin{eqnarray*}
\dot{Z}_{u}(t,x)&=&
-e^{-t}\|x\|_{L^{2}(0,1)}^{2}\\
   &&+2(1+e^{-t}) \int _{0}^{1 }x(z)\left(-\displaystyle\frac{\partial^{4} x(z)}{\partial z^{4}}-\varrho \displaystyle\frac{\partial^{2} x(z)}{\partial z^{2}}-\mu(t)x(z)+\frac{x(z)|\sin(t)|}{1+e^{-zt}x^{2}(z)}u(z,t)\right)dz.
\end{eqnarray*}
Since $1+e^{-z t}x^{2}(z)\geq 1,$ we have
\begin{eqnarray}\label{1nnnX}
\dot{Z}_{u}(t,x) &\leq&-2(1+e^{-t})\int _{0}^{1 }x(z)\frac{\partial^{4} x(z)}{\partial z^{4}}dz-2\varrho (1+e^{-t})\int _{0}^{1 }x(z) \displaystyle\frac{\partial^{2} x(z)}{\partial z^{2}}dz \nonumber\\
&&\ \ \ \ \ \ \ \ \ \ \ \ \ + 2(1+e^{-t})\int _{0}^{1 }x^{2}(z) u(z,t)dz.
\end{eqnarray}
Partial integration of the first and the second terms of (\ref{1nnnX}) together with the Dirichlet boundary condition (\ref{R1jjd}) leads to
\begin{equation}\label{R1jjjd}
\int _{0}^{1 }x(z)\frac{\partial^{4} x(z)}{\partial z^{4}}dz=\int _{0}^{1 }\left(\displaystyle\frac{\partial^{2} x(z)}{\partial z^{2}}\right)^{2}dz,
\end{equation}
and
\begin{equation}\label{R2jjjd}
 \int _{0}^{1 }x(z)\displaystyle\frac{\partial^{2} x(z)}{\partial z^{2}}dz=-\int _{0}^{1 }\left(\displaystyle\frac{\partial x(z)}{\partial z}\right)^{2}dz.
\end{equation}
Combining (\ref{R1jjjd}) and (\ref{R2jjjd}), we find
$$\dot{Z}_{u}(t,x)\leq -2(1+e^{-t})\left(\int _{0}^{1 }\left(\displaystyle\frac{\partial^{2} x(z)}{\partial z^{2}}\right)^{2}dz-\varrho \int _{0}^{1}\left(\displaystyle\frac{\partial x(z)}{\partial z}\right)^{2}dz\right)+2Z(t,x)\|u(\cdot,t)\|_{U}.$$
By Lemma 3.1 in \cite{liu2001stability}, we obtain
$$
\dot{Z}_{u}(t,x)\leq-2\sigma(\varrho )Z(t,x)+2Z(t,x)\|u(\cdot,t)\|_{U}.
$$
Let $\varrho <4\pi^{2}.$ Then, as stated in \cite[Lemma 2.1]{liu2001stability}, $\sigma(\varrho )>0.$
Consider the following coercive candidate iISS Lyapunov function:
$$
V(t,x):=\ln(1+Z(t,x)),\quad x \in X, \ t\geq 0.
$$
Then,
\begin{eqnarray*}
\dot{V}_{u}(t,x)&\le& \frac{-2\sigma(\varrho )Z(t,x)}{1+Z(t,x)}+2\frac{Z(t,x)}{1+Z(t,x)}\|u\|_{\mathcal{U}}\\
&\le&\frac{-2\sigma(\varrho )\|x\|_{L^{2}(0,1)}^{2}}{1+2\|x\|_{L^{2}(0,1)}^{2}}+2\|u\|_{\mathcal{U}}\\
&=&-\vartheta(\|x\|_{L^{2}(0,1)})+ \chi(\|u\|_{\mathcal{U}}),
\end{eqnarray*}
where $\vartheta(s)=\frac{\sigma(\varrho )s^{2}}{1+2s^{2}}$ and $\chi(s)=2s.$
Theorem \ref{hjljg} now shows that (\ref{1EX}), (\ref{R1jjd}) is iISS when $$\varrho <4\pi^{2}.$$
\end{exm}

\begin{exm}\label{expl2}
Let $\nu>0,$ $\ell>0,$ $X=L^{2}(0,\ell)$ and as the input space we take $\mathcal{U}=PC(\mathbb{R}_+, U).$ We consider the controlled heat equation
\begin{equation}\label{22EX}
\left\lbrace
\begin{array}{l}
\displaystyle\frac{\partial x(z,t)}{\partial t}=\nu\frac{\partial^{2} x(z,t)}{\partial z^{2}}+R(t)x(z,t)+\omega\sin(tz)x(z,t)+u(z,t),\quad z\in(0,\ell),\quad t\geq t_0>0,\\
\\
\displaystyle x(0,t)=0=x(\ell,t),
\end{array}\right.
\end{equation}
where $\omega\in\mathbb{R},$ $t_0$ is the initial time and $\{R(t)\}_{t\geq 0}$ is a family of linear bounded operators on $X,$ satisfying $\sup_{t \in \mathbb{R}_{+} }\|R(t)\|=r<\infty,$ and such that $t \mapsto R(t)$ is a continuously differentiable map.

Denote
$$
B:=\nu\displaystyle\frac{\partial^{2}}{\partial z},
$$ 
with domain 
$D(B)=H_{0}^{1}(0,\ell)\cap H^{2}(0,\ell)$.

Furthermore, define $\{A(t)\}_{t\geq 0}$ as
$$
\displaystyle A(t):=B+R(t),
$$ 
with domain $D(A(t))=D(B),\; t\geq0$. 

Finally, define the following map:
$$
\Psi: (t,x(z,t),u(z,t))\mapsto \omega\sin(tz)x(z,t)+u(z,t).
$$
By a classical bounded perturbation result \cite[Theorem 3.2.7 p. 119]{curtain2012introduction}, the family $\{A(t)\}_{t\geq 0}$ generates an evolution family
$\{W(t,s)\}_{t\geq s\geq 0}\subset L(X)$ of the form:
$$
W(t,s)x=S(t-s)x+\int_{s}^{t}S(t-\tau)R(\tau)W(\tau,s)xd\tau\quad \forall t\geq s\geq0\quad \forall x\in X,
$$
where $S(t)$ represents the $C_0$-semigroup on $X$ generated by the operator $B.$

It is clear that the nonlinearity $\Psi(t,x,u)$ is continuous in $t$ and $u$ and locally Lipschitz continuous in $x,$ uniformly in $t$
and $u$ on bounded sets. 

By Proposition \ref{propooo1}, the system (\ref{22EX}) is a well-posed control system satisfying the BIC property. 

Consider the following coercive ISS Lyapunov function candidate:
$$V(t,x)=\|x\|_{L_{2}(0,\ell)}^{2}=\int_{0}^{\ell }x^{2}(z)dz.$$
For smooth $u\in\Uc$ and for $(t,x)\in\mathbb{R}_{+}\times D(B),$ 
the Lie derivative of $V$ with respect to the system (\ref{22EX}) and using the integration by parts, we have
\begin{eqnarray*}
\dot{V}_{u}(t,x)&=&
2\int _{0}^{\ell}x(z)\left(\nu\frac{\partial^{2} x(z)}{\partial z^{2}}+R(t)x(z)+\omega\sin(t z)x(z)+u(z,t)\right)dz\\
&\leq&-2\nu\int _{0}^{\ell}\left(\frac{\partial x(z)}{\partial z}\right)^{2}dz+(2r+\omega)\int _{0}^{\ell}x^{2}(z)dz+2\int _{0}^{\ell}x(z)u(z,t)dz.
\end{eqnarray*}
Utilizing Friedrich's inequality (see \cite{mitrinovic1991inequalities}) in the first term, we continue estimates:
$$\dot{V}_{u}(t,x)\leq-\frac{2\nu\pi^{2}}{\ell^{2}}V(t,x)+(2r+\omega)V(t,x)+2\int _{0}^{\ell}x(z)u(z,t)dz.$$
Using Young's inequality, we have for any $\epsilon>0,$
\begin{eqnarray*}
\dot{V}_{u}(t,x)&\leq&
\left(-\frac{2\nu\pi^{2}}{\ell^{2}}+2(r+\omega)+\epsilon\right)V(t,x)+\frac{1}{\epsilon}\int _{0}^{\ell}u^{2}(z,t)dz\\
&\leq&\left(-\frac{2\nu\pi^{2}}{\ell^{2}}+2(r+\omega)+\epsilon\right)V(t,x)+\frac{\ell}{\epsilon}\|u(\cdot,t)\|_{U}^{2}.
\end{eqnarray*}
To show that $V$ is an ISS Lyapunov function for the equation (\ref{22EX}), we assume that
$-\frac{2\nu\pi^{2}}{\ell^{2}}+2(r+\omega)+\epsilon<0.$ As $\epsilon>0$ can be chosen arbitrarily small, we obtain the following sufficient condition for ISS of the system (\ref{22EX}):
$$r+\omega<\frac{\nu\pi^{2}}{\ell^{2}}.$$
\end{exm}

\chapter{ISpS Criteria For Time-Varying Non-Linear Infinite-Dimensional Systems}
\label{ch:chapter3}
\markboth{ISPS CRITERIA For TIME-VARYING NON-LINEAR}{ }

In this Chapter, we consider the ISpS for time-varying nonlinear infinite-dimensional systems and then
obtain some novel criteria using Lyapunov functions techniques and a nonlinear inequality. We 
show if a control system possesses the CpUAG property, then it has an ISpS property. We show that the existence of an ISpS-Lyapunov function implies the ISpS of a system and the CpUAG characterization. Then, we develop a method for constructing ISpS-Lyapunov functions for a certain class of time-varying semilinear evolution equations. Also, we study ISpS of interconnected systems in the case when explicit Lyapunov functions are known and provide a generalization of the Lyapunov small-gain theorem for the case of finite-dimensional systems. 
\section{\sectiontitle{Lyapunov Theorems for ISpS and iISpS}}
In this section, sufficient conditions are established for the ISpS and iISpS of the system $\Sigma$ by using the Lyapunov theory with piecewise-right continuous inputs. \\
The following theorem provides a characterization of ISpS.
\begin{thm}\label{tmddff1}
If the control system $\Sigma=(X,\mathcal{U},\phi)$ is completely practical uniform asymptotic gain property (CpUAG), then $\Sigma$ is ISpS.
\end{thm}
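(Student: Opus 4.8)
The plan is to rewrite the CpUAG estimate of Definition~\ref{cpuagg} directly in the ISpS form of Definition~\ref{ispssysy}; the only work is to absorb the constant $c$ appearing inside the $\mathcal{KL}$-term into a transient bound plus a residual constant. First I would note that forward completeness is already implicit in CpUAG: the inequality $\|\phi(t,t_0,x_0,u)\|_X\le\beta(\|x_0\|_X+c,t-t_0)+\gamma(\|u\|_{\mathcal{U}})+\varsigma$ is required for \emph{all} $t\ge t_0$, which forces $t_m(t_0,x_0,u)=\infty$ for every $(t_0,x_0,u)$, i.e.\ $D_\phi=\mathcal{T}\times X\times\mathcal{U}$.

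Next I would invoke the standard weak triangle inequality for class-$\mathcal{KL}$ functions: since $\beta(\cdot,s)\in\mathcal{K}$ for each $s\ge0$ and $a+b\le2\max\{a,b\}$ for all $a,b\ge0$, one has
\[
\beta(a+b,s)\le\beta\bigl(2\max\{a,b\},s\bigr)=\max\{\beta(2a,s),\beta(2b,s)\}\le\beta(2a,s)+\beta(2b,s).
\]
Applying this with $a=\|x_0\|_X$, $b=c$, $s=t-t_0$, and using that $\beta(2c,\cdot)$ is nonincreasing so that $\beta(2c,t-t_0)\le\beta(2c,0)$, I would obtain, for all $x_0\in X$, all $u\in\mathcal{U}$, all $t_0\ge0$ and all $t\ge t_0$,
\[
\|\phi(t,t_0,x_0,u)\|_X\le\beta\bigl(2\|x_0\|_X,t-t_0\bigr)+\gamma(\|u\|_{\mathcal{U}})+\bigl(\beta(2c,0)+\varsigma\bigr).
\]

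Finally I would set $\tilde\beta(p,s):=\beta(2p,s)$ for $p,s\ge0$ and $r:=\beta(2c,0)+\varsigma>0$. Since $p\mapsto2p$ lies in $\mathcal{K}_\infty$, one checks immediately that $\tilde\beta\in\mathcal{KL}$, and the previous display becomes $\|\phi(t,t_0,x_0,u)\|_X\le\tilde\beta(\|x_0\|_X,t-t_0)+\gamma(\|u\|_{\mathcal{U}})+r$, which is exactly the defining estimate of ISpS with $\mathcal{KL}$-bound $\tilde\beta$, gain $\gamma$ and offset $r$. Together with forward completeness this proves that $\Sigma$ is ISpS. I do not expect any genuine obstacle here: the argument is pure comparison-function bookkeeping, and the only point that deserves a line of explanation is that forward completeness comes for free from the CpUAG hypothesis.
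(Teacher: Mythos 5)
Your proposal is correct and follows essentially the same route as the paper's proof: split $\beta(\|x_0\|_X+c,t-t_0)\le\beta(2\|x_0\|_X,t-t_0)+\beta(2c,t-t_0)$ via the weak triangle inequality for $\mathcal{K}$-functions, then bound the second term by $\beta(2c,0)$ and absorb it into the constant offset. The only addition is your explicit remark that forward completeness is implicit in the CpUAG estimate, which the paper leaves unstated.
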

\begin{proof}
Let $\Sigma=(X,\mathcal{U},\phi)$ be a control system which is CpUAG. Then, there are $\beta\in\mathcal{KL},$ $\gamma\in \mathcal{K},$ and $c,\varsigma>0,$ such that for all $x_0\in X,$ all $u\in \mathcal{U},$ all $t_0\geq 0$ and all $t\geq t_0,$ we have
\begin{eqnarray*}
\|\phi(t,t_0,x_0,u)\|_{X}&\leq &
\beta(\|x_0\|_{X}+c,t-t_0)+\gamma(\|u\|_{\mathcal{U}})+\varsigma\\
&\leq&\beta(2\|x_0\|_{X},t-t_0)+\beta(2c,t-t_0)+\gamma(\|u\|_{\mathcal{U}})+\varsigma\\
&\leq& \beta(2\|x_0\|_{X},t-t_0)+\gamma(\|u\|_{\mathcal{U}})+\beta(2c,0)+\varsigma,
\end{eqnarray*}
which shows ISpS of $\Sigma.$
\end{proof}
\begin{thm}\label{dfffdddf1}
Consider the control system $\Sigma=(X,PC(\mathbb{R}_+ ,U),\phi).$ Assume that there exist continuously differentiable function $V:\mathbb{R}_+\times X\to \mathbb{R}_+,$ functions $\psi_1,\psi_2,\delta\in\mathcal{K}_{\infty},$ a function $\kappa\in\mathcal{K}$ and
a continuous integrable function $\ell:\mathbb{R}_+ \to \mathbb{R}_+,$ such that, for all $(t,x)\in \mathbb{R}_+\times X$ and all $u\in PC(\mathbb{R}_+,U),$
\begin{enumerate}
\item[$(i)$] \begin{equation}
\psi_1(\|x\|_{X})\le V(t,x)\le \psi_2(\|x\|_{X}).
\label{llMMMM}
\end{equation}
\item[$(ii)$] There holds
\begin{equation}
\dot{V}_{u}(t,x)\leq -\delta(V(t,x))+\ell(t),\; \hbox{whenever}\; V(t,x)\geq \kappa(\|u(t)\|_{U}).
\label{rama1bbbbb}
\end{equation}
\end{enumerate}
Then, system $\Sigma$ is ISpS.\\
Moreover, if $\displaystyle \lim_{t\to \infty} \ell(t)=0,$ then
the solution $\phi(t,t_0,x_0,u)$ of the system $\Sigma$ satisfies
\begin{equation}
\|\phi(t,t_0,x_0,u)\|_{X}\leq\beta( \psi_2(\|x_0\|_{X})+\int_{ 0} ^{\infty}\ell(s)ds,t-t_0)+\gamma(\|u\|_{[t_0,t]})+\varsigma,
\label{lllll}
\end{equation}
where $\beta$ is a class $\mathcal{KL}$ function on $\mathbb{R}_+\times \mathbb{R}_+,$ $\gamma$ is a class $\mathcal{K}$ function on $\mathbb{R}_+$ and $\varsigma$ is a positive constant.\\
The estimate (\ref{lllll}) shows CpUAG which implies that the system $\Sigma$ is ISpS.
\end{thm}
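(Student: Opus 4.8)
The plan is to run the standard implication-form ISS Lyapunov argument on the scalar function $w(t):=V(t,\phi(t,t_0,x_0,u))$, modified so as to absorb the non-negative forcing $\ell$ by \emph{shifting} $V$ by the tail integral of $\ell$. Fix data $(t_0,x_0,u)$ and let $[t_0,t_m)$ be the maximal interval of existence. By continuity of $V$ and of $t\mapsto\phi(t,t_0,x_0,u)$ the map $w$ is continuous, and substituting $x=\phi(t,t_0,x_0,u)$ into \eqref{rama1bbbbb} while using the cocycle property (which yields $\dot V_u(t,\phi(t,t_0,x_0,u))=D^+w(t)$) we get $D^+w(t)\le-\delta(w(t))+\ell(t)$ whenever $w(t)\ge\kappa(\|u(t)\|_U)$. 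First I would settle forward completeness: whenever $w(t)\ge\kappa(\|u\|_{\mathcal U})\ge\kappa(\|u(t)\|_U)$ one has $D^+w(t)\le\ell(t)$, so $w$ never exceeds $\max\{w(t_0),\kappa(\|u\|_{\mathcal U})\}+L$ with $L:=\int_0^\infty\ell(s)\,ds<\infty$; by \eqref{llMMMM} this bounds $\|\phi(t,t_0,x_0,u)\|_X$ uniformly on $[t_0,t_m)$, and the BIC property (assumed, as in Theorems~\ref{hjMMMMMljg}--\ref{hjljg}) forces $t_m=\infty$, so $\Sigma$ is forward complete.

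Next I would introduce $m(t):=\int_t^\infty\ell(s)\,ds$, which is continuous, non-increasing, satisfies $0\le m\le L$ and $m'(t)=-\ell(t)$, and set $v:=w+m$. Then $D^+v(t)=D^+w(t)-\ell(t)\le-\delta(w(t))$ whenever $w(t)\ge\kappa(\|u(t)\|_U)$. Fix $t\ge t_0$, put $c:=\kappa(\|u\|_{[t_0,t]})$ with $\|u\|_{[t_0,t]}:=\sup_{t_0\le s\le t}\|u(s)\|_U$, and note that for $s\in[t_0,t]$ the inequality $v(s)\ge c+2L$ forces both $w(s)=v(s)-m(s)\ge c\ge\kappa(\|u(s)\|_U)$ and $w(s)\ge v(s)-L\ge v(s)/2$, hence $D^+v(s)\le-\delta(w(s))\le-\delta(v(s)/2)=:-\hat\delta(v(s))$ with $\hat\delta\in\mathcal K_\infty$. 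A scalar comparison argument of the type underlying Lemma~\ref{lemmm1} (the level $c+2L$ cannot be crossed from below, since there $D^+v<0$, and above it $v$ decays at rate $\hat\delta$) then produces a $\tilde\beta\in\mathcal{KL}$ depending only on $\delta$ with $v(t)\le\tilde\beta(v(t_0),t-t_0)+\kappa(\|u\|_{[t_0,t]})+2L$ for all $t\ge t_0$.

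It then remains to translate this back to the state. Since $w\le v$, $v(t_0)\le\psi_2(\|x_0\|_X)+\int_{t_0}^\infty\ell\le\psi_2(\|x_0\|_X)+\int_0^\infty\ell$, and $\psi_1(\|\phi(t,t_0,x_0,u)\|_X)\le w(t)$ by \eqref{llMMMM}, applying $\psi_1^{-1}\in\mathcal K_\infty$ together with the elementary bound $\psi_1^{-1}(a+b+d)\le\psi_1^{-1}(3a)+\psi_1^{-1}(3b)+\psi_1^{-1}(3d)$ gives exactly \eqref{lllll} with $\beta(s,r):=\psi_1^{-1}(3\tilde\beta(s,r))\in\mathcal{KL}$, $\gamma(r):=\psi_1^{-1}(3\kappa(r))\in\mathcal K$ and $\varsigma:=\psi_1^{-1}(6L)$; note that this used only $\int_t^\infty\ell(s)\,ds<\infty$, so the hypothesis $\lim_{t\to\infty}\ell(t)=0$ adds nothing beyond integrability. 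For the unconditional ISpS claim I would further split $\tilde\beta(\psi_2(\|x_0\|_X)+L,r)\le\tilde\beta(2\psi_2(\|x_0\|_X),r)+\tilde\beta(2L,0)$ to rewrite \eqref{lllll} as $\beta_0(\|x_0\|_X,t-t_0)+\gamma(\|u\|_{\mathcal U})+\varsigma_0$ with $\beta_0\in\mathcal{KL}$ (vanishing at $\|x_0\|_X=0$ since $\psi_2(0)=0$) and $\varsigma_0>0$, which is Definition~\ref{ispssysy}. Finally, to obtain the CpUAG characterization pick $c'':=\psi_2^{-1}(L)$; then $\psi_2(\|x_0\|_X)+L\le2\psi_2(\|x_0\|_X+c'')$, so \eqref{lllll} becomes $\|\phi(t,t_0,x_0,u)\|_X\le\beta''(\|x_0\|_X+c'',t-t_0)+\gamma(\|u\|_{\mathcal U})+\varsigma$ with $\beta''(\sigma,r):=\psi_1^{-1}(6\tilde\beta(2\psi_2(\sigma),r))\in\mathcal{KL}$, i.e.\ Definition~\ref{cpuagg} holds, and Theorem~\ref{tmddff1} re-derives ISpS.

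The only genuinely non-routine part is the second paragraph: converting the implication-form dissipation with the \emph{indefinite} additive term $+\ell(t)$ into a single clean $\mathcal{KL}$-plus-gain-plus-constant estimate. Shifting by $m(t)$ removes $\ell(t)$ from the right-hand side, but only at the price of replacing $\delta(w)$ by $\delta(v/2)$, which is legitimate merely above the threshold $\sim 2L$; the care lies in keeping those thresholds consistent and in making sure the residual constant lands additively rather than inside the $\mathcal{KL}$ term (which is also why the $\beta$ in \eqref{lllll} carries $\psi_2(\|x_0\|_X)+\int_0^\infty\ell$ in its first slot). Everything afterwards is the sublevel-set and comparison bookkeeping already used in the proofs of Theorems~\ref{hjMMMMMljg} and~\ref{hjljg}.
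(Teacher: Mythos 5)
Your proof is correct, but it takes a genuinely different route from the one in the text. The paper's proof works directly with $y(t)=V(t,\phi(t,t_0,x_0,u))$ and splits according to whether $y(s)\geq\kappa(\|u(s)\|_U)$ persists on all of $[t_0,t]$; when it fails, it locates the last crossing time $t^{\ast}=\sup\{s\in[t_0,t]:y(s)\leq\kappa(\|u(s)\|_U)\}$ and on each resulting piece invokes Proposition \ref{propbbvcc} — the nonlinear Gronwall-type lemma from Appendix A — whose first conclusion \eqref{h24n2} yields the ISpS bound and whose sharper conclusion \eqref{hjj102} (which puts $\int_0^\infty\ell$ inside the $\mathcal{KL}$ argument) is what requires $\lim_{t\to\infty}\ell(t)=0$. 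You instead absorb the forcing term by shifting along trajectories by the tail mass $m(t)=\int_t^\infty\ell(s)\,ds$, which converts the implication-form inequality into a clean decay $D^{+}v\leq-\delta(v/2)$ above the raised threshold $\kappa(\|u\|_{[t_0,t]})+2L$, and then you run the same sublevel-set invariance plus Lemma \ref{lemmm1} machinery already used in Theorem \ref{hjMMMMMljg}. The trade-off: the paper's argument is modular (all the $\ell$-handling is delegated to Proposition \ref{propbbvcc}, whose own proof is the hard part), whereas yours is self-contained, bypasses that proposition entirely, and — as you correctly observe — delivers the refined estimate \eqref{lllll} under integrability of $\ell$ alone, so the hypothesis $\lim_{t\to\infty}\ell(t)=0$ is not actually needed for that form; you also make the reduction of \eqref{lllll} to the CpUAG format of Definition \ref{cpuagg} explicit via $c''=\psi_2^{-1}(L)$, a step the paper leaves implicit. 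Two cosmetic remarks: your explicit treatment of forward completeness via BIC is more careful than the paper (whose statement does not list BIC among the hypotheses but tacitly assumes global existence), and the factor $6$ in your $\beta''$ should presumably be $3$, though enlarging the $\mathcal{KL}$ bound is harmless.
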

\begin{proof}
For given initial time $t_0,$ initial condition $x_0\in X,$ and an input $u\in PC(\mathbb{R}_+ ,U),$ let $y(t)=V(t,x(t)),$ where $x(t)=\phi(t,t_0,x_0,u).$ Condition $(ii)$ shows that for all $t_0\geq 0$ and all $t\geq t_0,$
\begin{center}
$\dot{y}(t)\leq -\delta( y(t)) +\ell(t),$ when $y(t)\geq \kappa(\|u(t)\|_{U}).$
\end{center}
For any $t\in[t_0,\infty),$ we consider the following inequality
\begin{equation}
y(s)\geq \kappa(\|u(s)\|_{U}) \quad \forall s\in[t_0,t].
\label{vnnR1CCC}
\end{equation}
Suppose that the inequality (\ref{vnnR1CCC}) is true, then it follows from Proposition \ref{propbbvcc} that there exists $\sigma_1\in\mathcal{KL},$ such that for all $x_0\in X,$ all $t_0\geq 0$ and all $t\geq t_0,$
\begin{equation}
y(t)\leq \sigma_1(y(t_0),t-t_0)+2\int_{0 }^{\infty}\ell(s)ds.
\label{bbbbR1CCC}
\end{equation}
Now, we assume that the inequality (\ref{vnnR1CCC}) does not hold. Let us consider the following set $\sup\{s\in [t_0,t]: y(s)\leq\kappa(\|u(s)\|_{U})\}$ which is non-empty. \\
Denote
$$t^{\ast}=\sup\{s\in [t_0,t]: y(s)\leq\kappa(\|u(s)\|_{U})\}.$$
Thus, we get either $t^{\ast}=t$ or $t^{\ast}< t.$ \\
If $t^{\ast}=t,$ then from the definition of $t^{\ast}$ we get
\begin{equation}
y(t)=y(t^{\ast})\leq \kappa(\|u(t^{\ast})\|_{U})\leq\kappa(\|u\|_{[t_0,t]}).
\label{jjjjnhyy}
\end{equation}
If $t^{\ast}< t,$ then $y(s)\geq \kappa(\|u(s)\|_{U}),\; s\in[t^{\ast},t],$ which by (\ref{rama1bbbbb}) implies
$$\dot{y}(s)\leq -\delta( y(s)) +\ell(s) \quad \forall s\in [t^{\ast},t].$$
Using Proposition \ref{propbbvcc}, there exists $\sigma_2\in\mathcal{KL},$ such that
$$y(t)\leq \sigma_2(y(t^{\ast}),t-t^{\ast})+2\int_{0 }^{\infty}\ell(s)ds.$$
Then, the following holds:
\begin{equation}
y(t)\leq \sigma_2(\kappa(\|u\|_{[t_0,t]}),0)+2\int_{0 }^{\infty}\ell(s)ds.
\label{nnnnnhyy}
\end{equation}
Hence, we get from (\ref{bbbbR1CCC}), (\ref{jjjjnhyy}) and (\ref{nnnnnhyy}) that
$$y(t)\leq \sigma_1(y(t_0),t-t_0)+\sigma(\|u\|_{[t_0,t]})+2\int_{0 }^{\infty}\ell(s)ds,$$
where $\sigma(s)=\max(\kappa(s),\sigma_2(\kappa(s),0)).$\\
Since $\psi_1$ is of class $\mathcal{K}_{\infty}$ and satisfies $\psi_1^{-1}(a+b+c)\leq\psi_1^{-1}(2a)+\psi_1^{-1}(4b)+\psi_1^{-1}(4c)$ for any $a,b,c\in\mathbb{R}_{+},$ we have
$$\|\phi(t,t_0,x_0,u)\|_{X}\leq \beta(\|x_0\|_{X},t-t_0)+\gamma(\|u\|_{[t_0,t]})+\eta,$$
where $$\beta(r,s)=\psi_1^{-1}(2\sigma_1(\psi_2(r),s)), \quad \gamma(r)=\psi_1^{-1}(\sigma(4r))$$ and $$\eta=\psi_1^{-1}(8\int_{0 }^{\infty}\ell(s)ds).$$ Thus, system $\Sigma$ is ISpS.\\
Now, we consider the case when $\ell(t)$ is a continuous integrable function with $\displaystyle \lim_{t\to \infty} \ell(t)=0.$
If the inequality (\ref{vnnR1CCC}) is true, we obtain from $(ii)$ and Proposition \ref{propbbvcc} that there exists $\sigma_1\in\mathcal{KL},$ such that for all $t_0\geq 0$ and all $t\geq t_0,$
\begin{equation}\label{bMLOPPP}
y(t)\leq \sigma_1(y(t_0)+\int_{0 }^{\infty}\ell(s)ds,t-t_0).
\end{equation}
If the inequality (\ref{vnnR1CCC}) is not true, then it can be obtained by repeating the above process that there exists $\sigma_2\in\mathcal{KL},$ such that
\begin{equation}\label{KJYFFy}
y(t)\leq \sigma_2(2\kappa(\|u\|_{[t_0,t]}),0)+\sigma_2(2\int_{0 }^{\infty}\ell(s)ds,0).
\end{equation}
Then, via (\ref{jjjjnhyy}), (\ref{bMLOPPP}) and (\ref{KJYFFy}), we further have
$$y(t)\leq \sigma_1(y(t_0)+\int_{0 }^{\infty}\ell(s)ds,t-t_0)+\sigma(\|u\|_{[t_0,t]})+\sigma_2(2\int_{0 }^{\infty}\ell(s)ds,0),$$
where $\sigma(s)=\max(\kappa(s),\sigma_2(2\kappa(s),0)).$\\
Therefore,
\begin{equation}\label{KJYbbbbFFy}
\|\phi(t,t_0,x_0,u)\|_{X}\leq\beta( \psi_2(\|x_0\|_{X}+\int_{0}^{\infty}\ell(s)ds,t-t_0)+\gamma(\|u\|_{[t_0,t]})+\varsigma,
\end{equation}
where $\beta=\psi_1^{-1}\circ2\sigma_1,$ $\gamma=\psi_1^{-1}\circ4\sigma$ and $\varsigma=\psi_1^{-1}(4\sigma_2(2\int_{0 }^{\infty}\ell(s)ds,0)).$
The inequality (\ref{KJYbbbbFFy}) shows that $\Sigma$ satisfies the CpUAG property and hence $\Sigma$ is ISpS.
\end{proof}
\begin{rem}
The function $\kappa$ is called ISpS Lyapunov gain for $\Sigma$ and function $V$ is called ISpS-Lyapunov function for the system $\Sigma$ in an implicative form. On the other hand, if we choose $\ell(t)=0$ then it is easy to verify that the system $\Sigma$
is ISS.
\end{rem}
\begin{thm}\label{dfffdddf1}
Consider the control system $\Sigma=(X,PC(\mathbb{R}_+,U),\phi),$ with the BIC property. Assume that there exists a continuous Lyapunov function $V:\mathbb{R}_+\times X\to \mathbb{R}_+,$ functions $\alpha_1,\alpha_2\in\mathcal{K}_{\infty},$ a constant $c\geq0,$ a function $\kappa\in\mathcal{K}$ and
functions $\nu,\psi\in C(\mathbb{R}_+, \mathbb{R}),$ such that, for all $(t,x)\in \mathbb{R}_+\times X$ and all $u\in PC(\mathbb{R}_+,U),$
\begin{enumerate}
\item[$(i)$]
\begin{equation}\label{rama1}
\alpha_1(\|x\|_{X})\le V(t,x)\le \alpha_2(\|x\|_{X})+c,
\end{equation}
\item[$(ii)$] the Lie derivative of $V$ along the trajectories of system $\Sigma$ satisfies
\begin{equation}\label{rama1bbb}
\dot{V}(t,x)\leq \nu(t) V(t,x)+\psi(t),\; \hbox{whenever}\; V(t,x)\geq \kappa(\|u(t)\|_{U}).
\end{equation}
\item[$(iii)$] There exist $\eta>0,$ $\xi\geq 0$ and $\rho>0,$ such that, for all $t\geq t_0\geq 0,$
$$\displaystyle \int _{t_0}^{t} \nu(\tau) d\tau\leq -\eta(t-t_0)+\xi,$$
and
$$\int _{t_0}^{t}\vert \psi(\tau)\vert\Theta(t,\tau)d\tau\leq\rho,$$ where $\Theta(t,\tau)=\exp(\displaystyle\int _{\tau}^{t}\nu(s)ds).$
\end{enumerate}
Then, $\Sigma$ is ISpS.
\end{thm}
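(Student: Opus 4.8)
The plan is to follow trajectories, turn the conditional inequality (ii) into a scalar linear differential inequality, integrate it with the kernel $\Theta$, and then separate the usual two regimes. Fix $t_0\ge 0$, $x_0\in X$ and $u\in\mathcal U$, let $x(t):=\phi(t,t_0,x_0,u)$ on its maximal interval $[t_0,t_m)$ and set $y(t):=V(t,x(t))$, which is continuous because $V$ is continuous and $\phi(\cdot,t_0,x_0,u)$ is continuous. The first ingredient I would isolate is the elementary comparison fact: if $D^{+}y(s)\le\nu(s)y(s)+\psi(s)$ for all $s\in[a,b)$, then, since $s\mapsto y(s)e^{-\int_a^s\nu(r)dr}$ is continuous with upper right Dini derivative bounded above by $e^{-\int_a^s\nu(r)dr}\psi(s)$ (here one uses $e^{-\int_a^s\nu(r)dr}>0$ and the product rule for $D^{+}$), integrating (a continuous function whose upper right Dini derivative is bounded above by an integrable function is bounded above by the integral) yields
\[
y(b)\ \le\ \Theta(b,a)\,y(a)+\int_a^b\Theta(b,s)\psi(s)\,ds .
\]
By the two estimates in (iii), applied with $(a,b)$ in place of $(t_0,t)$, this gives $y(b)\le e^{\xi}e^{-\eta(b-a)}y(a)+\rho$ on every interval on which the differential inequality is available.

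Now fix $t\in[t_0,t_m)$. If $y(s)\ge\kappa(\|u(s)\|_U)$ for all $s\in[t_0,t]$, then (ii) holds throughout $[t_0,t]$ and the comparison fact gives $y(t)\le e^{\xi}e^{-\eta(t-t_0)}y(t_0)+\rho$. Otherwise put $t^{\ast}:=\sup\{s\in[t_0,t]:y(s)\le\kappa(\|u(s)\|_U)\}$, which is well defined. If $t^{\ast}=t$, then continuity of $y$ and the existence of the left limit $\|u(t^{\ast}-)\|_U\le\|u\|_{\mathcal U}$ force $y(t)=y(t^{\ast})\le\kappa(\|u\|_{\mathcal U})$. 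If $t^{\ast}<t$, then $y(s)>\kappa(\|u(s)\|_U)$ on $(t^{\ast},t]$ and, letting $s\downarrow t^{\ast}$, also at $t^{\ast}$; hence (ii) holds on $[t^{\ast},t]$, and the comparison fact gives $y(t)\le e^{\xi}e^{-\eta(t-t^{\ast})}y(t^{\ast})+\rho\le e^{\xi}\kappa(\|u\|_{\mathcal U})+\rho$, using $y(t^{\ast})\le\kappa(\|u\|_{\mathcal U})$ and $e^{-\eta(t-t^{\ast})}\le 1$. Adjoining the nonnegative terms that are missing in each case, we obtain for all $t\in[t_0,t_m)$
\[
V(t,x(t))\ \le\ e^{\xi}e^{-\eta(t-t_0)}y(t_0)+e^{\xi}\kappa(\|u\|_{\mathcal U})+\rho .
\]

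Finally I would feed this through (i): $\alpha_1(\|x(t)\|_X)\le y(t)$ and $y(t_0)\le\alpha_2(\|x_0\|_X)+c$, so $\alpha_1(\|x(t)\|_X)\le e^{\xi}e^{-\eta(t-t_0)}\alpha_2(\|x_0\|_X)+e^{\xi}\kappa(\|u\|_{\mathcal U})+(\rho+ce^{\xi})$. Applying $\alpha_1^{-1}$ together with the weak triangle inequality $\alpha_1^{-1}(a_1+a_2+a_3)\le\alpha_1^{-1}(3a_1)+\alpha_1^{-1}(3a_2)+\alpha_1^{-1}(3a_3)$ gives, for all $t\in[t_0,t_m)$,
\[
\|\phi(t,t_0,x_0,u)\|_X\ \le\ \beta(\|x_0\|_X,t-t_0)+\gamma(\|u\|_{\mathcal U})+\varsigma ,
\]
with $\beta(s,\tau):=\alpha_1^{-1}\!\big(3e^{\xi}e^{-\eta\tau}\alpha_2(s)\big)\in\mathcal{KL}$, $\gamma(r):=\alpha_1^{-1}\!\big(3e^{\xi}\kappa(r)\big)\in\mathcal K$ and $\varsigma:=\alpha_1^{-1}\!\big(3(\rho+ce^{\xi})\big)>0$. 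Since the right-hand side is bounded uniformly in $t$ on $[t_0,t_m)$, the BIC property excludes $t_m<\infty$; thus $\Sigma$ is forward complete and the bound holds for all $t\ge t_0$, i.e., $\Sigma$ is ISpS (the bound also plainly witnesses CpUAG). I expect the only delicate point to be the case $t^{\ast}=t$, and the endpoint $t^{\ast}$ in the second subcase: because $V$ is merely continuous and $u$ only piecewise right-continuous, one must work with one-sided limits rather than assume that $t^{\ast}$ lies in the defining set, and one must check that (ii) is genuinely applicable on the closed interval $[t^{\ast},t]$; everything else is routine comparison-function bookkeeping, parallel to the proof of the preceding theorem, and could alternatively be routed through the scalar comparison proposition used there with $\Theta$ as the linear kernel.
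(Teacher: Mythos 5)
Your proof is correct and follows essentially the same route as the paper's: the same reduction to the scalar inequality $D^{+}y\le\nu y+\psi$ via $y(t)=V(t,\phi(t,t_0,x_0,u))$, the same linear comparison estimate with kernel $\Theta$ (which the paper obtains by citing Lemma \ref{prop3vvv} rather than deriving it from the integrating factor as you do), the same case split on whether $y(s)\ge\kappa(\|u(s)\|_U)$ holds on all of $[t_0,t]$ with $t^{\ast}=\sup\{s:y(s)\le\kappa(\|u(s)\|_U)\}$ and its two subcases, and the same final passage through $\alpha_1^{-1}$ with a weak triangle inequality and the BIC property. Your extra care at $t^{\ast}$ (one-sided limits, right-continuity of $u$) is a legitimate tightening of a point the paper glosses over, but it does not change the argument.
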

\begin{proof}
Let $d\in(t_0,\infty)$ be such that $[t_0,d)$ is the maximal time interval over which a system $\Sigma$ admits a solution. For given initial time $t_0,$ initial condition $x_0\in X,$ and an input $u\in\mathcal{U},$ let $y(t)=V(t,x(t)),$ where $x(t)=\phi(t,t_0,x_0,u).$ Condition $(ii)$ shows that
\begin{center}
$D^{+}y(t)\leq  \nu(t)  y(t)+\psi(t),$ when $y(t)\geq \kappa(\|u(t)\|_{U}),$
\end{center}
for all $t\in [t_0,d),$ where $t_0< d\leq \infty.$\\
For any $t\in[t_0,d),$ we consider the following inequality
\begin{equation}\label{vnnR1CCC}
y(s)\geq \kappa(\|u(s)\|_{U}),\quad \forall s\in[t_0,t].
\end{equation}
Suppose the inequality (\ref{vnnR1CCC}) is true, then it follows from Lemma \ref{prop3vvv} that
$$y(t)\leq y(t_0)e^{\int_{t_0}^{ t} \nu(s)ds}+\int _{t_0}^{t} \psi(s)e^{\int_{s}^{ t} \nu(\tau)d\tau}ds.$$
Thus, from Condition $(iii)$ and (\ref{rama1}), we can get
\begin{equation}\label{bbbbR1CCC}
y(t)\leq \alpha_2(\|x_0\|_{X})e^{\xi}e^{-\eta(t-t_0)}+\rho+ce^{\xi}.
\end{equation}
Now, we assume that the inequality (\ref{vnnR1CCC}) does not hold. Let us consider the following set $\sup\{s\in [t_0,t]: y(s)\leq\kappa(\|u(s)\|_{U})\}$ which is non-empty. \\ Denote
$$\hat{t}=\sup\{s\in [t_0,t]: y(s)\leq\kappa(\|u(s)\|_{U})\}.$$
Then, we get either $\hat{t}=t$ or $\hat{t}< t.$
If $\hat{t}=t,$ it follows from the definition of $\hat{t}$ that
$$y(t)=y(\hat{t})\leq \kappa(\sup_{\nu\in[t_0,t]} \|u(\nu)\|_{U}).$$
If $\hat{t}< t,$ then $y(s)\geq \kappa(\|u(s)\|_{U}),\quad s\in[\hat{t},t],$ which, by (\ref{rama1bbb}), implies
$$D^{+}y(s)\leq  \nu(s)  y(s)+\psi(s) \quad \forall s\in[\hat{t},t].$$
Then, Lemma \ref{prop3vvv} yields
$$y(t)\leq y(\hat{t})e^{\int_{\hat{t}} ^{t} \nu(s)ds}+\int _{\hat{t}}^{t} \psi(s)e^{\int_{s}^{ t} \nu(\tau)d\tau}ds\leq y(\hat{t})e^{\int_{\hat{t}} ^{t} \nu(s)ds}+\rho.$$
Thus,
\begin{equation}\label{nnnnnhyy}
y(t)\leq \kappa(\sup _{t_0\leq s\leq t}\|u(s)\|_{U})e^{\int_{\hat{t}} ^{t} \nu(s)ds}+\rho\leq \kappa(\sup _{t_0\leq s\leq t}\|u(s)\|_{U})e^{\xi}+\rho.
\end{equation}
Hence, we get from (\ref{bbbbR1CCC}) and (\ref{nnnnnhyy}) that
$$y(t)\leq \alpha_2(\|x_0\|_{X})e^{\xi}e^{-\eta(t-t_0)}+\kappa(\sup_{t_0\leq s\leq t}(\|u(s)\|_{U})e^{\xi}+2\rho+ce^{\xi}.$$
Since $\alpha_1$ is of class $\mathcal{K}_{\infty}$ and satisfies $\alpha_1^{-1}(a+b+c)\leq\alpha_1^{-1}(2a)+\alpha_1^{-1}(4b)+\alpha_1^{-1}(4c)$ for any $a,b,c\in\mathbb{R}_{+},$ we have
\begin{equation}\label{nyasmine}
\|x(t)\|_{X}\leq \beta(\|x_0\|_{X},t-t_0)+\gamma(\sup _{t_0\leq s\leq t}\|u(s)\|_{U})+r,
\end{equation}
where $\beta(r,s)=\alpha_1^{-1}(2\alpha_2(r)e^{\xi}e^{-\eta s}),$ $\gamma(r)=\alpha_1^{-1}(4\kappa(r)e^{\xi})$ and $r=\alpha_1^{-1}(8\rho+4ce^{\xi}).$ Therefore, $x(t)$ is uniformly bounded at the maximal interval of existence, and BIC property implies that $d=\infty,$ and the bound (\ref{nyasmine}) holds for all $t\in[t_0,\infty), x_0\in X$ and $u\in PC(\mathbb{R}_+, U).$ Thus, system $\Sigma$ is ISpS.
\end{proof}
\begin{rem}
The function $\kappa$ is called ISpS Lyapunov gain for $\Sigma$ and function $V$ is called ISpS-Lyapunov function for the system $\Sigma$ in an implicative form. The biggest advantage of this approach is that the upper right-hand derivative of the Lyapunov function is neither required to be negative definite nor required to be
negative semi-definite. 
Moreover, if we choose $c=0$ and $\psi(t)=0,$ then it is easy to verify that the system $\Sigma$ is ISS. 
\end{rem}
\begin{cor}\label{MARIAM}
Consider the control system $\Sigma=(X,PC(\mathbb{R}_+,U),\phi),$ with the BIC property. Assume that there exists a continuous Lyapunov function $V:\mathbb{R}_+\times X\to \mathbb{R}_+,$ constants $b_1>0,$ $b_2\geq0,$ $c\geq 0$ and $m\geq1,$ a function $\kappa\in\mathcal{K}$ and functions $\nu,\psi\in C(\mathbb{R}_+, \mathbb{R}),$ such that, for all $(t,x)\in \mathbb{R}_+\times X$ and all $u\in PC(\mathbb{R}_+,U),$
\begin{enumerate}
\item[$(i)$]
\begin{equation}\label{rama2}
b_1\|x\|_{X}^{m}\le V(t,x)\le b_2\|x\|_{X}^{m}+c,
\end{equation}
\item[$(ii)$] the Lie derivative of $V$ along the trajectories of $\Sigma$ satisfies
$$\dot{V}(t,x)\leq \nu(t) V(t,x)+\psi(t),\; \hbox{whenever}\; V(t,x)\geq \kappa(\|u(t)\|_{U}).$$
\item[$(iii)$] Condition $(iii)$ in Theorem \ref{dfffdddf1} holds.
\end{enumerate}
Then, $\Sigma$ is exponentially ISpS (eISpS).
\end{cor}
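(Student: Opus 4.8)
The plan is to derive Corollary~\ref{MARIAM} directly from Theorem~\ref{dfffdddf1} by a suitable choice of comparison functions. First I would set $\alpha_1(s):=b_1 s^{m}$ and $\alpha_2(s):=b_2 s^{m}$ for $s\ge 0$. Since $m\ge 1$ and $b_1>0$, we have $\alpha_1\in\mathcal{K}_{\infty}$; if $b_2>0$ then also $\alpha_2\in\mathcal{K}_{\infty}$, and the degenerate case $b_2=0$ is harmless since one may replace $b_2$ by any positive number, which only weakens the upper bound in \eqref{rama2}. With these identifications, hypotheses $(i)$--$(iii)$ of the corollary become exactly hypotheses $(i)$--$(iii)$ of Theorem~\ref{dfffdddf1}, the constant $c\ge 0$ in \eqref{rama2} playing the role of the constant $c$ in \eqref{rama1}. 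Hence I would invoke Theorem~\ref{dfffdddf1} to conclude that $\Sigma$ is ISpS and, more importantly, to obtain the explicit estimate it produces, namely
$$\|\phi(t,t_0,x_0,u)\|_{X}\le \beta(\|x_0\|_{X},t-t_0)+\gamma\Big(\sup_{t_0\le s\le t}\|u(s)\|_{U}\Big)+r,$$
valid for all $t_0\ge 0$, $t\ge t_0$, $x_0\in X$ and $u\in PC(\mathbb{R}_+,U)$, where $\beta(s,t)=\alpha_1^{-1}\!\big(2\alpha_2(s)e^{\xi}e^{-\eta t}\big)$ and $\eta,\xi>0$ are the constants furnished by condition $(iii)$.

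The key remaining step is to evaluate $\beta$ in closed form. Since $\alpha_1^{-1}(y)=(y/b_1)^{1/m}$ and $\alpha_2(s)=b_2 s^{m}$, one computes
$$\beta(s,t)=\Big(\tfrac{2b_2 e^{\xi}}{b_1}\Big)^{1/m}e^{-(\eta/m)t}\,s =: M e^{-a t}\,s,\qquad M:=\Big(\tfrac{2b_2 e^{\xi}}{b_1}\Big)^{1/m}>0,\quad a:=\frac{\eta}{m}>0.$$
Thus the $\mathcal{KL}$ transient term in the ISpS estimate has exactly the form $\beta(s,t)=Me^{-at}s$ required in Definition~\ref{ispssysy} for exponential ISpS. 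Finally I would note that $\sup_{t_0\le s\le t}\|u(s)\|_{U}\le\|u\|_{\mathcal U}$, so the class-$\mathcal{K}$ gain $\gamma$ and the nonnegative constant $r$ are exactly as needed, and conclude that $\Sigma$ is eISpS.

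I do not expect a genuine obstacle: the argument is essentially a substitution of the explicit power-law bounds into the estimate already established in Theorem~\ref{dfffdddf1}. The only mildly delicate points are the degenerate case $b_2=0$ (dealt with by enlarging $b_2$, or equivalently by choosing $M$ arbitrarily positive since then $\beta\equiv 0$ and the estimate trivially persists) and the routine passage from the running supremum $\sup_{t_0\le s\le t}\|u(s)\|_{U}$ to $\|u\|_{\mathcal U}$; both are immediate.
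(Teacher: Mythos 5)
Your proposal is correct and follows exactly the route the paper intends: the corollary is stated without proof as a specialization of Theorem~\ref{dfffdddf1}, and your substitution $\alpha_1(s)=b_1s^m$, $\alpha_2(s)=b_2s^m$ into $\beta(r,s)=\alpha_1^{-1}\bigl(2\alpha_2(r)e^{\xi}e^{-\eta s}\bigr)$ correctly yields the exponential form $Me^{-at}r$ with $M=(2b_2e^{\xi}/b_1)^{1/m}$ and $a=\eta/m$ required for eISpS. Your handling of the degenerate case $b_2=0$ is also fine.
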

Next, we present an ISpS result for the time-varying infinite-dimensional systems where the ISpS-Lyapunov function is introduced in dissipative form.
\begin{thm}\label{dffff1}
Consider the control system $\Sigma=(X,PC(\mathbb{R}_+,U),\phi).$ Assume that there exists a continuously differentiable function $V:\mathbb{R}_+\times X\to \mathbb{R}_+,$ functions $\psi_1,\psi_2,\delta\in\mathcal{K}_{\infty},$ a function $\theta\in\mathcal{K}$ and
a continuous integrable function $\ell:\mathbb{R}_+ \to \mathbb{R}_+,$ such that, for all $(t,x)\in \mathbb{R}_+\times X,$ and all $u\in PC( \mathbb{R}_+ ,U),$
\begin{enumerate}
\item[$(i)$] $$\psi_1(\|x\|_{X})\le V(t,x)\le \psi_2(\|x\|_{X}).$$
\item[$(ii)$] The following holds:
\begin{equation}
\dot{V}_{u}(t,x)\leq -\delta(V(t,x))+\ell(t)+\theta(\|u(t)\|_{U}).
\label{yesmine}
\end{equation}
\end{enumerate}
Then, system $\Sigma$ is ISpS. Moreover, if $\displaystyle \lim_{t\to \infty} \ell(t)=0,$ then the system $\Sigma$ has the CpUAG property.
\end{thm}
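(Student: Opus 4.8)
The plan is to reduce the dissipative estimate \eqref{yesmine} to an implication-form estimate of the type appearing in the previous Theorem \ref{dfffdddf1}, and then invoke that theorem. First I would fix $\theta\in\mathcal{K}_{\infty}$ without loss of generality (replacing $\theta$ by a larger $\mathcal{K}_\infty$ majorant if necessary) and choose a gain $\kappa\in\mathcal{K}$ together with a ``leftover'' dissipation rate $\tilde\delta\in\mathcal{K}_\infty$ such that
\begin{equation}\label{eq:split}
\delta(v)\geq \tilde\delta(v)+2\theta(\|u(t)\|_U)\qquad\text{whenever } v\geq\kappa(\|u(t)\|_U).
\end{equation}
Concretely, one sets $\kappa:=\delta^{-1}\circ 4\theta$ (so that $v\geq\kappa(\|u(t)\|_U)$ forces $\tfrac12\delta(v)\geq 2\theta(\|u(t)\|_U)$) and $\tilde\delta:=\tfrac12\delta$. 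Substituting \eqref{eq:split} into \eqref{yesmine} yields, on the region $V(t,x)\geq\kappa(\|u(t)\|_U)$, the implication-form estimate
\begin{equation*}
\dot{V}_{u}(t,x)\leq -\tilde\delta(V(t,x))+\ell(t),\qquad\text{whenever } V(t,x)\geq\kappa(\|u(t)\|_U),
\end{equation*}
which is exactly hypothesis $(ii)$ of Theorem \ref{dfffdddf1} with $\delta$ replaced by $\tilde\delta\in\mathcal{K}_\infty$, $\psi_1,\psi_2$ unchanged, and the same integrable $\ell$. Hypothesis $(i)$ carries over verbatim.

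Next I would apply Theorem \ref{dfffdddf1} to conclude that $\Sigma$ is ISpS; and for the second assertion, when $\lim_{t\to\infty}\ell(t)=0$, the same theorem yields the stronger estimate \eqref{lllll}, i.e.
\begin{equation*}
\|\phi(t,t_0,x_0,u)\|_{X}\leq\beta\Big(\psi_2(\|x_0\|_{X})+\textstyle\int_{0}^{\infty}\ell(s)ds,\,t-t_0\Big)+\gamma(\|u\|_{[t_0,t]})+\varsigma
\end{equation*}
for suitable $\beta\in\mathcal{KL}$, $\gamma\in\mathcal{K}$, $\varsigma>0$, which is precisely the CpUAG property for $\Sigma$ (with $c:=\psi_2(\|x_0\|_X)$ absorbed, $\varsigma$ the practical offset). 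One should double-check that $\beta$, $\gamma$, $\varsigma$ produced by Theorem \ref{dfffdddf1} do not depend on $(t_0,x_0,u)$ — they do not, by inspection of that proof — so the CpUAG constants are uniform as required by Definition \ref{cpuagg}.

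The main obstacle is the bookkeeping in the split \eqref{eq:split}: one must be careful that $\theta(\|u(t)\|_U)$ in \eqref{yesmine} depends on the instantaneous value $u(t)$ whereas the gain condition $V(t,x)\geq\kappa(\|u(t)\|_U)$ must be stated at the same time instant, so the argument is genuinely pointwise in $t$ along the trajectory and no mismatch arises — but this needs to be spelled out since $\delta$ need not be subadditive, which is why I use the factor-$2$ margin and halve $\delta$ rather than attempting $\delta(a+b)\le\delta(2a)+\delta(2b)$-type manipulations. A minor secondary point is ensuring $\tilde\delta=\tfrac12\delta$ is still of class $\mathcal{K}_\infty$, which is immediate, and that $\kappa=\delta^{-1}\circ 4\theta$ is of class $\mathcal{K}$ (of class $\mathcal{K}_\infty$ if $\theta$ was taken in $\mathcal{K}_\infty$), which is likewise immediate. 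Everything else is a direct citation of the already-proved Theorem \ref{dfffdddf1}.
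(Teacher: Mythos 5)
Your proposal is correct and follows essentially the same route as the paper: the paper's proof likewise absorbs $\theta(\|u(t)\|_U)$ into half of $\delta(V(t,x))$ on the region $V(t,x)\geq\delta^{-1}(2\theta(\|u(t)\|_U))$, obtaining $\dot V_u\leq-\tfrac12\delta(V)+\ell(t)$ and then invoking the implication-form Theorem \ref{dfffdddf1} with gain $\kappa=\delta^{-1}\circ 2\theta$. Your only deviation is the more conservative gain $\kappa=\delta^{-1}\circ 4\theta$, which is harmless.
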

\begin{proof}
Let $V$ be a Lyapunov function in a dissipative form so that (\ref{yesmine}) is satisfied. Whenever $\|u(t)\|_{U}\leq\theta^{-1}(\frac{1}{2}\delta(V(t,x)))$ holds, which is the same as $V(t,x)\geq\delta^{-1}(2\theta(\|u(t)\|_{U})),$ the inequality
$$\dot{V}_{u}(t,x)\leq -\frac{1}{2}\delta(V(t,x))+\ell(t),$$
is verified, which means that $V$ is an ISpS-Lyapunov function in an implicative form with a Lyapunov gain $\kappa=\delta^{-1}\circ2\theta.$
\end{proof}
In the following, the sufficient conditions of uniform global practical asymptotic stability for
the system $\Sigma$ at the zero input are presented as by-products of Theorem \ref{dffff1}.
\begin{cor}\label{COR1}
Consider the control system $\Sigma=(X,PC(\mathbb{R}_+,U),\phi).$ Assume that there exists a continuously differentiable function $V:\mathbb{R}_+\times X\to \mathbb{R}_+,$ functions $\psi_1,\psi_2,\delta\in\mathcal{K}_{\infty}$ and
a continuous integrable function $\ell:\mathbb{R}_+ \to \mathbb{R}_+,$ such that, for all $(t,x)\in \mathbb{R}_+\times X,$
\begin{enumerate}
\item[$(i)$] $$\psi_1(\|x\|_{X})\le V(t,x)\le \psi_2(\|x\|_{X}).$$
\item[$(ii)$] There holds
\begin{equation}
\dot{V}_{u}(t,x)\leq -\delta(V(t,x))+\ell(t).
\label{MARIAM1}
\end{equation}
\end{enumerate}
Then, system $\Sigma$ is 0-UGpAS.\\
Moreover, if $\displaystyle \lim_{t\to \infty} \ell(t)=0,$ then
the solution $\phi(t,t_0,x_0,u)$ of the system $\Sigma$ satisfies
\begin{equation}
\|\phi(t,t_0,x_0,u)\|_{X}\leq\beta( \psi_2(\|x_0\|_{X})+\int_{ 0} ^{\infty}\ell(s)ds,t-t_0),
\label{llyasminKJe}
\end{equation}
where $\beta$ is a class $\mathcal{KL}$ function on $\mathbb{R}_+\times \mathbb{R}_+.$\\
The estimate $(\ref{llyasminKJe})$ implies that the system $\Sigma$ at the zero input is uniformly globally attractive and uniformly globally
practically stable.
\end{cor}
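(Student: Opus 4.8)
The plan is to obtain everything from Theorem \ref{dffff1} together with one direct comparison argument. First I would note that hypothesis $(ii)$ of the corollary, $\dot V_u(t,x)\le -\delta(V(t,x))+\ell(t)$, trivially implies hypothesis $(ii)$ of Theorem \ref{dffff1} with, say, $\theta(r):=r$, because $\theta(\|u(t)\|_U)\ge 0$. Hence Theorem \ref{dffff1} applies and $\Sigma$ is ISpS: there are $\beta_0\in\mathcal{KL}$, $\gamma\in\mathcal{K}$ and $r_0>0$ with $\|\phi(t,t_0,x_0,u)\|_X\le\beta_0(\|x_0\|_X,t-t_0)+\gamma(\|u\|_{\mathcal{U}})+r_0$ for all admissible data; in particular $\Sigma$ is forward complete. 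Substituting $u\equiv 0$ and using $\gamma(0)=0$ gives $\|\phi(t,t_0,x_0,0)\|_X\le\beta_0(\|x_0\|_X,t-t_0)+r_0$ for all $t_0\ge 0$, $x_0\in X$, $t\ge t_0$, which is exactly Definition \ref{glasypraasystz}; so $\Sigma$ is 0-UGpAS.

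For the refined estimate under the extra hypothesis $\lim_{t\to\infty}\ell(t)=0$, I would argue directly along trajectories. Fix $(t_0,x_0,u)$ and put $y(t):=V(t,\phi(t,t_0,x_0,u))$; since $V$ is $C^{1}$ and $\phi$ is continuous in $t$, $y$ is continuous, and by $(ii)$ — which here holds unconditionally, with no "whenever" restriction — $D^{+}y(t)\le -\delta(y(t))+\ell(t)$ for all $t\ge t_0$. Invoking the comparison estimate for vanishing forcing (Proposition \ref{propbbvcc}, used in the same way earlier in this section), there is $\sigma_1\in\mathcal{KL}$, independent of the data, with $y(t)\le\sigma_1\!\left(y(t_0)+\int_0^\infty\ell(s)\,ds,\ t-t_0\right)$ for all $t\ge t_0$. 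The sandwich bounds $(i)$, monotonicity of $\sigma_1$ in its first argument, and $y(t_0)=V(t_0,x_0)\le\psi_2(\|x_0\|_X)$ then yield
\[
\|\phi(t,t_0,x_0,u)\|_X\le\psi_1^{-1}\!\left(\sigma_1\!\left(\psi_2(\|x_0\|_X)+\int_0^\infty\ell(s)\,ds,\ t-t_0\right)\right),
\]
so $\beta(r,s):=\psi_1^{-1}(\sigma_1(r,s))$ is of class $\mathcal{KL}$ (as $\psi_1^{-1}\in\mathcal{K}_\infty$ and $\sigma_1\in\mathcal{KL}$) and satisfies (\ref{llyasminKJe}).

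Finally I would read the two stability claims off (\ref{llyasminKJe}) with $u\equiv 0$. Writing $C_\ell:=\int_0^\infty\ell(s)\,ds<\infty$, the bound $\|\phi(t,t_0,x_0,0)\|_X\le\beta(\psi_2(\|x_0\|_X)+C_\ell,t-t_0)$ together with $\lim_{s\to\infty}\beta(r,s)=0$ for each fixed $r$ (since $\beta(r,\cdot)\in\mathcal{L}$) shows that for every $R,\varepsilon>0$ one can choose $T=T(R,\varepsilon)$ with $\|\phi(t,t_0,x_0,0)\|_X<\varepsilon$ whenever $\|x_0\|_X\le R$ and $t\ge t_0+T$, i.e.\ uniform global attractivity at zero input; and bounding $\beta(\cdot,t-t_0)\le\beta(\cdot,0)$ and using $\beta(a+b,0)\le\beta(2a,0)+\beta(2b,0)$ gives $\|\phi(t,t_0,x_0,0)\|_X\le\beta(2\psi_2(\|x_0\|_X),0)+\beta(2C_\ell,0)$, i.e.\ uniform global practical stability. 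The only mildly delicate point is invoking the comparison lemma in the sharper form with the shifted argument $y(t_0)+\int_0^\infty\ell$ instead of an additive remainder $2\int_0^\infty\ell$; otherwise the argument is routine bookkeeping with comparison functions, and I do not expect a real obstacle.
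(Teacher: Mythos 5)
Your argument is correct and follows essentially the route the paper intends: the 0-UGpAS claim is read off from Theorem \ref{dffff1} (whose hypothesis $(ii)$ is trivially implied by (\ref{MARIAM1})) by setting $u\equiv 0$, and the sharper estimate (\ref{llyasminKJe}) comes from applying the comparison result of Lemma \ref{propbbvcc} in its shifted-argument form (\ref{hjj102}) directly to $y(t)=V(t,\phi(t,t_0,x_0,u))$, exactly as in the $\ell(t)\to 0$ branch of the proof of Theorem \ref{dfffdddf1}. The only point worth flagging is that Lemma \ref{propbbvcc} is stated for $C^1$ functions $y$ while you (like the paper) invoke it for a merely continuous $y$ satisfying a Dini-derivative inequality; this is the same harmless looseness already present in the paper's own proofs and does not constitute a gap relative to them.
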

In the following, we present an iISpS result for the time-varying infinite-dimensional systems by employing an indefinite Lyapunov function.
\begin{thm}\label{dffff1}
Consider the control system $\Sigma=(X,C(\mathbb{R}_+,U),\phi),$ with the BIC property. Assume that there exists a continuous Lyapunov function $V:\mathbb{R}_+\times X\to \mathbb{R}_+,$ functions $\alpha_1,\alpha_2\in\mathcal{K}_{\infty},$ a function $\theta\in\mathcal{K},$ a constant $c\geq0,$ and functions $\nu,\psi\in C(\mathbb{R}_+, \mathbb{R}),$ such that, for all
$(t,x)\in \mathbb{R}_+\times X$ and all $u\in C(\mathbb{R}_+,U),$
\begin{enumerate}
\item[$(i)$] \begin{equation}\label{rama3}
\alpha_1(\|x\|_{X})\le V(t,x)\le \alpha_2(\|x\|_{X})+c,
\end{equation}
\item[$(ii)$] the Lie derivative of $V$ along the trajectories of system $\Sigma$ satisfies
\begin{equation}\label{ramffff}
\dot{V}(t,x)\leq \nu(t) V(t,x)+\psi(t)+\theta(\|u(t)\|_{U}).
\end{equation}
\item[$(iii)$] Condition $(iii)$ in Theorem \ref{dfffdddf1} holds.
\end{enumerate}
Then, $\Sigma$ is iISpS.
\end{thm}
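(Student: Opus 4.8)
The plan is to mimic the proof of the preceding indefinite-Lyapunov ISpS theorem (Theorem~\ref{dfffdddf1}), carrying the extra input term $\theta(\|u(t)\|_U)$ through the variation-of-constants formula, and then to invoke the BIC property to upgrade the estimate on the maximal interval of existence to a global one. Fix an initial time $t_0\geq0$, an initial state $x_0\in X$, and an input $u\in C(\mathbb{R}_+,U)$; let $[t_0,t_m)$ be the maximal interval of existence of $x(\cdot)=\phi(\cdot,t_0,x_0,u)$ and set $y(t):=V(t,x(t))$. Since $V$ is continuous and $\phi$ is continuous in $t$, the scalar function $y$ is continuous on $[t_0,t_m)$, and assumption (ii) yields
$$D^{+}y(t)\leq \nu(t)\,y(t)+\bigl(\psi(t)+\theta(\|u(t)\|_U)\bigr),\qquad t\in[t_0,t_m).$$

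First I would apply the scalar comparison lemma, Lemma~\ref{prop3vvv}, with forcing term $\psi(s)+\theta(\|u(s)\|_U)$, to obtain for all $t\in[t_0,t_m)$ that
$$y(t)\leq y(t_0)\,e^{\int_{t_0}^{t}\nu(s)\,ds}+\int_{t_0}^{t}\psi(s)\,\Theta(t,s)\,ds+\int_{t_0}^{t}\theta(\|u(s)\|_U)\,\Theta(t,s)\,ds,$$
with $\Theta(t,s)=\exp\!\bigl(\int_{s}^{t}\nu(\tau)\,d\tau\bigr)$. Next I would invoke assumption (iii): the first exponential is bounded by $e^{\xi}e^{-\eta(t-t_0)}$; the middle integral is bounded by $\rho$ after estimating $\psi(s)\leq|\psi(s)|$; and, taking $t_0:=s$ in the first inequality of (iii) gives $\int_{s}^{t}\nu(\tau)\,d\tau\leq-\eta(t-s)+\xi\leq\xi$, hence $\Theta(t,s)\leq e^{\xi}$, so the last integral is at most $e^{\xi}\int_{t_0}^{t}\theta(\|u(s)\|_U)\,ds$. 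Combining this with the sandwich bounds (i) and with $y(t_0)\leq\alpha_2(\|x_0\|_X)+c$, I get for all $t\in[t_0,t_m)$
$$\alpha_1(\|x(t)\|_X)\leq e^{\xi}e^{-\eta(t-t_0)}\bigl(\alpha_2(\|x_0\|_X)+c\bigr)+\rho+e^{\xi}\int_{t_0}^{t}\theta(\|u(s)\|_U)\,ds.$$

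Finally I would split the right-hand side using the weak triangle inequality for $\alpha_1^{-1}\in\mathcal{K}_\infty$ (namely $\alpha_1^{-1}(a+b+d)\leq\alpha_1^{-1}(3a)+\alpha_1^{-1}(3b)+\alpha_1^{-1}(3d)$, applied once more to separate $\alpha_2(\|x_0\|_X)$ from $c$ inside the first term), obtaining
$$\|x(t)\|_X\leq\beta(\|x_0\|_X,t-t_0)+\alpha\!\left(\int_{t_0}^{t}\mu(\|u(s)\|_U)\,ds\right)+r,$$
where $\mu:=\theta\in\mathcal{K}$, $\alpha(q):=\alpha_1^{-1}(3e^{\xi}q)\in\mathcal{K}_\infty$, $\beta(p,s):=\alpha_1^{-1}\bigl(6e^{\xi}e^{-\eta s}\alpha_2(p)\bigr)\in\mathcal{KL}$, and $r:=\alpha_1^{-1}(6e^{\xi}c)+\alpha_1^{-1}(3\rho)$. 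This is exactly the iISpS estimate of Definition~\ref{def1}, but so far only on $[t_0,t_m)$. Since the right-hand side is uniformly bounded on $[t_0,t_m)$ and $\alpha_1\in\mathcal{K}_\infty$, the trajectory $x(\cdot)$ is bounded on its maximal interval of existence, so the BIC property forces $t_m=\infty$; thus $\Sigma$ is forward complete and the displayed estimate holds for all $t\geq t_0$, $x_0\in X$, $u\in C(\mathbb{R}_+,U)$, which is precisely iISpS.

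The argument is essentially bookkeeping and I do not expect a genuine obstacle. The two points requiring a little care are: (a) the constant $c$ in the upper sandwich bound, which prevents $e^{\xi}e^{-\eta(t-t_0)}(\alpha_2(\|x_0\|_X)+c)$ from defining a $\mathcal{KL}$ function directly and must be split off into the residual constant $r$; and (b) the fact that $V$ is only assumed continuous, so the comparison step must be carried out for the Dini derivative $D^{+}y$ of the continuous scalar function $y$, exactly as in Theorem~\ref{dfffdddf1}, rather than via classical ODE comparison.
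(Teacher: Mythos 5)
Your proposal is correct and follows essentially the same route as the paper's proof: apply the scalar comparison lemma (Lemma~\ref{prop3vvv}) to $y(t)=V(t,x(t))$ with forcing term $\psi(s)+\theta(\|u(s)\|_U)$, bound the three resulting terms via condition $(iii)$, split with the weak triangle inequality for $\alpha_1^{-1}$, and invoke the BIC property to conclude $t_m=\infty$. The only differences are cosmetic (your constants $3$ and $6$ versus the paper's $2$ and $4$ in the triangle-inequality splitting), and your remarks (a) and (b) correctly identify the two points of care.
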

\begin{proof}
Let $d\in(t_0,\infty)$ be such that $[t_0,d)$ is the maximal time interval over which a system $\Sigma$ admits a solution. \\ 
Set $y(t)=V(t,x(t)),$
where $x(t)=\phi(t,t_0,x_0,u).$ By (\ref{ramffff}), we can get
$$D^{+}y(t)\leq \nu(t) y(t)+\psi(t)+\theta(\|u(t)\|_{U}),$$
for all $t\in [t_0,d).$ \\ 
By Lemma \ref{prop3vvv}, we have
\begin{equation}\label{FFFKLLL}
y(t)\leq y(t_0)e^{\int_{t_0}^{t} \nu(s)ds}+\int _{t_0} ^{t}e^{\int _{s}^{t} \nu(\tau)d\tau}[\theta(\|u(s)\|_{U})+\psi(s)]ds.
\end{equation}
Then, via condition $(iii),$ inequality (\ref{FFFKLLL}) implies that
\begin{equation}\label{nyasminewwwwww}
y(t)\leq \alpha_2(\|x_0\|_{X})e^{-\eta(t-t_0)}e^{\xi}+\int _{t_0} ^{t}e^{\xi}\theta(\|u(s)\|_{U})ds+\rho+ce^{\xi}.
\end{equation}
Combining (\ref{nyasminewwwwww}) with (\ref{rama3}), we obtain
\begin{equation}\label{vRCCN1}
\|x(t)\|_{X}\leq \beta(\|x_0\|_{X},t-t_0)+\alpha\left(\int _{t_0} ^{t}\theta(\|u(s)\|_{U})ds\right)+r,
\end{equation}

where $\beta(\varsigma,s)=\alpha_1^{-1}(2e^{\xi}\alpha_2(\varsigma)e^{-\eta s})\in \mathcal{KL},$ $\alpha(\varsigma)=\alpha_1^{-1}(4e^{\xi}\varsigma)$ and $r=\alpha_1^{-1}(4\rho+4ce^{\xi}).$
Using (\ref{vRCCN1}) and the BIC property, we can establish that $d=\infty.$ Therefore, system $\Sigma$ is iISpS. 
\end{proof}
\begin{rem}
Condition $(ii)$ in Theorem \ref{dfffdddf1} implies condition $(ii)$ in Theorem \ref{dffff1}, but the converse may not be true. As a special case, if let $c=0$ and $\psi(t)=0$ in Theorem \ref{dffff1}, then the system  $\Sigma$ is iISS. 
\end{rem}
Next, we give sufficient conditions to guarantee the ISpS of $\Sigma$ where a particular ISpS Lyapunov function is introduced in Banach spaces.
\begin{thm}\label{MARIAM11}
Consider the control system $\Sigma=(X,PC(\mathbb{R}_+,U),\phi),$ with the BIC property. Assume that there exists a continuous Lyapunov function $V:\mathbb{R}_+\times X\to \mathbb{R}_+,$ functions $\alpha_1,\alpha_2\in\mathcal{K}_{\infty},$ constants $c\geq 0,$ $\varrho >0,$ $\epsilon\geq0$ and a function $\kappa\in\mathcal{K},$ such that, for all $(t,x)\in \mathbb{R}_+\times X$ and all $u\in PC(\mathbb{R}_+,U),$
\begin{enumerate}
\item[$(i)$]
\begin{equation}\label{YDFDFD}
\alpha_1(\|x\|_{X})\le V(t,x)\le \alpha_2(\|x\|_{X})+c,
\end{equation}
\item[$(ii)$] the Lie derivative of $V$ along the trajectories of system $\Sigma$ satisfies
\begin{equation}\label{YJJasmine}
\dot{V}(t,x)\leq -\varrho V(t,x)+\kappa(\|u(t)\|_{U})+\epsilon.
\end{equation}
\end{enumerate}
Then, $\Sigma$ is ISpS.
\end{thm}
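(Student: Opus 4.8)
The plan is to obtain ISpS of $\Sigma$ directly from Theorem~\ref{dfffdddf1} (the version formulated with a linear coefficient $\nu$ and forcing $\psi$, whose conclusion is ISpS), by rewriting the dissipative estimate \eqref{YJJasmine} in the implication form required there. The trick is to sacrifice a fixed fraction of the decay rate in order to absorb the input term into the linear part. Concretely, I would set $\nu(t):\equiv -\varrho/2$, $\psi(t):\equiv \epsilon$, and introduce the new gain $\tilde\kappa:=\tfrac{2}{\varrho}\kappa\in\mathcal{K}$. Then, whenever $V(t,x)\ge \tilde\kappa(\|u(t)\|_{U})$, that is, $\kappa(\|u(t)\|_{U})\le \tfrac{\varrho}{2}V(t,x)$, the bound \eqref{YJJasmine} gives
\[
\dot V(t,x)\le -\varrho V(t,x)+\tfrac{\varrho}{2}V(t,x)+\epsilon=-\tfrac{\varrho}{2}V(t,x)+\epsilon=\nu(t)V(t,x)+\psi(t),
\]
which is exactly condition~(ii) of Theorem~\ref{dfffdddf1} with Lyapunov gain $\tilde\kappa$.

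It then remains to check condition~(iii) of Theorem~\ref{dfffdddf1}. Since $\int_{t_0}^{t}\nu(\tau)\,d\tau=-\tfrac{\varrho}{2}(t-t_0)$ for all $t\ge t_0\ge 0$, the first inequality there holds with $\eta:=\varrho/2>0$ and $\xi:=0$. The associated kernel is $\Theta(t,\tau)=\exp\!\big(\int_\tau^t\nu(s)\,ds\big)=e^{-\varrho(t-\tau)/2}$, so
\[
\int_{t_0}^{t}|\psi(\tau)|\,\Theta(t,\tau)\,d\tau=\epsilon\int_{t_0}^{t}e^{-\varrho(t-\tau)/2}\,d\tau=\tfrac{2\epsilon}{\varrho}\big(1-e^{-\varrho(t-t_0)/2}\big)\le \tfrac{2\epsilon}{\varrho}=:\rho<\infty,
\]
giving the second inequality. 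Condition~(i) of Theorem~\ref{dfffdddf1} is literally \eqref{YDFDFD}, $V$ is continuous, and the BIC property is assumed; hence all hypotheses of Theorem~\ref{dfffdddf1} are in force and $\Sigma$ is ISpS.

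For a self-contained derivation (which also exhibits explicit $\mathcal{KL}$/$\mathcal{K}$ bounds) one may instead argue directly: put $y(t):=V(t,\phi(t,t_0,x_0,u))$ on the maximal interval of existence $[t_0,t_m)$; by \eqref{YJJasmine} and continuity, $D^{+}y(t)\le -\varrho y(t)+\kappa(\|u(t)\|_{U})+\epsilon$, so the linear comparison principle (Lemma~\ref{prop3vvv}), together with $\kappa(\|u(s)\|_{U})\le\kappa(\|u\|_{\mathcal{U}})$ and $\int_{t_0}^{t} e^{-\varrho(t-s)}\,ds\le \varrho^{-1}$, yields
\[
y(t)\le e^{-\varrho(t-t_0)}y(t_0)+\tfrac{1}{\varrho}\kappa(\|u\|_{\mathcal{U}})+\tfrac{\epsilon}{\varrho}\le e^{-\varrho(t-t_0)}\alpha_2(\|x_0\|_{X})+c+\tfrac{1}{\varrho}\kappa(\|u\|_{\mathcal{U}})+\tfrac{\epsilon}{\varrho}.
\]
Applying $\alpha_1^{-1}$, using $\alpha_1^{-1}(a+b+d)\le\alpha_1^{-1}(3a)+\alpha_1^{-1}(3b)+\alpha_1^{-1}(3d)$ and \eqref{YDFDFD}, one gets $\|\phi(t,t_0,x_0,u)\|_{X}\le\beta(\|x_0\|_{X},t-t_0)+\gamma(\|u\|_{\mathcal{U}})+r$ with $\beta(s,\tau):=\alpha_1^{-1}\big(3e^{-\varrho\tau}\alpha_2(s)\big)\in\mathcal{KL}$, $\gamma(r):=\alpha_1^{-1}\big(\tfrac{3}{\varrho}\kappa(r)\big)\in\mathcal{K}$ and $r:=\alpha_1^{-1}\big(3c+\tfrac{3\epsilon}{\varrho}\big)$; the uniform boundedness of this estimate on $[t_0,t_m)$ and the BIC property force $t_m=\infty$, so $\Sigma$ is forward complete and ISpS.

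The argument is essentially routine; the one point deserving attention is that one must \emph{not} take the decay rate in the implication equal to the full $-\varrho$: since $\varrho>0$ need not exceed $1$, there would then be no margin to dominate $\kappa(\|u(t)\|_{U})$ by the linear term. Choosing any rate in $(-\varrho,0)$ — here $-\varrho/2$ — removes this difficulty and simultaneously keeps the constant forcing $\epsilon$ inside the finite quantity $\rho$ (equivalently, inside the residual $r$ of the ISpS estimate). No further obstacle arises; if additionally $c=\epsilon=0$ the same computation gives $r=0$, i.e.\ $\Sigma$ is ISS.
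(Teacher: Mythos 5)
Your argument is essentially the paper's own proof: the paper picks an arbitrary $\lambda\in(0,\varrho)$ and gain $\omega(s)=\frac{1}{\varrho-\lambda}\kappa(s)$ to convert \eqref{YJJasmine} into the implication form $\dot V\le-\lambda V+\epsilon$ and then invokes Theorem~\ref{dfffdddf1}; your choice $\lambda=\varrho/2$ with $\tilde\kappa=\frac{2}{\varrho}\kappa$ is exactly this with a fixed $\lambda$. Your explicit verification of condition~(iii) (with $\eta=\varrho/2$, $\xi=0$, $\rho=2\epsilon/\varrho$) and the self-contained comparison-lemma derivation are welcome additions the paper leaves implicit, but they do not change the route.
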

\begin{proof}
Pick any $\lambda\in(0,\varrho)$ and define $\omega(s)=\frac{1}{\varrho-\lambda}\kappa(s).$ For all $(t,x)\in \mathbb{R}_+\times X$ and all $u\in PC(\mathbb{R}_+,U),$ we have the following implication
$$V(t,x)\geq\omega(\|u(t)\|_{U})\Longrightarrow \dot{V}(t,x)\leq -\lambda V(t,x)+\epsilon.$$
Then, $V$ satisfies the inequality (\ref{rama1bbb}) with $\nu(t)=-\lambda$ and $\psi(t)=\epsilon.$ Thus, all conditions of Theorem \ref{dfffdddf1} are satisfied. Therefore, $\Sigma$ is ISpS.
\end{proof}
\begin{rem}
For finite-dimensional systems existence of an ISpS-Lyapunov function (\ref{YJJasmine}) implies the ISpS of systems. 
\end{rem}
\section{\sectiontitle{ISpS of time-varying infinite-dimensional systems with integrable inputs}}

In this section, we extend the ISpS Lyapunov methodology to make it suitable for the analysis of ISpS w.r.t. inputs from $L_p$-spaces. We show that the existence of the so-called $L_p$-ISpS Lyapunov function implies the $L_p$-ISpS of a system. 
\begin{dfn}
Let $\Sigma=(X, \mathcal{U},\phi)$ be a control system. We say that $\phi$ depends continuously on inputs with respect to $\mathcal{U}$-norm, if for all $x\in X,$ all $u\in\mathcal{U},$ all $T\in[t_0,d)$ and all $\varepsilon>0,$ there is $\delta>0,$ such that for all $\tilde{u}\in\mathcal{U}:$ $\|u-\tilde{u}\|_{\mathcal{U}}<\delta$ it holds that $d(x,\tilde{u})\geq T$ and
$$\|\phi(t,x,u)-\phi(t,x,\tilde{u})\|_{X}<\varepsilon\quad \forall t\in[t_0,T].$$
\end{dfn}
\begin{thm}\label{yasmp}
Let $p\in [1,\infty).$ Consider a control system $\Sigma=(X, \mathcal{U},\phi)$ with $\mathcal{U}=L_{p}(\mathbb{R}_+,U).$ Suppose that $\Sigma$ has BIC property and that $\phi$ is continuous on inputs with respect to $L_p-$norm. Assume that there exist a continuous Lyapunov function $V:\mathbb{R}_+\times X \to \mathbb{R}_+,$ functions $\alpha_1,\alpha_2\in \mathcal{K}_{\infty},$ constants $\ell >0,$ $c\geq 0$ and functions $\nu,\psi\in C(\mathbb{R}_+, \mathbb{R}),$ such that, for all $(t,x)\in \mathbb{R}_+\times X$ and all $u\in \mathcal{U},$
\begin{enumerate}
\item[$(i)$]
\begin{equation}\label{ramkklj}
\alpha_1(\|x\|_{X}) \leq V(t,x) \leq  \alpha_2 (\|x\|_{X})+c,
\end{equation}
\item[$(ii)$] the Lie derivative of $V$ along the trajectories of system $\Sigma$ satisfies
$$\dot{V}(t,x) \leq \nu(t)V(t,x)+\psi(t)+\ell \|u(t)\|_{U}.$$
\item[$(iii)$] Condition $(iii)$ in Theorem \ref{dfffdddf1} holds.
\end{enumerate}
Then, $\Sigma$ is ISpS w.r.t. inputs in the space $L_p(\mathbb{R}_{+},U).$
\end{thm}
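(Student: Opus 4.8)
The plan is to run essentially the same comparison-function argument as in the proof of Theorem~\ref{dfffdddf1}, but to replace the crude handling of the input term by an $L_p$-weighted estimate obtained from H\"older's inequality. First I would fix data $(t_0,x_0,u)$ with $u\in L_p(\mathbb{R}_+,U)$, let $[t_0,d)$ be the maximal interval of existence, and set $y(t):=V(t,\phi(t,t_0,x_0,u))$. Since $V$ is continuous and $\phi$ is continuous in time, $y$ is continuous and, by hypothesis~$(ii)$, satisfies $D^{+}y(t)\le \nu(t)y(t)+\psi(t)+\ell\|u(t)\|_{U}$ on $[t_0,d)$. Lemma~\ref{prop3vvv} then yields
\begin{equation*}
y(t)\le y(t_0)\,e^{\int_{t_0}^{t}\nu(s)\,ds}+\int_{t_0}^{t}\Theta(t,s)\big(\psi(s)+\ell\|u(s)\|_{U}\big)\,ds,\qquad t\in[t_0,d),
\end{equation*}
with $\Theta(t,s)=\exp\!\big(\int_{s}^{t}\nu(\tau)\,d\tau\big)$.

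Next I would invoke condition~$(iii)$. It gives $\Theta(t,s)\le e^{\xi}e^{-\eta(t-s)}$ for all $t\ge s\ge t_0$, hence $y(t_0)e^{\int_{t_0}^{t}\nu}\le(\alpha_2(\|x_0\|_X)+c)e^{\xi}e^{-\eta(t-t_0)}$ by $(i)$, while $\int_{t_0}^{t}\Theta(t,s)\psi(s)\,ds\le\int_{t_0}^{t}|\psi(s)|\Theta(t,s)\,ds\le\rho$. For the input term, the key point — and what separates this statement from a corollary of the $L^1$-type iISpS result — is that for $p>1$ one cannot bound $\int\|u\|_{U}$, only $\int\|u\|_{U}^{p}$; the exponential weight is exactly what compensates. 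Writing $q$ for the conjugate exponent, H\"older's inequality gives
\begin{equation*}
\ell\int_{t_0}^{t}\Theta(t,s)\|u(s)\|_{U}\,ds\le \ell e^{\xi}\Big(\int_{t_0}^{t}e^{-q\eta(t-s)}\,ds\Big)^{1/q}\|u\|_{L_p(\mathbb{R}_+,U)}\le \frac{\ell e^{\xi}}{(q\eta)^{1/q}}\,\|u\|_{L_p(\mathbb{R}_+,U)},
\end{equation*}
and the analogous bound with constant $\ell e^{\xi}$ holds trivially when $p=1$. Collecting the three contributions and applying $(i)$ once more,
\begin{equation*}
\alpha_1(\|\phi(t,t_0,x_0,u)\|_X)\le \alpha_2(\|x_0\|_X)e^{\xi}e^{-\eta(t-t_0)}+\big(ce^{\xi}+\rho\big)+\lambda_p\,\|u\|_{L_p(\mathbb{R}_+,U)},
\end{equation*}
with $\lambda_p:=\ell e^{\xi}(q\eta)^{-1/q}$ (resp. $\ell e^{\xi}$ if $p=1$).

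The rest is routine: applying $\alpha_1^{-1}\in\mathcal{K}_{\infty}$ together with the weak triangle inequality $\alpha_1^{-1}(a+b+c)\le\alpha_1^{-1}(3a)+\alpha_1^{-1}(3b)+\alpha_1^{-1}(3c)$ produces
\begin{equation*}
\|\phi(t,t_0,x_0,u)\|_X\le \beta(\|x_0\|_X,t-t_0)+\gamma(\|u\|_{L_p(\mathbb{R}_+,U)})+r,
\end{equation*}
where $\beta(s,\theta)=\alpha_1^{-1}\!\big(3e^{\xi}\alpha_2(s)e^{-\eta\theta}\big)\in\mathcal{KL}$, $\gamma(s)=\alpha_1^{-1}(3\lambda_p s)\in\mathcal{K}$, and $r=\alpha_1^{-1}\!\big(3(ce^{\xi}+\rho)\big)$. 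Since the right-hand side is a bound uniform over $[t_0,d)$, the BIC property forces $d=\infty$, so the estimate holds for all $t\ge t_0$ and $\Sigma$ is ISpS w.r.t. inputs in $L_p(\mathbb{R}_+,U)$.

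The one technical point I expect to need care is the legitimacy of the comparison step when $V$ is merely continuous and $u$ only lies in $L_p$, since then the forcing term $s\mapsto\psi(s)+\ell\|u(s)\|_{U}$ is only locally integrable and $y$ need not be absolutely continuous, whereas the comparison lemma is typically applied with a continuous (or piecewise-continuous) forcing as in Theorems~\ref{dfffdddf1} and~\ref{dffff1}. My plan to deal with this is a density reduction: first establish the displayed estimate for inputs $u$ continuous with compact support (where $s\mapsto\psi(s)+\ell\|u(s)\|_{U}$ is continuous and Lemma~\ref{prop3vvv} applies in the classical way), and then pass to a general $u\in L_p(\mathbb{R}_+,U)$ by choosing such $u_n\to u$ in $L_p$, using the assumed continuous dependence of $\phi$ on inputs in the $L_p$-norm (which in particular yields existence of $\phi(\cdot,t_0,x_0,u_n)$ past any fixed horizon) together with the continuity of $\gamma$ and $\|u_n\|_{L_p}\to\|u\|_{L_p}$.
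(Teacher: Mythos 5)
Your proposal is correct and follows essentially the same route as the paper's proof: apply Lemma~\ref{prop3vvv} to $y(t)=V(t,\phi(t,t_0,x_0,u))$, use condition $(iii)$ to bound the homogeneous and $\psi$ terms, treat the input term by H\"older's inequality (trivially for $p=1$), conclude via the weak triangle inequality and BIC, and extend from continuous to general $L_p$ inputs by density and the assumed continuous dependence on inputs. Your explicit flagging of why the density reduction is needed (the comparison lemma requires a continuous forcing term) is if anything slightly more careful than the paper's presentation, but it is the same argument.
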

\begin{proof}
Pick any initial time $t_0\in\mathbb{R}_+,$ any initial condition $x_0 \in X$ and any admissible control $u \in C(\mathbb{R}_+,U)\subset L_{p}(\mathbb{R}_+,U).$ As $\Sigma$ is a control system, there is $d \in (t_0,\infty),$ such that $A_{\phi} \cap ([t_0,\infty)\times \{(t_0,x_0, u)\})= [t_0,d)\times \{(t_0,x_0, u)\}.$ Define $y(t)=V(t,x(t)),\; t \in [t_0,d).$ We have
\begin{equation}\label{R2nYASMINEbbbbb}
D^{+}y(t) \leq \nu(t)y(t)+\psi(t)+\ell \|u(t)\|_{U}.
\end{equation}
By Condition $(iii)$ and Lemma \ref{prop3vvv} one gets from (\ref{R2nYASMINEbbbbb}) that
\begin{equation}\label{R2nYASMINE}
y(t)\leq y(t_0) e^{-\eta(t-t_0)} e^ \xi + \rho + \ell e^ \xi \int_{t_0}^{t}e^{-\eta(t-s)} \|u(s)\|_{U}ds.
\end{equation}
The following two cases are considered respectively.
\begin{enumerate}
\item[\bf{Case 1.}] $p=1.$ By using $e^{-\eta(t-s)} \leq 1,\quad t\geq s,$ we have from (\ref{ramkklj}) and (\ref{R2nYASMINE}),
$$\alpha_1(\| x(t)\|_{X}) \leq \alpha_2(\|x_0\|_{X}) e^{-\eta(t-t_0)}e^ \xi +\rho +c+ \ell e^ \xi \int_{t_0}^{t} \|u (s)\|_{U }ds.$$
Then, by using $\alpha_1^{-1}(a+b+c)\leq\alpha_1^{-1}(2a)+\alpha_1^{-1}(4b)+\alpha_1^{-1}(4c),\; \alpha_1^{-1}\in\mathcal{K}_{\infty},\; a,b,c\in\mathbb{R}_{+},$ we get
$$\|x(t)\|_{X}\leq \beta (\|x_0\|_{X},t-t_0)+ \gamma(\|u\|_{L_1(\mathbb{R}_1,U)})+r,$$
where $\beta(\upsilon,s)=\alpha_{1}^{-1} (2\alpha_{2}(\upsilon)e^{-\eta s}e^ {\xi}),$ $\gamma(s)=\alpha_{1}^{-1}(4 \ell e^ \xi s)$ and $r=\alpha_{1}^{-1} (4\rho+4c).$ The above estimate and the BIC property show that the solution $x(t)$ exists in $[t_0,\infty).$ Therefore, the system $\Sigma $ is ISpS w.r.t. inputs in the space $ L_1(\mathbb{R}_+,U).$
\item[\bf{Case 2.}] $p>1.$ Pick any $q\geq 1,$ so that $\frac{1}{p}+\frac{1}{q}=1.$ We continue the estimates (\ref{R2nYASMINE}), using the H\"older Inequality
\begin{eqnarray*}
y(t)&\leq & y(t_0) e^{-\eta(t-t_0)} e^ \xi + \rho+ \ell e^{\xi} \bigg{(} \int_{t_0}^{t} e^{-\frac{\eta}{2}q(t-s)}\bigg{)}^{\frac{1}{q}} \bigg{(}\int_{t_0}^{t} e^{-\frac{p\eta}{2}}\|u(s)\|_{U}^{p} \bigg{)}^{\frac{1}{p}}\\
&\le& y(t_0) e^{-\eta(t-t_0)}e^\xi+\rho+\ell e^\xi \left(\frac{2}{q\eta}\right)^\frac{1}{q} \bigg{(}\int_{t_0}^{t}\|u(s)\|_{U}^p ds\bigg{)}^\frac{1}{p} \\
&\le& y(t_0) e^{-\eta(t-t_0)}e^\xi+\rho+\ell e^\xi \left(\frac{2}{q\eta}\right)^\frac{1}{q}\|u\|_{L_p(\mathbb{R}_+,U)}.
\end{eqnarray*}
Hence, it can be deduced that
$$\|x(t)\|_{X}\leq \beta (\|x_0\|_{X},t-t_0)+ \gamma(\|u\|_{L_p(\mathbb{R}_+,U)})+r,$$
where $\beta(\upsilon,s)=\alpha_{1}^{-1} (2\alpha_{2}(\upsilon)e^{-\eta s}e^ {\xi}),$ $\gamma(s)=\alpha_{1}^{-1}(4\ell e^ \xi \left(\frac{2}{q\eta}\right)^\frac{1}{q} s)$ and $r=\alpha_{1}^{-1}(4\rho+4c).$
\end{enumerate}
In particular, the solution $x(\cdot)$ stays bounded on $[t_0,d),$ if $d < \infty,$ then by the BIC property it can be prolonged to a larger interval which contradiction to the maximality of $d.$ Hence, $d=\infty $ and the solution exists on $[t_0,\infty)$ for all $x_0 \in X $ and all $u \in C(\mathbb{R}_{+},U).$ Now, pick any $x_0\in X$ and any input $u\in L_{p}(\mathbb{R}_+,U).$ As $\Sigma$ is a control system, the corresponding solution $x(\cdot)$ exists on a certain maximal interval $[t_0,d).$ It is well know that $C([0,d),U)$ is dense in $\mathcal{U}=L_{p}([0,d), U).$ Since $\phi$ is continuous on inputs with respect to $L_p-$norm and the BIC property, we have $\Sigma$ is ISpS w.r.t. inputs in the space $L_p(\mathbb{R}_{+},U).$
\end{proof}
\begin{rem}
Function $V$ is called $L_p$-ISpS Lyapunov function for the system $\Sigma.$ On the other hand, if we take $c=0$ and $\psi(t)=0$ in Theorem \ref{yasmp}, we can derive that the system $\Sigma$ is ISS w.r.t. inputs in the space $L_p(\mathbb{R}_{+},U).$
\end{rem}

\section{\sectiontitle{ISpS of semi-linear evolution equations}}

The following results show how to construct a suitable ISpS-Lyapunov function for the time-varying infinite-dimensional system (\ref{R1}).
\begin{thm}\label{th0b1}
Let Assumption (H) hold and let $A(t)$ be a uniformly bounded linear operator for all $t\geq 0$ in the sense that $\sup _{t\geq 0 } \|A(t)\|<\infty.$ Assume that the evolution operator generated by $\{A(t)\}_{t\geq0}$ is uniformly asymptotically stable and there exists a continuously differentiable, uniformly bounded, coercive positive operator-valued function $P:\mathbb{R}_{+}\to L(X)$, satisfying (\ref{R2}). If there are integrable continuous functions $a,b:\mathbb{R}_{+} \to \mathbb{R}_{+},$ a constant $\varsigma\geq0,$ and a function $\zeta\in \mathcal{K},$ such that for all $x\in X,$ all $u\in \mathcal{U},$ all $t_0\geq 0$ and all $t\geq t_0,$
\begin{equation}\label{R2nnkkkk}
\|\Psi(t,x,u)\|_{X}\leq a(t)\|x\|_{X}+b(t)\zeta(\|u(t)\|_{U})+\varsigma,
\end{equation}
then the system \eqref{R1} is ISpS and its ISpS-Lyapunov function can be constructed as
$$V(t,x)=\langle P(t)x,x\rangle.$$
\end{thm}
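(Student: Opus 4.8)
The plan is to show that $V(t,x)=\langle P(t)x,x\rangle$ is an ISpS-Lyapunov function in implicative form, so that Theorem~\ref{dfffdddf1} (the version assuming the BIC property, with an indefinite rate $\nu(t)$, a perturbation $\psi(t)$, and the integral condition $(iii)$) applies and yields ISpS. Since $P:\mathbb{R}_{+}\to L(X)$ is bounded and coercive, put $p:=\sup_{t\geq0}\|P(t)\|$ and let $\mu_1>0$ be the coercivity constant; then $\mu_1\|x\|_{X}^{2}\leq V(t,x)\leq p\|x\|_{X}^{2}$ for all $t\geq0$, $x\in X$, which is condition~$(i)$ of Theorem~\ref{dfffdddf1} with $\alpha_1(r)=\mu_1 r^2$, $\alpha_2(r)=pr^2$ and $c=0$. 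Well-posedness and the BIC property of (\ref{R1}) follow from Proposition~\ref{propooo1} under the standing assumption on $\Psi$.

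Next I would compute the Lie derivative of $V$ along the trajectories of (\ref{R1}). Because $A(\cdot)$ is uniformly bounded and continuous in the uniform operator topology and $\Psi$ satisfies the local Lipschitz hypothesis, the mild solution is a piecewise classical solution (argue as in Proposition~\ref{prop:mild-is-classical}), so $\dot x=A(t)x+\Psi(t,x,u)$ holds at all but countably many $t$ and
$$\dot V_{u}(t,x)=\langle\dot P(t)x,x\rangle+\langle P(t)\dot x,x\rangle+\langle P(t)x,\dot x\rangle .$$
Inserting $\dot x$, using $\langle P(t)x,A(t)x\rangle=\langle A(t)^{*}P(t)x,x\rangle$ and the Lyapunov equality (\ref{R2}) to cancel the (possibly unbounded) linear contribution, this collapses to
$$\dot V_{u}(t,x)=-\|x\|_{X}^{2}+\langle P(t)\Psi(t,x,u),x\rangle+\langle P(t)x,\Psi(t,x,u)\rangle\leq -\|x\|_{X}^{2}+2p\|x\|_{X}\big(a(t)\|x\|_{X}+b(t)\zeta(\|u(t)\|_{U})+\varsigma\big),$$
where the last step uses Cauchy--Schwarz and the growth estimate (\ref{R2nnkkkk}).

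The crucial step is the choice of the ISpS Lyapunov gain. Take $\kappa:=\zeta^{2}\in\mathcal{K}$. Whenever $V(t,x)\geq\kappa(\|u(t)\|_{U})$ we get $\zeta(\|u(t)\|_{U})\leq\sqrt{V(t,x)}$, and together with $\|x\|_{X}\leq\sqrt{V(t,x)/\mu_1}$ this turns the bilinear term into $2p\,b(t)\|x\|_{X}\zeta(\|u(t)\|_{U})\leq\tfrac{2p}{\sqrt{\mu_1}}b(t)V(t,x)$ — linear in $V$ with an integrable coefficient. Absorbing $2p\varsigma\|x\|_{X}\leq\tfrac12\|x\|_{X}^{2}+2p^{2}\varsigma^{2}$ by Young's inequality and using $-\tfrac12\|x\|_{X}^{2}\leq-\tfrac{1}{2p}V(t,x)$ together with $2pa(t)\|x\|_{X}^{2}\leq\tfrac{2p}{\mu_1}a(t)V(t,x)$, I obtain, for all $t\geq0$ and all $(t,x)$ with $V(t,x)\geq\kappa(\|u(t)\|_{U})$,
$$\dot V_{u}(t,x)\leq\nu(t)V(t,x)+\psi(t),\qquad \nu(t):=-\tfrac{1}{2p}+\tfrac{2p}{\mu_1}a(t)+\tfrac{2p}{\sqrt{\mu_1}}b(t),\quad \psi(t):=2p^{2}\varsigma^{2},$$
which is condition~$(ii)$. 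It remains to check condition~$(iii)$: since $a,b\in L^{1}(\mathbb{R}_{+})$, one has $\int_{t_0}^{t}\nu(\tau)\,d\tau\leq-\tfrac{1}{2p}(t-t_0)+\xi$ with $\xi:=\tfrac{2p}{\mu_1}\int_{0}^{\infty}a+\tfrac{2p}{\sqrt{\mu_1}}\int_{0}^{\infty}b<\infty$, so $\eta=\tfrac{1}{2p}$; and then $\Theta(t,\tau)\leq e^{\xi}e^{-\eta(t-\tau)}$, whence $\int_{t_0}^{t}|\psi(\tau)|\Theta(t,\tau)\,d\tau\leq 2p^{2}\varsigma^{2}e^{\xi}/\eta=:\rho<\infty$. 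Theorem~\ref{dfffdddf1} then gives that $\Sigma$ is ISpS with ISpS-Lyapunov function $V(t,x)=\langle P(t)x,x\rangle$.

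I expect the main obstacle to be precisely the bilinear cross term $b(t)\|x\|_{X}\zeta(\|u(t)\|_{U})$: a naive Young split either forces an extra boundedness assumption on $b$ or leaves a term $b(t)\zeta^{2}(\|u\|_{U})$ fitting none of the available Lyapunov theorems, so one genuinely needs the gain $\kappa=\zeta^{2}$ (not a multiple of $\zeta$) to make the term linear in $V$ with an $L^{1}$ coefficient; the integrability of $a$ and $b$ is then exactly what makes condition~$(iii)$ hold, with $\eta$ coming from the $-\|x\|_{X}^{2}$ dissipation. A secondary, more routine, point is the rigorous justification that $t\mapsto V(t,\phi(t,t_0,x_0,u))$ is piecewise $C^{1}$, so that the Lyapunov equality (\ref{R2}) may be applied pointwise along the solution.
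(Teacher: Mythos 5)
Your proposal is correct and follows essentially the same route as the paper: the same quadratic Lyapunov function $V(t,x)=\langle P(t)x,x\rangle$, the Lyapunov equality (\ref{R2}) to cancel the linear part, Cauchy--Schwarz with the growth bound (\ref{R2nnkkkk}), the gain $\kappa=\zeta^{2}$ to absorb the input term into a $b(t)V$ contribution, and reduction to Theorem~\ref{dfffdddf1} with $\nu(t)$ indefinite but satisfying the integral condition. If anything, your write-up is slightly more complete than the paper's, since you explicitly verify condition $(iii)$ of Theorem~\ref{dfffdddf1} from the integrability of $a$ and $b$, a step the paper leaves implicit.
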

\begin{proof}
Since $P$ is uniformly bounded and coercive, for some $\mu_1,\mu_2> 0,$ il holds that
$$\mu_1\|x\|_{X}^{2}\leq V(t,x)\leq \mu_2\|x\|_{X}^{2}\quad \forall x \in X,$$
where $\mu_2=\sup_{t\in \mathbb{R}_{+}}\|P(t)\|$ and property (\ref{R2}) is verified.\\
 The  Lie derivative of $V$ along the trajectories of system \eqref{R1} is given as follows:
\begin{align*}
\dot{V}(t,x)&=\langle\dot{P}(t)x,x\rangle+\langle P(t)\dot{x},x\rangle +\langle P(t)x,\dot{x}\rangle\\
&=\langle \dot{P}(t)x,x\rangle+ \langle P(t)[A(t)x+\Psi(t,x,u)],x\rangle +\langle P(t)x, A(t)x+\Psi(t,x,u)\rangle.
\end{align*}
It follows by (\ref{R2nnkkkk}) with the help of Cauchy-Schwartz inequality that
\begin{align*}
\dot{V}(t,x)&\leq-\|x\|_{X}^{2}+2a(t)\|P(t)\|\|x\|_{X}^{2}+2\|P(t)\|b(t) \|x\|_{X} \zeta(\|u(t)\|_{U})+2\|P(t)\| \|x\|_{X}\varsigma\\
&\leq-\|x\|_{X}^{2}+\|P(t)\|(2a(t)+b(t))\|x\|_{X}^{2}+\|P(t)\|b(t) (\zeta(\|u(t)\|_{U}))^{2}+2\|P(t)\| \|x\|_{X}\varsigma.
\end{align*}
For any $\epsilon>0,$ by applying Young's inequality
$$2\|P(t)\|\|x\|_{X}\varsigma \leq \epsilon\|x\|^{2}+ \frac{\mu_2^{2}\varsigma^{2}}{\epsilon},$$ we have
$$\dot{V}(t,x)\leq \left(-\frac{1}{\mu_2 } +\frac{\mu_2}{\mu_1}(2a(t)+b(t))+\frac{\epsilon}{\mu_1}\right)V(t,x)
+ \mu_2 b(t) (\zeta(\|u(t)\|_{U}))^{2} +\frac{\mu_2^{2}\varsigma^{2}}{\epsilon}.$$
If $V(t,x)\geq(\zeta(\|u\|_{U}))^{2},$ then
$$\dot{V}(t,x)\leq \left(-\frac{1}{\mu_2} +\frac{2\mu_2}{\mu_1}(a(t)+b(t))+\frac{\epsilon}{\mu_1}\right)V(t,x)+ \frac{\mu_2^{2}\varsigma^{2}}{\epsilon}.$$
Let $\epsilon=\frac{\mu_1}{2\mu_2}\cdot$ Thus,
$$\dot{V}(t,x)\leq \nu(t)V(t,x)+\psi(t),$$
where
$\nu(t)=-\frac{1}{2\mu_2} +\frac{2\mu_2}{\mu_1}(a(t)+b(t))$ and $\psi(t)=\frac{2\mu_2^{3}\varsigma^{2}}{\mu_1}.$
Now, all conditions of Theorem \ref{dfffdddf1} are satisfied, and then the system (\ref{R1}) is ISpS.
\end{proof}
The following results show how to construct a suitable ISpS-Lyapunov function for the time-varying semilinear evolution equation (\ref{psiunb}).
\begin{thm}\label{th0b1} Let Assumption $(\mathcal{H}_{1})$ hold and let $A(t)$ be a bounded linear operator for all $t\geq 0$ in the sense that $\sup _{t\geq 0 } \|A(t)\|<\infty.$
Assume that the system (\ref{unb}) is ISS and there exists a continuously differentiable, bounded, coercive positive operator-valued function $P:\mathbb{R}_{+}\to L(X)$, satisfying (\ref{R2}).
If there exists a continuous function $\omega:\mathbb{R}_{+} \to \mathbb{R}_{+},$ such that for all $x\in X,$ all $u\in \mathcal{U},$ all $t_0\geq 0$ and all $t\geq t_0,$
$$\|\Psi(t,x,u)\|_{X}\leq \omega(t),$$
with $\displaystyle\int _{0} ^{\infty}\omega^{2}(s)<\infty,$ then the system (\ref{psiunb}) is ISpS and its ISpS-Lyapunov function can be constructed as
\begin{equation}
V(t,x)=\langle P(t)x,x\rangle.
\label{R2lynv}
\end{equation}
Moreover, if $\displaystyle \lim_{t\to \infty}\omega^{2}(t)=0,$ then the system (\ref{psiunb}) has the CpUAG property.
\end{thm}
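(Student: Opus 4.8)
The plan is to follow the proof of the preceding theorem: take $V(t,x)=\langle P(t)x,x\rangle$ as in (\ref{R2lynv}), compute its Lie derivative along (\ref{psiunb}) using the Lyapunov equality (\ref{R2}), and then absorb the forcing term $\Psi(t,x,u)=B(t)u+\psi(t,x,u)$ by Cauchy--Schwarz and Young's inequality, reducing to one of the ISpS Lyapunov theorems of this section. First I would record the structural bounds on $V$: since $P$ is bounded and coercive, there are $\mu_1>0$ and $\mu_2:=\sup_{t\geq0}\|P(t)\|<\infty$ with $\mu_1\|x\|_X^2\leq V(t,x)\leq\mu_2\|x\|_X^2$ for all $t\geq0$, $x\in X$, and $V$ is continuously differentiable because $P\in C^1(\mathbb{R}_+,L(X))$ and $x\mapsto\langle P(t)x,x\rangle$ is a quadratic form. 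This is hypothesis (i) of Theorem \ref{dfffdddf1} with $\alpha_1(r)=\mu_1 r^2$, $\alpha_2(r)=\mu_2 r^2\in\mathcal{K}_{\infty}$ and $c=0$. Also, since $\Psi$ satisfies $(\mathcal{H}_1)$, system (\ref{psiunb}) is well posed and has the BIC property by Proposition \ref{propooo1}.

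Next, as $A(\cdot)$ is bounded, the mild solutions of (\ref{psiunb}) are piecewise classical by the argument behind Proposition \ref{prop:mild-is-classical}, so along a solution $x(t)=\phi(t,t_0,x_0,u)$ the map $t\mapsto V(t,x(t))$ is differentiable off a countable set and, using the product rule, $\langle P(t)x,A(t)x\rangle=\langle A(t)^*P(t)x,x\rangle$, and (\ref{R2}),
\[
\dot V_u(t,x)=-\|x\|_X^2+\langle P(t)\Psi(t,x,u),x\rangle+\langle P(t)x,\Psi(t,x,u)\rangle .
\]
By Cauchy--Schwarz and the hypothesis $\|\Psi(t,x,u)\|_X\leq\omega(t)$ this gives $\dot V_u(t,x)\leq-\|x\|_X^2+2\mu_2\,\omega(t)\,\|x\|_X$, and Young's inequality with a parameter $\epsilon\in(0,1)$ together with $\|x\|_X^2\geq\mu_2^{-1}V(t,x)$ yields
\[
\dot V_u(t,x)\leq-(1-\epsilon)\|x\|_X^2+\frac{\mu_2^2}{\epsilon}\,\omega^2(t)\leq\nu(t)\,V(t,x)+\psi(t),
\]
where $\nu(t)\equiv-\frac{1-\epsilon}{\mu_2}<0$ and $\psi(t):=\frac{\mu_2^2}{\epsilon}\omega^2(t)\geq0$ is continuous. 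As this bound holds for all $t,x,u$, the implication in (\ref{rama1bbb}) holds trivially for any gain $\kappa\in\mathcal{K}$. It remains to check condition (iii) of Theorem \ref{dfffdddf1}: $\int_{t_0}^t\nu(\tau)\,d\tau=-\frac{1-\epsilon}{\mu_2}(t-t_0)$, so take $\eta=\frac{1-\epsilon}{\mu_2}$ and $\xi=0$; and since $\Theta(t,\tau)=\exp(\int_\tau^t\nu(s)\,ds)\leq1$,
\[
\int_{t_0}^t|\psi(\tau)|\,\Theta(t,\tau)\,d\tau\leq\frac{\mu_2^2}{\epsilon}\int_0^\infty\omega^2(s)\,ds=:\rho<\infty .
\]
All hypotheses of Theorem \ref{dfffdddf1} being met, (\ref{psiunb}) is ISpS with the ISpS-Lyapunov function (\ref{R2lynv}).

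For the \emph{moreover} statement I would reread the same differential inequality as $\dot V_u(t,x)\leq-\delta(V(t,x))+\ell(t)$ with $\delta(r)=\frac{1-\epsilon}{\mu_2}r\in\mathcal{K}_{\infty}$ and $\ell(t)=\frac{\mu_2^2}{\epsilon}\omega^2(t)$ continuous and integrable; if $\lim_{t\to\infty}\omega^2(t)=0$ then $\lim_{t\to\infty}\ell(t)=0$, and the dissipative-form ISpS Lyapunov theorem together with its CpUAG conclusion (Theorem \ref{dffff1}, with the input gain taken trivial) gives that (\ref{psiunb}) has the CpUAG property.

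The computation is otherwise routine; I expect the only point requiring care to be the regularity used to differentiate $t\mapsto V(t,x(t))$ and to read off the displayed Lie derivative formula for a merely mild solution — which is handled exactly as for Proposition \ref{prop:mild-is-classical} using boundedness (and continuity) of $A(\cdot)$ — together with the observation that the hypothesis $\|\Psi(t,x,u)\|_X\leq\omega(t)$ forces the whole driving term $B(t)u(t)+\psi(t,x(t),u(t))$ to be pointwise dominated by a fixed integrable function, which is precisely what makes the input contribution drop out of the final ISpS and CpUAG estimates.
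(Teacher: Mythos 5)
Your overall strategy --- the sandwich bounds $\mu_1\|x\|_X^2\leq V(t,x)\leq\mu_2\|x\|_X^2$, differentiating $V$ along (piecewise classical) solutions, invoking the Lyapunov equality (\ref{R2}), Cauchy--Schwarz plus Young's inequality, and then reducing to Theorem \ref{dfffdddf1} and its CpUAG clause --- is exactly the paper's. The one substantive divergence is your treatment of the driving term. You read $\Psi(t,x,u)$ in the hypothesis as the \emph{entire} perturbation $B(t)u+\psi(t,x,u)$ and bound it wholesale by $\omega(t)$, so that $u$ disappears from your dissipation inequality and the resulting ISpS estimate carries only a trivial input gain. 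Note that under this literal reading the hypothesis ``$\|\Psi(t,x,u)\|_X\leq\omega(t)$ for all $u\in\mathcal{U}$'' is degenerate: since $u$ ranges over all of $\mathcal{U}$ and $B(t)u$ is unbounded in $u$, the bound can only hold if $B\equiv0$ (a nonlinearity cannot cancel an unbounded linear term while the sum stays bounded by a function of $t$ alone). The paper instead reads $\Psi$ as the nonlinearity alone, keeps $B(t)u$ explicit, estimates $\dot V_u(t,x)\leq-\|x\|_X^2+2\mu_2\|x\|_X\bigl(\upsilon\|u\|_U+\omega(t)\bigr)$ with $\upsilon=\sup_{t\geq0}\|B(t)\|$, applies Young's inequality with $\eta=1/(2\mu_2)$ to obtain $\dot V_u(t,x)\leq-\tfrac12\|x\|_X^2+4\mu_2^2\upsilon^2\|u\|_U^2+4\mu_2^2\omega^2(t)$, and then extracts the implication-form condition $V(t,x)\geq\frac{8\mu_2^{3}\upsilon^{2}\|u\|_{U}^{2}}{1-2\mu_2a}\Rightarrow\dot V_u(t,x)\leq-aV(t,x)+4\mu_2^2\omega^2(t)$ for $a\in(0,\tfrac{1}{2\mu_2})$, which supplies a genuine (quadratic) ISpS gain. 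Everything else in your argument --- conditions (i) and (iii) of Theorem \ref{dfffdddf1} with $\xi=0$ and $\rho$ proportional to $\int_0^\infty\omega^2(s)\,ds$, the regularity discussion for differentiating $t\mapsto V(t,x(t))$, and the passage to CpUAG when $\omega^2(t)\to0$ --- is correct and coincides with the paper's; you should simply redo the single Young's-inequality step with the input term kept separate so that the theorem retains a nontrivial input gain rather than collapsing to a statement about the undriven system.
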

\begin{proof}
Consider $V:\mathbb{R}_{+}\times X \to \mathbb{R}_{+}$ as defined in (\ref{R2lynv}). Since $P$ is bounded and coercive, for some $\mu_1,\mu_2> 0$, it holds that
$$\mu_1\|x\|_{X}^{2}\leq V(t,x)\leq \mu_2\|x\|_{X}^{2}\quad \forall x \in X \quad \forall t\geq0,$$
where $\mu_2=\displaystyle\sup_{t\in \mathbb{R}_{+}}\|P(t)\|.$ Thus, (\ref{llMMMM}) holds. The Lie derivative of $V$ along the trajectories of system \eqref{psiunb} is given as follows:
\begin{align*}
\dot{V}_{u}(t,x)&=\langle\dot{P}(t)x,x\rangle+\langle P(t)\dot{x},x\rangle +\langle P(t)x,\dot{x}\rangle\\
&=\langle \dot{P}(t)x,x(t)\rangle+ \langle P(t)[A(t)x+B(t)u+\Psi(t,x,u)],x\rangle\\ &+\langle P(t)x, A(t)x+B(t)u+\Psi(t,x,u)\rangle.
\end{align*}
Since $\langle P(t)x,A(t)x\rangle=\langle A(t)^*P(t)x,x\rangle,$ by applying the Lyapunov equation (\ref{R2}) in the form
$$\langle \dot{P}(t)x,x\rangle+\langle P(t)A(t)x,x\rangle+\langle A(t)^*P(t)x,x\rangle=-\|x\|^{2}_{X},$$
and using Cauchy-Schwartz inequality, we obtain
\begin{align*}
\dot{V}_{u}(t,x)&\leq -\|x\|_{X}^{2}+2 \mu_2\|x\|_{X}(\|B(t)\|\|u\|_{U}+\Psi(t,x,u))\\
&\leq -\|x\|_{X}^{2}+2 \mu_2\|x\|_{X}(\upsilon\|u\|_{U}+\omega(t)),
\end{align*}
where $\upsilon=\displaystyle\sup_{t\in \mathbb{R}_{+}}\|B(t)\|.$ For any $\eta>0,$ by applying Young's inequality
$$2\|x\|_{X}(\upsilon\|u\|_{U}+\omega(t))\le\eta\|x\|_{X}^{2}+\frac{1}{\eta}(\upsilon\|u\|_{U}+\omega(t))^{2},$$
we have
$$\dot{V}_{u}(t,x)\leq -(1-\eta  \mu_2 )\|x\|_{X}^{2}+\frac{2 \mu_2 \upsilon^{2} }{\eta}\|u\|_{U}^{2}+\frac{2\mu_2}{\eta}\omega^{2}(t).$$
Let, $\eta=\displaystyle\frac{1}{2\mu_2}\cdot$ It yields,
$$\dot{V}_{u}(t,x)\leq -\frac{1}{2}\|x\|_{X}^{2}+4\mu_2^{2} \upsilon^{2}\|u\|_{U}^{2}+4\mu_2^{2}\omega^{2}(t).$$
Now, take $a\in(0,\frac{1}{2\mu_2})$ and let
$$V(t,x)\geq \frac{8\mu_2^{3}\upsilon^{2}\|u\|_{U}^{2}}{1-2\mu_2a}\cdot$$
Thus, we have
$$\dot{V}_{u}(t,x)\leq -a V(t,x)+4\mu_2^{2}\omega^{2}(t).$$
From Theorem \ref{dfffdddf1}, one can conclude that the system (\ref{psiunb}) is ISpS. If $\displaystyle \lim_{t\to \infty} \omega^{2}(t)=0,$ then the system (\ref{psiunb}) has the CpUAG property.
\end{proof}

\section{\sectiontitle{ISpS of interconnected systems}}
In this section, we have shown, how an ISpS-Lyapunov function can be explicitly constructed for a given interconnection if the small-gain condition is satisfied. \\ 
Let $X_i$ and $U_i$ be Banach spaces and as the input space, we take $\mathcal{U}_i=PC(\mathbb{R}_{+}, U_i),$ for $i=1,...,n.$
We consider a time-varying semi-linear interconnected system:
\begin{equation}
\left\lbrace
\begin{array}{l}
\dot{x_i}(t)=A_i(t)x_i(t)+\Psi_i(t,x_1(t),...,x_n(t),u_i(t)),\quad i=1,...,n,\\
\\
x_i(t)\in X_i,\quad  u_i(t)\in U_i ,\quad t\geq t_0\geq 0,
\end{array}\right.
\label{GGGSnnS}
\end{equation}
where $X_i$ is a state space and $U_i$ is an input space of the $i$-th subsystem, $t_0\geq 0$ is the initial time, $\{A_i(t):D(A_i(t)) \subset X_i \rightarrow X_i\}_{t \geq t_0\geq 0},$ is a family of linear operators depending on time where the domain $D(A_i(t))$ of the operator $A_i(t)$ is assumed to be dense in $X_i$ for all $t\geq 0$ and generates a strongly continuous evolution family $\{W_i(t,s)\}_{t\geq s\geq 0},$ $i=1,...,n,$ the function $\Psi_i:\mathbb{R}_{+}\times X_i\times U_i \to X_i$ is a nonlinear operator and we assume that the solution of (\ref{GGGSnnS}) exists, is unique, and is forward complete.\\
We denote the state space of the system (\ref{GGGSnnS}) by $X=X_1\times ...\times X_n$ which is Banach with the norm $\|\cdot\|_X = \|\cdot\|_{X_{1}}+...+\|\cdot\|_{X_{n}}.$\\
To present a small-gain criterion for the interconnected system (\ref{GGGSnnS}), let us consider the following assumption:
\begin{description}
\item[$(\mathcal{H}_{4})$] For each $i=1,...,n,$ there exists a continuously differentiable $V_i :\mathbb{R}_{+}\times X_i \to \mathbb{R}_{+}$ satisfying the following properties:
\begin{itemize}
  \item There are functions $\psi_{i_{1}},\psi_{i_{2}} \in \mathcal{K}_{\infty}$ so that for all $(t,x_i)\in \mathbb{R}_{+}\times X_i,$
$$ \psi_{i_{1}}(\|x_i\|_{X_i})\leq V_i(t,x_i)\leq \psi_{i_{2}}(\|x_i\|_{X_i}).$$
  \item  There are functions $\delta_{i}\in\mathcal{K}_{\infty},$ $\sigma_i,\xi_i\in \mathcal{K}_{\infty}\cup\{0\}$ and continuous integrable functions $\ell_{i}:\mathbb{R}_+ \to \mathbb{R}_+$ so that the following hold: for all $t_0\geq 0,$ all $t\geq t_0,$ all $x_i\in  X_i,$ all $x_{n+1-i}\in  X_{n+1-i}$ and all $u_i\in  \mathcal{U}_i=PC(\mathbb{R}_{+},U_i)$ one has
$$\dot{V}_{iu}(t,x_{i})\leq -\delta_{i}(V_{i}(t,x_i))+\sigma_i(V_{n+1-i}(t,x_{n+1-i}))+\ell_{i}(t)+\xi_i(\|u_i(t)\|_{U_i}).$$
\end{itemize}
\end{description}
\noindent Now, we can present our small-gain theorem which provides an ISpS-Lyapunov function for the interconnected system (\ref{GGGSnnS}).
\begin{thm}\label{ml}
Let Assumption $(\mathcal{H}_{4})$ hold.
If there exist a parameter $0<\zeta<1$ and a function $\delta\in \mathcal{K}_{\infty},$ such that the small-gain condition
\begin{equation}
\frac{1}{\zeta}\sigma\circ\delta^{-1}(s)<s \quad \forall s>0,
\label{RKKKKn}
\end{equation}
holds with $\sigma(s)=\displaystyle \max_{i=1,...,n}(\sigma_i(s))$ and $\delta\left(\displaystyle\sum_{ i=1}^{n}s_i\right) \leq \displaystyle\sum_{ i=1}^{n}\delta_{i}(s_{i}),$ then the interconnected system (\ref{GGGSnnS}) is ISpS. \\ Moreover, if $\displaystyle \lim_{t\to \infty} \ell_i(t)=0$ for $i=1,...,n,$ then the system (\ref{GGGSnnS}) satisfies the CpUAG property.
\end{thm}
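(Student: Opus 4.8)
The strategy is to construct a single ISpS-Lyapunov function for the interconnected system by the standard max-type aggregation of the subsystem Lyapunov functions, exactly as in the Lyapunov small-gain theorem for finite-dimensional systems, and then to invoke Theorem \ref{dfffdddf1} (in implication form) to conclude ISpS. The key object is
\[
V(t,x) := \max_{i=1,\dots,n} \sigma_i^{-1}\big(\lambda V_i(t,x_i)\big),
\]
or, if some $\sigma_i=0$, a suitable modification where the corresponding index is simply dropped from the max (so that the max is over the ``active'' channels only); here $\lambda\in(\zeta,1)$ is a scaling parameter to be fixed. First I would record the sandwich bounds: since each $V_i$ satisfies $\psi_{i_1}(\|x_i\|_{X_i})\le V_i(t,x_i)\le \psi_{i_2}(\|x_i\|_{X_i})$ and $\|x\|_X=\sum_i\|x_i\|_{X_i}$, the max-combination $V$ inherits bounds $\alpha_1(\|x\|_X)\le V(t,x)\le\alpha_2(\|x\|_X)$ with $\alpha_1,\alpha_2\in\mathcal{K}_\infty$ (using that a finite max of $\mathcal{K}_\infty$ functions and their compositions with $\mathcal{K}_\infty$ functions stays $\mathcal{K}_\infty$, and comparing $\max_i\|x_i\|$ with $\sum_i\|x_i\|$).

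Next comes the dissipation estimate, which is the heart of the argument. Fix $t\ge0$, $x$, $u$, and let $j$ be an index attaining the maximum, i.e. $V(t,x)=\sigma_j^{-1}(\lambda V_j(t,x_j))$. Along trajectories, $D^+V(t,x)\le (\sigma_j^{-1})'(\lambda V_j)\cdot\lambda \dot V_{ju}(t,x_j)$ at points of differentiability (with the usual Dini-derivative/Danskin-type bound for the max of finitely many continuous functions — one restricts attention to the index set realizing the max). Plugging in Assumption $(\mathcal{H}_4)$,
\[
\dot V_{ju}(t,x_j)\le -\delta_j(V_j(t,x_j))+\sigma_j\big(V_{n+1-j}(t,x_{n+1-j})\big)+\ell_j(t)+\xi_j(\|u_j(t)\|_{U_j}).
\]
The small-gain condition \eqref{RKKKKn} is used to dominate the interconnection term: since $j$ realizes the max, $\sigma_{n+1-j}^{-1}(\lambda V_{n+1-j})\le \sigma_j^{-1}(\lambda V_j)$, hence $\lambda V_{n+1-j}(t,x_{n+1-j})\le \sigma_{n+1-j}\circ\sigma_j^{-1}(\lambda V_j(t,x_j))\le \sigma(\cdot)$, and \eqref{RKKKKn} (applied through $\delta$) yields $\sigma_j(V_{n+1-j})\le \tfrac{1}{\lambda}\cdot(\text{strict fraction of }) \delta_j(V_j)$-type bound; after absorbing, one gets $-\delta_j(V_j)+\sigma_j(V_{n+1-j})\le -\tilde\delta_j(V_j)$ for some $\tilde\delta_j\in\mathcal{K}_\infty$. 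Then, defining $\ell(t):=\sum_i \ell_i(t)$ (continuous and integrable, with $\ell(t)\to0$ if all $\ell_i(t)\to0$) and the gain $\kappa:=\delta^{-1}\circ(\text{appropriate scaling})\circ\max_i\xi_i$, one arrives at the implication
\[
V(t,x)\ge \kappa(\|u(t)\|_{U})\ \Longrightarrow\ \dot V_u(t,x)\le -\delta_V(V(t,x))+\ell(t)
\]
for some $\delta_V\in\mathcal{K}_\infty$ (after composing the $\tilde\delta_j$ bounds with the derivative factor and taking a lower envelope over $j$; here $\|u(t)\|_U:=\sum_i\|u_i(t)\|_{U_i}$ and $U=U_1\times\cdots\times U_n$). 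This is precisely condition $(ii)$ of Theorem \ref{dfffdddf1}, while the sandwich bounds give $(i)$; hence $\Sigma$ (i.e. \eqref{GGGSnnS}) is ISpS. When all $\ell_i(t)\to0$, Theorem \ref{dfffdddf1} furthermore delivers the estimate \eqref{lllll}, which is the CpUAG property.

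I expect the main obstacle to be the careful bookkeeping in the dissipation step: handling the nonsmoothness of the max (a Danskin/Dini-derivative argument restricted to the active index set), controlling the derivative factor $(\sigma_j^{-1})'$ — which may force one to work with a smoothed or reparametrized version of $\sigma_j^{-1}$, or to use the comparison-function machinery to avoid differentiating $\sigma_j^{-1}$ altogether (e.g. by establishing the implication form directly via forward invariance of sublevel sets of $V$, mimicking Step 1 of the proof of Theorem \ref{hjMMMMMljg}) — and the degenerate cases where some $\sigma_i$ or $\xi_i$ vanish. The passage from the two-subsystem cyclic structure implicit in the $V_{n+1-i}$ coupling to the general $n$-subsystem small-gain condition via $\delta(\sum_i s_i)\le\sum_i\delta_i(s_i)$ and $\sigma=\max_i\sigma_i$ also needs to be threaded carefully, but it is routine once the two-subsystem estimate is in place. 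Everything else — the sandwich bounds, the integrability of $\ell$, the final invocation of Theorem \ref{dfffdddf1} — is bookkeeping.
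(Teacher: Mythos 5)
Your overall architecture --- aggregate the $V_i$ into a single ISpS-Lyapunov function and feed it to Theorem \ref{dfffdddf1} --- is the right one, but the aggregation you chose does not match the hypothesis, and this creates a genuine gap. The paper takes the sum $V(t,x)=\sum_{i=1}^{n}V_i(t,x_i)$. The hypothesis is tailored to exactly that choice: summing the dissipation inequalities of $(\mathcal{H}_{4})$ gives $\dot V_u\le -\sum_i\delta_i(V_i)+\sum_i\sigma_i(V_{n+1-i})+\ell(t)+\sum_i\xi_i(\|u_i(t)\|_{U_i})$; the subadditivity assumption $\delta\bigl(\sum_i s_i\bigr)\le\sum_i\delta_i(s_i)$ converts the first sum into $-\delta(V)$; the cross terms are dominated through $\sigma=\max_i\sigma_i$ evaluated at $V$; and \eqref{RKKKKn} --- which after the substitution $s=\delta(r)$ reads $\sigma(r)<\zeta\delta(r)$ for all $r>0$ --- lets one absorb $\sigma(V)$ into $-\zeta\delta(V)$ and extract the implication-form gain $\kappa=(\zeta\delta)^{-1}\circ\bigl(I-\tfrac{1}{\zeta}\sigma\circ\delta^{-1}\bigr)^{-1}\circ\xi$. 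Theorem \ref{dfffdddf1} then finishes both the ISpS claim and the CpUAG claim.

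Your max-type candidate $V=\max_i\sigma_i^{-1}(\lambda V_i)$ requires, at the active index $j$, the absorption of $\sigma_j(V_{n+1-j})$ into $-\delta_j(V_j)$ using only the information $\sigma_j^{-1}(\lambda V_j)\ge\sigma_{n+1-j}^{-1}(\lambda V_{n+1-j})$, i.e.\ a cyclic condition of the form $\sigma_j\circ\bigl(\tfrac{1}{\lambda}\,\sigma_{n+1-j}\circ\sigma_j^{-1}(\lambda\,\cdot)\bigr)<\delta_j$. That is a different inequality from \eqref{RKKKKn}, and the stated hypothesis does not imply it in general; the sentence where you invoke ``\eqref{RKKKKn} applied through $\delta$'' is precisely where the argument breaks. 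In addition, $\sigma_i^{-1}$ is undefined when $\sigma_i=0$ (which $(\mathcal{H}_{4})$ explicitly permits), and $\sigma_j^{-1}$ need not be differentiable, so the factor $(\sigma_j^{-1})'$ in your Dini-derivative computation is not available without smoothing lemmas that you only gesture at. Switching to the sum aggregation removes all of these obstacles at once and is the route the theorem's hypotheses are designed for.
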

\begin{proof} Consider the following candidate for an ISpS-Lyapunov function
$$V(t,x)=\sum_{ i=1}^{n}V_i(t,x_i).$$
The Lie derivative of $V$ along the trajectories of system (\ref{GGGSnnS}) is given by:
$$\dot{V}_{u}(t,x)\leq\sum_{i=1}^{n}(-\delta_{i}(V_{i}(t,x_i))+\sigma_i(V_{n+1-i}(t,x_{n+1-i}))+\ell_{i}(t)+\xi_i(\|u_i(t)\|_{U_i})).$$
Then, there exists $\delta\in \mathcal{K}_{\infty},$ such that
$$\dot{V}_{u}(t,x)\leq -\delta(V(t,x))+\sigma(V(t,x))+\ell(t)+\sum_{ i=1}^{n}\xi_i(\|u_i(t)\|_{U_i}),$$
where $\delta\left(\displaystyle\sum_{ i=1}^{n}s_i\right) \leq \displaystyle\sum_{ i=1}^{n}\delta_{i}(s_{i}),$ $\sigma(s)=\displaystyle \max_{i=1,...,n}(\sigma_i(s))$ and $\ell(s)=\displaystyle\sum_{ i=1}^{n}\ell_{i}(s).$\\
By using the fact $u=(u_1^{T},...,u_n^{T})^{T},$ we have
$$\dot{V}_{u}(t,x)\leq -\delta(V(t,x))+\sigma(V(t,x))+\ell(t)+\xi(\|u(t)\|_{U}),$$
where $\xi(s)=\displaystyle\sum_{ i=1}^{n}\xi_i(s).$
Thus, there exists $0<\zeta<1,$ such that
$$\dot{V}_{u}(t,x)\leq-(1-\zeta)\delta(V(t,x))-\zeta\delta(V(t,x))+\sigma(V(t,x))+\ell(t)+\xi(\|u(t)\|_{U}).$$
Which implies that
$$\dot{V}_{u}(t,x)\leq-(1-\zeta)\delta(V(t,x))+\ell(t),\; \hbox{if} \; [\sigma-\zeta\delta]V(t,x)+\xi(\|u(t)\|_{U})\leq0.$$
Therefore, using (\ref{RKKKKn}), there exists $\kappa\in \mathcal{K}_{\infty},$ defined by
$$\kappa(s)=(\zeta\delta)^{-1}\circ\left(I-\frac{1}{\zeta}\sigma\circ\delta^{-1}\right)^{-1}\circ\xi(s),$$
such that, we have the following implication
$$V(t,x)\geq \kappa(\|u(t)\|_{U})\Longrightarrow \dot{V}_{u}(t,x)\leq-\delta(V(t,x))+\ell(t).$$
Now, all conditions of Theorem \ref{dfffdddf1} are satisfied, and hence the interconnected system (\ref{GGGSnnS}) is ISpS. If $\displaystyle \lim_{t\to
\infty} \ell_i(t)=0$ for $i=1,...,n,$ then the system (\ref{GGGSnnS}) has the CpUAG property.
\end{proof}
\begin{rem} In a recent paper \cite{Jiang1996}, a nonlinear small-gain theorem was proved for autonomous interconnected systems of ODEs. The authors \cite{Jiang1994,Jiang1996} proved that the interconnection of two ISpS systems remains an
ISpS system where they established a Lyapunov-type nonlinear small-gain
theorem based on the construction of an appropriate Lyapunov function. To overcome this challenge, we have considered a class of time-varying $n$ systems where we have imposed some restrictions through the
Assumption $(\mathcal{H}_{4})$ and the small-gain condition (\ref{RKKKKn}). A suitable Lyapunov function can therefore be designed to ensure the ISpS of the interconnected
system (\ref{GGGSnnS}). However, if $\sigma_i=0$ for all $i=1,...,n,$ in Assumption $(\mathcal{H}_{4}),$ then the $n$ subsystems are ISpS.
\end{rem}
\section{\sectiontitle{Examples}}
\begin{exm}
  Consider the system
\begin{equation}\label{1EX}
\dot{x}(\zeta,t)=\left(\frac{1}{1+t+x^{2}(\zeta,t)}-t|\cos(t)|\right)x(\zeta,t)+\frac{2t|\cos(t)|u(\zeta,t)}{\pi(1+x^{2}(\zeta,t))}+\frac{|\cos(t)|(\tan \zeta)^{\frac{1}{2}}}{1+x^{2}(\zeta,t)},
\end{equation}
where $t\geq0$ and $\zeta\in \left(0,\frac{\pi}{2}\right).$ The functions $x(\zeta,t)$ and $u(\zeta,t)$ are scalar-valued. Let $X=L_{2}(0,\frac{\pi}{2})$ and $U=C([0,\frac{\pi}{2}]).$ Choose the Lyapunov function as
$$V(t,x)=\int_{0}^{\frac{\pi}{2}}x^{2}(\zeta,t)d\zeta+e^{-t}.$$
The function $V$ satisfies condition (\ref{rama2}) with $b_1=b_2=1,$ $m=2$ and $c=1.$ The derivative of $V$ in $t$ along the solutions $x(\cdot,t)$ of system (\ref{1EX}) satisfies
\begin{equation}\label{2EXyasmine}
\dot{V}(t,x)\leq 2\left(\frac{1}{1+t^{2}}-t |\cos(t)|\right)V(t,x)+(2te^{-t}+M)|\cos(t)|+t|\cos(t)|\|u(\cdot,t)\|_{C([0,\frac{\pi}{2}])},
\end{equation}
where $M=\int _{0}^{\frac{\pi}{2}}(\tan \zeta)^{\frac{1}{2}}d\zeta<\infty.$\\
If $\|u(\cdot,t)\|_{C([0,\frac{\pi}{2}])}\leq V(t,x),$ we can obtain from $(\ref{2EXyasmine})$ that
$$\dot{V}(t,x)\leq \nu(t) V(t,x)+\psi(t),$$
where $\nu(t)= \frac{2}{1+t^{2}}-t |\cos(t)|$ and $\psi(t)=(2te^{-t}+M)|\cos(t)|.$\\
Let $\Theta(t,\tau)=\exp(\displaystyle\int _{\tau}^{t}\nu(s)ds).$ \\
Since $\displaystyle\int _{t_0} ^{t} 2\tau e^{-\tau}\Theta(t,\tau)d\tau \to 0$ as $t \to +\infty,$ then
$$ \displaystyle \int _{t_0}^{t} \nu(\tau) d\tau\leq -\eta(t-t_0)+\xi,$$
see Appendix 3 in \cite{zhou2017stability}
and
$$\int _{t_0}^{t}\vert \psi(\tau)\vert\Theta(t,\tau)d\tau\leq\rho,$$
where $\eta=\frac{4\pi}{3},$ $\xi=2\ln(1+\frac{3\pi}{2})+2$ and $\rho=\displaystyle\sup _{t\geq t_0}(\int _{t_0} ^{t} 2\tau e^{-\tau}\Theta(t,\tau)d\tau)+\frac{Me^{-\eta}}{\xi}.$
Thus, all hypotheses of Corollary \ref{MARIAM} are satisfied, and hence system (\ref{1EX}) is exponentially ISpS (eISpS).
\end{exm}
\subsection{\subsectiontitle{Semi-linear reaction-diffusion equation}}
\begin{exm}
Consider the following semi-linear reaction-diffusion equation:
\begin{equation}\label{2EX}
\left\lbrace
\begin{array}{l}
\displaystyle\frac{\partial x(l,t)}{\partial t}=\frac{\partial^{2} x(l,t)}{\partial ^{2} l}+\frac{\Phi(t)(u(l,t)+e^{-t})}{(1+x^{2}(l,t))(1+(\pi-l)x^{2}(l,t))},\\
\\
\displaystyle x(0,t)=0=x(\pi,t),\; x(l,0)=x_{0}(l),
\end{array}\right.
\end{equation}
on the region $(l,t)\in(0,\pi)\times(0,\infty)$ of the valued functions $x(l,t)$ and $u(l,t)$ and $\Phi:\mathbb{R}_+\to \mathbb{R}$ is a continuous and  integrable function. We denote $X=L_{2}(0,\pi),$ $U=C([0,\pi])$ and $\mathcal{U}=C(\mathbb{R}_+,U).$ Thus, system (\ref{2EX}) can be rewritten as
\begin{equation}\label{Rexm1nb}
\left\lbrace
\begin{array}{l}
\dot{x}(t)=Ax(t)+\chi(t,x(t),u(t)),\quad  t> 0,\\
\\
x(0)=x_{0}(l),
\end{array}\right.
\end{equation}
where $A=\displaystyle\frac{\partial^{2}}{\partial^{2} l},$ is defined on $D(A)=H_{0}^{1}(0,\pi)\cap H^{2}(0,\pi)$ and
$$\displaystyle \chi(t,x,u)=\frac{\Phi(t)(u(l,t)+e^{-t})}{(1+x^{2}(l,t))(1+(\pi-l)x^{2}(l,t))}.$$ Operator $A$ generates an analytic semigroup $(S(t))_{t\geq0}$ on $X.$
For every $(x_{0}(l),u)\in X\times \mathcal{U},$ the system (\ref{2EX}) has a unique mild solution $x(\cdot)\in C([0,\infty),X)$ and 
for $(x_{0}(l),u)\in D(A)\times \mathcal{U}$ this mild solution is even a classical solution which satisfies the integral equation
$$x(t)=S(t)x_{0}(l)+\int_{0}^{t}S(t-s)\chi(t,x(s),u(s))ds.$$
Consider the following ISpS-Lyapunov function candidate:
$$V(t,x)=\|x\|_{L_{2}(0,\pi)}^{2}+e^{-t}=\int_{0}^{\pi }x^{2}(l,t)dl+e^{-t}$$ which satisfies the inequalities $(\ref{rama1})$ and $(\ref{rama3})$ given in Theorems $\ref{dfffdddf1}$ and $\ref{dffff1},$ with $\alpha_1(s)=\alpha_2(s)=s^{2}$ and $c=1.$ Then,
$$\dot{V}(t,x)=2\int_{0}^{\pi}x(l,t)\left(\frac{\partial^{2} x(l,t)}{\partial ^{2} l}+\frac{\Phi(t)(u(l,t)+e^{-t})}{(1+x^{2}(l,t))(1+(\pi-l)x^{2}(l,t))}\right)dl\\-e^{-t}.$$
Since $1+(\pi-l)x^{2}(l,t)\geq 1,$ we get
$$\dot{V}(t,x)\leq -2\int _{0}^{\pi}\left(\frac{\partial x(l,t)}{\partial l}\right)^{2}dl+\pi|\Phi(t)|\|u(\cdot,t)\|_{C([0,\pi])}\\
+ \pi |\Phi(t)|e^{-t}.$$
Applying Friedrich's inequality (\cite{Pecaric}) in the first term, we continue estimates:
\begin{equation}\label{LLLMM}
\dot{V}(t,x)\leq -2V(t,x)+(2+\pi|\Phi(t)|)e^{-t}+\pi|\Phi(t)|\|u(\cdot,t)\|_{C([0,\pi])}.
\end{equation}
Let $V(t,x)\geq \|u(\cdot,t)\|_{C([0,\pi])}.$
Then,
$$\dot{V}(t,x)\leq \nu(t)V(t,x)+\psi(t),$$
where $\nu(t)=-2+\pi|\Phi(t)|$ and $\psi(t)=(2+\pi|\Phi(t)|)e^{-t}.$ \\
Let $\Theta(t,\tau)=\exp(\displaystyle\int _{\tau}^{t}\nu(s)ds).$ Therefore, for all $t\geq t_0,$
\begin{equation}\label{kjjkjk}
\displaystyle \int _{t_0}^{t} \nu(\tau) d\tau\leq -\eta(t-t_0)+\xi,
\end{equation}
and
\begin{equation}\label{kjjkbbbjk}
\int _{t_0}^{t}\vert \psi(\tau)\vert\Theta(t,\tau)d\tau\leq\rho,
\end{equation}
where $\eta=-2,$ $\xi=\pi\lambda$ and $\rho=e^{\pi\lambda}(2+\pi\lambda)$ with $\lambda=\displaystyle\int _{0 }^{+\infty} |\Phi(s)|ds.$
Thus, all conditions of Theorem $\ref{dfffdddf1}$ are satisfied, and hence the system (\ref{2EX}) is ISpS.\\
Now, if the function $\Phi$ is bounded and integrable on $\mathbb{R}_+.$ We can also obtain from $(\ref{LLLMM})$ that
$$\dot{V}(t,x)\leq \nu(t)V(t,x)+\psi(t)+\pi\displaystyle\sup_{s\in\mathbb{R}_+}|\Phi(s)| \|u(\cdot,t)\|_{C([0,\pi])},$$
where $\nu(t)=-2,$ $\psi(t)=(2+\pi|\Phi(t)|)e^{-t}$ and $\theta( \|u(\cdot,t)\|_{C([0,\pi])})=\pi\displaystyle\sup_{s\in\mathbb{R}_+}|\Phi(s) | \|u(\cdot,t)\|_{C([0,\pi])}.$
By repeating the same calculation as before, we obtain that (\ref{kjjkjk}) and (\ref{kjjkbbbjk}) are verified with
$\eta=-2,$ $\xi=0$ and $\rho=2+\pi\lambda ,$ which, by Theorem $\ref{dffff1}$ implies that the system (\ref{2EX}) is iISpS.
\end{exm}

\subsection{\subsectiontitle{Time-varying interconnected reaction-diffusion PDEs}}
\begin{exm}
We consider the following time-varying system of interconnected nonlinear reaction-diffusion PDEs:
\begin{equation}
\left\lbrace
\begin{array}{l}
\frac{\partial x_1(\zeta,t)}{\partial t}=c_1\frac{\partial^{2} x_1(\zeta,t)}{\partial^{2} \zeta}-\upsilon  (t)x_1(\zeta,t)+e^{-t}x_2(\zeta,t)+\frac{t^{2}+1}{1+(1+t^{2})^{2}x_1^{2}(\zeta,t)}+u_1^{2}(\zeta,t)e^{-x_1^{2}(\zeta,t)},\\
x_1(0,t)=0=x_1(L,t),\\
\frac{\partial x_2(\zeta,t)}{\partial t}=c_2\frac{\partial^{2} x_2(\zeta,t)}{\partial^{2} \zeta}-\upsilon (t)x_2(\zeta,t)+\sin(u_2(\zeta,t))x_1(\zeta,t)+\frac{e^{-2t}+u_2(\zeta,t)}{1+x_2^{2}(\zeta,t)},\\
x_2(0,t)=0=x_2(L,t),
\end{array}\right.
\label{1EX}
\end{equation}
on the region $(\zeta,t)\in(0,L)\times(0,\infty)$ where $L\in (0,\infty),$ $\upsilon:\mathbb{R}_+\to \mathbb{R}_+$ is a continuously differentiable function and $c_i>0, i=1,2,$ are the diffusion coefficients.\\
The state spaces of subsystems we choose as $X_1=L^{2}(0,L)$ for $x_1(\cdot,t)$ and $X_2=L^{2}(0,L)$ for $x_2(\cdot,t).$ We take the spaces of input values for the subsystems as $U_1=U_2=C([0,L]).$ The state of the whole system (\ref{1EX}) is denoted by $X=X_1\times X_2.$ For both subsystems, take the set of input functions as $\mathcal{U}_i=C(\mathbb{R}_{+}, U_i).$\\
A family $\{A_i(t):D(A_i(t)) \subset X_i \rightarrow X_i\}_{t \geq t_0\geq 0},$ $i=1,2$ of linear operator can be represented as follows:
$\displaystyle A_i(t)f=A_if-\upsilon(t)f,$ with the domain $D(A_i(t))=D(A_i),\; for \ all \ t \geq t_0\geq 0$ where $A_i$ is defined by $$c_i\displaystyle\frac{\partial^{2}}{\partial^{2} \zeta},$$ with the domain $$D(A_i)=\{f\in H^{2}(0,L)/f(0)=f(L)=0\}.$$
Here $H^{2}(0,L)$ denotes the Sobolev space of functions $f\in L^{2}(0,L),$ which have weak derivatives of order $\leq 2,$ all of
which belong to $L^{2}(0,L).$ Setting
$$\Psi_1(t,x_1,x_2,u_1)=e^{-t}x_2(\zeta,t)+\frac{t^{2}+1}{1+(1+t^{2})^{2}x_1^{2}(\zeta,t)}+u_1^{2}(\zeta,t)e^{-x_1^{2}(\zeta,t)}$$
and
$$\Psi_2(t,x_1,x_2,u_2)=\sin(u_2(\zeta,t))x_1(\zeta,t)+\frac{e^{-2t}+u_2(\zeta,t)}{1+x_2^{2}(\zeta,t)}\cdot$$
We can verify that $\{A_i(t)\}_{ t\geq 0}$ generates a strongly continuous evolution operator $\{W_i(t,s)\}_{t\geq s\geq 0}$ of the form:
$$W_i(t,s)=e^{(t-s)A_i}\exp\left(\int_{s}^{t} -\upsilon(r)dr\right) \quad \forall t\geq s\geq 0.$$
We choose $V_i, i=1,2$ defined by
$$V_1(t,x_1)=\int_{0} ^{L }x_1^{2}(\zeta,t)d\zeta,\quad  V_2(t,x_2)=\int_{0} ^{L }x_2^{2}(\zeta,t)d\zeta.$$
Consider the derivative of $V_1:$
\begin{eqnarray*}
\dot{V}_1(t,x_1)&=&\int _{0}^{L}2x_1(\zeta,t)(c_1\frac{\partial^{2} x_1(\zeta,t)}{\partial^{2} \zeta}-\upsilon (t)x_1(\zeta,t)+e^{-t}x_2(\zeta,t)\\&+&\frac{t^{2}+1}{1+(1+t^{2})^{2}x_1^{2}(\zeta,t)}+u_1^{2}(\zeta,t)e^{-x_1^{2}(\zeta,t)})d\zeta\\&\leq&
-2\int _{0} ^{L}\left(\frac{\partial x_1(\zeta,t)}{\partial \zeta}\right)^{2}+2\int _{0} ^{L}x_1(\zeta,t) x_2(\zeta,t)d\zeta+\frac{L}{1+t^{2}}+2L\|u_1(\cdot,t)\|^{2}_{C([0,L])}\cdot
\end{eqnarray*}
By the Friedrich's inequality \cite{Pecaric} and Young's inequality, we obtain for any $\varepsilon>0:$
$$\dot{V}_1(t,x_1)\le -2c_1\left(\frac{\pi}{L}\right)^{2}V_1(t,x_1)+\varepsilon V_1(t,x_1) +\frac{1}{\varepsilon}V_2(t,x_2)+\frac{L}{1+t^{2}}+2L\|u_1(\cdot,t)\|^{2}_{C([0,L])}\cdot$$
Choosing $\varepsilon=c_1\left(\frac{\pi}{L}\right)^{2},$ we get
\begin{equation}
\dot{V}_1(t,x_1)\le-c_1\left(\frac{\pi}{L}\right)^{2}V_1(t,x_1)+\frac{1}{c_1}\left(\frac{L}{\pi}\right)^{2}V_2(t,x_2)+\frac{L}{1+t^{2}}+2L\|u_1(\cdot,t)\|^{2}_{C([0,L])}\cdot
\label{R2nnnnn}
\end{equation}
Consider the derivative of $V_2:$
\begin{eqnarray*}
\dot{V}_2(t,x_2)&=& \int _{0}^{L}2x_2(\zeta,t)(c_2\frac{\partial^{2} x_2(\zeta,t)}{\partial^{2} \zeta}-\upsilon (t)x_2(\zeta,t)+\sin(u_2(\zeta,t))x_1(\zeta,t)\\
&+&\frac{e^{-2t}+u_2(\zeta,t)}{1+x_2^{2}(\zeta,t)})d\zeta.
\end{eqnarray*}
Using Friedrich's inequality \cite{Pecaric} in the first term and Young's inequality, we get for any $\xi>0:$
\begin{equation}
\dot{V}_2(t,x_2)\le -2c_2\left(\frac{\pi}{L}\right)^{2}V_2(t,x_2)+\xi V_2(t,x_2)+\frac{1}{\xi}V_1(t,x_1)+Le^{-2t}+L\|u_2(\cdot,t)\|_{C([0,L])}.
\label{R2nnn}
\end{equation}
Choosing $\xi=c_2\left(\frac{\pi}{L}\right)^{2},$ it follows from (\ref{R2nnn}) that
\begin{equation}
\dot{V}_2(t,x_2)\le-c_2\left(\frac{\pi}{L}\right)^{2}V_2(t,x_2)+\frac{1}{c_2}\left(\frac{L}{\pi}\right)^{2}V_1(t,x_1)+Le^{-2t}+L\|u_2(\cdot,t)\|_{C([0,L])}.
\label{R2nnbvccnnn}
\end{equation}
Due to (\ref{R2nnnnn}) and (\ref{R2nnbvccnnn}), we have $(\mathcal{H}_{4})$ for
$$\psi_{1_{1}}=\psi_{1_{2}}=\psi_{2_{1}}= \psi_{2_{2}}:s \mapsto s^{2},$$ $$\delta_{1}(s)=c_1\left(\frac{\pi}{L}\right)^{2}s,\;\sigma_1(s)=\frac{1}{c_1}\left(\frac{L}{\pi}\right)^{2}s,\; \ell_{1}(t)=\frac{L}{1+t^{2}},\;\xi_1(s)=2Ls^{2},$$
$$\delta_{2}(s)=c_2\left(\frac{\pi}{L}\right)^{2}s,\;\sigma_2(s)=\frac{1}{c_2}\left(\frac{L}{\pi}\right)^{2}s,\;\ell_{2}(t)=L e^{-2t},\;\xi_2(s)=Ls.$$
Then, Theorem \ref{ml} establishes ISpS and the CpUAG property for the system (\ref{1EX}) when
$$\frac{\max\left(\frac{1}{c_1}\left(\frac{L}{\pi}\right)^{2},\frac{1}{c_2}\left(\frac{L}{\pi}\right)^{2}\right)}{\min\left(c_1\left(\frac{\pi}{L}\right)^{2},c_2\left(\frac{\pi}{L}\right)^{2}\right)}<\zeta,\quad 0<\zeta<1.$$
\end{exm}

\chapter{Conclusion}
\label{ch:chapter4}
\markboth{Conclusion}{Conclusion }
\textbf{Control systems}. Definition \ref{csyol} of a control system is frequently used within the system-theoretic community at least since the 1960s. Not all important systems are covered by this definition. In particular, the input space $C(\mathbb{R}_{+}, U)$ of continuous $U-$valued functions does not satisfy the axiom of concatenation. This should not be a big restriction since already piecewise continuous and $L^{p}$ inputs, which are used in control theory much more frequently than continuous ones, satisfy the axiom of concatenation. In a similar spirit, even more general system classes can be considered, containing output maps, time-variant dynamics, the possibility for a solution to jump at certain time instants, systems that fail to satisfy the cocycle property, etc..

We have investigated the input-to-state practical stability problem of time-varying nonlinear infinite-dimensional systems, integral input-to-state practical stability, and global uniform practical stability problems of time-varying infinite-dimensional systems by employing indefinite Lyapunov functions in Banach spaces. Based on Lyapunov theory and a nonlinear inequality, we have proposed an input-to-state practical stability theorem and established criteria for ISpS in terms of the uniform practical asymptotic gain property. Moreover, the global asymptotic practical stability of time-varying nonlinear infinite-dimensional systems with zero input is studied. These results have been applied to study ISpS for time-varying nonlinear evolution equations in Hilbert spaces, demonstrating the explicit construction of ISpS-Lyapunov functions for interconnected systems satisfying the small-gain condition.\\

In addition, we have explored the input-to-state stability of time-varying evolution equations in terms of coercive and non-coercive ISS Lyapunov functions with piecewise-right continuous inputs. Indeed, we have seen in this paper some classes of time-varying linear systems with
bounded input operators in the construction of Lyapunov functions. In addition, new methods for the construction of non-coercive LISS/iISS Lyapunov functions for a certain class of time-varying semi-linear evolution equations with unbounded linear operator $\{A(t)\}_{t\geq t_{0}}$ have been proposed as well. The application to some specific time-varying partial differential equations shows the practical significance of the theoretical results. An interesting problem for future research is the development of Lyapunov methods for ISS and iISS of the time-varying nonlinear parabolic PDEs with boundary disturbances.\\

A promising avenue for future research involves developing Lyapunov methods for ISS and iISS of time-varying nonlinear parabolic PDEs under boundary disturbances. \\
\appendix
\chapter*{Appendix A: Comparison Functions and Elementary Inequalities}
\label{appendix:A}
\markboth{Appendix A: Comparison Functions}{Appendix A: Elementary Inequalities}
\addcontentsline{toc}{chapter}{Appendix A: Comparison Functions and Elementary Inequalities}
In this appendix, I have included some results from Andrii Mironchenko's habilitation \cite{mironchenko2023input} to provide context and save you from referring to multiple sources. These results are not my own but are necessary for understanding my thesis. They are placed here to make the thesis easier to read and comprehend.
\subsection*{A.1 Comparison functions}
\renewcommand{\thelem}{A.\arabic{lem}}
\renewcommand{\thecor}{A.\arabic{cor}}
\renewcommand{\thedfn}{A.\arabic{dfn}}

In this part, we explore the properties of the following classes of comparison functions
\begin{itemize}
\item $\mathcal{P}=\{\gamma:\mathbb{R}_+\to \mathbb{R}_+:\gamma \;\;\hbox{is continuous,} \gamma(0)=0\; \hbox{and}\;\gamma(r)>0 \;\hbox{for} \;r>0 \}.$
\item $\mathcal{K}=\{\gamma\in \mathcal{P}:\gamma \;\;\hbox{is strictly increasing}\}.$
\item $\mathcal{K}_{\infty}=\{\gamma\in \mathcal{K}:\gamma \;\;\hbox{is unbounded}\}.$
\item $\mathcal{L}=\{\gamma:\mathbb{R}_+\to \mathbb{R}_+:\gamma \;\;\hbox{is continuous and strictly decreasing with} \displaystyle\lim_{t\to \infty}\gamma(t)=0\}.$
\item $\mathcal{KL}=\{\beta\in C(\mathbb{R}_+\times\mathbb{R}_+,\mathbb{R}_+):\beta(\cdot,t)\in \mathcal{K}, \forall t\geq0, \beta(r,\cdot)\in \mathcal{L},\forall r> 0\}.$
\end{itemize}
Functions in class $\mathcal{P}$ are also referred to as positive definite functions. Functions in class $\mathcal{K}$ are often called class $\mathcal{K}$ functions or simply $\mathcal{K}$-functions. We apply this same naming convention to other classes of comparison functions.

\par Additionally, we use $\circ$ to denote the composition of functions, meaning $f \circ g(\cdot) := f(g(\cdot)).$
\renewcommand{\theprop}{A.\arabic{prop}}
\renewcommand{\theequation}{A.\arabic{equation}} 

\setcounter{equation}{0} 
\begin{prop}(Elementary properties of $\mathcal{K}-$ and $\mathcal{L}-$functions) The following
properties hold:
\begin{enumerate}
\item[$(i)$] For all $f,g\in \mathcal{K}$ it follows that $f \circ g\in\mathcal{K}.$
\item[$(ii)$]  For any $f\in\mathcal{K}_{\infty}$ there exists $f^{-1}$ that also belongs to $\mathcal{K}_{\infty}.$
\item[$(iii)$]  For any $f\in\mathcal{K}, g\in \mathcal{L}$ it holds that $f\circ g\in \mathcal{L}$ and $g\circ f\in \mathcal{L}.$
\item[$(v)$]  For any $\gamma_{1}, \gamma_{2}\in\mathcal{K}$ it follows that $\gamma_{+} : s \mapsto max\{\gamma_{1}(s), \gamma_{2}(s)\}$ and $\gamma_{-} : s \mapsto
min\{\gamma_{1}(s), \gamma_{2}(s)\}$ also belong to class $\mathcal{K}.$
\item[$(vi)$]  For any $\beta_{1}, \beta_{2} \in \mathcal{KL}$ it follows that $\beta_{+} : (s,t) \mapsto max\{\beta_{1}(s, t), \beta_{2}(s, t)\}$ and
$\beta_{-} : (s, t) \mapsto min\{\beta_{1}(s, t), \beta_{2}(s, t)\}$ also belong to class $\mathcal{KL}.$
\end{enumerate}
\end{prop}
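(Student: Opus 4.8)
The plan is to verify each item by unwinding the definitions of the comparison-function classes $\mathcal{P}$, $\mathcal{K}$, $\mathcal{K}_\infty$, $\mathcal{L}$, $\mathcal{KL}$ given in Appendix A.1, using only elementary facts about continuity, monotonicity, limits, and composition of real functions. None of these require anything beyond undergraduate real analysis, so the whole proposition is a routine verification; the main ``obstacle'' is merely organizing the cases cleanly and invoking the inverse function theorem for strictly monotone continuous functions in item $(ii)$.

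First I would handle $(i)$: if $f,g\in\mathcal{K}$, then $f\circ g$ is continuous as a composition of continuous maps; $f(g(0))=f(0)=0$; for $r>0$ we have $g(r)>0$ so $f(g(r))>0$; and strict monotonicity is preserved because $r_1<r_2$ implies $g(r_1)<g(r_2)$ implies $f(g(r_1))<f(g(r_2))$. Hence $f\circ g\in\mathcal{K}$. For $(ii)$, a function $f\in\mathcal{K}_\infty$ is a continuous, strictly increasing bijection from $\mathbb{R}_+$ onto $\mathbb{R}_+$ (surjectivity follows from $f(0)=0$, unboundedness, and the intermediate value theorem), so $f^{-1}$ exists, is continuous, strictly increasing, satisfies $f^{-1}(0)=0$, and is unbounded since $f$ is bounded on no interval; thus $f^{-1}\in\mathcal{K}_\infty$. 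For $(iii)$, with $f\in\mathcal{K}$, $g\in\mathcal{L}$: $f\circ g$ and $g\circ f$ are continuous; $f\circ g$ is strictly decreasing because $g$ is strictly decreasing and $f$ strictly increasing, while $g\circ f$ is strictly decreasing because $f$ strictly increasing and $g$ strictly decreasing; and $\lim_{t\to\infty} f(g(t)) = f\big(\lim_{t\to\infty} g(t)\big) = f(0) = 0$ by continuity of $f$ at $0$, while $\lim_{t\to\infty} g(f(t)) = 0$ because $f(t)\to\infty$ (here one may note $f\in\mathcal{K}$ need not be unbounded, but $g$ still tends to $0$ on the range of $f$, which suffices since $g$ is decreasing; more carefully, if $f$ is bounded with supremum $L$, one uses that $g$ is continuous at $L$ and the monotone behaviour gives $g(f(t))\to g(L)$ — so strictly one should assume $f\in\mathcal{K}_\infty$ here, or interpret the claim for $\mathcal{K}_\infty$).

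For $(v)$, given $\gamma_1,\gamma_2\in\mathcal{K}$: the pointwise maximum and minimum of two continuous functions are continuous; $\gamma_\pm(0)=0$; for $r>0$ both $\gamma_1(r),\gamma_2(r)>0$ so $\gamma_\pm(r)>0$; and for $r_1<r_2$, $\max\{\gamma_1(r_1),\gamma_2(r_1)\}<\max\{\gamma_1(r_2),\gamma_2(r_2)\}$ since each $\gamma_i(r_1)<\gamma_i(r_2)\le\max_j\gamma_j(r_2)$, and similarly for the minimum. Hence $\gamma_\pm\in\mathcal{K}$. For $(vi)$, given $\beta_1,\beta_2\in\mathcal{KL}$: $\beta_\pm$ is continuous on $\mathbb{R}_+\times\mathbb{R}_+$; for fixed $t\geq 0$, $\beta_\pm(\cdot,t)$ is the pointwise max/min of the two $\mathcal{K}$-functions $\beta_1(\cdot,t),\beta_2(\cdot,t)$, hence lies in $\mathcal{K}$ by $(v)$; and for fixed $r>0$, $\beta_\pm(r,\cdot)$ is the pointwise max/min of the two $\mathcal{L}$-functions $\beta_1(r,\cdot),\beta_2(r,\cdot)$, which is continuous, strictly decreasing (same argument as in $(v)$ with the inequalities reversed), and tends to $0$ as $t\to\infty$ since both $\beta_i(r,t)\to 0$. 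Therefore $\beta_\pm\in\mathcal{KL}$. I would assemble these six short arguments in the stated order; the only point deserving a line of care is the surjectivity/inverse argument in $(ii)$ and, as noted, the precise hypothesis in $(iii)$ for the composition $g\circ f$ when $f$ is merely in $\mathcal{K}$ rather than $\mathcal{K}_\infty$.
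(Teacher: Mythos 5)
Your verification is correct, and it is in fact more informative than the paper, which does not prove this proposition at all but simply cites \cite[Proposition A.1]{Mir23}. Each of your six arguments is the standard elementary one, and the details you give (the bijectivity argument via the intermediate value theorem in $(ii)$, the strict-monotonicity argument for the pointwise $\max$ and $\min$ in $(v)$, and the reduction of $(vi)$ to $(v)$ plus the corresponding $\mathcal{L}$-argument in the second variable) are all sound. Your caveat on item $(iii)$ is also a genuine and correct observation: if $f\in\mathcal{K}$ is bounded with $\sup f = L <\infty$, then $g(f(t))\to g(L)>0$ (note $g\in\mathcal{L}$ forces $g>0$ everywhere, since $g$ is strictly decreasing with values in $\mathbb{R}_+$ and limit $0$), so $g\circ f\notin\mathcal{L}$; the claim for $g\circ f$ really does require $f\in\mathcal{K}_\infty$, and the statement as printed is imprecise on this point. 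The only stylistic improvement I would suggest is to state that correction as a definitive counterexample rather than hedging it inside a parenthesis, since it is a fault of the statement and not of your argument.
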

\begin{proof}
  See \cite[Proposition A.1]{Mir23}.
\end{proof}
\renewcommand{\therem}{A.\arabic{rem}}

\begin{lem}\label{lemmss1}(see \cite{sontag1989smooth}):
If $\alpha \in \mathcal{K}_{\infty},$ then for all $a,b\geq 0,$ we have
$$\alpha(a+b)\leq \alpha(2a)+\alpha(2b).$$
\end{lem}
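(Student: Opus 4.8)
The final statement to prove is Lemma A.2 (labeled \texttt{lemmss1}): if $\alpha \in \mathcal{K}_{\infty}$, then for all $a,b\geq 0$ we have $\alpha(a+b)\leq \alpha(2a)+\alpha(2b)$. This is a classical and very short fact, so the proof plan is brief.

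\textbf{Plan of proof.} The key observation is that for any two nonnegative reals $a,b$, the sum $a+b$ is bounded above by $2\max\{a,b\}$; indeed $a+b \leq \max\{a,b\} + \max\{a,b\} = 2\max\{a,b\}$. First I would fix $a,b \geq 0$ and set $m := \max\{a,b\}$, so that $a+b \leq 2m$. Since $\alpha$ is nondecreasing (it is strictly increasing, being in $\mathcal{K}_{\infty} \subset \mathcal{K}$), applying $\alpha$ to both sides of the inequality $a+b \leq 2m$ yields $\alpha(a+b) \leq \alpha(2m)$. Now $2m$ equals either $2a$ or $2b$, whichever is larger, so $\alpha(2m) = \max\{\alpha(2a),\alpha(2b)\}$ using again monotonicity of $\alpha$. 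Finally, $\max\{\alpha(2a),\alpha(2b)\} \leq \alpha(2a) + \alpha(2b)$ because both summands are nonnegative (the range of $\alpha$ lies in $\mathbb{R}_+$). Chaining these inequalities gives $\alpha(a+b) \leq \alpha(2a) + \alpha(2b)$, as desired.

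\textbf{On the obstacle.} There is essentially no obstacle here; the only points requiring care are (i) that $\alpha$ is indeed monotone — this is immediate from the definition of $\mathcal{K}$ as strictly increasing functions — and (ii) that $\alpha$ takes nonnegative values, which is built into the definition $\alpha : \mathbb{R}_+ \to \mathbb{R}_+$. Both are guaranteed by $\alpha \in \mathcal{K}_{\infty}$. Note also that the hypothesis $\alpha \in \mathcal{K}_{\infty}$ is stronger than needed — monotonicity and nonnegativity alone suffice, so in fact $\alpha \in \mathcal{K}$ or even just $\alpha$ nondecreasing with $\alpha(\mathbb{R}_+) \subseteq \mathbb{R}_+$ would do — but I would state the proof exactly for the hypothesis as given. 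A one-line write-up suffices:
$$\alpha(a+b) \leq \alpha\bigl(2\max\{a,b\}\bigr) = \max\{\alpha(2a),\alpha(2b)\} \leq \alpha(2a)+\alpha(2b).$$
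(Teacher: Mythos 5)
Your proof is correct and is essentially the standard argument: the paper itself only cites Sontag for this lemma, and the same idea appears in its weak triangle inequality (case-split on whether $a\leq b$ or $b\leq a$, i.e.\ $a+b\leq 2\max\{a,b\}$, then use monotonicity and nonnegativity of $\alpha$). Nothing is missing.
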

\begin{lem}\cite{jameson2014some}\label{lema5}
Let $a,\ b\ge0$ and $p\ge1,$ then
\begin{itemize}
  \item $(a+b)^p\le 2^{p-1}(a^p+b^p).$
  \item $(a+b)^{1\over p}\leq a^{1\over p}+b^{1\over p}.$
\end{itemize}
\end{lem}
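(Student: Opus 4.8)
The plan is to establish the two inequalities independently, each by a short one-variable argument; neither needs more than the convexity (respectively the concavity) of an appropriate power function, so the only ``obstacle'' is bookkeeping of the trivial case $a=b=0$ and keeping track of which direction the monotonicity goes.

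For the first inequality I would invoke the convexity of $t\mapsto t^{p}$ on $\mathbb{R}_{+}$, which holds precisely because $p\ge1$. Applying Jensen's inequality to the two points $a$ and $b$ with equal weights $\tfrac12$ gives
$$\left(\frac{a+b}{2}\right)^{p}\le\frac{a^{p}+b^{p}}{2},$$
and multiplying both sides by $2^{p}$ yields $(a+b)^{p}\le 2^{p-1}(a^{p}+b^{p})$. If $a=b=0$ the inequality reads $0\le0$. (The sharper constant $2^{p-1}$, as opposed to the crude $2^{p}$ one would get from $(a+b)^{p}\le(2\max\{a,b\})^{p}$, is exactly what convexity buys.)

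For the second inequality, set $q:=1/p\in(0,1]$, so that the claim becomes $(a+b)^{q}\le a^{q}+b^{q}$. If $a+b=0$ this is $0\le0$; otherwise divide by $(a+b)^{q}>0$ and put $s:=a/(a+b)$, $t:=b/(a+b)$, so $s,t\in[0,1]$ and $s+t=1$. Since $q\le1$ we have $r^{q}\ge r$ for every $r\in[0,1]$ (indeed $r^{q-1}\ge1$ when $r\in(0,1]$, and the case $r=0$ is clear), hence $s^{q}+t^{q}\ge s+t=1$; multiplying back by $(a+b)^{q}$ gives $a^{q}+b^{q}\ge(a+b)^{q}$. Alternatively, fixing $b\ge0$ and considering $g(a):=a^{q}+b^{q}-(a+b)^{q}$, one has $g(0)=0$ and $g'(a)=q\bigl(a^{q-1}-(a+b)^{q-1}\bigr)\ge0$ for $a>0$ since $a\le a+b$ and $q-1\le0$; thus $g$ is nondecreasing on $(0,\infty)$ and $g\ge0$ throughout. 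Either route finishes the proof, and both are of the same elementary, purely real-variable character, so it suffices to record the displays above (or simply to refer to \cite{jameson2014some}).
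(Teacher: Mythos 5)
Your argument is correct in both parts: Jensen's inequality applied to the convex map $t\mapsto t^{p}$ at the two points $a,b$ with equal weights gives exactly the constant $2^{p-1}$, and the normalization $s=a/(a+b)$, $t=b/(a+b)$ together with $r^{q}\ge r$ on $[0,1]$ for $q=1/p\le 1$ cleanly yields the subadditivity of $r\mapsto r^{1/p}$. The paper itself supplies no proof of this lemma — it is stated with a bare citation to the reference — so there is no argument to compare against; your elementary, self-contained derivation is a perfectly adequate substitute, and the only cosmetic remark is that the derivative-based alternative for the second part needs the (implicit, and correctly handled) restriction to $a>0$ when $q<1$, with continuity at $a=0$ closing the case.
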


A fundamental yet useful result regarding $\mathcal{K}-$functions is the following:

\begin{prop}(Weak triangle inequality) For any $\gamma\in\mathcal{K}$ and any $\sigma \in \mathcal{K}_{\infty}$, the following holds for all $a, b \geq 0$:
\renewcommand{\theequation}{A.\arabic{equation}} 
\begin{equation}
    \gamma(a + b) \leq \max\{\gamma(a + \sigma(a)), \gamma(b + \sigma^{-1}(b))\}. \label{eq:A1}
\end{equation}
\end{prop}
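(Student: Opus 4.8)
The plan is to prove the weak triangle inequality \eqref{eq:A1} by a simple case distinction based on the comparison between $b$ and $\sigma(a)$. The underlying idea is that, since $\gamma$ is nondecreasing, it suffices to bound the argument $a+b$ by something controllable in each of the two regimes. First I would fix $a,b\geq 0$, $\gamma\in\mathcal{K}$ and $\sigma\in\mathcal{K}_{\infty}$, and split into the two exhaustive cases $b\leq\sigma(a)$ and $b>\sigma(a)$.

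In the first case, $b\leq\sigma(a)$, I would estimate $a+b\leq a+\sigma(a)$, and since $\gamma$ is increasing (hence nondecreasing on $\mathbb{R}_+$), this gives $\gamma(a+b)\leq\gamma(a+\sigma(a))$. In the second case, $b>\sigma(a)$, I would apply $\sigma^{-1}$ (which exists and is increasing by property $(ii)$ of the proposition on elementary properties, since $\sigma\in\mathcal{K}_\infty$) to obtain $a<\sigma^{-1}(b)$, hence $a+b<b+\sigma^{-1}(b)$ and therefore $\gamma(a+b)\leq\gamma(b+\sigma^{-1}(b))$. In either case the left-hand side is bounded by one of the two terms appearing in the maximum on the right-hand side, so it is bounded by their maximum, which proves \eqref{eq:A1}.

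There is essentially no serious obstacle here; the only mild subtlety is ensuring that $\sigma^{-1}$ is well-defined and order-preserving on all of $\mathbb{R}_+$, which is exactly what $\sigma\in\mathcal{K}_{\infty}$ guarantees (an arbitrary $\sigma\in\mathcal{K}$ that is bounded would not admit a globally defined inverse, which is why the hypothesis is stated with $\mathcal{K}_\infty$). One should also note that the inequalities $\gamma(a+b)\le\gamma(a+\sigma(a))$ and $\gamma(a+b)\le\gamma(b+\sigma^{-1}(b))$ only require monotonicity of $\gamma$, not strict monotonicity or continuity, so the argument would in fact work for any nondecreasing $\gamma$; but since we only claim it for $\gamma\in\mathcal{K}$ this is not an issue. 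I would close by remarking that the boundary case $b=\sigma(a)$ can be placed in either branch without affecting the conclusion.
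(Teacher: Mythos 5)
Your proof is correct and follows exactly the same route as the paper's: a dichotomy on whether $b\leq\sigma(a)$ or $a\leq\sigma^{-1}(b)$, followed by monotonicity of $\gamma$. You simply spell out the details that the paper leaves implicit.
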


\begin{proof}
  Choose any $\sigma \in \mathcal{K}_{\infty}.$ Then, either $b \leq \sigma(a)$ or $a \leq \sigma^{-1}(b)$ holds. This directly implies (\ref{eq:A1}).
\end{proof}

In particular, by setting $\sigma:= id$ (identity operator on $\mathbb{R}_+$) in (\ref{eq:A1}), we obtain a straightforward inequality for any $\gamma\in\mathcal{K}$:
$$\gamma(a + b) \leq max \{\gamma (2a), \gamma(2b)\}.$$

\subsection*{A.1.1 Dini derivatives}
It is well-known that continuous functions are differentiable almost everywhere. However, there are instances when it is necessary to "differentiate" only continuous functions. In such cases, we utilize Dini derivatives. For a continuous function $y: \mathbb{R} \rightarrow \mathbb{R},$ the right upper Dini derivative and the right lower Dini derivative are defined as follows:
$$ D^{+}y(t) := \limsup_{h\rightarrow+0} \frac{y(t + h) - y(t)}{h}\quad \ and \ \ D_{+}y(t) := \liminf_{h\rightarrow+0}\frac{y(t + h) -y(t)}{h},$$
respectively. \\

We begin with some basic properties of Dini derivatives:
\begin{lem}
  The following properties of Dini derivatives hold:
  \begin{enumerate}
\item[$(i)$] For all $f , g\in C(\mathbb{R})$ it holds that $ D^{+}(f + g) \leq D^{+}(f ) + D^{+}(g).$
\item[$(ii)$] For any $f \in C(\mathbb{R})$ it holds that $D^{+}(-f ) = -D_{+}(f ).$
\end{enumerate}
\end{lem}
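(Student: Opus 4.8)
The statement to prove consists of two elementary facts about the right upper Dini derivative $D^{+}$ and the right lower Dini derivative $D_{+}$: subadditivity of $D^{+}$, and the duality $D^{+}(-f) = -D_{+}(f)$. Both follow directly by unwinding the definitions as $\limsup$ (resp.\ $\liminf$) of difference quotients as $h \to 0^{+}$, and applying the standard inequalities relating $\limsup$ and $\liminf$ of sums.

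For part $(i)$, the plan is to start from the difference quotient of $f+g$ at a point $t$, namely
$$\frac{(f+g)(t+h)-(f+g)(t)}{h} = \frac{f(t+h)-f(t)}{h} + \frac{g(t+h)-g(t)}{h},$$
valid for every $h>0$. I would then invoke the general fact that for any two real-valued functions $\varphi, \psi$ of $h$ one has $\limsup_{h\to 0^{+}}(\varphi(h)+\psi(h)) \leq \limsup_{h\to 0^{+}}\varphi(h) + \limsup_{h\to 0^{+}}\psi(h)$, provided the right-hand side is not of the form $\infty - \infty$ (which does not occur here because $f,g$ are continuous, so the individual $\limsup$'s are well-defined in $[-\infty,+\infty]$ and, in the relevant applications, finite). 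Applying this with $\varphi(h)$ the difference quotient of $f$ and $\psi(h)$ the difference quotient of $g$ gives exactly $D^{+}(f+g)(t) \leq D^{+}f(t) + D^{+}g(t)$.

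For part $(ii)$, the plan is to use $-\limsup_{h\to 0^{+}} a(h) = \liminf_{h\to 0^{+}}(-a(h))$ for any real function $a$ of $h$. Taking $a(h) = \frac{f(t+h)-f(t)}{h}$, we have $-a(h) = \frac{(-f)(t+h)-(-f)(t)}{h}$, so
$$D^{+}(-f)(t) = \limsup_{h\to 0^{+}} \frac{(-f)(t+h)-(-f)(t)}{h} = \limsup_{h\to 0^{+}}(-a(h)) = -\liminf_{h\to 0^{+}} a(h) = -D_{+}f(t).$$

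There is essentially no obstacle here; the only point requiring a word of care is ensuring that the $\limsup$/$\liminf$ arithmetic does not run into $\infty - \infty$ indeterminacies, which I would handle by noting that continuity of $f$ and $g$ is assumed and, in any case, the subadditivity inequality for $\limsup$ holds whenever the right-hand side is defined in the extended reals. I would present the argument in two short displayed chains as above, one for each item, with a sentence of justification between them.
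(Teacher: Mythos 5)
Your argument is correct, and it is the standard (essentially the only natural) proof: part $(i)$ follows from subadditivity of $\limsup$ applied to the split difference quotient, and part $(ii)$ from the identity $\limsup(-a)=-\liminf(a)$. The paper states this lemma without any proof, so your proposal simply supplies the routine verification that the text omits; your caveat about avoiding an $\infty-\infty$ indeterminacy in $(i)$ is the right one to flag (note that continuity alone does not make the Dini derivatives finite, but the inequality is understood in the extended reals and the only excluded case is the indeterminate one, exactly as you say).
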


We present next a lemma on the derivatives of monotone functions
\begin{lem}
  Let $b : \mathbb{R}_{+}\rightarrow \mathbb{R}_{+}$ be a nonincreasing function. Then for each $t \in\mathbb{R}_{+}$ it holds that
  \renewcommand{\theequation}{A.\arabic{equation}} 
  \begin{equation}
b(t) \geq D^{+}\int_{0}^{t} b(s)ds \geq D_{+} \int_{0}^{t}b(s)ds \geq \displaystyle\lim_{h\rightarrow+0}b(t + h).   \label{eq:27}
\end{equation}
\end{lem}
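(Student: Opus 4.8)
The plan is to introduce $F(t):=\int_{0}^{t}b(s)\,ds$ and to extract all three inequalities directly from the monotonicity of $b$. First I would record that $F$ is well-defined: a nonincreasing function on $\mathbb{R}_{+}$ is bounded on every bounded interval (e.g.\ by $b(0)$) and has at most countably many discontinuities, hence is Riemann integrable on $[0,t]$ for each $t\ge0$. With this in place, the middle inequality $D^{+}F(t)\ge D_{+}F(t)$ is immediate, since $\limsup_{h\to0^{+}}q(h)\ge\liminf_{h\to0^{+}}q(h)$ for the difference quotient $q(h):=\frac{F(t+h)-F(t)}{h}$.

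For the first inequality, I would fix $t\ge0$ and $h>0$. Since $b$ is nonincreasing, $b(s)\le b(t)$ for all $s\in[t,t+h]$, so
$$\frac{F(t+h)-F(t)}{h}=\frac{1}{h}\int_{t}^{t+h}b(s)\,ds\le\frac{1}{h}\int_{t}^{t+h}b(t)\,ds=b(t).$$
Passing to $\limsup_{h\to0^{+}}$ then yields $D^{+}F(t)\le b(t)$.

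For the last inequality, I would again fix $t\ge0$ and $h>0$, and now use that monotonicity gives $b(s)\ge b(t+h)$ for all $s\in[t,t+h]$, hence
$$\frac{F(t+h)-F(t)}{h}=\frac{1}{h}\int_{t}^{t+h}b(s)\,ds\ge\frac{1}{h}\int_{t}^{t+h}b(t+h)\,ds=b(t+h).$$
The one point needing a short remark is that the right-hand side actually converges as $h\to0^{+}$: the map $h\mapsto b(t+h)$ is itself nonincreasing and bounded above by $b(t)$, so $\lim_{h\to0^{+}}b(t+h)=\sup_{h>0}b(t+h)$ exists. Taking $\liminf_{h\to0^{+}}$ in the displayed estimate therefore gives
$$D_{+}F(t)=\liminf_{h\to0^{+}}\frac{F(t+h)-F(t)}{h}\ge\liminf_{h\to0^{+}}b(t+h)=\lim_{h\to0^{+}}b(t+h),$$
which is the claimed bound, completing the chain.

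I do not expect any genuine obstacle in this argument; the only items requiring care are the integrability of $b$ (handled by monotonicity) and the existence of the one-sided limit $\lim_{h\to0^{+}}b(t+h)$ (again a consequence of monotonicity plus nonnegativity), both of which are routine. The essential mechanism is simply sandwiching the average $\frac1h\int_t^{t+h}b$ between the end-point values $b(t)$ and $b(t+h)$.
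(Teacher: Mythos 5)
Your argument is correct and complete. The paper itself gives no proof of this lemma --- it only cites an external reference --- and your sandwiching of the average $\frac{1}{h}\int_{t}^{t+h}b(s)\,ds$ between the endpoint values $b(t)$ and $b(t+h)$, followed by taking $\limsup$ and $\liminf$ respectively, is exactly the standard argument. The two technical points you flag (Riemann integrability of a monotone function on compact intervals, and existence of $\lim_{h\to0^{+}}b(t+h)$ as a monotone bounded limit) are the only ones needing mention, and you handle both correctly.
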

\begin{proof}
  See \cite[Lemma A.32]{Mir23}.
\end{proof}

\subsection*{A.1.2 Comparison principles}
Comparison principles establish relationships between the properties of differential equations or inequalities and auxiliary equations or inequalities. \\
We start with the following comparison principle.
\begin{lem}\label{lemmm1}
For any $\theta\in \mathcal{P}$ there exists $\beta\in \mathcal{KL}$ so
that for any continuous function $\omega:\mathbb{R}_+\to \mathbb{R}_+$
satisfying, for a given $t_0\geq 0$ and all $t\geq t_0,$ the differential inequality 
\begin{equation}\label{bbb2}
D^{+}\omega(t)\leq-\theta(\omega(t)),\quad \omega(t_0)=\omega_0,
\end{equation}
it holds that
$$\omega(t)\leq \beta(\omega_0,t-t_0)\quad 
\forall t\geq t_0.$$
\end{lem}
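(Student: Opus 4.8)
The plan is to establish Lemma~\ref{lemmm1} via the classical "integrate the comparison ODE and invert" argument, being careful that $\omega$ is only continuous (so we work with the right upper Dini derivative $D^{+}$ rather than an honest derivative). First I would reduce to the solution of the associated scalar ODE: for a fixed $\theta\in\mathcal{P}$, consider the initial value problem $\dot y = -\theta(y)$, $y(0)=y_{0}\geq 0$. Since $\theta$ is continuous and positive definite, $y\equiv 0$ is an equilibrium and for $y_{0}>0$ the solution $y(t;y_{0})$ is strictly decreasing, stays positive, and tends to $0$ as $t\to\infty$ (this is where $\theta\in\mathcal{P}$, not merely $\theta$ continuous, is used: positivity on $(0,\infty)$ forces strict decrease, and the separation-of-variables integral $\int_{\varepsilon}^{y_{0}} \frac{ds}{\theta(s)}$ controls the time to reach level $\varepsilon$). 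One then shows that $\beta(r,t):=y(t;r)$, suitably modified, is of class $\mathcal{KL}$: monotonicity in $r$ follows from uniqueness/comparison for the scalar ODE, continuity and the decay to $0$ in $t$ are as just described; if $\theta$ fails to be locally Lipschitz one may first majorize it from below by a locally Lipschitz $\tilde\theta\in\mathcal{P}$ with $\tilde\theta\le\theta$ to guarantee a well-defined, unique, continuous flow, and the resulting $\beta$ only gets larger, which is harmless.

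Next I would carry out the comparison step proper. Set $\bar y(t):=y(t-t_{0};\omega_{0})$, the shifted solution of the scalar ODE with $\bar y(t_{0})=\omega_{0}$, and claim $\omega(t)\le \bar y(t)$ for all $t\ge t_{0}$. Suppose not; by continuity of both $\omega$ and $\bar y$ and equality at $t_{0}$, there is a first time $\tau>t_{0}$ with $\omega(\tau)=\bar y(\tau)$ and $\omega(t)>\bar y(t)$ on some interval $(\tau,\tau+\delta)$. At $\tau$ we compare Dini derivatives: by hypothesis $D^{+}\omega(\tau)\le -\theta(\omega(\tau)) = -\theta(\bar y(\tau)) = \dot{\bar y}(\tau) = D^{+}\bar y(\tau)$, so $D^{+}(\omega-\bar y)(\tau)\le 0$. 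A standard lemma (a continuous function with nonpositive right upper Dini derivative at a point where it vanishes cannot be strictly positive immediately to the right — more precisely one uses that if $\omega-\bar y>0$ on $(\tau,\tau+\delta)$ then $\limsup_{h\to 0^{+}} h^{-1}\big((\omega-\bar y)(\tau+h)-(\omega-\bar y)(\tau)\big)\ge 0$, and strict inequality can be forced by first perturbing $\bar y$ to $\bar y_{\epsilon}$ solving $\dot y=-\theta(y)+\epsilon$) yields the contradiction. I would run the perturbation version: for each $\epsilon>0$ let $\bar y_{\epsilon}$ solve the $\epsilon$-inflated ODE with the same initial data, obtain $\omega\le \bar y_{\epsilon}$ by the now-strict comparison, and let $\epsilon\to 0^{+}$ using continuous dependence to recover $\omega(t)\le \bar y(t)=\beta(\omega_{0},t-t_{0})$.

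The main obstacle I anticipate is purely regularity bookkeeping rather than anything conceptual: making the comparison argument airtight when $\theta$ is merely continuous and positive definite (hence possibly neither Lipschitz nor strictly increasing), so that the scalar "comparison ODE" need not have unique solutions and $\theta$ need not be invertible. The clean fix is the two-step majorization — replace $\theta$ by a locally Lipschitz $\tilde\theta\in\mathcal{P}$ with $0<\tilde\theta\le\theta$ on $(0,\infty)$ (such $\tilde\theta$ exists by a standard smoothing/interpolation construction for positive definite functions), prove the statement for $\tilde\theta$ where the flow is genuinely unique and continuous, and note that the resulting $\beta\in\mathcal{KL}$ serves for $\theta$ as well since $-\theta\le -\tilde\theta$. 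Everything else — the $\mathcal{KL}$-regularity of $t\mapsto y(t;r)$, monotonicity in $r$, the decay to zero — then reduces to elementary ODE facts plus the separation-of-variables estimate, and the Dini-derivative contradiction is exactly the type of argument already used in the proof of Theorem~\ref{hjMMMMMljg} (the $\hat t$ argument there), so it can be invoked in the same spirit.
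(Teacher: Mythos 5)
Your proposal is correct, but it takes a genuinely different route from the paper. The paper's proof is the Massera-type change of variables applied directly to $\omega$: it sets $\pi(s)=\int_{1}^{s}\frac{d\tau}{\theta(\tau)}$, deduces $D^{+}\pi(\omega(t))\leq -1$ from the differential inequality, integrates to get $\pi(\omega(t))\leq\pi(\omega_{0})-(t-t_{0})$, and defines $\beta(r,s)=\pi^{-1}(\pi(r)-s)$. You instead compare $\omega$ with the flow of the auxiliary scalar ODE $\dot y=-\tilde\theta(y)$ after minorizing $\theta$ by a locally Lipschitz $\tilde\theta\in\mathcal{P}$, running the first-crossing argument on the $\epsilon$-inflated equation $\dot y=-\tilde\theta(y)+\epsilon$ to make the Dini-derivative contradiction strict, and then letting $\epsilon\to 0^{+}$. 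Both are standard and both yield the lemma; your route requires more machinery (Peano/uniqueness, continuous dependence, the Lipschitz minorization that the paper itself invokes only later, in Corollary~\ref{cor1}), but it is more robust on a point the paper's proof glosses over: the paper asserts $\lim_{s\to 0^{+}}\pi(s)=-\infty$, which fails whenever $\int_{0}^{1}\frac{d\tau}{\theta(\tau)}<\infty$ (e.g.\ $\theta(s)=\sqrt{s}$), so that $\pi^{-1}(\pi(r)-s)$ is undefined for large $s$ — this is exactly the finite-extinction-time phenomenon you explicitly flag and handle with the ``suitably modified'' $\beta$ (needed anyway because a trajectory reaching zero in finite time is not strictly decreasing, hence not of class $\mathcal{L}$ without modification). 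The one caveat is that your sketch leaves that modification and the $\mathcal{K}$-regularity of $r\mapsto y(t;r)$ (backward uniqueness so trajectories do not merge, continuity in the initial condition) as cited ``elementary ODE facts''; these do all hold for the Lipschitz minorant, so the argument closes.
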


\begin{proof}
From condition (\ref{bbb2}), we obtain
\begin{equation}\label{mlbbb2}
\frac{D^{+}\omega(t)}{\theta(\omega(t))}\leq-1,
\end{equation}
for all $t_0\geq0$ and all $t\geq t_0$ so that $\omega(t)\neq0.$ \\ 
Define
\begin{equation}\label{sxsxxb2}
\pi(s):=\int_{ 1 }^{s }\frac{d\tau}{\omega(\tau)}\cdot
\end{equation}
Since $\omega$ is continuous, then $\pi$  is continuously differentiable and its derivative is positive on its domain of definition.
Then, by \cite[Lemma 2]{MiI16}, we have
$$\frac{D^{+}\omega(t)}{\theta(\omega(t))}=D^{+}\pi(\omega(t)),$$
and (\ref{mlbbb2}) can be rewritten as
\begin{equation}\label{nvcxsxb2w}
D^{+}\pi(\omega(t))\leq -1.
\end{equation}
Since $\theta\in \mathcal{P}$ and $\omega$ is strictly increasing, then the function $\pi(\omega(t))$ is a decreasing function.
Then, via \cite[Proposition 5]{MiI16}, we further have
\begin{equation}\label{HENI1}
\pi(\omega(t))-\pi(\omega_0)\leq\int_{ t_0 }^{t }D^{+}\pi(\omega(\tau))d\tau\leq -(t-t_0).
\end{equation}
Since $\pi$ is strictly increasing, $\pi$ is invertible and  its inverse $\pi^{-1}$ is a strictly increasing function on $[-\infty,\infty).$
Hence, we get from (\ref{HENI1}) that
\begin{equation}\label{HENI2}
\omega(t)\leq \pi^{-1}(\pi(\omega_0)-(t-t_0)),
\end{equation}
for all $t_0\geq0$ and all $t\in [t_0,T),$ where $T=\min\{t\in[t_0,\infty): \omega(t)=0\}.$
Define a function a function $\beta:\mathbb{R}_{+}\times\mathbb{R}_{+}\to \mathbb{R}_{+}$ by

$$\beta(r,s):=\left\lbrace
\begin{array}{l}
\pi^{-1}(\pi(r)-s) \; if\; \;r>0, \\
0\;  \;if \; \;r=0.
\end{array}\right.$$
As a consequence of the strict increasing property of $\pi$ and $\lim_{ s \to 0^{+}}\pi(s)=-\infty,$ we obtain $\beta$ is a class $\mathcal{KL}$ function.
\end{proof}

The following is an easy consequence of Lemma \ref{lemmm1}.
\begin{cor}\label{cor1} 
For any $\theta\in\mathcal{P},$ there is $\beta\in\mathcal{KL}$ such that for any $t_0\ge 0$, any $\tau\in(0,\infty],$ any $\omega\in C([t_0,t_0+\tau), \mathbb{R}_+)$, and any
$\mu\in PC ([t_0,t_0+\tau),\mathbb{R}_+)$ if
\begin{equation}\label{b..20rh}
D^{+}\omega(t)\leq-\theta(\omega(t))+\mu(t)
\end{equation}
holds for all $t\in [t_0,t_0+\tau),$ then for all $t\in[t_0,t_0+\tau)$ it holds that
$$\omega(t)\leq\beta(\omega(t_0),t-t_0)+2\int _{t_0}^{t} \mu(s)ds.$$
\end{cor}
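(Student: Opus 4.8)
The plan is to track the ``state minus the accumulated input'' and reduce everything to Lemma~\ref{lemmm1}. Put
\[
v(t):=\omega(t)-2\int_{t_0}^{t}\mu(s)\,ds,\qquad \bar v(t):=\max\{v(t),0\},\qquad t\in[t_0,t_0+\tau).
\]
Both are continuous (since $\omega$ is continuous and $t\mapsto\int_{t_0}^{t}\mu$ is), and $\bar v(t_0)=v(t_0)=\omega(t_0)$. Because $\mu\in PC$ is right continuous, $t\mapsto 2\int_{t_0}^t\mu$ is right-differentiable with derivative $2\mu(t)$, so $D^{+}v(t)=D^{+}\omega(t)-2\mu(t)$; using (\ref{b..20rh}), $\mu\ge 0$ and $\theta\ge 0$,
\[
D^{+}v(t)\le -\theta(\omega(t))+\mu(t)-2\mu(t)=-\theta(\omega(t))-\mu(t)\le 0 .
\]
Hence $v$ is nonincreasing, so $v(t)\le\omega(t_0)$, which already gives the crude a priori bound $\omega(t)\le\omega(t_0)+2\int_{t_0}^{t}\mu(s)\,ds$.

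The next step replaces $\omega(t_0)$ by a $\mathcal{KL}$-term. Since $\omega(t)=v(t)+2\int_{t_0}^{t}\mu\ge v(t)$, whenever $\theta$ is nondecreasing we have $\theta(\omega(t))\ge\theta(v(t))$, and combining with the monotonicity of $v$ one checks that $D^{+}\bar v(t)\le-\theta(\bar v(t))$ for \emph{all} $t\in[t_0,t_0+\tau)$: where $v>0$ this is $D^{+}v\le-\theta(v)$, while where $v\le 0$ the function $\bar v$ is locally constant equal to $0$ and $-\theta(0)=0$. Lemma~\ref{lemmm1} applied to $\theta$ then provides $\beta\in\mathcal{KL}$, depending only on $\theta$, with $\bar v(t)\le\beta(\omega(t_0),t-t_0)$, whence
\[
\omega(t)=v(t)+2\int_{t_0}^{t}\mu(s)\,ds\le\bar v(t)+2\int_{t_0}^{t}\mu(s)\,ds\le\beta(\omega(t_0),t-t_0)+2\int_{t_0}^{t}\mu(s)\,ds,
\]
which is the assertion.

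For a general $\theta\in\mathcal{P}$ the step $\theta(\omega(t))\ge\theta(v(t))$ may fail, and this is the main obstacle. When $\theta\in\mathcal{K}_\infty$ — which is the case in all the Lyapunov estimates where this corollary is applied in the paper — no monotonicity issue arises and the argument above is complete. For a non-monotone $\theta\in\mathcal{P}$ one localizes: the claim is trivial if $\int_{t_0}^{t_1}\mu=\infty$, so fix $t_1$ with $I:=\int_{t_0}^{t_1}\mu<\infty$; the crude bound confines $\omega$ to $[0,R]$ with $R:=\omega(t_0)+2I$, so $\omega(t)\in[\bar v(t),R]$ and $\theta(\omega(t))\ge\underline{\theta}_R(\bar v(t))$, where $\underline{\theta}_R(r):=\inf_{\rho\in[r,R]}\theta(\rho)$ is continuous, nondecreasing and positive on $(0,R]$; the $\bar v$-argument then runs with $\underline{\theta}_R$ in place of $\theta$, giving $\omega(t_1)\le\beta_R(\omega(t_0),t_1-t_0)+2I$. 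The genuinely delicate point is that $\underline{\theta}_R$, hence $\beta_R$, depends on the trajectory through $R$, whereas (\ref{b..20rh}) requires a single $\beta$ valid for all data; removing this dependence is what really has to be done, either by re-running the proof of Lemma~\ref{lemmm1} with the extra $2\mu$-term carried along from the start, or by keeping $\theta$ fixed and telescoping the estimates over the alternating sets $\{t:\theta(\omega(t))\ge 2\mu(t)\}$ and its complement using the flow identity $\beta_\theta(\beta_\theta(r,s_1),s_2)=\beta_\theta(r,s_1+s_2)$ enjoyed by the explicit comparison function $\beta_\theta(r,s)=\pi^{-1}(\pi(r)-s)$ constructed in that proof. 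I expect precisely this recombination of the bad-set growth ($\le 2\int_{t_0}^{t_1}\mu$) with the good-set decay ($\beta_\theta(\omega(t_0),\cdot)$) into a trajectory-independent $\mathcal{KL}$-bound to be the only step that needs real care.
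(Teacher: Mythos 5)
Your substitution $v(t)=\omega(t)-2\int_{t_0}^{t}\mu$ and the truncation $\bar v=\max\{v,0\}$ give a clean, correct proof when $\theta$ is nondecreasing: the monotonicity step $\theta(\omega(t))\ge\theta(v(t))$ closes the loop and Lemma~\ref{lemmm1} finishes. But for general $\theta\in\mathcal{P}$ you have, as you yourself say, not proved the statement: your localized minorant $\underline{\theta}_R$ makes the resulting $\mathcal{KL}$-function depend on $R=\omega(t_0)+2\int_{t_0}^{t_1}\mu$, i.e.\ on the trajectory data, whereas the corollary asserts a single $\beta$ depending on $\theta$ alone, and neither of your two proposed repairs is carried out. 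This gap is not cosmetic for this paper: your remark that only $\mathcal{K}_\infty$ rates occur in the applications is incorrect — in Theorem~\ref{hjljg} the corollary is invoked with $\tilde{\eta}\in\mathcal{P}$ dominated by $\varrho(\alpha_2^{-1}(\cdot))\,\xi(\alpha_1^{-1}(\cdot))$ with $\xi\in\mathcal{L}$ decreasing, so the decay rate there is genuinely non-monotone (indeed typically bounded), and the non-monotone case is exactly the one that is needed.

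The paper closes the gap by a different route that avoids any monotonicity of $\theta$. First, one picks a globally Lipschitz $\varphi\in\mathcal{P}$ with $\varphi\le\theta$ pointwise (this exists by \cite[Proposition B.1.18]{mironchenko2023input}); then one compares $\omega$ with the unique solution $w$ of the majorant ODE $\dot w=-\varphi(w)+\mu(t)$, $w(t_0)=\omega(t_0)$, obtaining $\omega\le w$ by a standard Lipschitz comparison argument. Only then is the shift performed, on the ODE solution rather than on $\omega$: with $v_1(t)=\int_{t_0}^{t}\mu$ and $w_1=w-v_1$ one gets $w_1'\le-\varphi(w_1+v_1)$, and the max-form comparison principle \cite[Proposition B.4.2]{mironchenko2023input} — which is designed precisely for non-monotone positive definite decay rates and a nondecreasing perturbation $v_1$ — yields $w_1(t)\le\max\{\beta(w_1(t_0),t-t_0),\;\|v_1\|_\infty\}$ with $\beta$ depending on $\varphi$ (hence on $\theta$) only; adding back $v_1(t)$ gives the claimed estimate. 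So the essential ingredient you are missing is not a cleverer telescoping but an appeal to (or a reproof of) that uniform max-form comparison lemma; if you want to stay self-contained, you would have to re-run the proof of Lemma~\ref{lemmm1} carrying the $\mu$-term along from the start, which is substantially more work than your current sketch suggests.
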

\begin{proof}
  
We assume that (\ref{b..20rh}) holds for all $t \in [t_0, t_0 + \tau)$.

By \cite[Lemma B.1.18. p.236]{mironchenko2023input}, there is a globally Lipschitz continuous function $\varphi \in \mathcal{P}$ such that $\varphi \leq \theta$ pointwise. Consider the following initial value problem:
\begin{equation}
\dot{w}(t) = -\varphi(w(t)) + \mu(t), \quad w(t_0) = \omega(t_0)
\end{equation}

As $\varphi$ is globally Lipschitz and $\mu$ is piecewise continuous, there is a unique piecewise continuously differentiable solution $w$ of this initial value problem.

Take some $\hat{t} \in (t_0, t_0 + \tau)$, such that $w$ is continuously differentiable on $[t_0, \hat{t})$. We have:
\begin{equation}
D^{+}\omega(t)-\dot{w}(t) \leq -(\varphi(\omega(t)) - \varphi(w(t))), \quad for \ all \ t \in [t_0, \hat{t}).
\end{equation}

Assume that there is some $\tau \in [t_0, \hat{t})$ such that $\omega(\tau) > w(\tau)$. Pick the maximal in $[t_0, \hat{t})$ interval $[s, s+\varepsilon)$ containing $\tau$ such that $\omega(t) > w(t)$ for all $t \in (s, s+\varepsilon)$. By the maximality of the interval $(s, s+\varepsilon)$, continuity of $\omega,$ $w$, as $\omega(t_0) = w(t_0)$, we have $\omega(s) = w(s)$.

As $\varphi$ is globally Lipschitz, there is some $L > 0$ such that for all $t \in (s, s+\varepsilon)$ we have:
\begin{equation}
D^{+}(\omega(t) - w(t)) \leq |\varphi(\omega(t)) - \varphi(w(t))| \leq L|\omega(t) - w(t)|= L(\omega(t) - w(t)).
\end{equation}

Define $z(t) := \omega(t) - w(t)$ on $(s, s+\varepsilon)$. By \cite[Proposition B.3.3. p.236]{mironchenko2023input}, this implies that $z(t) \leq z(s)e^{L(t-s)}$, which contradicts $z(t) > 0$ for $t \in (s, s+\varepsilon)$.

This shows that $0 \leq \omega(t) \leq w(t)$ for all $t \in [t_0, t_0 + \tau)$.

Now, define $v_1(t) := \int_{t_0}^t \mu(s) ds$ and $w_1(t) := w(t) - v_1(t)$. Then, for almost all $t \in [t_0, t_0 + \tau)$, we have:
\begin{equation}
w_1'(t) = w'(t) - \mu(t) = -\varphi(w(t)) \leq -\varphi(w_1(t) + v_1(t))
\end{equation}

In \cite{mironchenko2023input}, Proposition B.4.2 ensures that there is a function $\beta \in \mathcal{KL}$, depending on $\varphi$ solely, such that for all $t \in [t_0, t_0 + \tau)$:
\begin{equation}
w_1(t) \leq \max\{\beta(w_1(t_0), t-t_0), \|v_1\|_\infty\}
\end{equation}

As $w_1(t_0) = w(t_0) = \omega(t_0)$, we conclude that for all $t \in [t_0, t_0 + \tau)$:
\begin{equation}
\omega(t) \leq w(t) = w_1(t) + v_1(t) \leq \beta(\omega(t_0), t-t_0) + 2\int_{t_0}^t \mu(s) ds
\end{equation}

This completes the proof.
\end{proof}

A fundamental result is:
\begin{prop}\label{a.3333}
Let $f$ be Lipschitz continuous on bounded subsets of $\mathbb{R}^{n}$ and continuous on $\mathbb{R}^{n}\times \mathbb{R}^{m}.$ Additionally, let $u \in \mathcal{U}, T > 0$ and
$x, y\in AC([0, T),\mathbb{R})$ such that $x(0) \leq y(0)$ and
\renewcommand{\theequation}{A.\arabic{equation}} 

\begin{equation}
\dot{x}(t)-f (x(t), u(t)) \leq \dot{y}(t)-f (y(t), u(t)),\quad for \ a.e. \ t \in [0, T).  \label{eq:28}
\end{equation}
Then $x(t)\leq y(t)$ for every $t\in [0, T).$
\end{prop}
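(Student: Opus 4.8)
The statement to prove is Proposition A.3 (labeled \texttt{a.3333}): a standard comparison principle for scalar absolutely continuous functions driven by the same input. Let me sketch how I would prove it.

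\bigskip

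The plan is to argue by contradiction, exploiting the local Lipschitz continuity of $f$ on bounded sets and the structure of the inequality \eqref{eq:28}. First I would set $z(t) := x(t) - y(t)$, which is absolutely continuous on $[0,T)$ with $z(0) \le 0$. Suppose, for contradiction, that $z(t_1) > 0$ for some $t_1 \in (0,T)$. By continuity of $z$ and $z(0)\le 0$, I can define $t_0 := \sup\{t \in [0,t_1] : z(t) \le 0\}$; then $z(t_0) = 0$ (if $z(0)<0$ strictly, $t_0>0$; if $z(0)=0$, possibly $t_0=0$) and $z(t) > 0$ for all $t \in (t_0, t_1]$. The key observation is that on the compact interval $[t_0, t_1]$ both trajectories $x$ and $y$ stay in a bounded subset $B$ of $\mathbb{R}^n$ (being continuous on a compact interval), so $f(\cdot, v)$ is Lipschitz on $B$ uniformly — more precisely, there is $L>0$ such that $|f(a,v) - f(b,v)| \le L|a-b|$ for all $a,b \in B$ and all relevant values $v$ (here I would use that $u$ takes values so that the relevant $f(x(t),u(t)), f(y(t),u(t))$ are controlled; this is where one invokes Lipschitz continuity on bounded subsets of $\mathbb{R}^n$ in the first argument).

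\bigskip

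Next, from \eqref{eq:28} I rewrite, for a.e.\ $t \in [t_0,t_1]$,
\begin{equation*}
\dot{z}(t) = \dot{x}(t) - \dot{y}(t) \le f(x(t),u(t)) - f(y(t),u(t)) \le L\,|x(t)-y(t)| = L\,z(t),
\end{equation*}
where the last equality uses $z(t) > 0$ on $(t_0,t_1]$. Thus $\dot z(t) - L z(t) \le 0$ a.e.\ on $[t_0,t_1]$. Multiplying by the integrating factor $e^{-L t}$ gives $\frac{d}{dt}\big(e^{-Lt} z(t)\big) \le 0$ for a.e.\ $t$, and since $e^{-Lt}z(t)$ is absolutely continuous, integrating from $t_0$ to $t_1$ yields $e^{-Lt_1} z(t_1) \le e^{-Lt_0} z(t_0) = 0$, hence $z(t_1) \le 0$, contradicting $z(t_1) > 0$. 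Therefore $z(t) \le 0$, i.e.\ $x(t) \le y(t)$, for all $t \in [0,T)$.

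\bigskip

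The main obstacle — and the only genuinely delicate point — is the handling of the Lipschitz constant: $f$ is only Lipschitz on \emph{bounded} subsets, so one must first confine the problem to a compact time interval $[t_0, t_1]$ where the images of $x$ and $y$ lie in a fixed bounded set, extract a single constant $L$ valid there, and only then run the Grönwall-type estimate. A secondary technical care is that $x, y \in AC([0,T))$ means $\dot z$ exists only almost everywhere, so all the pointwise differential manipulations must be phrased "for a.e.\ $t$" and the passage to an integral inequality must go through absolute continuity of $e^{-Lt}z(t)$ (a product of an AC function with a $C^\infty$ function), not through the fundamental theorem of calculus for everywhere-differentiable functions. With those two points observed, the argument is routine. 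Note this proposition is exactly the tool needed to make Dini-derivative comparison arguments (as in Lemma~\ref{lemmm1} and Corollary~\ref{cor1}) rigorous in the ODE setting, which is why it is placed in the appendix.
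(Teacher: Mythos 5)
Your argument is correct. The paper itself gives no proof of this proposition --- it merely cites \cite[Proposition A.33]{Mir23} --- so there is nothing to compare against line by line; your contradiction-plus-Gr\"onwall argument is the standard proof one would find there. You correctly identify the two delicate points: (a) the Lipschitz constant $L$ must be extracted \emph{after} restricting to the compact interval $[t_0,t_1]$ on which $x$ and $y$ (and, essentially, $u$) take values in a fixed bounded set, so that a single $L$ works uniformly in the second argument of $f$; and (b) all pointwise manipulations hold only a.e., with the passage to the integral inequality justified by absolute continuity of $e^{-Lt}z(t)$. One tiny point worth making explicit when you write this up: the identity $z(t_0)=0$ follows by combining $z(t_0)\geq 0$ (continuity from the right, since $z>0$ on $(t_0,t_1]$) with $z(t_0)\leq 0$ (either the supremum is attained, or it is approached by points $s_k\uparrow t_0$ with $z(s_k)\leq 0$). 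With that, the proof is complete.
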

\begin{proof}
  See \cite[Proposition A.33]{Mir23}.
\end{proof}
A variation of the above principle for the Dini derivatives is as follows:
\begin{prop}
  Let $f$ is Lipschitz continuous on bounded subsets of $\mathbb{R}^{n},$ continuous on $\mathbb{R}^{n}\times \mathbb{R}^{m}$ and $u\in C(\mathbb{R}_{+},\mathbb{R}^{m}), T > 0,$ and let
$x, y \in C([0, T),\mathbb{R})$ be such that $x(0) \leq y(0)$ and
\renewcommand{\theequation}{A.\arabic{equation}} 
\begin{equation}
D^{+}x(t)-f (x(t), u(t))\leq D_{+}y(t)- f (y(t), u(t)), \quad for \ all \ t \in [0, T).   \label{eq:30}
\end{equation}
Then $x(t) \leq y(t)$ for every $t\in [0, T).$
\end{prop}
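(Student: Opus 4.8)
The plan is to argue by contradiction, following the same pattern as the proof of Corollary~\ref{cor1} above. Suppose the conclusion fails, so that $x(t_1) > y(t_1)$ for some $t_1 \in (0,T)$. Set
\[
t_0 := \sup\{\, t \in [0,t_1] : x(t) \le y(t) \,\}.
\]
Since $x(0) \le y(0)$, this set is nonempty; by continuity of $x$ and $y$ we obtain $x(t_0) = y(t_0)$, $t_0 < t_1$, and $x(t) > y(t)$ for all $t \in (t_0,t_1]$. Introduce $z := x - y$ on $[t_0,t_1]$, so that $z$ is continuous, $z(t_0) = 0$, and $z(t) > 0$ for $t \in (t_0,t_1]$; in particular $z \ge 0$ on the whole interval $[t_0,t_1]$.

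First I would bound the upper-right Dini derivative of $z$. Using the subadditivity $D^{+}(f+g) \le D^{+}f + D^{+}g$ and the identity $D^{+}(-f) = -D_{+}f$ from the basic properties of Dini derivatives recalled above, for every $t \in [t_0,t_1)$ we have
\[
D^{+}z(t) \le D^{+}x(t) + D^{+}(-y)(t) = D^{+}x(t) - D_{+}y(t) \le f(x(t),u(t)) - f(y(t),u(t)),
\]
the last inequality being the hypothesis \eqref{eq:30}. Since $x$ and $y$ are continuous on the compact interval $[t_0,t_1]$ and $u$ is continuous (hence bounded) there, the points $(x(t),u(t))$ and $(y(t),u(t))$ remain in a fixed bounded subset of $\mathbb{R}^{n}\times\mathbb{R}^{m}$; the assumed local Lipschitz continuity of $f$ then furnishes a constant $L>0$ with $|f(x(t),u(t)) - f(y(t),u(t))| \le L\,|x(t)-y(t)| = L\,z(t)$ for all $t \in [t_0,t_1]$, where we used $z \ge 0$. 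Hence $D^{+}z(t) \le L\,z(t)$ for all $t \in [t_0,t_1)$ (note this also holds at $t_0$, since $z(t_0)=0$).

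Finally I would invoke the elementary Gr\"onwall-type estimate for Dini derivatives already used in the proof of Corollary~\ref{cor1}, namely \cite[Proposition B.3.3, p.~236]{mironchenko2023input}: a continuous $z \ge 0$ satisfying $D^{+}z \le L z$ on $[t_0,t_1]$ obeys $z(t) \le z(t_0)\,e^{L(t-t_0)}$ for all $t \in [t_0,t_1]$. Since $z(t_0)=0$, this forces $z \equiv 0$ on $[t_0,t_1]$, contradicting $z(t_1) > 0$. Therefore no such $t_1$ exists, i.e.\ $x(t) \le y(t)$ for every $t \in [0,T)$.

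I expect the only delicate point to be the handling of the Dini derivatives: justifying $D^{+}z \le D^{+}x - D_{+}y$ via the subadditivity/negation rules, and arguing that the Lipschitz constant can be chosen uniformly on $[t_0,t_1]$ despite the time dependence entering through $u$. Once the differential inequality $D^{+}z \le L z$ with $z(t_0)=0$ is established, the contradiction is immediate from the cited comparison estimate. An alternative route would be to imitate the perturbation argument behind Proposition~\ref{a.3333} --- replacing $y$ by the solution of $\dot{w} = f(w,u) + \varepsilon$ with $w(t_0) = y(t_0) + \varepsilon$, comparing $x$ with $w$, and letting $\varepsilon \to 0^{+}$ --- but the direct contradiction argument is shorter and stays parallel to the development already present in the appendix.
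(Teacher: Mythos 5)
Your argument is correct. Note that the paper itself gives no proof of this proposition; it only points to \cite[Proposition A.34]{Mir23}, and your contradiction argument --- first crossing time $t_0$ with $x(t_0)=y(t_0)$, the estimate $D^{+}(x-y)\le D^{+}x-D_{+}y\le f(x,u)-f(y,u)\le L(x-y)$ on the compact interval where $x-y\ge 0$, and the Dini--Gr\"onwall bound forcing $x-y\equiv 0$ --- is precisely the standard proof found in that reference, so there is nothing to compare beyond saying you have supplied the omitted details. The one point you rightly flag as delicate, the subadditivity step $D^{+}(x+(-y))\le D^{+}x+D^{+}(-y)$, is licensed by the lemma on Dini derivatives stated earlier in this appendix (with the usual tacit convention that the right-hand side is not of the form $\infty-\infty$); a fully pedantic treatment would fall back on the $\varepsilon$-perturbation route you sketch, exactly as in the proof of Corollary \ref{cor1}.
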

\begin{proof}
  See \cite[Proposition A.34]{Mir23}.
\end{proof}

\section*{A.2 Elementary inequalities}
In this section, we collect several inequalities, which are instrumental in the stability analysis of 
infinite and finite-dimensional control systems and for verifying the important properties of function 
spaces.
\subsection*{A.2.1 Inequalities in $\mathbb{R}^{n}$}
\begin{dfn}
A map $ f: X \rightarrow \mathbb{R} $ is called convex provided that 
   $$f(ax+ (1-a)y)\leq af(x) + (1-a)f(y),$$
  for all $x, y\in X,$ and all $a\in[0,1].$
\end{dfn}

\begin{prop}(Young’s inequality). Let $p,q>1$ be such that $\frac{1}{p}+\frac{1}{q}= 1.$ Then for all $a, 
b\geq0$ holds 
\renewcommand{\theequation}{B.\arabic{equation}} 
\begin{equation}\label{D2}
ab\leq \frac{a^{p}}{p}+\frac{b^{q}}{q}.
\end{equation}
\end{prop}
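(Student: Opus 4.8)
The plan is to derive Young's inequality \eqref{D2} from the concavity of the natural logarithm, using that the hypothesis $\frac1p+\frac1q=1$ turns the weights $\frac1p$ and $\frac1q$ into a legitimate convex combination.

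First I would dispose of the trivial cases: if $a=0$ or $b=0$, the left-hand side of \eqref{D2} equals $0$ while the right-hand side is a sum of nonnegative terms, so the inequality holds; hence from now on I may assume $a,b>0$.

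Next, recall that $t\mapsto\ln t$ is concave on $(0,\infty)$ — equivalently, $-\ln$ is convex in the sense of the Definition preceding the statement — which one can justify by $(\ln)''(t)=-1/t^{2}<0$ or simply take as standard. Applying the concavity inequality to the points $x=a^{p}$ and $y=b^{q}$ with weights $\lambda=\frac1p$ and $1-\lambda=\frac1q$ gives
\[
\ln\!\Big(\tfrac{a^{p}}{p}+\tfrac{b^{q}}{q}\Big)\;\ge\;\tfrac1p\ln\!\big(a^{p}\big)+\tfrac1q\ln\!\big(b^{q}\big)=\ln a+\ln b=\ln(ab).
\]
Since $\ln$ is strictly increasing, applying its inverse to both sides yields $ab\le\frac{a^{p}}{p}+\frac{b^{q}}{q}$, which is exactly \eqref{D2}.

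As an alternative I could argue by elementary calculus: fix $b>0$ and set $g(a):=\frac{a^{p}}{p}+\frac{b^{q}}{q}-ab$ on $[0,\infty)$; the equation $g'(a)=a^{p-1}-b=0$ has the unique positive solution $a_{0}=b^{q-1}$ (using $q=\frac{p}{p-1}$), and there $a_{0}^{p}=a_{0}b=b^{q}$, so $g(a_{0})=b^{q}\big(\frac1p+\frac1q-1\big)=0$; since $g''(a)=(p-1)a^{p-2}>0$ on $(0,\infty)$ and $g(0)=\frac{b^{q}}{q}\ge 0$, the point $a_{0}$ is a global minimiser and $g\ge 0$ throughout. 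I do not anticipate a genuine obstacle here; the only points deserving a sentence of justification are the concavity of $\ln$ in the first approach and the identity $q=\frac{p}{p-1}$ (immediate from $\frac1p+\frac1q=1$) in the second.
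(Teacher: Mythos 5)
Your proposal is correct and its main argument — applying the concavity of $\ln$ to the points $a^{p}$ and $b^{q}$ with weights $\frac1p$ and $\frac1q$ and then exponentiating — is exactly the proof given in the paper; you are in fact slightly more careful, since you dispose of the cases $a=0$ or $b=0$ explicitly before taking logarithms, a point the paper's proof silently skips. The alternative calculus argument you sketch is also fine but unnecessary here.
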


\begin{proof}
The function $\ln(x)$ is concave since $\ln'(x) = \frac{1}{x}$ and $\ln''(x) = -\frac{1}{x^2}$. Thus, we have
\[
\ln(ab) = \ln\left((a^p)^{\frac{1}{p}} (b^q)^{\frac{1}{q}}\right) = \frac{1}{p} \ln(a^p) + \frac{1}{q} \ln(b^q).
\]
By the concavity of $\ln(x)$, it follows that
\[
\frac{1}{p} \ln(a^p) + \frac{1}{q} \ln(b^q) \leq \ln\left(\frac{a^p}{p} + \frac{b^q}{q}\right).
\]
Exponentiating both sides, we get
\[
ab \leq \frac{a^p}{p} + \frac{b^q}{q}.
\]
\end{proof}

\begin{prop}\label{generalyi}(Young’s inequality, general form). For all $a, b\geq0$ and all $\omega, p 
>0$ it holds
\renewcommand{\theequation}{B.\arabic{equation}} 
\begin{equation}\label{D3}
ab\leq\frac{\omega}{p}a^{p}+\frac{1}{\omega^{\frac{1}{p-1}}}\frac{p-1}{p}b^{\frac{p}{p-1}}.
\end{equation}
\end{prop}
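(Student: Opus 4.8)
The plan is to derive the general form \eqref{D3} from the basic Young inequality \eqref{D2}, which was just established, by a one-parameter scaling substitution. First I would make explicit the standing hypothesis that is implicit in the statement: for the exponent $\frac{p}{p-1}$ and the power $\omega^{\frac{1}{p-1}}$ to be meaningful (and for the coefficient $\frac{p-1}{p}$ to be nonnegative) one needs $p>1$, so I fix $p>1$ and introduce the conjugate exponent $q:=\frac{p}{p-1}$, for which $\frac1p+\frac1q=1$, $\frac1q=\frac{p-1}{p}$, $b^{q}=b^{\frac{p}{p-1}}$, and, crucially, $\frac{q}{p}=\frac{1}{p-1}$. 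The cases $a=0$ or $b=0$ are trivial because the right-hand side of \eqref{D3} is nonnegative, so I may assume $a,b>0$.

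Next I would set $\lambda:=\omega^{1/p}>0$ and apply \eqref{D2} to the nonnegative numbers $\lambda a$ and $b/\lambda$, whose product is again $ab$:
\[
ab=(\lambda a)\Bigl(\frac{b}{\lambda}\Bigr)\;\le\;\frac{(\lambda a)^{p}}{p}+\frac{(b/\lambda)^{q}}{q}\;=\;\frac{\lambda^{p}}{p}\,a^{p}+\frac{1}{q\,\lambda^{q}}\,b^{q}.
\]
It then remains only to rewrite the two coefficients in terms of $\omega$ and $p$. Since $\lambda^{p}=\omega$, $\lambda^{q}=\omega^{q/p}=\omega^{1/(p-1)}$, $\frac1q=\frac{p-1}{p}$, and $q=\frac{p}{p-1}$, the displayed bound reads
\[
ab\le\frac{\omega}{p}\,a^{p}+\frac{1}{\omega^{\frac{1}{p-1}}}\,\frac{p-1}{p}\,b^{\frac{p}{p-1}},
\]
which is exactly \eqref{D3}.

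I do not expect a genuine obstacle: the argument is nothing more than a substitution into an inequality proved one line above. The only points requiring care are the exponent bookkeeping — in particular the identity $\frac{q}{p}=\frac{1}{p-1}$, which is what turns $\lambda^{q}$ into $\omega^{1/(p-1)}$ — and recording the hypothesis $p>1$, since the proposition as stated only asks for $p>0$.
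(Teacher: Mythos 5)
Your proof is correct and is essentially the paper's own argument: the paper likewise writes $ab=\varepsilon^{1/p}a\cdot b/\varepsilon^{1/p}$ and applies the basic Young inequality \eqref{D2} with $q=\frac{p}{p-1}$, which is exactly your substitution $\lambda=\omega^{1/p}$. Your added remark that the hypothesis must really be $p>1$ (not merely $p>0$, as the statement says) is a valid correction of the proposition as written.
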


\begin{proof}
Write $ab=\varepsilon^{\frac{1}{p}}a\cdot \frac{b}{\varepsilon^{\frac{1}{p}}},$ and apply for 
$q=\frac{p}{p-1}$ Young’s inequality (\ref{D2}).
\end{proof}
Often a special case of Proposition \ref{generalyi} (also called Cauchy’s inequality with $\varepsilon$) 
is used:
\begin{prop} (Young’s inequality, special case). For all $a, b\in\mathbb{R}$ and all $\varepsilon >0$ it 
holds
\renewcommand{\theequation}{B.\arabic{equation}} 
\begin{equation}\label{d4}
  ab\leq\frac{\varepsilon}{2}a^{2}+\frac{1}{2\varepsilon}b^{2}.
\end{equation}
\end{prop}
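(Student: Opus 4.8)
The cleanest route is a direct completion of the square, which also automatically handles the sign issue (the statement allows $a,b\in\mathbb{R}$, not just nonnegative reals). First I would fix $\varepsilon>0$ and $a,b\in\mathbb{R}$ and observe that the real number $\sqrt{\varepsilon}\,a-\tfrac{1}{\sqrt{\varepsilon}}\,b$ has nonnegative square. Expanding,
$$
0\leq\Big(\sqrt{\varepsilon}\,a-\tfrac{1}{\sqrt{\varepsilon}}\,b\Big)^{2}=\varepsilon a^{2}-2ab+\tfrac{1}{\varepsilon}b^{2}.
$$
Rearranging gives $2ab\leq\varepsilon a^{2}+\tfrac{1}{\varepsilon}b^{2}$, and dividing by $2$ yields the claimed inequality $ab\leq\tfrac{\varepsilon}{2}a^{2}+\tfrac{1}{2\varepsilon}b^{2}$. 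That is the whole argument.

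\textbf{Alternative.} If one prefers to deduce it from the already-stated general form, Proposition~\ref{generalyi}, I would specialize there to $p=2$ (so that $\tfrac{p}{p-1}=2$) and $\omega=\varepsilon$, which turns the general bound into $ab\leq\tfrac{\varepsilon}{2}a^{2}+\tfrac{1}{2\varepsilon}b^{2}$ for all $a,b\geq0$. To pass from nonnegative to arbitrary reals, I would then use $ab\leq|ab|=|a|\,|b|$, apply the nonnegative case to $|a|$ and $|b|$, and note $|a|^{2}=a^{2}$, $|b|^{2}=b^{2}$. Either proof is a few lines; I would present the square-completion version in the paper since it is self-contained and does not drag in the $p,q$ conjugacy machinery.

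\textbf{Main obstacle.} There is essentially none: the only point requiring the slightest care is the domain discrepancy (the hypotheses permit negative $a,b$, whereas the general Young inequalities in the excerpt are stated for $a,b\geq0$), and the square-completion argument sidesteps it entirely because the expansion is valid for all real $a,b$ with no appeal to signs. Accordingly I expect the write-up to be three or four lines of display math with no subtleties to flag.
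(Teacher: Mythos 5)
Your proof is correct. The paper itself offers no explicit proof of this proposition: it simply presents the estimate as a special case of the general form (Proposition~\ref{generalyi}), i.e.\ the instantiation $p=2$, $\omega=\varepsilon$ — which is precisely your ``alternative'' route. Your primary argument, expanding $\bigl(\sqrt{\varepsilon}\,a-\tfrac{1}{\sqrt{\varepsilon}}\,b\bigr)^{2}\geq 0$, is a genuinely different and more elementary derivation: it is self-contained, avoids the conjugate-exponent machinery entirely, and — as you rightly point out — is valid for all real $a,b$ without any case analysis, whereas the general Young inequalities in the appendix are stated only for $a,b\geq 0$, so the paper's implicit specialization silently skips the reduction to $|a|,|b|$ that you spell out. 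Either version is acceptable; your square-completion proof is arguably the cleaner one to include.
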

We adopt the next formulation from \cite[Lemma 2.7, p.42]{holden2013spectral}):

\begin{lem} (Generalized Gronwall’s inequality). Let $\psi,\alpha,\beta\in C([0,T],\mathbb{R})$ and let 
the
inequality holds
\renewcommand{\theequation}{B.\arabic{equation}} 
\begin{equation}\label{D5}
 \psi(t)\leq\alpha(t) +\int_{0}^{t}\beta(s)\psi(s)ds,\quad t\in [0,T],
\end{equation}
with $\alpha(t)\in \mathbb{R}$ and $\beta(t)\geq0,t\in [0,T].$ Then:
\begin{itemize}
\item[$(i)$] $\psi(t)\leq\alpha(t) +\int_{0}^{t} \alpha(s)\beta(s)\exp\{\int_{s}^{t}\beta(r)dr\}ds,t\in 
    [0,T].$
\item[$(ii)$] If further $\alpha(s)\leq\alpha(t)$ for $s\leq t,$ then it holds that
 $$\psi(t)\leq\alpha(t) \exp \{(\int_{0}^{t}\beta(s)ds)\}, t\in[0,T].$$
\end{itemize}
\end{lem}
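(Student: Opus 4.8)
The plan is to run the classical integrating-factor argument. First I would introduce the auxiliary function $v(t) := \int_0^t \beta(s)\psi(s)\,ds$, which is continuously differentiable on $[0,T]$ since $\psi$ and $\beta$ are continuous. From the hypothesis $\psi(t) \le \alpha(t) + v(t)$ together with the sign condition $\beta \ge 0$, one obtains $v'(t) = \beta(t)\psi(t) \le \beta(t)\alpha(t) + \beta(t)v(t)$, i.e.
$$v'(t) - \beta(t)v(t) \le \beta(t)\alpha(t), \qquad t \in [0,T].$$

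Next I would multiply this differential inequality by the strictly positive integrating factor $\mu(t) := \exp\big(-\int_0^t \beta(r)\,dr\big)$, so that the left-hand side becomes $\tfrac{d}{dt}\big(\mu(t)v(t)\big) \le \beta(t)\alpha(t)\mu(t)$. Integrating from $0$ to $t$ and using $v(0)=0$ yields $\mu(t)v(t) \le \int_0^t \beta(s)\alpha(s)\mu(s)\,ds$; dividing by $\mu(t)$ and using $\mu(s)/\mu(t) = \exp\big(\int_s^t \beta(r)\,dr\big)$ gives
$$v(t) \le \int_0^t \alpha(s)\beta(s)\exp\!\Big(\int_s^t \beta(r)\,dr\Big)\,ds.$$
Substituting this back into $\psi(t) \le \alpha(t) + v(t)$ proves part $(i)$.

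For part $(ii)$, under the additional monotonicity assumption $\alpha(s) \le \alpha(t)$ for $s \le t$, I would bound $\alpha(s)$ by $\alpha(t)$ inside the integral just obtained, pull $\alpha(t)$ out, and recognize the remaining integral as a perfect differential: with $F(s) := \int_s^t \beta(r)\,dr$ one has $F'(s) = -\beta(s)$, hence $\beta(s)e^{F(s)} = -\tfrac{d}{ds}e^{F(s)}$ and $\int_0^t \beta(s)e^{F(s)}\,ds = e^{F(0)} - e^{F(t)} = \exp\big(\int_0^t \beta(r)\,dr\big) - 1$. Plugging this in collapses $\psi(t) \le \alpha(t) + \alpha(t)\big(\exp(\int_0^t\beta) - 1\big)$ to the claimed bound $\psi(t) \le \alpha(t)\exp\big(\int_0^t \beta(s)\,ds\big)$.

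There is no serious obstacle here: the statement is a standard form of Grönwall's inequality and the argument is entirely elementary. The only points requiring a little care are the use of $\beta \ge 0$ to preserve the direction of the inequality when passing from $\psi$ to $v$ and when multiplying by $\mu$, and, in $(ii)$, the observation that the monotonicity of $\alpha$ is exactly what is needed to replace $\alpha(s)$ by $\alpha(t)$ under the integral sign before telescoping.
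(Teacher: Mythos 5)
Your argument is correct and complete: it is the standard integrating-factor proof of Gr\"onwall's inequality, and both the use of $\beta\ge 0$ (to pass from $\psi$ to $v$ and to replace $\alpha(s)$ by $\alpha(t)$ under the integral, where the multiplier $\beta(s)\exp(\int_s^t\beta)$ is nonnegative) and the telescoping in part $(ii)$ are handled properly. The paper itself gives no proof of this lemma --- it simply cites the formulation from the literature --- so your write-up supplies exactly the expected argument.
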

\subsection*{A.2.2 Basic integral inequalities}
We collect here several basic inequalities in $L^{p}$ spaces.
\begin{prop} (Hölder's inequality). Let $G$ be an open, bounded region in $\mathbb{R}^{n}$. As some 
$p,q\in [1,\infty]$ and $\frac{1}{p}+\frac{1}{q}= 1.$ Then if $x\in L^{p}(G), \nu\in L^{q}(G),$ then
\renewcommand{\theequation}{B.\arabic{equation}} 
\begin{equation}\label{d6}
  \int_{G} |x(z)\nu(z)|dz\leq \|x\|_{L^{p}(G)}\|\nu\|_{L^{q}(G)}.
\end{equation}
\end{prop}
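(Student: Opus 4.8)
The plan is to reduce the statement to the pointwise Young inequality (\ref{D2}) that was just established. First I would dispose of the degenerate cases. If $\|x\|_{L^{p}(G)}=0$ or $\|\nu\|_{L^{q}(G)}=0$, then $x=0$ a.e.\ or $\nu=0$ a.e., so the left-hand side vanishes and the inequality is trivial; if either norm equals $+\infty$ the right-hand side is $+\infty$ and there is nothing to prove. The endpoint case $p=1$, $q=\infty$ (and symmetrically $p=\infty$, $q=1$) is handled directly: since $|\nu(z)|\le\|\nu\|_{L^{\infty}(G)}$ for a.e.\ $z\in G$, one has $\int_{G}|x(z)\nu(z)|\,dz\le\|\nu\|_{L^{\infty}(G)}\int_{G}|x(z)|\,dz=\|x\|_{L^{1}(G)}\|\nu\|_{L^{\infty}(G)}$.

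It then remains to treat $1<p,q<\infty$ with both $\|x\|_{L^{p}(G)}$ and $\|\nu\|_{L^{q}(G)}$ finite and strictly positive. Here I would normalize: set $\tilde{x}:=x/\|x\|_{L^{p}(G)}$ and $\tilde{\nu}:=\nu/\|\nu\|_{L^{q}(G)}$, so that $\|\tilde{x}\|_{L^{p}(G)}=\|\tilde{\nu}\|_{L^{q}(G)}=1$. For a.e.\ $z\in G$, apply Young's inequality (\ref{D2}) with $a=|\tilde{x}(z)|$ and $b=|\tilde{\nu}(z)|$ to get
\[
|\tilde{x}(z)\tilde{\nu}(z)|\le\frac{|\tilde{x}(z)|^{p}}{p}+\frac{|\tilde{\nu}(z)|^{q}}{q}.
\]
Integrating over $G$ yields
\[
\int_{G}|\tilde{x}(z)\tilde{\nu}(z)|\,dz\le\frac{1}{p}\int_{G}|\tilde{x}(z)|^{p}\,dz+\frac{1}{q}\int_{G}|\tilde{\nu}(z)|^{q}\,dz=\frac{1}{p}+\frac{1}{q}=1,
\]
and multiplying both sides by $\|x\|_{L^{p}(G)}\|\nu\|_{L^{q}(G)}$ together with the homogeneity of the integral gives the claimed bound.

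The argument is entirely routine and I do not expect a genuine obstacle. The only point deserving a word of care is the measurability and integrability of the product $x\nu$: since $x$ and $\nu$ are measurable, $x\nu$ is measurable, and the pointwise Young bound above exhibits an integrable dominating function, so the integral on the left-hand side is well-defined and finite before the estimate is invoked.
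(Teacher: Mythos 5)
Your proof is correct and follows essentially the same route as the paper: normalize to unit norms, apply the pointwise Young inequality (\ref{D2}), integrate, and recover the general case by homogeneity. Your treatment of the degenerate and endpoint cases ($p=1$, $q=\infty$) is a welcome extra that the paper's proof silently omits, but the core argument is identical.
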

\begin{proof}
Assume first that $\|x\|_{L^{p}(G)}= 1$ and $\|\nu\|_{L^{q}(G)}=1.$ By Young’s inequality (\ref{D2}) for 
$p, q$ as in the statement of the proposition, we have for a.e. $z\in G$
\renewcommand{\theequation}{B.\arabic{equation}} 
\begin{equation}\label{d7}
  |x(z)\nu(z)| \leq\frac{1}{p}|x(z)|^{p}+\frac{1}{q}|\nu(z)|^{q}.
\end{equation}
The function $x\nu$ is measurable and (\ref{d7}) shows that its absolute value is upper-bounded by an 
integrable function, thus $x\nu\in L^{1}(G)$ and
 $$ \int_{G} 
 |x(z)\nu(z)|dz\leq\frac{1}{p}\int_{G}|x(z)|^{p}dz+\frac{1}{q}\int_{G}|\nu(z)|^{q}dz=\frac{1}{p}+\frac{1}{q}= 
 1 =\|x\|_{L^{p}(G)} \|\nu\|_{L^{q}(G)}.$$
For general $x\in L^{p}(G), \nu \in L^{q}(G),$ the result follows by homogeneity.
\end{proof}

\begin{prop}(Cauchy-Bunyakovsky-Schwarz inequality in $L^{2}(G)$).
For all $x\in L^{2}(G),\nu\in L^{2}(G)$ it holds that 
\renewcommand{\theequation}{B.\arabic{equation}} 
\begin{equation}\label{d9}
\int_{G}|x(z)\nu(z)|dz\leq\|x\|_{L^{2}(G)}\|\nu\|_{L^{2}(G)}.
\end{equation}
\end{prop}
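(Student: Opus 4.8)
The statement is nothing more than the special case $p=q=2$ of Hölder's inequality, which has just been established as equation~(\ref{d6}). The plan is therefore to verify that the exponents $p=q=2$ are admissible and then invoke that proposition directly. Since $\tfrac{1}{2}+\tfrac{1}{2}=1$, the pair $(p,q)=(2,2)$ satisfies the hypothesis of Hölder's inequality, and any $x,\nu\in L^{2}(G)$ lie in $L^{p}(G)$ and $L^{q}(G)$ respectively. Applying (\ref{d6}) with these exponents yields immediately
\[
\int_{G}|x(z)\nu(z)|\,dz \;\leq\; \|x\|_{L^{2}(G)}\,\|\nu\|_{L^{2}(G)},
\]
which is the claim. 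There is essentially no obstacle here; the only thing to observe is that $2$ is its own conjugate exponent.

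\textbf{A self-contained alternative.} If one prefers not to rely on the general Hölder statement, the same argument can be reproduced in this special case. First I would treat the normalized situation $\|x\|_{L^{2}(G)}=\|\nu\|_{L^{2}(G)}=1$. By Young's inequality in the form (\ref{D2}) with $p=q=2$ (equivalently, the elementary bound $ab\leq\tfrac12 a^{2}+\tfrac12 b^{2}$, cf.\ (\ref{d4})), one has for a.e.\ $z\in G$ the pointwise estimate $|x(z)\nu(z)|\leq \tfrac12|x(z)|^{2}+\tfrac12|\nu(z)|^{2}$. The right-hand side is integrable, so $x\nu\in L^{1}(G)$ and integrating over $G$ gives $\int_{G}|x(z)\nu(z)|\,dz\leq\tfrac12\|x\|_{L^{2}(G)}^{2}+\tfrac12\|\nu\|_{L^{2}(G)}^{2}=1=\|x\|_{L^{2}(G)}\|\nu\|_{L^{2}(G)}$. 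The case of general nonzero $x,\nu$ then follows by homogeneity, replacing $x$ by $x/\|x\|_{L^{2}(G)}$ and $\nu$ by $\nu/\|\nu\|_{L^{2}(G)}$; the degenerate cases $\|x\|_{L^{2}(G)}=0$ or $\|\nu\|_{L^{2}(G)}=0$ are trivial since then $x\nu=0$ a.e.

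\textbf{Remark on difficulty.} Because the result is a direct corollary, the "hard part" is purely expository: deciding whether to present it as a one-line consequence of (\ref{d6}) or to spell out the short normalization-and-homogeneity argument. I would include both the one-line deduction and a brief indication of the self-contained route, so that the section on integral inequalities is readable without backtracking.
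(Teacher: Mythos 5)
Your proof is correct and matches the paper's argument: the paper likewise obtains the result as the special case $p=q=2$ of Hölder's inequality (\ref{d6}). Your self-contained alternative simply re-runs the paper's own proof of Hölder's inequality specialized to that case, so nothing further is needed.
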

\begin{proof}
The result is a special case of Hölder's inequality (\ref{d6}) for $p=q=2,$ as well as a special case of the general Cauchy-Bunyakovsky-Schwarz’ inequality in Hilbert spaces.
\end{proof}
The following result constitutes the triangle inequality in the $L^{p}(G)-$space
\begin{prop} (Minkowski inequality). Assume that $p, q \in [1,\infty]$ and $\frac{1}{p}+\frac{1}{q}=1.$
Then if $x,\nu\in L^{p}(G),$ then
$$\|x+\nu\|_{L^{p}(G)}\leq \|x\|_{L^{p}(G)}+\|\nu\|_{L^{p}(G)}.$$
\end{prop}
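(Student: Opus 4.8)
The plan is to reduce the statement to the already-established H\"older inequality (\ref{d6}), following the standard argument. First I would dispose of trivial cases: if $p=1$ the claim is immediate from the pointwise triangle inequality $|x(z)+\nu(z)|\leq |x(z)|+|\nu(z)|$ followed by integration over $G$; and if $\|x+\nu\|_{L^p(G)}=0$ there is nothing to prove. (The case $p=\infty$ is likewise immediate from the pointwise bound and the definition of the essential supremum.) So I may assume $1<p<\infty$ and $\|x+\nu\|_{L^p(G)}>0$; note also that $x+\nu\in L^p(G)$ because $|x+\nu|^p\leq 2^{p-1}(|x|^p+|\nu|^p)$ by Lemma~\ref{lema5}.

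The key computation is to write, pointwise for a.e.\ $z\in G$,
\begin{equation*}
|x(z)+\nu(z)|^{p}=|x(z)+\nu(z)|\,|x(z)+\nu(z)|^{p-1}\leq |x(z)|\,|x(z)+\nu(z)|^{p-1}+|\nu(z)|\,|x(z)+\nu(z)|^{p-1},
\end{equation*}
and then integrate over $G$. To each of the two resulting integrals I apply H\"older's inequality (\ref{d6}) with exponents $p$ and $q=\tfrac{p}{p-1}$, observing that $|x+\nu|^{p-1}\in L^{q}(G)$ with $\big\||x+\nu|^{p-1}\big\|_{L^q(G)}=\|x+\nu\|_{L^p(G)}^{p-1}$ since $(p-1)q=p$. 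This yields
\begin{equation*}
\|x+\nu\|_{L^p(G)}^{p}\leq\big(\|x\|_{L^p(G)}+\|\nu\|_{L^p(G)}\big)\,\|x+\nu\|_{L^p(G)}^{p-1}.
\end{equation*}

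Finally I divide both sides by $\|x+\nu\|_{L^p(G)}^{p-1}$, which is legitimate and finite precisely because of the reductions made at the start, and use $p-(p-1)=1$ to obtain $\|x+\nu\|_{L^p(G)}\leq\|x\|_{L^p(G)}+\|\nu\|_{L^p(G)}$. I do not anticipate a genuine obstacle here; the only point requiring a little care is the bookkeeping that ensures $\|x+\nu\|_{L^p(G)}$ is both strictly positive and finite before dividing, so that the division step is valid — everything else is a direct application of the pointwise triangle inequality and H\"older's inequality already proved in the excerpt.
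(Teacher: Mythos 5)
Your proof is correct and complete. The paper itself gives no argument for this proposition — it simply cites \cite[p.~707]{evans2022partial} — and the argument you supply is exactly the standard one found there: dispose of $p=1$, $p=\infty$, and the degenerate case $\|x+\nu\|_{L^p(G)}=0$; check $x+\nu\in L^p(G)$ via $|a+b|^p\le 2^{p-1}(|a|^p+|b|^p)$; split $|x+\nu|^p\le |x|\,|x+\nu|^{p-1}+|\nu|\,|x+\nu|^{p-1}$ and apply H\"older's inequality (\ref{d6}) with conjugate exponent $q=p/(p-1)$, using $(p-1)q=p$; then divide by $\|x+\nu\|_{L^p(G)}^{p-1}$. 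Your explicit attention to the positivity and finiteness of that quantity before dividing is exactly the point where careless versions of this proof go wrong, and you have handled it properly.
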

\begin{proof}
See \cite[p. 707]{evans2022partial}.
\end{proof}
An important integral inequality for convex functions is
\begin{prop}\label{d.2.10} (Jensen’s inequality). Let $G$ be open and bounded, and
$f :\mathbb{R}^{m}\rightarrow\mathbb{R}$ be convex. Let also $x :G \rightarrow\mathbb{R}^{m}$ be 
summable. Then
\renewcommand{\theequation}{B.\arabic{equation}} 
\begin{equation}\label{d.11}
  \frac{1}{|G|}\int_{G}f(x(z))dz\geq f(\frac{1}{|G|}\int_{G}x(z)dz).
\end{equation}
\end{prop}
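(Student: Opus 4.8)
The plan is the classical supporting-hyperplane (subgradient) argument. First I would introduce the average value
\[
\bar{x}:=\frac{1}{|G|}\int_{G}x(z)\,dz\in\mathbb{R}^{m},
\]
which is well defined since $G$ is bounded and $x$ is summable. Because $f:\mathbb{R}^{m}\to\mathbb{R}$ is convex and finite on all of $\mathbb{R}^{m}$, it is continuous, and it is subdifferentiable at every point; in particular there exists $p\in\mathbb{R}^{m}$ (a subgradient of $f$ at $\bar{x}$) such that
\[
f(y)\geq f(\bar{x})+p\cdot(y-\bar{x})\qquad\forall\,y\in\mathbb{R}^{m}.
\]

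Next I would substitute $y=x(z)$ for a.e.\ $z\in G$, obtaining the pointwise bound $f(x(z))\geq f(\bar{x})+p\cdot(x(z)-\bar{x})$. The right-hand side is a summable function of $z$ (as $x$ is summable), and $z\mapsto f(x(z))$ is measurable since $f$ is continuous; hence $\int_{G}f(x(z))\,dz$ is well defined in $(-\infty,+\infty]$, and if it equals $+\infty$ the claimed inequality is trivial, so we may assume it is finite. Integrating the pointwise bound over $G$ and dividing by $|G|$ gives
\[
\frac{1}{|G|}\int_{G}f(x(z))\,dz\;\geq\;f(\bar{x})+p\cdot\!\left(\frac{1}{|G|}\int_{G}x(z)\,dz-\bar{x}\right)\;=\;f(\bar{x}),
\]
where the term in parentheses vanishes by the definition of $\bar{x}$. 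Since $f(\bar{x})=f\!\left(\tfrac{1}{|G|}\int_{G}x(z)\,dz\right)$, this is exactly \eqref{d.11}.

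The main obstacle is the existence of the subgradient $p$ at $\bar{x}$. This can be obtained from the separating hyperplane theorem applied to the epigraph $\{(y,t)\in\mathbb{R}^{m+1}:t\geq f(y)\}$, which is convex with nonempty interior (as $f$ is real-valued and continuous), and its boundary point $(\bar{x},f(\bar{x}))$: the separating hyperplane cannot be vertical, so after normalization it has the form $t=f(\bar{x})+p\cdot(y-\bar{x})$ with the desired supporting property. Alternatively one simply invokes the standard fact that a finite convex function on $\mathbb{R}^{m}$ is subdifferentiable everywhere. The remaining measurability and integrability points above are routine and need only a sentence each.
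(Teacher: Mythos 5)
Your proof is correct. The paper itself gives no argument for this proposition — it simply cites Evans — and the supporting-hyperplane (subgradient) proof you give is exactly the standard one found there, with the additional care you take about measurability of $z\mapsto f(x(z))$ and the possibility that $\int_G f(x(z))\,dz=+\infty$ being a welcome bonus rather than a deviation.
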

\begin{proof}
See \cite[p. 705]{evans2022partial}.
\end{proof}
In the scalar case, Jensen’s inequality simplifies to
\begin{prop} (Scalar Jensen’s inequality).
For any convex $f :\mathbb{R} \rightarrow \mathbb{R}$ and any summable
$x :[0,L]\rightarrow\mathbb{R},$ it holds that
\renewcommand{\theequation}{B.\arabic{equation}} 
\begin{equation}\label{d.12}
 \int_{0}^{L}f(x(z))dz\geq Lf(\frac{1}{L}\int_{0}^{L}x(z)dz).
\end{equation}
Similarly, for any concave $f :\mathbb{R} \rightarrow \mathbb{R}$ and any summable $x 
:[0,L]\rightarrow\mathbb{R},$ Jensen’s
inequality holds with the reverse inequality sign:
\renewcommand{\theequation}{B.\arabic{equation}} 
\begin{equation}\label{d.13}
 \int_{0}^{L}f(x(z))dz\leq Lf(\frac{1}{L}\int_{0}^{L}x(z)dz).
\end{equation}
\end{prop}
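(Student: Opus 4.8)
The plan is to obtain the scalar inequality as an immediate consequence of the general (multivariate) Jensen inequality, Proposition~\ref{d.2.10}, which is already at our disposal. Concretely, I would take $m=1$ and $G=(0,L)\subset\mathbb{R}$, so that $G$ is open and bounded with Lebesgue measure $|G|=L$; for a summable $x:[0,L]\to\mathbb{R}$ and a convex $f:\mathbb{R}\to\mathbb{R}$, Proposition~\ref{d.2.10} gives
$$\frac{1}{L}\int_0^L f(x(z))\,dz\;\geq\;f\!\left(\frac{1}{L}\int_0^L x(z)\,dz\right),$$
and multiplying through by $L>0$ yields exactly \eqref{d.12}. For the concave case I would simply note that $-f$ is convex, apply \eqref{d.12} to $-f$, and multiply the resulting inequality by $-1$; the inequality sign reverses and \eqref{d.13} follows.

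Should a self-contained argument be preferred, I would instead use a supporting line. Put $c:=\frac{1}{L}\int_0^L x(z)\,dz$. Convexity of $f$ provides a slope $\lambda\in\mathbb{R}$ with $f(t)\geq f(c)+\lambda(t-c)$ for every $t\in\mathbb{R}$. Substituting $t=x(z)$ and integrating over $[0,L]$,
$$\int_0^L f(x(z))\,dz\;\geq\;\int_0^L\big(f(c)+\lambda(x(z)-c)\big)\,dz\;=\;Lf(c)+\lambda\!\left(\int_0^L x(z)\,dz-Lc\right)\;=\;Lf(c),$$
because the parenthesised term vanishes by the choice of $c$; this is \eqref{d.12}, and the concave case again follows by passing to $-f$.

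The only step that needs any attention --- and the one I would regard as the (minor) obstacle --- is the measurability and integrability bookkeeping: one has to record that $f\circ x$ is measurable and that the affine minorant $z\mapsto f(c)+\lambda(x(z)-c)$ is summable (since $x$ is summable), so that $\int_0^L f(x(z))\,dz$ is well defined in $(-\infty,+\infty]$ and the displayed manipulations are legitimate. With that remark in place the proof is complete.
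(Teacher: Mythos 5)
Your primary argument---specializing the general Jensen inequality (Proposition~\ref{d.2.10}) to $m=1$, $G=(0,L)$ with $|G|=L$, and handling the concave case by passing to $-f$---is exactly the route the paper takes, and it is correct. The supporting-line argument you sketch as an alternative is a valid self-contained substitute, but it is not needed given that the general proposition is already available.
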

\begin{proof}
For convex $f,$ the claim follows from the general Jensen’s inequality. To see the second part, note that 
$f$ is concave if $-f$ is convex. Applying Jensen’s inequality to $-f,$ we obtain the result for concave 
$f.$ 
\end{proof}

\section*{A.2.3 Integral inequalities for 1-dimensional domains}
This section presents some several integral inequalities on $(0, L)$.\\ 
We begin with the well-known Friedrichs' inequality (see, e.g., \cite[Appendix A]{murray2003spatial}).

\begin{prop}[Friedrichs' inequality]\label{d.6.1}
For all $x \in H^1_0(0, L)$ it holds that
\renewcommand{\theequation}{B.\arabic{equation}}  
\begin{equation}\label{d29}
\frac{L^{2}}{\pi^{2}} \int_{0}^{L} \left( \frac{d x(z)}{dz} \right)^2 dz \geq \int_{0}^{L} x^2(z) dz.
\end{equation}
\end{prop}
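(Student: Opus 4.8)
The plan is to establish Friedrichs' inequality via the Fourier sine expansion on the interval $(0,L)$, reducing first to smooth functions by density.

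\textbf{Step 1 (density reduction).} Since $C_c^\infty(0,L)$ is dense in $H^1_0(0,L)$ and both functionals $x\mapsto\int_0^L (dx/dz)^2\,dz$ and $x\mapsto\int_0^L x^2\,dz$ are continuous with respect to the $H^1$-norm, it suffices to prove \eqref{d29} for $x\in C_c^\infty(0,L)$; the general case follows by approximating $x\in H^1_0(0,L)$ by smooth compactly supported $x_k\to x$ in $H^1$ and passing to the limit. (Alternatively, one may work directly in $H^1_0$, since the integration by parts used below remains valid there with vanishing trace terms.)

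\textbf{Step 2 (expansion and Parseval).} Fix $x\in C_c^\infty(0,L)$. The sine system $e_n(z)=\sqrt{2/L}\,\sin(n\pi z/L)$, $n\ge 1$, is an orthonormal basis of $L^2(0,L)$, and the cosine system $f_n(z)=\sqrt{2/L}\,\cos(n\pi z/L)$, $n\ge1$, together with the constant $L^{-1/2}$ is another orthonormal basis. Set $c_n=\langle x,e_n\rangle_{L^2(0,L)}$, so that $\int_0^L x^2\,dz=\sum_{n\ge1}c_n^2$ by Parseval. Integrating by parts and using $x(0)=x(L)=0$ to kill the boundary terms,
\[
\langle x',f_n\rangle_{L^2(0,L)}=\sqrt{\tfrac2L}\int_0^L x'(z)\cos\!\big(\tfrac{n\pi z}{L}\big)dz=\sqrt{\tfrac2L}\,\frac{n\pi}{L}\int_0^L x(z)\sin\!\big(\tfrac{n\pi z}{L}\big)dz=\frac{n\pi}{L}c_n,
\]
while $\langle x',L^{-1/2}\rangle=L^{-1/2}(x(L)-x(0))=0$. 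Since $x'\in L^2(0,L)$, Parseval for the cosine basis gives $\int_0^L (x')^2\,dz=\sum_{n\ge1}\big(\tfrac{n\pi}{L}\big)^2 c_n^2$.

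\textbf{Step 3 (conclusion).} Because $n^2\ge 1$ for every $n\ge1$,
\[
\int_0^L\Big(\frac{dx}{dz}\Big)^2dz=\sum_{n\ge1}\Big(\frac{n\pi}{L}\Big)^2 c_n^2\ \ge\ \frac{\pi^2}{L^2}\sum_{n\ge1}c_n^2=\frac{\pi^2}{L^2}\int_0^L x^2\,dz,
\]
which upon multiplication by $L^2/\pi^2$ is exactly \eqref{d29}.

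\textbf{Main obstacle.} The one place requiring care is the identification of the cosine-Fourier coefficients of $x'$ as $\tfrac{n\pi}{L}c_n$: this is where the homogeneous Dirichlet conditions enter decisively, since otherwise the boundary terms in the integration by parts do not vanish and the sharp constant $L^2/\pi^2$ fails. Keeping the argument at the level of smooth functions (Step 1) makes this computation elementary; the rest is routine Hilbert-space bookkeeping.
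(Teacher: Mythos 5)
Your proof is correct. Note that the paper itself does not prove this proposition at all: it simply refers the reader to an external source (Mironchenko's monograph), so your argument is a genuinely self-contained alternative rather than a variant of the paper's proof. Your route — expand $x$ in the Dirichlet sine basis, identify the cosine-Fourier coefficients of $x'$ as $\tfrac{n\pi}{L}c_n$ via integration by parts with vanishing boundary terms, and apply Parseval in both bases — is the standard spectral proof and has the advantage of producing the sharp constant $L^2/\pi^2$ together with the characterization of equality (only the first mode $\sin(\pi z/L)$ attains it). All the individual steps check out: the density reduction is legitimate since both sides of \eqref{d29} are continuous in the $H^1$-norm, the completeness of the sine and cosine systems in $L^2(0,L)$ is classical, and the observation that $\langle x',L^{-1/2}\rangle=0$ correctly disposes of the constant mode so that Parseval for $x'$ involves only the terms $(n\pi/L)^2c_n^2$. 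The one point worth stating slightly more carefully is the justification that $\sum_{n\ge1}\big(\tfrac{n\pi}{L}\big)^2c_n^2$ really equals $\|x'\|_{L^2}^2$ rather than merely bounding it from below; this is exactly Parseval applied to $x'\in L^2(0,L)$ in the Neumann (cosine) basis, as you say, and requires no further hypothesis. No gaps.
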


\begin{proof}
See \cite[Proposition D.6.1]{mironchenko2023input}.
\end{proof}

\begin{prop} For every $x \in H^1(0, L)$ with either $x(0) = 0$ or $x(L) = 0$, it holds that
\renewcommand{\theequation}{B.\arabic{equation}}  
\begin{equation}\label{d30}
\frac{4L^2}{\pi^2} \int_{0}^{L} \left(\frac{dx(z)}{dz}\right)^2 dz \geq \int_{0}^{L} x^2(z) dz.
\end{equation}
\end{prop}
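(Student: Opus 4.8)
The plan is to deduce this inequality from Friedrichs' inequality (Proposition~\ref{d.6.1}) by a reflection trick that doubles the spatial domain, so that the case of a single vanishing endpoint is reduced to the case of both endpoints vanishing.

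First I would treat the case $x(0)=0$. Define $\tilde{x}:[0,2L]\to\mathbb{R}$ to be the even reflection of $x$ about the point $z=L$, namely $\tilde{x}(z):=x(z)$ for $z\in[0,L]$ and $\tilde{x}(z):=x(2L-z)$ for $z\in[L,2L]$. The key preliminary step is to verify that $\tilde{x}\in H^1_0(0,2L)$: the two pieces agree at the gluing point $z=L$ (both equal $x(L)$), so $\tilde{x}$ is continuous on $[0,2L]$; testing against $C_c^\infty(0,2L)$ functions and integrating by parts on each subinterval shows that $\tilde{x}$ has a weak derivative on $(0,2L)$ equal to $x'(z)$ on $(0,L)$ and $-x'(2L-z)$ on $(L,2L)$, which belongs to $L^2(0,2L)$; and $\tilde{x}(0)=x(0)=0$ together with $\tilde{x}(2L)=x(0)=0$ gives $\tilde{x}\in H^1_0(0,2L)$.

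Applying Friedrichs' inequality (Proposition~\ref{d.6.1}) on the interval $(0,2L)$ then yields
\[
\frac{(2L)^2}{\pi^2}\int_0^{2L}\left(\frac{d\tilde{x}(z)}{dz}\right)^2 dz \;\geq\; \int_0^{2L}\tilde{x}^2(z)\,dz .
\]
By the change of variables $w=2L-z$ on the interval $[L,2L]$ and the definition of $\tilde{x}$, one has $\int_0^{2L}\tilde{x}^2(z)\,dz = 2\int_0^L x^2(z)\,dz$ and $\int_0^{2L}\big(\tilde{x}'(z)\big)^2\,dz = 2\int_0^L \big(x'(z)\big)^2\,dz$. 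Substituting these two identities and cancelling the common factor $2$ on both sides produces exactly the claimed inequality \eqref{d30}.

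Finally, the case $x(L)=0$ reduces to the one just handled: put $y(z):=x(L-z)$, so that $y\in H^1(0,L)$ with $y(0)=x(L)=0$; apply the inequality already proved to $y$, and note that the change of variables $z\mapsto L-z$ gives $\int_0^L y^2(z)\,dz=\int_0^L x^2(z)\,dz$ and $\int_0^L \big(y'(z)\big)^2\,dz=\int_0^L \big(x'(z)\big)^2\,dz$, so the estimate transfers back to $x$. The only genuinely delicate point in this argument is the regularity claim $\tilde{x}\in H^1_0(0,2L)$ — in particular, that concatenating two $H^1$ functions that match at an interior endpoint yields an $H^1$ function; this is standard but is the step I would write out with care.
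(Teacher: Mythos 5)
Your argument is correct. The even reflection $\tilde{x}$ about $z=L$ is continuous at the gluing point, its piecewise derivative is the weak derivative on $(0,2L)$ (the boundary terms $x(L)\phi(L)$ produced by integrating by parts on the two subintervals cancel), and since $\tilde{x}$ vanishes at both endpoints of $(0,2L)$ it lies in $H^1_0(0,2L)$; applying Proposition~\ref{d.6.1} on the doubled interval and using that both integrals over $(0,2L)$ equal twice their counterparts over $(0,L)$ gives exactly \eqref{d30}, and the case $x(L)=0$ follows by the coordinate flip as you say. The constant $4L^2/\pi^2$ you obtain is moreover the sharp one (extremal $\sin(\pi z/2L)$), so nothing is lost in the reduction.

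The comparison with the source is simple: the paper does not prove this proposition at all, but merely cites \cite[Section 7.6, Nr 256]{hardy1952inequalities}. Your reflection trick therefore supplies a genuinely self-contained derivation from the Friedrichs inequality already stated as Proposition~\ref{d.6.1}, which is arguably preferable in context since it makes Appendix~A internally closed rather than dependent on an external sharp-constant computation. The one step you flagged as delicate --- that concatenating two $H^1$ pieces agreeing at an interior point yields an $H^1$ function, and that an $H^1(0,2L)$ function vanishing at both endpoints belongs to $H^1_0(0,2L)$ --- is indeed standard in one dimension, and your sketch of the verification is adequate.
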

\begin{proof}
  See \cite[Section 7.6, Nr 256]{hardy1952inequalities}.
\end{proof}
A notable inequality relating the $L^\infty$-norm to the $H^1$-norm, is:
\begin{prop}(Agmon's inequality). For all $x \in H^1(0,L)$, it holds that
\renewcommand{\theequation}{B.\arabic{equation}}  
\begin{equation}\label{d31}
\|x\|^2_{L^\infty(0,L)} \leq |x(0)|^2 + 2\|x\|_{L^2(0,L)}\big{\|}\frac{dx}{dz}\big{\|}_{L^2(0,L)}.
\end{equation}
\end{prop}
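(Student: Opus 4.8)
The plan is to reduce the inequality to a single application of the Cauchy--Bunyakovsky--Schwarz inequality in $L^2(0,L)$, via the fundamental theorem of calculus. First I would fix $x\in H^1(0,L)$ and pass to its continuous representative, so that $\|x\|_{L^\infty(0,L)}=\max_{z\in[0,L]}|x(z)|$; denote by $z_\ast\in[0,L]$ a point where this maximum is attained. The crucial observation is that $z\mapsto x(z)^2$ is absolutely continuous on $[0,L]$ with $\frac{d}{dz}x(z)^2=2x(z)\frac{dx}{dz}(z)$ for a.e.\ $z$, so that
$$x(z)^2=x(0)^2+2\int_0^z x(s)\frac{dx}{dz}(s)\,ds,\qquad z\in[0,L].$$
I would justify this identity either from the characterization of $H^1(0,L)$ as the space of absolutely continuous functions with $L^2$-derivative, or by an approximation argument: pick $x_n\in C^1([0,L])$ with $x_n\to x$ in $H^1(0,L)$, use that $x_n\to x$ uniformly (the one-dimensional Sobolev embedding $H^1(0,L)\hookrightarrow C([0,L])$) to deduce convergence of $x_n(0)^2$ and of $x_n(z)^2$, and convergence of the integral term by Cauchy--Schwarz applied to $x_n-x$ and to $\frac{dx_n}{dz}-\frac{dx}{dz}$.

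Next I would bound the right-hand side. For every $z\in[0,L]$,
$$x(z)^2\le x(0)^2+2\int_0^L\Big|x(s)\frac{dx}{dz}(s)\Big|\,ds\le x(0)^2+2\,\|x\|_{L^2(0,L)}\Big\|\frac{dx}{dz}\Big\|_{L^2(0,L)},$$
where the last step is the Cauchy--Bunyakovsky--Schwarz inequality in $L^2(0,L)$ applied to $x$ and $\frac{dx}{dz}$ (the inequality recorded in Appendix~A.2.2). Evaluating at $z=z_\ast$ --- equivalently, taking the supremum over $z\in[0,L]$ --- gives $\|x\|_{L^\infty(0,L)}^2\le |x(0)|^2+2\,\|x\|_{L^2(0,L)}\|\frac{dx}{dz}\|_{L^2(0,L)}$, which is the claim.

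The only delicate point --- and the one I would write out with care --- is the justification of the fundamental-theorem-of-calculus identity for a function that is merely in $H^1$; once that identity is in hand, the rest is a one-line estimate. The approximation route is the most self-contained, since it uses only density of $C^1$ functions in $H^1(0,L)$, the one-dimensional Sobolev embedding, and the $L^2$ Cauchy--Schwarz inequality, with no further machinery needed.
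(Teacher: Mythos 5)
Your proof is correct and is essentially the standard argument: the paper itself only cites \cite[Lemma 2.4]{krstic2008boundary}, and the proof given there is exactly your route --- the identity $x(z)^2=x(0)^2+2\int_0^z x\,\frac{dx}{dz}\,ds$ for the absolutely continuous representative, followed by the Cauchy--Bunyakovsky--Schwarz inequality and taking the supremum over $z$. Your care in justifying the fundamental-theorem-of-calculus step for a general $H^1$ function (via density of $C^1$ in $H^1(0,L)$ and the one-dimensional Sobolev embedding) is appropriate and makes the argument self-contained.
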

\begin{proof} 
See \cite[Lemma 2.4., p. 20]{krstic2008boundary}.
\end{proof}
We finish this Appendix by presenting a useful inequality from \cite[Lemma 1]{ZhZ18}:
\begin{prop} Let $x \in C^1([a,b],\mathbb{R})$. Then for each $c \in [a, b]$, it holds that
\renewcommand{\theequation}{B.\arabic{equation}}  
\begin{equation}\label{d32}
x^2(c) \leq \frac{2}{b - a} \|x\|^2_{L^2(0,1)} + (b - a) \|x_{z}\|^2_{L^2(0,1)}.
\end{equation}
\end{prop}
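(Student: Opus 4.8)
The plan is to combine the fundamental theorem of calculus with an averaging over the base interval, followed by two elementary inequalities already recorded in this appendix. Fix $c\in[a,b]$. For every $z\in[a,b]$ the map $s\mapsto x^2(s)$ is $C^1$ on $[a,b]$, so
$$x^2(c)=x^2(z)+\int_z^c\frac{d}{ds}\bigl(x^2(s)\bigr)\,ds=x^2(z)+2\int_z^c x(s)x'(s)\,ds.$$
Whether $c\le z$ or $c\ge z$, the last integral is bounded in modulus by $\int_a^b|x(s)x'(s)|\,ds$, hence
$$x^2(c)\le x^2(z)+2\int_a^b|x(s)x'(s)|\,ds,\qquad z\in[a,b].$$

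Next I would integrate this inequality in $z$ over $[a,b]$ and divide by $b-a$. Since the left-hand side is independent of $z$, this gives
$$x^2(c)\le\frac{1}{b-a}\int_a^b x^2(z)\,dz+2\int_a^b|x(s)x'(s)|\,ds=\frac{1}{b-a}\|x\|^2_{L^2(a,b)}+2\int_a^b|x(s)x'(s)|\,ds.$$
Applying the Cauchy--Bunyakovsky--Schwarz inequality \eqref{d9} on $(a,b)$ to the remaining integral yields $\int_a^b|x(s)x'(s)|\,ds\le\|x\|_{L^2(a,b)}\|x'\|_{L^2(a,b)}$, so that
$$x^2(c)\le\frac{1}{b-a}\|x\|^2_{L^2(a,b)}+2\|x\|_{L^2(a,b)}\|x'\|_{L^2(a,b)}.$$

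Finally I would absorb the cross term with Young's inequality \eqref{d4} applied with $\varepsilon=\tfrac{1}{b-a}$ to the factors $\alpha=\|x\|_{L^2(a,b)}$ and $\beta=\|x'\|_{L^2(a,b)}$, namely $2\alpha\beta\le\frac{1}{b-a}\alpha^2+(b-a)\beta^2$, which produces
$$x^2(c)\le\frac{2}{b-a}\|x\|^2_{L^2(a,b)}+(b-a)\|x'\|^2_{L^2(a,b)},$$
the asserted estimate (the symbol $(0,1)$ under the norms in the statement is to be read as the interval $(a,b)$, matching \cite{ZhZ18}). There is no serious obstacle here; the only point needing mild care is the sign bookkeeping in $\int_z^c x\,x'\,ds$, which is dealt with uniformly by passing to $\int_a^b|x\,x'|\,ds$ before averaging, and the particular split of $2\|x\|_{L^2(a,b)}\|x'\|_{L^2(a,b)}$ in the Young step is exactly what yields the constants $2/(b-a)$ and $(b-a)$.
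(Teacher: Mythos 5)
Your argument is correct and complete: the identity $x^2(c)=x^2(z)+2\int_z^c x\,x'$, the averaging in $z$, Cauchy--Schwarz, and Young's inequality with $\varepsilon=\tfrac{1}{b-a}$ combine exactly to give the constants $\tfrac{2}{b-a}$ and $(b-a)$. The paper itself supplies no proof here (it only cites \cite[Lemma 1]{ZhZ18}), and your derivation is precisely the standard one behind that lemma, including the correct reading of the $(0,1)$ in the norms as the interval $(a,b)$.
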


\section*{A.2.4 Gronwall's inequality}

To prove the various theorems presented in this paper, we need the following lemmas:
\begin{lem} \label{propbbvcc}
Let's consider the following nonlinear inequality:
\begin{equation}
\dot{y}\leq -\delta(y)+\ell(t) \quad \forall t\geq t_0,\quad y \in C^{1}([t_0,\infty),\mathbb{R}_+),
\label{h2ni2}
\end{equation}
where $\ell: \mathbb{R}_+ \rightarrow \mathbb{R}_+$ be a continuous integrable function and $\delta $ be of class $\mathcal{K}_{\infty}.$ Then, there exists a $\mathcal{KL} $ function $\sigma : \mathbb{R}_+\times \mathbb{R}_+\rightarrow \mathbb{R}_+,$
such that
\begin{equation}
y(t)\leq \sigma(y(t_0),t-t_0)+2\int_{0}^{\infty}\ell(s)ds\quad \forall t\geq t_0.
\label{h24n2}
\end{equation}
Moreover, if $\displaystyle\lim_{t\to \infty} \ell(t)=0,$ there exists a function $\beta \in \mathcal{KL}$ depending only on $\delta$ and $\ell,$ such that
\begin{equation}
y(t)\leq\beta(y(t_0)+ \int_{0}^{\infty}\ell(s)ds,t-t_0) \quad \forall t\geq t_0.
\label{hjj102}
\end{equation}
In particular, if $\ell$ is a uniformly continuous function, then (\ref{hjj102}) holds.
\end{lem}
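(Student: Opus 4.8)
The plan is to prove the estimate \eqref{h24n2} first and then deduce \eqref{hjj102}. The first ingredient will be a comparison argument: since $\delta\in\mathcal{K}_\infty\subset\mathcal{P}$, I can invoke the comparison principle of Corollary \ref{cor1} (with $\theta:=\delta$ and $\mu:=\ell$) directly. Indeed, $y\in C^1([t_0,\infty),\mathbb{R}_+)$ satisfies $D^+y(t)=\dot y(t)\le -\delta(y(t))+\ell(t)$ for all $t\ge t_0$, and $\ell$ being continuous belongs to $PC([t_0,\infty),\mathbb{R}_+)$. Hence Corollary \ref{cor1} yields a $\beta_0\in\mathcal{KL}$, depending only on $\delta$, such that
$$
y(t)\le\beta_0(y(t_0),t-t_0)+2\int_{t_0}^t\ell(s)\,ds\le\beta_0(y(t_0),t-t_0)+2\int_0^\infty\ell(s)\,ds\qquad\forall t\ge t_0,
$$
which is exactly \eqref{h24n2} with $\sigma:=\beta_0$. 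So the first claim is essentially a direct application of a result already in the Appendix.

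\textbf{The refined estimate when $\lim_{t\to\infty}\ell(t)=0$.} For \eqref{hjj102} I would follow the same line of reasoning that appears in the proof of Corollary \ref{cor1}, but this time keeping track of the ``$\|v_1\|_\infty$'' term more carefully instead of crudely bounding the integral term. As in that proof, pick a globally Lipschitz $\varphi\in\mathcal{P}$ with $\varphi\le\delta$ pointwise, and compare $y$ with the solution $w$ of $\dot w=-\varphi(w)+\ell(t)$, $w(t_0)=y(t_0)$, obtaining $0\le y(t)\le w(t)$ on $[t_0,\infty)$. Now set $v_1(t):=\int_{t_0}^t\ell(s)\,ds$ (which is bounded since $\ell$ is integrable) and $w_1:=w-v_1$; then $\dot w_1(t)=-\varphi(w_1(t)+v_1(t))$ a.e.\ and by \cite[Proposition B.4.2]{mironchenko2023input} there is $\beta_1\in\mathcal{KL}$ depending only on $\varphi$ (hence only on $\delta$) with
$$
w_1(t)\le\max\{\beta_1(w_1(t_0),t-t_0),\ \|v_1\|_\infty\}\le\beta_1(y(t_0),t-t_0)+\int_0^\infty\ell(s)\,ds.
$$
The key extra step is to absorb the leftover $v_1(t)=\int_{t_0}^t\ell(s)\,ds$ into a $\mathcal{KL}$-type decay. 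Since $\ell$ is continuous, integrable, and $\ell(t)\to0$, the tail function $t\mapsto\int_t^\infty\ell(s)\,ds$ is continuous, nonincreasing, and tends to $0$; I would dominate it by an $\mathcal{L}$-function $\xi$ and write $v_1(t)=\int_0^\infty\ell-\int_t^\infty\ell\le\int_0^\infty\ell(s)\,ds$, then combine. Concretely, $y(t)\le w(t)=w_1(t)+v_1(t)\le\beta_1(y(t_0),t-t_0)+2\int_0^\infty\ell(s)\,ds$, and a standard $\mathcal{KL}$-repackaging (using $a+b\le$ a $\mathcal{KL}$ function of $a+b$ and that the additive constant can be routed through the first argument) gives a single $\beta\in\mathcal{KL}$, depending only on $\delta$ and $\ell$, with $y(t)\le\beta\big(y(t_0)+\int_0^\infty\ell(s)\,ds,\ t-t_0\big)$.

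\textbf{Main obstacle.} The routine part is the comparison with the Lipschitz-regularized ODE; the genuinely delicate point is producing a \emph{single} $\mathcal{KL}$ bound of the form $\beta(y(t_0)+\int_0^\infty\ell,\,t-t_0)$ rather than a sum $\beta_1(y(t_0),t-t_0)+(\text{const})$. The trick is that the constant $\int_0^\infty\ell(s)\,ds$ does not decay, so to make the whole right-hand side a $\mathcal{KL}$ function one must exploit that $\ell(t)\to 0$: the ``remaining energy'' $\int_t^\infty\ell(s)\,ds$ does decay to zero, and one feeds the initial energy $\int_0^\infty\ell$ into the first slot of $\beta$ while letting the decay in the second slot come from the combined dissipation of $\delta$ and the vanishing of $\ell$. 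I expect this $\mathcal{KL}$-bookkeeping — and verifying that the resulting $\beta$ is genuinely of class $\mathcal{KL}$ and depends only on $\delta$ and $\ell$ — to be the step requiring the most care. The final sentence (uniform continuity of $\ell$ implies \eqref{hjj102}) is then immediate, since a uniformly continuous integrable function on $\mathbb{R}_+$ necessarily satisfies $\lim_{t\to\infty}\ell(t)=0$.
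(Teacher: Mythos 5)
Your treatment of the first claim is fine and in fact takes a slightly different route from the paper: the paper simply cites \cite{ASW00} for \eqref{h24n2}, whereas you derive it directly from Corollary \ref{cor1} with $\theta=\delta$ and $\mu=\ell$; that is a legitimate, more self-contained argument.

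The second claim, however, is where your proposal has a genuine gap. The concrete chain of inequalities you actually write down terminates at
\begin{equation*}
y(t)\;\le\;\beta_1\bigl(y(t_0),t-t_0\bigr)+2\int_0^\infty\ell(s)\,ds,
\end{equation*}
which is just \eqref{h24n2} again. This bound cannot be ``repackaged'' into \eqref{hjj102}: the target estimate $y(t)\le\beta\bigl(y(t_0)+\int_0^\infty\ell,\ t-t_0\bigr)$ with $\beta\in\mathcal{KL}$ forces $y(t)\to0$ as $t\to\infty$, while your right-hand side converges to the strictly positive constant $2\int_0^\infty\ell$. No additive constant can be ``routed through the first argument'' of a $\mathcal{KL}$ function, because $\beta(r,\cdot)\in\mathcal{L}$ must vanish at infinity for every fixed $r$. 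Your suggested fix is also aimed in the wrong direction: the term appearing in your estimate is $v_1(t)=\int_{t_0}^t\ell$, which is nondecreasing, not the decaying tail $\int_t^\infty\ell$; and in any case the eventual decay of $y$ does not come from the decay of that tail but from the dissipation $-\delta(y)$ eventually dominating $\ell(t)$ once $\ell(t)$ is small. This is precisely the content of the paper's proof, which you are missing: one first shows a uniform attractivity property (for $t\ge T(\epsilon)$ one has $\ell(t)\le\tfrac12\delta(\epsilon/2)$, so the region $\{y\le\epsilon\}$ is positively invariant and is reached in a time $\widetilde T(\epsilon,R+M)$ computable from the bound $y\le y(t_0)+M$), and then converts uniform attractivity plus the uniform bound into a single $\mathcal{KL}$ estimate in the combined quantity $y(t_0)+M$ via the construction $h(r)=\sqrt{r}\,e^{r}$, $v(s)=\sup\{y(t_0+s)/h(y(t_0)+M)\}$, $v(t)\to0$, and $\beta(s,t)=h(s)\widetilde v(t)$ with $\widetilde v$ a continuous decreasing majorant. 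You correctly identify this as ``the step requiring the most care,'' but you do not carry it out, and without it the second assertion of the lemma is not proved.
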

\begin{proof}
It follows from \cite{ASW00} that when $\ell$ is a continuous integrable function, there exists a $\mathcal{KL} $ function $\sigma$ such that (\ref{h24n2}) holds.

Now, we consider the case when $\ell$ is a nonzero function and $\displaystyle\lim_{t \to \infty} \ell(t) = 0$.

Let
\[ H(t_0) = \{ y : [t_0,\infty) \to \mathbb{R} \text{ continuously differentiable function satisfying (\ref{h2ni2})} \} \]
Note that
\begin{equation}
y(t) \leq y(t_0) + M \quad \forall t \geq t_0, \, y \in H(t_0), 
\label{254mnmnj}
\end{equation}
where
\[ M = \int_0^\infty \ell(s) \, ds \]
and this shows that $y$ is bounded $\forall y \in H(t_0)$. We start by showing the property:
\begin{description}
\item[$(\mathcal{P})$] For all $(\epsilon, R, t_0) \in \mathbb{R}^{*}_{+} \times \mathbb{R}_{+} \times \mathbb{R}_{+}, \, y \in H(t_0)$, there exists $\widetilde{T}(\epsilon, R+M)$ such that if $y(t_0) \leq R$ then $0 \leq y(t) \leq \epsilon, \, \forall t \geq t_0 + \widetilde{T}(\epsilon, R+M)$.
\end{description}

Indeed, let $R > 0$, and $0 < \epsilon \leq R + M$. Because $\displaystyle\lim_{t \to \infty} \ell(t) = 0$, there exists a time $T := T(\epsilon)$, such that
\begin{equation}
 t \geq T \implies \ell(t) \leq \frac{1}{2} \delta \left( \frac{\epsilon}{2} \right). 
\label{hnai22}
\end{equation}
We consider the region
\[ L_\epsilon := \{ (t, y) \in \mathbb{R}_{+} \times \mathbb{R}_{+} : y \leq \epsilon, \, t \geq T(\epsilon) \}, \]
which is positively invariant. In fact, when
\[ \frac{\epsilon}{2} \leq y(t) \quad \text{and} \quad t \geq T(\epsilon) \]
for some $y \in H(t_0)$, then by (\ref{h2ni2}) and (\ref{hnai22}), we have
\[ \dot{y}(t) \leq -\frac{1}{2} \delta (y) < 0. \]

We next establish that if we define
\begin{equation}
 \widetilde{T}(\epsilon, r) = T(\epsilon) + \frac{2r}{\delta(\epsilon)}, 
 \label{hlokl25}
\end{equation}
then the following is fulfilled:
\begin{equation}
 \text{For all } t \geq t_0 + \widetilde{T}(\epsilon, R+M), \, y \in H(t_0) \text{ and } y(t_0) \leq R \implies (t, y(t)) \in L_\epsilon. 
\label{hklokl25}
\end{equation}
Indeed otherwise, by positive invariance of $L_\epsilon$, there would exist $y \in H(t_0)$ and $y(t_0) \leq R$, such that
\[ (t, y(t)) \notin L_\epsilon \quad \forall t \in [t_0 + T(\epsilon), t_0 + \widetilde{T}(\epsilon, R+M)], \]
and because $t \geq T(\epsilon)$, we would have
\begin{equation}
 y(t) > \epsilon \quad \forall t \in [t_0 + T(\epsilon), t_0 + \widetilde{T}(\epsilon, R+M)]. 
 \label{hn924}
\end{equation}
On the other hand, by (\ref{h2ni2}), (\ref{hnai22}) and (\ref{hn924}), it follows that
\begin{equation}
 \dot{y}(t) \leq -\frac{1}{2} \delta(\epsilon) \quad \forall t \in [t_0 + T(\epsilon), t_0 + \widetilde{T}(\epsilon, R+M)]. 
  \label{23mama48}
\end{equation}

It turns out from (\ref{254mnmnj}), (\ref{hn924}) and (\ref{23mama48}) that
\begin{equation}
 \epsilon < y(t) \leq R + M - \frac{1}{2} (t - t_0 - T(\epsilon)) \delta(\epsilon) \quad \forall t \in [t_0 + T(\epsilon), t_0 + \widetilde{T}(\epsilon, R+M)].
  \label{215nmnai}
\end{equation}

Using (\ref{215nmnai}) and taking into account the definition (\ref{hlokl25}) of $\widetilde{T}(\epsilon, R+M)$, we obtain $\epsilon < y(t_0 + \widetilde{T}) = 0$, which is a contradiction. This establishes (\ref{hklokl25}). From (\ref{hklokl25}) and the positive invariance of $L_\epsilon$, we deduce $(\mathcal{P})$.

In order to establish inequality (\ref{h2ni2}), we exploit $(\mathcal{P})$. We proceed as follows. Define
\begin{equation}
 h(r) = \sqrt{r} \exp(r), 
\label{12mkhmli}
\end{equation}
\begin{equation} 
v(s) = \sup \left\{ \frac{y(t_0 + s)}{h(y(t_0) + M)}, \, y \in H(t_0), \, t_0 \geq 0 \right\}. 
\label{213mkh}
\end{equation}

Because $M > 0$ ($\ell$ is a continuous nonzero function), the denominator in (\ref{213mkh}) is strictly positive and (\ref{254mnmnj}) and (\ref{12mkhmli}) imply that $v(t) \leq 1$ for all $t \geq 0$.

Now, we prove that $\displaystyle\lim_{t \to \infty} v(t) = 0$.

Indeed, let $\epsilon > 0$ and let $\delta_1 := \delta_1(\epsilon), \, \delta_2 := \delta_2(\epsilon)$ be a pair of constants with $0 < \delta_1 < \delta_2$ and being defined in such a way that
\[ s \notin [\delta_1, \delta_2] \implies \frac{s}{h(s)} < \epsilon. \]
Then, by (\ref{254mnmnj}), it follows that
\begin{equation} 
 \frac{y(t)}{h(y(t_0) + M)} < \epsilon \quad \forall t \geq t_0, \, y \in H(t_0) 
\label{15rjnemnm}
\end{equation}
provided that either $y(t_0) + M < \delta_1$ or $y(t_0) + M > \delta_2$.

It remains to consider the case $\delta_1 \leq y(t_0) + M \leq \delta_2$.\\
 By $(\mathcal{P})$, we obtain
\begin{equation} 
 \frac{y(t)}{h(y(t_0) + M)} \leq \frac{y(t)}{h(\delta_1)} \leq \epsilon \quad \forall t \geq t_0 + \widetilde{T}(\epsilon h(\delta_1), \delta_2 + M), \, y \in H(t_0). 
 \label{rhnim13}
\end{equation}

It turns out from (\ref{213mkh}), (\ref{15rjnemnm}) and (\ref{rhnim13}) that
\begin{equation} 
 v(t) \leq \epsilon \quad \forall t \geq \widetilde{T}(\epsilon h(\delta_1), \delta_2 + M). 
 \label{rlom13}
\end{equation}
Because $\epsilon > 0$ is arbitrary, (\ref{rlom13}) asserts that $\displaystyle\lim_{t \to \infty} v(t) = 0$. Therefore, if we define
\[ \widetilde{v}: \mathbb{R}_{+} \to \mathbb{R}_{+} \]
by
\[ \widetilde{v}(t) = \int_{t-1}^t \widetilde{v}(s) \, ds, \quad \text{for } t \geq 0, \]
where
\[ \widetilde{v}(t) := \sup_{s \geq t} v(s), \quad \forall t \geq 0 \]
and
\[ \widetilde{v}(t) := \widetilde{v}(0) \quad \forall t < 0 \]
which is a decreasing function and thus measurable. So, $\widetilde{v}$ is a continuous decreasing function with $v(t) \leq \widetilde{v}(t)$ for all $t \geq 0$ and
\[ \displaystyle\lim_{t \to \infty} \widetilde{v}(t) = 0. \]
Hence, by considering $\beta(s, t) = h(s) \widetilde{v}(t)$, (\ref{hjj102}) is fulfilled for all $y \in H(t_0)$ with $t_0 \geq 0$.

Finally, it is obvious from Barbalat's lemma (\cite{hammami2001stability}) that if $\ell$ is uniformly continuous then $\displaystyle\lim_{t \to \infty} \ell(t) = 0$ which makes us able to deduce (\ref{hjj102}) in this particular case.

\begin{rem}
If $\ell = 0$, we can take $\beta = \sigma$ and the proof is finished.
\end{rem}
\end{proof}

\begin{lem}\label{prop3vvv} (\cite[Theorem 9]{hagood2006recovering})
Let $\tilde{t}\in(t_0,\infty)$ with $t_0\geq 0$ and let $y:[t_0,\tilde{t})\to \mathbb{R}_+$ be a continuous function satisfying the following scalar differential inequality:
$$D^{+}y(t)\leq \mu(t)y(t)+v(t)\quad \forall t\in (t_0,\tilde{t}),$$
where $\mu,v\in C(\mathbb{R}_+,\mathbb{R}).$ Then, for all $t\in [t_0,\tilde{t}),$ we have
$$y(t)\leq y(t_0)e^{\int_{t_0}^{t}\mu(s)ds}+\int_{t_0}^{t}v(s)e^{\int_{s}^{t}\mu(\tau)d\tau}ds.$$
\end{lem}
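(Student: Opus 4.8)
The plan is to use the classical integrating-factor trick, adapted to the setting of upper right Dini derivatives. First I would introduce the auxiliary function
$$z(t):=y(t)\,e^{-\int_{t_0}^{t}\mu(s)\,ds},\qquad t\in[t_0,\tilde t),$$
whose weight $e^{-\int_{t_0}^{t}\mu(s)\,ds}$ is strictly positive and continuously differentiable because $\mu$ is continuous. Since multiplication by a positive $C^{1}$ function is compatible with $D^{+}$ (i.e.\ $D^{+}(fg)=f\,D^{+}g+g'f$ whenever $g'$ exists, using $D^{+}(f)=-D_{+}(-f)$ together with item $(ii)$ of the Dini-derivative lemma in Appendix~A), I would compute
$$D^{+}z(t)=e^{-\int_{t_0}^{t}\mu(s)\,ds}\Big(D^{+}y(t)-\mu(t)y(t)\Big)\le e^{-\int_{t_0}^{t}\mu(s)\,ds}\,v(t)=:g(t),$$
where the inequality uses the hypothesis $D^{+}y(t)\le\mu(t)y(t)+v(t)$; note that $g$ is continuous on $[t_0,\tilde t)$.

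Next I would integrate this scalar Dini inequality. Setting $w(t):=z(t)-\int_{t_0}^{t}g(s)\,ds$, one has $D^{+}w(t)=D^{+}z(t)-g(t)\le 0$ for all $t\in(t_0,\tilde t)$, and $w$ is continuous on $[t_0,\tilde t)$. By the Dini-derivative analogue of the fundamental theorem of calculus (\cite[Theorem 9]{hagood2006recovering}), a continuous function whose upper right Dini derivative is nonpositive is non-increasing, so $w(t)\le w(t_0)=z(t_0)=y(t_0)$ for every $t\in[t_0,\tilde t)$. Rearranging,
$$y(t)\,e^{-\int_{t_0}^{t}\mu(s)\,ds}\le y(t_0)+\int_{t_0}^{t}v(s)\,e^{-\int_{t_0}^{s}\mu(\tau)\,d\tau}\,ds.$$

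Finally I would multiply both sides by $e^{\int_{t_0}^{t}\mu(s)\,ds}>0$ and simplify the exponent inside the integral via $\int_{t_0}^{t}\mu-\int_{t_0}^{s}\mu=\int_{s}^{t}\mu$, obtaining exactly
$$y(t)\le y(t_0)e^{\int_{t_0}^{t}\mu(s)\,ds}+\int_{t_0}^{t}v(s)\,e^{\int_{s}^{t}\mu(\tau)\,d\tau}\,ds,\qquad t\in[t_0,\tilde t).$$
The only points requiring care are the product rule for $D^{+}$ against the smooth integrating factor and the monotonicity conclusion from $D^{+}w\le 0$; both are standard, the latter being precisely the cited result on recovering a function from its Dini derivative, so I expect no genuine obstacle. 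An alternative would be to invoke the Dini comparison principle of Appendix~A against the linear ODE $\dot{\bar y}=\mu(t)\bar y+v(t)$, but the integrating-factor argument is self-contained and gives the explicit formula directly.
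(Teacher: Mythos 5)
Your argument is correct. Note that the paper itself gives no proof of this lemma at all: it is stated with a bare citation to Hagood--Thomson, so your integrating-factor derivation is a genuinely self-contained alternative rather than a reproduction of anything in the text. The two places where care is needed both check out. First, the product rule $D^{+}(gy)(t)=g(t)\,D^{+}y(t)+g'(t)\,y(t)$ for $g(t)=e^{-\int_{t_0}^{t}\mu(s)\,ds}$ is legitimate because $g$ is $C^{1}$ and strictly positive, so in the difference quotient the factor $g(t+h)$ converges to the positive number $g(t)$ and the $\limsup$ distributes over the sum (the second summand having an honest limit); the hypothesis guarantees $D^{+}y(t)<+\infty$, so no indeterminate product arises. (Your inline statement of the product rule has the roles of $f$ and $g$ swapped relative to the displayed computation, but the displayed computation is the correct one.) Second, the step ``$w$ continuous on $[t_0,\tilde t)$ with $D^{+}w\le 0$ on $(t_0,\tilde t)$ implies $w(t)\le w(t_0)$'' is exactly the monotonicity-from-Dini-derivative result that the paper's citation is meant to supply, and it applies on the half-open interval since continuity at $t_0$ closes the gap at the left endpoint. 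The exponent simplification $\int_{t_0}^{t}\mu-\int_{t_0}^{s}\mu=\int_{s}^{t}\mu$ is routine. Compared with the alternative you mention (invoking the Dini comparison principle against the solution of $\dot{\bar y}=\mu(t)\bar y+v(t)$), your route has the advantage of producing the explicit variation-of-constants bound directly without needing existence or uniqueness for the comparison ODE.
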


\chapter*{Appendix B: Semigroups and Their Stability}
\label{appendix:B}
\markboth{Appendix B: Semigroups}{Appendix B: Stability}
\addcontentsline{toc}{chapter}{Appendix B: Semigroups and Their Stability}
\section*{B.1 Semigroup theory}
\renewcommand{\thedfn}{B.\arabic{dfn}}
\setcounter{dfn}{0}
\renewcommand{\thelem}{B.\arabic{lem}}
\setcounter{lem}{0}
\renewcommand{\thethm}{B.\arabic{thm}}
\setcounter{thm}{0}
\renewcommand{\theprop}{B.\arabic{prop}}
\setcounter{prop}{0}
In this section, we introduce basic definitions and state known results from semigroup theory.
Let $X$ denote a real or complex Banach space with the norm $\|\cdot\|_{X}.$
\begin{dfn}(Semi-group)
The operator value function $T(t):\mathbb{R}_+ \to L(X)$ is called uniformly continuous semigroup if satisfies:
\begin{enumerate}
\item[$(i)$] $T(0)=I.$
\item[$(ii)$] $T(t+s)=T(t)T(s) \;\forall s,t\geq0.$
\item[$(iii)$]  $\displaystyle\lim_{t\longrightarrow 0}\|T(t)-I\|=0.$
\end{enumerate}
\end{dfn}
The linear operator $A$ is defined by
$$D(A)=\{x\in X: \displaystyle\lim_{t\longrightarrow 0}\frac{T(t)x-x}{t}\; \;exists \; \;in \; \;X\}.$$
and $$Ax=\displaystyle\lim_{t\longrightarrow 0}\frac{T(t)x-x}{t} \;\;\hbox{for} \; \;x\in \; \;D(A)$$
is the infinitesimal generator of the semigroup $(T(t))_{t\geq0}.$
\begin{thm}(\cite{Paz83})
A linear operator $A$ is the infinitesimal generator of a uniformly continuous semigroup if and only if $A$ is a bounded linear operator.
\end{thm}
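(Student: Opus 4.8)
The statement to prove is the classical Hille–Yosida-type characterization: a linear operator $A$ generates a uniformly continuous semigroup on a Banach space $X$ if and only if $A \in L(X)$ is bounded. The plan is to prove the two implications separately, starting with the easier direction.

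\textbf{Sufficiency ($A$ bounded $\Rightarrow$ $A$ generates a uniformly continuous semigroup).} Suppose $A \in L(X)$. The idea is to define $T(t)$ by the exponential series
\begin{equation*}
T(t) := e^{tA} = \sum_{n=0}^{\infty} \frac{t^n A^n}{n!}, \qquad t \geq 0.
\end{equation*}
First I would show this series converges absolutely in the operator norm $L(X)$ for every $t \geq 0$, since $\sum_n \frac{t^n \|A\|^n}{n!} = e^{t\|A\|} < \infty$ and $L(X)$ is a Banach space. Then I would verify the three defining properties of a uniformly continuous semigroup: $T(0) = I$ is immediate from the series; the semigroup law $T(t+s) = T(t)T(s)$ follows by a Cauchy-product / Mertens rearrangement of the absolutely convergent series (exactly as for the scalar exponential); and uniform continuity at $0$, i.e. $\|T(t) - I\| \to 0$ as $t \to 0^+$, follows from the estimate $\|T(t) - I\| \leq \sum_{n\geq 1} \frac{t^n \|A\|^n}{n!} = e^{t\|A\|} - 1 \to 0$. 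Finally, to identify $A$ as the infinitesimal generator, I would estimate
\begin{equation*}
\left\| \frac{T(t)x - x}{t} - Ax \right\| = \left\| \sum_{n=2}^{\infty} \frac{t^{n-1} A^n x}{n!} \right\| \leq \|x\| \sum_{n=2}^{\infty} \frac{t^{n-1}\|A\|^n}{n!} \longrightarrow 0
\end{equation*}
as $t \to 0^+$, which shows the limit defining $Ax$ exists for every $x \in X$ and equals $Ax$; hence $D(A) = X$ and the generator is exactly $A$.

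\textbf{Necessity (uniformly continuous semigroup $\Rightarrow$ generator is bounded).} Let $(T(t))_{t\geq 0}$ be uniformly continuous with generator $A$. The standard trick is to exploit a time-average. By uniform continuity, choose $\rho > 0$ small enough that $\left\| I - \frac{1}{\rho}\int_0^\rho T(s)\,ds \right\| < 1$, where the integral is a Riemann integral of a norm-continuous $L(X)$-valued function (this uses precisely $\|T(s) - I\| \to 0$). By the Neumann series, the operator $\frac{1}{\rho}\int_0^\rho T(s)\,ds$ is then invertible in $L(X)$, so $\int_0^\rho T(s)\,ds$ is invertible. Next I would show, using the semigroup law, the identity
\begin{equation*}
\frac{T(t)-I}{t}\int_0^\rho T(s)\,ds = \frac{1}{t}\left( \int_t^{t+\rho} T(s)\,ds - \int_0^\rho T(s)\,ds \right) = \frac{1}{t}\left( \int_\rho^{t+\rho} T(s)\,ds - \int_0^t T(s)\,ds \right),
\end{equation*}
and let $t \to 0^+$: by continuity of $s \mapsto T(s)$ the right-hand side converges in operator norm to $T(\rho) - I$. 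Therefore $\frac{T(t)-I}{t}$ converges in $L(X)$ to $(T(\rho)-I)\left(\int_0^\rho T(s)\,ds\right)^{-1}$, which is a bounded operator. Since operator-norm convergence implies strong convergence, this limit agrees with $A$ on all of $X$, so $D(A) = X$ and $A \in L(X)$.

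\textbf{Main obstacle.} The conceptually delicate point is the necessity direction: one must produce a bounded operator out of the mere hypothesis of uniform continuity, and the averaging argument with the invertibility of $\int_0^\rho T(s)\,ds$ is the crux. The technical care needed there is (i) justifying that the $L(X)$-valued Riemann integral behaves as expected (linearity, fundamental-theorem-of-calculus-type manipulations under the semigroup law), and (ii) the Neumann-series invertibility step, which is where uniform — as opposed to merely strong — continuity is essential. Everything in the sufficiency direction is routine manipulation of an absolutely convergent operator series.
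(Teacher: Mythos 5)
Your proof is correct and is precisely the standard argument from the cited reference (Pazy, Ch.~1, Thm.~1.2): the exponential series for sufficiency and the averaging/Neumann-series trick for necessity. The paper itself gives no proof of this statement — it only cites \cite{Paz83} — so there is nothing further to compare against.
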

\begin{thm}(\cite{Paz83})
Let $T(t)$ and $S(t)$ be $C_0-$semigroups of bounded linear operators with infinitesimal generators $A$ and $B$ respectively. If $A=B,$ then $T(t)=S(t)$ for $t\geq 0.$
\end{thm}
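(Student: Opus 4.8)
The statement to be proved is the uniqueness theorem for $C_0$-semigroups: if $T(t)$ and $S(t)$ are $C_0$-semigroups with infinitesimal generators $A$ and $B$ respectively, and $A = B$ (as operators, i.e.\ with the same domain $D(A) = D(B)$ and agreeing there), then $T(t) = S(t)$ for all $t \geq 0$. The standard approach is to fix $x \in D(A)$ and $t > 0$, and to examine the function $s \mapsto T(t-s)S(s)x$ on the interval $[0,t]$, showing it is constant by differentiating.

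\textbf{Key steps.} First I would recall the basic differentiability facts for $C_0$-semigroups (available from Pazy \cite{Paz83}): for $x \in D(A)$, the map $s \mapsto T(s)x$ is continuously differentiable with $\frac{d}{ds}T(s)x = AT(s)x = T(s)Ax$, and $T(s)x \in D(A)$ for all $s \geq 0$; similarly for $S$ with generator $B$. Next, fix $x \in D(A) = D(B)$ and $t > 0$, and define $g(s) := T(t-s)S(s)x$ for $s \in [0,t]$. Since $S(s)x \in D(B) = D(A)$, the expression $T(t-s)(S(s)x)$ is differentiable in $s$; using the product rule together with the two differentiation formulas, one computes
\[
\frac{d}{ds}g(s) = -AT(t-s)S(s)x + T(t-s)BS(s)x = -T(t-s)AS(s)x + T(t-s)AS(s)x = 0,
\]
where in the middle step I used $A = B$ so that $BS(s)x = AS(s)x$, and the commutation $AT(t-s)y = T(t-s)Ay$ valid for $y \in D(A)$. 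Hence $g$ is constant on $[0,t]$, so $g(0) = g(t)$, i.e.\ $T(t)x = S(t)x$. Finally, since $D(A)$ is dense in $X$ (a standard property of $C_0$-semigroup generators) and both $T(t)$ and $S(t)$ are bounded linear operators, the equality $T(t)x = S(t)x$ for all $x$ in a dense set extends by continuity to all $x \in X$; as $t > 0$ was arbitrary and $T(0) = I = S(0)$, we conclude $T(t) = S(t)$ for all $t \geq 0$.

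\textbf{Main obstacle.} The only delicate point is the justification that $s \mapsto g(s)$ is genuinely differentiable with the claimed derivative — one must be careful that the two-variable dependence (through $t-s$ in the semigroup argument and through $s$ in the vector $S(s)x$) can be handled by a legitimate product rule. This requires the joint continuity of $(s, y) \mapsto T(s)y$ and the fact that $s \mapsto S(s)x$ takes values in $D(A)$ with $\frac{d}{ds}S(s)x = AS(s)x$ continuous; the standard estimate splitting the difference quotient $\frac{1}{h}(g(s+h) - g(s))$ into two terms, one using strong continuity of $T$ and one using differentiability of $S(\cdot)x$, closes the argument. This is routine once the Pazy differentiation lemmas are invoked, so I would simply cite them and present the computation.
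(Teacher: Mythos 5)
Your proof is correct and is exactly the standard uniqueness argument from Pazy (Theorem 2.6, Chapter 1) that the paper cites; the paper itself gives no proof, only the reference. The computation $\frac{d}{ds}\,T(t-s)S(s)x = -T(t-s)AS(s)x + T(t-s)BS(s)x = 0$ on $D(A)=D(B)$, followed by extension from the dense domain by boundedness, is precisely the intended argument.
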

\renewcommand{\thecor}{B.\arabic{cor}}
\setcounter{cor}{0}
\begin{cor}(\cite{Paz83})
Let $(T(t))_{t\geq0}$ be a uniformly continuous semigroup of bounded linear operators. Then,
\begin{enumerate}
\item[$(a)$] There exists a constant $\omega\geq0,$ such that $\|T(t)\|\leq e^{\omega t},$ for all $t\geq0.$
\item[$(b)$] There exists a unique bounded linear operator $A,$ such that $$T(t)=e^{tA}=\displaystyle\sum_{n=0}^{\infty} \frac{(tA)^{n}}{n!}, \ for \ all \ t\geq0.$$
\item[$(c)$] The operator $A$ in part $(b)$ is the infinitesimal generator of $(T(t))_{t\geq0}.$
\item[$(d)$] $t \to T(t)$ is differentiable in norm and $$\frac{dT(t)}{dt}=AT(t)=T(t)A.$$
\end{enumerate}
\end{cor}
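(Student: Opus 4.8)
The plan is to follow the classical argument of Pazy \cite{Paz83}, establishing (c) and the existence half of (b) first, and then obtaining (a), (d) and the uniqueness statement as consequences. \emph{Step 1: the generator is a bounded operator.} Since $\norm{T(t)-I}\to 0$ as $t\to 0^+$, the map $s\mapsto T(s)$ is norm-continuous, so $\frac{1}{t}\int_0^t T(s)\,ds\to I$ in $L(X)$ as $t\to 0^+$; hence there is $t_0>0$ for which $V:=\int_0^{t_0}T(s)\,ds$ is invertible in $L(X)$. For $0<h<t_0$ the semigroup property gives $\int_0^{t_0}T(s+h)\,ds=\int_h^{t_0+h}T(s)\,ds$, whence
$$\frac{T(h)-I}{h}\,V=\frac{1}{h}\Big(\int_{t_0}^{t_0+h}T(s)\,ds-\int_0^h T(s)\,ds\Big)\longrightarrow T(t_0)-I\qquad(h\to 0^+)$$
in operator norm, by the fundamental theorem of calculus for the norm-continuous integrand. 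Consequently $\frac{1}{h}(T(h)-I)\to A:=(T(t_0)-I)V^{-1}$ in $L(X)$, so $A$ is a bounded operator with $D(A)=X$; being defined as exactly this limit, $A$ is the infinitesimal generator of $(T(t))_{t\ge 0}$, which is (c).

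\emph{Step 2: the differential equation and the exponential formula.} From the semigroup identity, for $t\ge 0$ and small $h>0$,
$$\frac{T(t+h)-T(t)}{h}=T(t)\,\frac{T(h)-I}{h}=\frac{T(h)-I}{h}\,T(t),$$
and, using the local boundedness of $s\mapsto T(s)$ (norm-continuity) together with Step 1, one passes to the limit to obtain that $t\mapsto T(t)$ is differentiable in operator norm with $T'(t)=AT(t)=T(t)A$; an analogous estimate treats the left derivative for $t>0$. This proves (d). Now put $S(t):=e^{tA}=\sum_{n\ge 0}\frac{(tA)^n}{n!}$, a norm-convergent series with $S'(t)=AS(t)=S(t)A$ and $S(0)=I$. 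Using $AT(t)=T(t)A$,
$$\frac{d}{dt}\big(T(t)S(-t)\big)=AT(t)S(-t)-T(t)AS(-t)=0,$$
so $T(t)S(-t)=T(0)S(0)=I$ for all $t\ge 0$; since $S(-t)S(t)=I$, this yields $T(t)=e^{tA}$, giving the existence part of (b).

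\emph{Step 3: growth bound and uniqueness.} From the series, $\norm{T(t)}=\norm{e^{tA}}\le\sum_{n\ge 0}\frac{t^n\norm{A}^n}{n!}=e^{t\norm{A}}$, so (a) holds with $\omega=\norm{A}\ge 0$. Finally, if $T(t)=e^{tB}$ for some other bounded operator $B$, then $\frac{1}{h}(T(h)-I)=\frac{1}{h}(e^{hB}-I)\to B$ in norm, and Step 1 identifies this same limit as $A$; hence $B=A$, which is the uniqueness assertion in (b) (alternatively one invokes the uniqueness theorem for generators already recorded in Appendix B).

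The main obstacle is Step 1: the proof that the infinitesimal generator is bounded. Everything afterwards is routine manipulation with the norm-convergent exponential series and the semigroup identity; the one genuinely non-formal ingredient is the invertibility of $\frac{1}{t_0}\int_0^{t_0}T(s)\,ds$ for small $t_0$, which is precisely where \emph{uniform} (norm) continuity is needed and which is unavailable for merely strongly continuous semigroups.
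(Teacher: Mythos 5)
Your proof is correct. The paper states this corollary without proof, merely citing Pazy; your argument reproduces the standard proof from that reference (norm-continuity gives invertibility of $V=\int_0^{t_0}T(s)\,ds$ for small $t_0$, hence boundedness of the generator, followed by the norm-differentiability of $T(\cdot)$ and the constancy of $t\mapsto T(t)e^{-tA}$), and every step checks out.
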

\begin{dfn}
A semigroup $(T(t))_{t\geq0},$ of bounded linear operators on $X$ is a strongly continuous semigroup of bounded linear operators if for all $x\in X$
$$\displaystyle\lim_{t\longrightarrow 0}\|T(t)x-x\|=0.$$
\end{dfn}
A strongly continuous semigroup of bounded linear operators on $X$ will be called a semigroup of class $C_0$ or simply a $C_0-$semigroup.
\begin{thm}(\cite{Paz83})\label{th1}
Let $T(t)$ be a $C_0$-semigroup. $(T(t))_{t\geq 0}$ is uniformly exponentially stable if there exist constants $\omega\geq0$ and $M\geq 1,$ such that
$$\|T(t)\|\leq Me^{\omega t} \;\;\hbox{for} \; \; 0\leq t< \infty.$$
\end{thm}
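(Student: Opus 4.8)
The plan is to prove that every $C_0$-semigroup $\{T(t)\}_{t\geq 0}$ admits such constants $M\geq 1$ and $\omega\geq 0$, following the classical two-step argument: first a \emph{local} operator-norm bound near $t=0$, then an exponential \emph{global} bound obtained by iterating the semigroup law $T(t+s)=T(t)T(s)$.

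\textbf{Step 1 (local boundedness).} I would show there exist $\delta>0$ and $M<\infty$ with $\|T(t)\|\leq M$ for all $t\in[0,\delta]$. Argue by contradiction: if no such pair exists, then for every $n\in\mathbb{N}$ the choice $\delta=1/n$, $M=n$ fails, so there is $t_n\in[0,1/n]$ with $\|T(t_n)\|>n$; in particular $t_n\to 0$ and $\|T(t_n)\|\to\infty$. Since each $T(t_n)$ is a bounded linear operator and $X$ is a Banach space, the uniform boundedness principle yields some $x\in X$ with $\sup_n\|T(t_n)x\|_X=\infty$. But strong continuity of the semigroup gives $T(t_n)x\to T(0)x=x$ as $t_n\to 0$, so $\{T(t_n)x\}$ is bounded — a contradiction. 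Hence the bound holds on some $[0,\delta]$, and since $T(0)=I$ we get $M\geq\|T(0)\|=1$ (assuming $X\neq\{0\}$).

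\textbf{Step 2 (global exponential bound).} Fix $t\geq 0$ and write $t=n\delta+r$ with $n=\lfloor t/\delta\rfloor\in\mathbb{N}\cup\{0\}$ and $r\in[0,\delta)$. The semigroup property gives $T(t)=T(\delta)^{n}T(r)$, hence
$$\|T(t)\|\leq\|T(\delta)\|^{n}\,\|T(r)\|\leq M^{n}\cdot M=M^{n+1}\leq M\,M^{t/\delta}=M\,e^{\omega t},$$
where $\omega:=\delta^{-1}\ln M\geq 0$ (and one may take $\omega=0$ when $M=1$). This is the desired estimate, valid for all $0\leq t<\infty$, with $M\geq 1$ and $\omega\geq 0$.

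\textbf{Main obstacle.} The only nontrivial ingredient is Step 1: upgrading the \emph{pointwise} information built into the definition of a $C_0$-semigroup (strong continuity, hence for each fixed $x$ the orbit $t\mapsto T(t)x$ is bounded near $0$) to a \emph{uniform} operator-norm bound on a neighbourhood of $0$. This is precisely where completeness of $X$ enters, via the uniform boundedness principle; the conclusion genuinely fails on incomplete spaces. Once Step 1 is available, Step 2 is a routine computation using the semigroup identity and the monotonicity of $s\mapsto M^{s}$ for $M\geq 1$.
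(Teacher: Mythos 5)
Your proof is correct and is precisely the classical argument from Pazy that the thesis cites for this result: local uniform boundedness of $\|T(t)\|$ near $t=0$ via the uniform boundedness principle applied to a sequence $t_n\to 0$ with $\|T(t_n)\|\to\infty$, followed by iteration of the semigroup law $T(t)=T(\delta)^nT(r)$ to obtain the exponential bound with $\omega=\delta^{-1}\ln M$. Note only that the theorem as transcribed here is garbled --- a bound $\|T(t)\|\leq Me^{\omega t}$ with $\omega\geq 0$ is a growth estimate, not uniform exponential stability --- and what you proved is the intended (and correct) growth-bound statement from Pazy.
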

\begin{dfn}
$(T(t))_{t\geq 0}$ is uniformly asymptotically stable if $\|T(t)\| \to 0,\quad t\to \infty.$
\end{dfn}
\begin{dfn}
  A strongly continuous semigroup $(T(t))_{t\geq0}$ is called
\begin{enumerate}
    \item Exponentially stable, if there exists $\lambda > 0$ such that $\displaystyle\lim_{t \to \infty} e^{\lambda t} \|T(t)\| = 0$.
    \item Uniformly stable, if $\displaystyle\lim_{t \to \infty} \|T(t)\| = 0$.
    \item Strongly stable, if $\displaystyle\lim_{t \to \infty} \|T(t)x\|_X = 0 \ \forall \ x \in X$.
\end{enumerate}
\end{dfn}
Note that strong stability of a semigroup $(T(t))_{t\geq0}$ is what we call attractivity of a corresponding dynamical system $\Sigma := (X, \{0\}, \phi)$.
\begin{lem} (Proposition 1.2, p. 296 in \cite{engel2000one}). A $C_0$-semigroup is uniformly stable if and only if it is exponentially stable.
\end{lem}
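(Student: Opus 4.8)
The plan is to prove the two implications separately; the forward implication is essentially immediate, while the converse is the classical discretization argument for $C_0$-semigroups.

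First I would treat exponential stability $\Rightarrow$ uniform stability. If there is $\lambda>0$ with $e^{\lambda t}\|T(t)\|\to 0$ as $t\to\infty$, then $t\mapsto e^{\lambda t}\|T(t)\|$ is in particular bounded, say by $C\geq 0$, so that $\|T(t)\|\leq C e^{-\lambda t}\to 0$; hence $(T(t))_{t\geq0}$ is uniformly stable.

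For the converse, suppose $\lim_{t\to\infty}\|T(t)\|=0$. Then there is $t_0>0$ with $q:=\|T(t_0)\|<1$ (say $q\leq\tfrac12$). Recall that for a $C_0$-semigroup the map $t\mapsto\|T(t)\|$ is bounded on every compact interval, as a consequence of the growth bound $\|T(t)\|\leq Me^{\omega t}$ for $C_0$-semigroups; thus $M_0:=\sup_{0\leq r\leq t_0}\|T(r)\|<\infty$. Given any $t\geq0$, write $t=nt_0+r$ with $n\in\mathbb{N}\cup\{0\}$ and $r\in[0,t_0)$. By the semigroup property, $T(t)=T(r)T(t_0)^n$, hence
$$\|T(t)\|\leq \|T(r)\|\,\|T(t_0)\|^{n}\leq M_0\,q^{\,n}.$$
Since $n>\tfrac{t}{t_0}-1$ and $0<q<1$, we get $q^{\,n}\leq q^{\,t/t_0-1}=\tfrac1q\,e^{-\mu t}$ with $\mu:=-\tfrac{1}{t_0}\ln q>0$, so $\|T(t)\|\leq \tfrac{M_0}{q}\,e^{-\mu t}$ for all $t\geq0$. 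Consequently, for any $\lambda\in(0,\mu)$ one has $e^{\lambda t}\|T(t)\|\leq \tfrac{M_0}{q}\,e^{-(\mu-\lambda)t}\to 0$ as $t\to\infty$, i.e. $(T(t))_{t\geq0}$ is exponentially stable.

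There is no substantial obstacle in this proof; the only point requiring care is the finiteness of $M_0$, which is justified by invoking the standard exponential growth bound for $C_0$-semigroups (equivalently, strong continuity combined with the uniform boundedness principle). The rest is a direct computation using the Euclidean division $t=nt_0+r$ together with the semigroup law.
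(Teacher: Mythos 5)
Your proof is correct: the paper itself gives no proof of this lemma (it only cites Engel--Nagel), and your argument is precisely the standard one from that reference --- the trivial forward implication plus the discretization $t=nt_0+r$ with $\|T(t_0)\|<1$ and the local bound $M_0$ from the exponential growth estimate. The only cosmetic point is the degenerate case $\|T(t_0)\|=0$, where $M_0/q$ is undefined; this is harmless since one may then pick a nearby $t_0$ with $0<\|T(t_0)\|<1$ (or note the semigroup vanishes for $t\geq t_0$).
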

Uniform stability implies strong stability, but the converse is not generally true.\\
We need methods for checking the exponential stability of $C_0$-semigroups.

\begin{lem} Let $\omega_0 := \inf_{t>0} \left( \frac{1}{t} \log \|T(t)\| \right)$ be well-defined. Then for all $\omega > \omega_0$, there exists $M_\omega$ such that $\|T(t)\| \leq M_\omega e^{\omega t}$.
\end{lem}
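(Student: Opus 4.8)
The final statement to prove is the lemma on the growth bound: if $\omega_0 := \inf_{t>0}\left(\frac{1}{t}\log\|T(t)\|\right)$ is well-defined, then for all $\omega > \omega_0$ there exists $M_\omega$ with $\|T(t)\| \leq M_\omega e^{\omega t}$ for all $t \geq 0$.

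\textbf{Plan of proof.} The approach rests on the subadditivity of the function $t \mapsto \log\|T(t)\|$, which follows from the semigroup property $T(t+s) = T(t)T(s)$ and submultiplicativity of the operator norm: setting $p(t) := \log\|T(t)\|$ we get $p(t+s) \leq p(t) + p(s)$. The classical Fekete subadditivity argument then gives $\lim_{t\to\infty} p(t)/t = \inf_{t>0} p(t)/t = \omega_0$. From this, for any $\omega > \omega_0$ we can find $t_0 > 0$ with $p(t_0)/t_0 < \omega$, i.e. $\|T(t_0)\| \leq e^{\omega t_0}$, and then bootstrap to all $t$ using the semigroup decomposition $t = n t_0 + r$ with $r \in [0, t_0)$, together with local boundedness of $\|T(\cdot)\|$ on the compact interval $[0, t_0]$ (which holds for $C_0$-semigroups, or already for the uniformly continuous ones treated here where $\|T(r)\| \leq e^{\omega_{\max} r}$ directly from Corollary B.1).

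\textbf{Key steps, in order.} First I would verify subadditivity of $p(t) = \log\|T(t)\|$ from the semigroup axioms and record that $p$ is bounded above on every compact subinterval of $\mathbb{R}_+$. Second, I would prove the Fekete-type limit: fix $s > 0$; for arbitrary $t > 0$ write $t = ns + r$ with $0 \le r < s$, so $p(t) \le n\, p(s) + p(r)$, hence $p(t)/t \le \frac{ns}{t}\cdot\frac{p(s)}{s} + \frac{p(r)}{t}$; letting $t \to \infty$ gives $\limsup_{t\to\infty} p(t)/t \le p(s)/s$, and taking the infimum over $s$ yields $\limsup_{t\to\infty} p(t)/t \le \omega_0$; the reverse inequality $\liminf \geq \omega_0$ is immediate since $p(t)/t \geq \omega_0$ for every $t$. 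Third, given $\omega > \omega_0$, pick $t_0$ with $\|T(t_0)\| \le e^{\omega t_0}$; set $C := \sup_{0 \le r \le t_0}\|T(r)\| < \infty$; then for $t = n t_0 + r$ we estimate $\|T(t)\| \le \|T(t_0)\|^n \|T(r)\| \le e^{\omega n t_0} C = C e^{\omega(t - r)} \le C e^{|\omega| t_0} e^{\omega t}$ (adjusting the constant to absorb the $e^{-\omega r}$ factor, bounded by $e^{|\omega| t_0}$), so $M_\omega := C e^{|\omega| t_0}$ works.

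\textbf{Main obstacle.} The only genuine subtlety is justifying that $\|T(\cdot)\|$ is bounded on compact intervals $[0, t_0]$ — in the general $C_0$-setting this is a standard consequence of strong continuity plus the uniform boundedness principle, but in the present excerpt the relevant semigroups are uniformly continuous, so part (a) of Corollary B.1 already gives $\|T(t)\| \le e^{\omega_{\max} t}$ for some $\omega_{\max} \ge 0$, making the bound immediate. A minor bookkeeping point is the sign handling of $\omega$ (it may be negative if the semigroup decays), which is why I absorb $e^{-\omega r}$ with $r \in [0, t_0)$ into the constant rather than the exponential. Everything else is routine real-analysis manipulation of the subadditive function.
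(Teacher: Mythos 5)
Your proof is correct, and it is the classical argument for this result (the paper itself states this lemma without proof, citing it as standard semigroup theory in the style of Pazy/Engel--Nagel, so there is no in-paper proof to compare against). All the essential ingredients are present and in the right order: subadditivity of $t\mapsto\log\|T(t)\|$, selection of a $t_0$ with $\|T(t_0)\|\le e^{\omega t_0}$, the decomposition $t=nt_0+r$, and local boundedness of $\|T(\cdot)\|$ on $[0,t_0]$ (correctly justified via strong continuity plus uniform boundedness in the $C_0$ case), together with the careful absorption of $e^{-\omega r}$ into the constant when $\omega$ may be negative. One small remark: the full Fekete limit $\lim_{t\to\infty}p(t)/t=\omega_0$ is more than you need here --- the existence of a $t_0$ with $p(t_0)/t_0<\omega$ follows immediately from the definition of $\omega_0$ as an infimum, so your second step could be dropped entirely without weakening the conclusion of this particular lemma (it is only needed if one also wants to identify $\omega_0$ as the limit, which the paper's subsequent definition of the growth bound does implicitly use).
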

\begin{dfn} 
The constant $\omega_0$ from the previous lemma is called the growth bound of a $C_0$-semigroup.
\end{dfn}

Denote by $\Re(\lambda)$ the real part of a complex number $\lambda$.

\begin{dfn}  Let $(T(t))_{t\geq0}$ be a $C_0$-semigroup and $A$ be its generator. If $\omega_0 = \sup_{\lambda \in \text{Spec}(A)} \Re(\lambda)$, then we say that $(T(t))_{t\geq0}$ satisfies the spectrum-determined growth assumption.
\end{dfn}
\begin{cor}(\cite{Paz83})
If $(T(t))_{t\geq0}$ is a $C_0$-semigroup, then for every $x\in X,\; t \to T(t)x$ is a continuous function $\R_+$ into $X.$
\end{cor}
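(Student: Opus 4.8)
The plan is to fix an arbitrary $x\in X$ and a point $t_0\ge0$, and to verify right- and left-continuity of the map $t\mapsto T(t)x$ at $t_0$ separately, using in both cases only the defining strong-continuity axiom at the origin, $\lim_{h\to0^+}\|T(h)x-x\|_X=0$, the semigroup law $T(t+s)=T(t)T(s)$, and the boundedness of $\|T(t)\|$ on compact time intervals. Since $t_0$ is arbitrary, this yields continuity on all of $\mathbb{R}_+$.

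First I would record the needed \emph{local boundedness}: there exist $M\ge1$ and $\omega\ge0$ with $\|T(t)\|\le Me^{\omega t}$ for all $t\ge0$; equivalently $\sup_{t\in[0,\tau]}\|T(t)\|<\infty$ for every $\tau\ge0$. This may be invoked directly from the earlier growth-bound lemma; if one prefers a self-contained argument it follows from the uniform boundedness principle. Indeed, if $\|T(t)\|$ were unbounded on every interval $[0,\delta]$ with $\delta\to0$, one could pick $t_n\to0$ with $\|T(t_n)\|\to\infty$; but for each $x\in X$ the orbit $\{T(t_n)x\}$ converges to $x$ and is therefore bounded, so Banach--Steinhaus forces $\sup_n\|T(t_n)\|<\infty$, a contradiction. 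Hence $M:=\sup_{t\in[0,\delta]}\|T(t)\|<\infty$ for some $\delta>0$, and writing $t=n\delta+r$ with $0\le r<\delta$ gives $\|T(t)\|\le M^{n+1}\le Me^{\omega t}$ for $\omega=\delta^{-1}\ln M$.

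With this in hand, for right-continuity at $t_0$ I would write, for $h>0$,
\[
\|T(t_0+h)x-T(t_0)x\|_X=\big\|T(t_0)\big(T(h)x-x\big)\big\|_X\le\|T(t_0)\|\,\|T(h)x-x\|_X\longrightarrow 0\quad(h\to0^+),
\]
using strong continuity at $0$. For left-continuity at $t_0>0$ I would write, for $0<h<t_0$,
\[
\|T(t_0)x-T(t_0-h)x\|_X=\big\|T(t_0-h)\big(T(h)x-x\big)\big\|_X\le\Big(\sup_{0\le s\le t_0}\|T(s)\|\Big)\|T(h)x-x\|_X\longrightarrow 0\quad(h\to0^+),
\]
where the finiteness of the supremum is exactly the local bound established in the previous step. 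Combining the two one-sided limits gives continuity of $t\mapsto T(t)x$ at $t_0$.

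The only genuinely nontrivial ingredient — and the step I expect to be the main obstacle if a fully self-contained proof is required — is the local boundedness of $\|T(t)\|$ near $t=0$, which is precisely where the uniform boundedness principle enters. Everything else is an immediate consequence of the semigroup identity together with the strong-continuity axiom at the origin; if the earlier growth estimate $\|T(t)\|\le M_\omega e^{\omega t}$ is taken as given, the proof reduces to the two displayed inequalities.
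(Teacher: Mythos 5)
Your proof is correct and is essentially the standard argument from the cited reference (Pazy, Ch.~1, Theorem 2.2 and its corollary), which the paper itself does not reproduce: local boundedness of $\|T(t)\|$ near $t=0$ via Banach--Steinhaus, extension to the exponential bound by the semigroup law, and then the two one-sided estimates $\|T(t_0)(T(h)x-x)\|_X$ and $\|T(t_0-h)(T(h)x-x)\|_X$. Nothing is missing.
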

\begin{cor}(\cite{Paz83})
If $A$ is the infinitesimal generator of a $C_0-$semigroup $(T(t))_{t\geq0},$ then $D(A)$ is dense in $X$ and $A$ is a closed linear operator.
\end{cor}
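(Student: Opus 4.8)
The final statement to prove is the corollary (attributed to Pazy) asserting that if $A$ is the infinitesimal generator of a $C_0$-semigroup $(T(t))_{t\geq0}$ on a Banach space $X$, then $D(A)$ is dense in $X$ and $A$ is a closed linear operator. The plan is to exploit the averaging trick: for $x\in X$ and $h>0$ consider the element $x_h:=\frac1h\int_0^h T(s)x\,ds$, where the integral is a Riemann/Bochner integral of the continuous map $s\mapsto T(s)x$. The two assertions (density of the domain, closedness of the operator) are then handled separately.

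First I would establish density. The key computation is that $x_h\in D(A)$ for every $h>0$: using the semigroup law $T(t)T(s)=T(t+s)$ and a change of variables, one shows
\begin{equation*}
\frac{T(t)x_h-x_h}{t}=\frac{1}{th}\Bigl(\int_h^{h+t}T(s)x\,ds-\int_0^{t}T(s)x\,ds\Bigr),
\end{equation*}
and letting $t\to0^+$ the right-hand side converges, by continuity of $s\mapsto T(s)x$ and the fundamental theorem of calculus for vector-valued integrals, to $\frac1h\bigl(T(h)x-x\bigr)$. Hence $x_h\in D(A)$ with $Ax_h=\frac1h(T(h)x-x)$. Then, since $T(0)=I$ and $s\mapsto T(s)x$ is continuous at $0$, we get $x_h\to x$ as $h\to0^+$, so $x$ lies in the closure of $D(A)$. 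As $x$ was arbitrary, $D(A)$ is dense.

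Next I would prove closedness. Let $(x_n)\subset D(A)$ with $x_n\to x$ and $Ax_n\to y$ in $X$; I must show $x\in D(A)$ and $Ax=y$. The standard device is the integral identity $T(t)x_n-x_n=\int_0^t T(s)Ax_n\,ds$, valid for $x_n\in D(A)$ (which follows from differentiating $s\mapsto T(s)x_n$ and noting $\frac{d}{ds}T(s)x_n=T(s)Ax_n$, then integrating). Passing to the limit $n\to\infty$: the left side tends to $T(t)x-x$, and on the right side one uses the uniform boundedness of $\{T(s):s\in[0,t]\}$ (a consequence of the $C_0$ property, via Corollary B, i.e. $\|T(s)\|\le Me^{\omega s}$) together with $Ax_n\to y$ to interchange limit and integral, obtaining $T(t)x-x=\int_0^t T(s)y\,ds$. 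Dividing by $t$ and letting $t\to0^+$ gives $\lim_{t\to0^+}\frac{T(t)x-x}{t}=y$, so $x\in D(A)$ and $Ax=y$; linearity of $A$ on its (linear) domain is immediate from linearity of each $T(t)$ and of limits. The main obstacle — really the only delicate point — is justifying the limit interchanges for the Banach-space–valued integrals; this rests on the local uniform boundedness of the semigroup and dominated/uniform convergence for such integrals, so I would invoke the boundedness estimate from Corollary B and standard properties of the Bochner integral rather than re-deriving them.
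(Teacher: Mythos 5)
Your proof is correct and is precisely the standard argument from Pazy that the paper cites without reproducing: the averaging element $x_h=\frac1h\int_0^h T(s)x\,ds$ for density of $D(A)$, and the identity $T(t)x-x=\int_0^t T(s)Ax\,ds$ combined with local uniform boundedness of the semigroup for closedness. Both computations check out, so there is nothing to add.
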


\subsection*{B.1.1 Linear systems}
We consider now a linear system of the form:
\renewcommand{\theequation}{B.\arabic{equation}}
\setcounter{equation}{0}
\begin{equation}\label{2ff}
\dot{x}=Ax,\qquad x(0)=x_0\in X,\qquad\ t\ge0,
\end{equation}
where $A$ is a linear operator generating a strongly continuous semigroup $(T(t))_{t\geq0}.$\\
The general solution $x(t,x_0)$ of system \eqref{2ff} is given by
\renewcommand{\theequation}{B.\arabic{equation}}
\begin{equation}\label{3}
x(t,x_0)=T(t)x_0,\qquad t\ge0,\quad x_0\in D(A),
\end{equation}
\begin{dfn}
The operator $A:D(A)\subset X \to X$ is a closed linear operator with $cl(D(A))=X,$ generates a strongly continuous semigroup on $X,$ $(T(t))_{t\geq0},$ exponentially stable, that is, there exist $M\geq 1$ and $\alpha >0,$ such that $$\|T(t)\|\leq M e^{-\alpha t} \quad \forall t\in \mathbb{R}^{+}.$$
\end{dfn}

\begin{dfn}
System \eqref{2ff} is exponentially stable if for every $x_0\in D(A),$ there are
numbers $M>0$ and $\alpha> 0,$ such that
$$\|T(t)x_0\|\leq Me^{-\alpha t}\|x_0\| \qquad \forall t\ge0.$$
\end{dfn}

\begin{prop}(\cite{Paz83})
Let $X$ be a Banach space. $A$ is the generator of the semigroup $(T(t))_{t\geq0}.$ The following conditions are equivalent:
\begin{enumerate}
\item[$(i)$] System \eqref{2ff} is exponentially stable.
\item[$(ii)$] $A$ is exponentially stable.
\item[$(iii)$]  For all $x_0\in D(A):$ $\displaystyle\int _{0}^{+\infty}\|x(t,x_0)\|^{2}dt<+\infty.$
\end{enumerate}
\end{prop}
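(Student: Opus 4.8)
The plan is to establish the cycle of implications $(ii)\Rightarrow(i)\Rightarrow(iii)\Rightarrow(ii)$. The first two links are immediate. For $(ii)\Rightarrow(i)$: if $\|T(t)\|\le Me^{-\alpha t}$ for some $M\ge1$, $\alpha>0$, then for every $x_0\in D(A)$ one has $\|x(t,x_0)\|=\|T(t)x_0\|\le\|T(t)\|\,\|x_0\|\le Me^{-\alpha t}\|x_0\|$. For $(i)\Rightarrow(iii)$: given $x_0\in D(A)$, choose $M=M(x_0)$, $\alpha=\alpha(x_0)>0$ with $\|x(t,x_0)\|\le Me^{-\alpha t}\|x_0\|$; squaring and integrating gives $\int_0^{\infty}\|x(t,x_0)\|^2\,dt\le M^2\|x_0\|^2\int_0^{\infty}e^{-2\alpha t}\,dt=\frac{M^2\|x_0\|^2}{2\alpha}<\infty$.

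The substance is the implication $(iii)\Rightarrow(ii)$, which I would obtain along the lines of Datko's theorem \cite{datko1972uniform} (see also \cite{Paz83}). Let $M_0\ge1$ and $\omega_0\in\mathbb{R}$ be such that $\|T(t)\|\le M_0e^{\omega_0 t}$ for all $t\ge0$; we may assume $\omega_0\ge0$, for otherwise $(ii)$ already holds. The first task is to upgrade the hypothesis to a quantitative bound valid on all of $X$, i.e. to produce a constant $K$ with $\int_0^{\infty}\|T(t)x\|^2\,dt\le K^2\|x\|^2$ for every $x\in X$; I return to this point below. Granting this bound, the exponential decay follows by the familiar constant-chasing: from $T(t)=T(t-s)T(s)$ and the growth bound one gets, for $t\ge1$, $\|T(t)x\|^2\le\frac{M_0^2}{c}\int_0^t\|T(s)x\|^2\,ds\le\frac{M_0^2K^2}{c}\|x\|^2$, where $c:=\int_0^1 e^{-2\omega_0 s}\,ds>0$; combined with the trivial bound on $[0,1]$ this makes the semigroup uniformly bounded, $\tilde M:=\sup_{t\ge0}\|T(t)\|<\infty$. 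Then $t\,\|T(t)x\|^2=\int_0^t\|T(t-s)T(s)x\|^2\,ds\le\tilde M^2K^2\|x\|^2$, so $\|T(t)\|\le\tilde M K\,t^{-1/2}\to0$; picking $t_0$ with $\|T(t_0)\|=:q<1$ and writing $t=nt_0+r$ with $0\le r<t_0$ yields $\|T(t)\|\le\tilde M q^{-1}e^{-\omega t}$ with $\omega:=-\tfrac{\ln q}{t_0}>0$. This is $(ii)$, and a posteriori it also gives $(i)$ with constants independent of $x_0$, so the cycle closes.

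I expect the main obstacle to be precisely the step passing from the hypothesis ``$\int_0^{\infty}\|T(t)x\|^2\,dt<\infty$ for $x\in D(A)$'' to the uniform estimate ``$\int_0^{\infty}\|T(t)x\|^2\,dt\le K^2\|x\|^2$ for $x\in X$''. The natural approach is the closed graph theorem: the linear map $x\mapsto T(\cdot)x$ from $D(A)$, endowed with the graph norm, into the Banach space $L^2(\mathbb{R}_+,X)$ is closed (if $x_n\to x$ in graph norm and $T(\cdot)x_n\to g$ in $L^2$, then $T(t)x_n\to T(t)x$ pointwise and along a subsequence $g(t)=T(t)x$ a.e.), hence bounded — but this only delivers $\|T(\cdot)x\|_{L^2}\le K(\|x\|+\|Ax\|)$. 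Removing the $\|Ax\|$ term is the delicate part; one can exploit the invariance of $D(A)$ under $T(t)$ together with the identity $A\!\int_0^h T(s)x\,ds=T(h)x-x$ to transfer estimates off $D(A)$, or simply invoke the Datko–Pazy theorem directly, which is the route taken in \cite{Paz83}. Everything else in the argument is routine.
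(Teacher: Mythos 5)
Your handling of the two easy implications and your reconstruction of the Datko constant-chasing (uniform boundedness of the semigroup from the $L^2$ bound, then $t\,\|T(t)x\|^2\le\tilde M^2K^2\|x\|^2$, then $\|T(t_0)\|<1$ for some $t_0$) are correct, and since the paper offers no proof beyond the citation of Pazy there is nothing to compare route-wise. The problem is exactly the step you single out at the end: passing from ``$\int_0^\infty\|T(t)x_0\|^2\,dt<\infty$ for all $x_0\in D(A)$'' to a uniform estimate on all of $X$. Neither of the devices you propose closes it. Invoking the Datko--Pazy theorem ``directly'' is circular here, because Pazy's theorem (Theorem 4.1 of Chapter 4) assumes square integrability of $T(\cdot)x$ for \emph{every} $x\in X$, which is precisely what you are missing; and the closed graph theorem on the graph-norm space only yields $\|T(\cdot)x\|_{L^2}\le K\bigl(\|x\|_X+\|Ax\|_X\bigr)$, while the identity $A\int_0^hT(s)x\,ds=T(h)x-x$ transfers such bounds only to smoothed vectors $R(\lambda,A)y$ or to averaged orbits $\int_t^{t+h}T(u)y\,du$, never to $T(t)y$ itself, since $\lambda-A$ is unbounded.

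In fact the implication $(iii)\Rightarrow(ii)$ is false as stated for general Banach spaces, so no argument can close the gap without changing the hypothesis. Square integrability of all orbits starting in $D(A)$ characterizes negativity of the growth bound $\omega_1(T):=\inf\{\omega:\ \exists M_\omega,\ \|T(t)x\|\le M_\omega e^{\omega t}(\|x\|+\|Ax\|)\ \forall x\in D(A)\}$, not of $\omega_0(T)$, and there are $C_0$-semigroups with $\omega_1(T)<0\le\omega_0(T)$: rescaling Arendt's semigroup $(T(t)f)(s)=f(se^t)$ on $L^p(1,\infty)\cap L^q(1,\infty)$ with $p<q$, for which $\omega_1(T)=-1/p<-1/q=\omega_0(T)$, produces a semigroup that is not uniformly exponentially stable although every orbit starting in $D(A)$ decays exponentially and hence satisfies $(iii)$. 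The proposition becomes correct --- and your argument becomes complete, because the closed graph theorem then applies on $X$ itself and gives $\|T(\cdot)x\|_{L^2}\le K\|x\|_X$ directly --- once $(iii)$ is quantified over all $x_0\in X$, which is how Pazy actually states the result.
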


\subsection*{B.1.2 Existence of solution of semi-linear system}
We consider the semilinear initial value problem:
\begin{equation}\label{R1dhanen}
\left\lbrace
\begin{array}{l}
\dot{x}(t)=Ax(t)+f(t,x(t)),\qquad t\ge t_0\ge0,\\
\\
x(t_0)=x_0,
\end{array}\right.
\end{equation}
where $x(t)\in X$ and $A$ is the infinitesimal generator of a $C_0-$semigroup $(S(t))_{t\geq0}$ on a Banach space $X,$ $t_0\geq0$ is the initial time, $x_0\in X$ is the initial condition and the function $f:\mathbb{R}_{+}\times X\times U \to X$ is a nonlinear operator.\\
\begin{thm}\label{teoyasmine}(\cite{Paz83})
Let $f:[t_0,T]\times X \to X$ be continuous in $t$ on $[t_0,T]$ and uniformly Lipschitz continuous on $X.$ If $A$ is the infinitesimal generator of a $C_0-$semigroup $(S(t))_{t\geq0},$ on $X,$ then for every $x_0\in X$ the initial value problem (\ref{R1dhanen}) has a unique mild solution $x\in C([t_0,T],X).$
\end{thm}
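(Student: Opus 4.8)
The plan is to prove existence and uniqueness by applying the Banach fixed point theorem to the integral operator coming from the variation of constants formula. Recall that a continuous $x:[t_0,T]\to X$ is a mild solution of (\ref{R1dhanen}) exactly when it satisfies
$$x(t)=S(t-t_0)x_0+\int_{t_0}^{t}S(t-s)f(s,x(s))\,ds,\qquad t\in[t_0,T].$$
So it suffices to show that the map $F$ on $C([t_0,T],X)$ defined by $(Fx)(t):=S(t-t_0)x_0+\int_{t_0}^{t}S(t-s)f(s,x(s))\,ds$ has a unique fixed point in $C([t_0,T],X)$. I would write $L$ for the uniform Lipschitz constant of $f$ in the state variable and set $M:=\sup_{\tau\in[0,T-t_0]}\|S(\tau)\|$, which is finite because a $C_0$-semigroup is uniformly bounded on compact time intervals.

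First I would verify that $F$ is well defined, i.e. that $Fx\in C([t_0,T],X)$ whenever $x\in C([t_0,T],X)$. For fixed $x$, the map $s\mapsto f(s,x(s))$ is continuous (continuity of $f$ in $t$, uniform Lipschitz continuity in $x$, and continuity of $x$), hence $s\mapsto S(t-s)f(s,x(s))$ is continuous and its Bochner integral is well defined; continuity of $t\mapsto(Fx)(t)$ then follows by estimating $(Fx)(t+h)-(Fx)(t)$ through the semigroup term (handled by strong continuity of $S$) and the integral term (handled by the bound $M$ together with the fact that $s\mapsto\|f(s,x(s))\|_X$ is bounded on $[t_0,T]$).

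Next I would establish the contraction estimate. For $x,y\in C([t_0,T],X)$ and $t\in[t_0,T]$ one has
$$\|(Fx)(t)-(Fy)(t)\|_X\le ML\int_{t_0}^{t}\|x(s)-y(s)\|_X\,ds,$$
and iterating this inequality $n$ times gives $\|(F^{n}x)(t)-(F^{n}y)(t)\|_X\le \frac{(ML(T-t_0))^{n}}{n!}\,\sup_{s\in[t_0,T]}\|x(s)-y(s)\|_X$, so that $F^{n}$ is a contraction on $C([t_0,T],X)$ in the supremum norm for $n$ large enough. By the Banach fixed point theorem applied to iterates, $F$ has a unique fixed point, which is the required mild solution on the whole of $[t_0,T]$; uniqueness of the mild solution is the uniqueness of the fixed point, and can also be read off directly from the displayed inequality via Gronwall.

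The routine parts are the two estimates above; the only point requiring a little care is the continuity of $Fx$ in $t$, because of the interplay of the moving upper integration limit with the strong (but not norm) continuity of $S$. I also prefer to iterate $F$ rather than shrink the time interval, since $M$ and $L$ do not depend on the interval and one therefore concludes on all of $[t_0,T]$ in one step. As this is entirely classical, in the write-up I would simply refer to \cite[Section 6.1]{Paz83}.
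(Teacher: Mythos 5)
Your proposal is correct and follows essentially the same route as the paper's proof: both define the integral operator $F$ from the variation of constants formula, derive the iterated estimate $\|(F^{n}u)(t)-(F^{n}v)(t)\|\le \frac{(ML(t-t_0))^{n}}{n!}\|u-v\|_{\infty}$, and invoke the fixed point theorem for maps with a contractive iterate, so the conclusion is obtained on all of $[t_0,T]$ in one step. The paper additionally records the Gronwall-based estimate showing Lipschitz dependence of the solution on the initial value, which your write-up subsumes under uniqueness.
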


\begin{proof} For a given $u_{0}\in X$ we define a mapping
$$F : C([t_{0},T]: X)\longrightarrow C([t_{0},T]: X),$$
by
\begin{eqnarray}\label{eq3}
(Fu)(t)= S(t-t_{0})u_{0}+\int_{t_0}^{t}S(t-s)f(s,x(s))ds,\quad  t_{0} \leq t\leq T.
\end{eqnarray}
Denoting by $\Vert u\Vert_{\infty}$ the norm of $u$ as an element of $C([t_{0},T]) : X)$ it follows readily from the definition of $F$ that
 \begin{eqnarray}\label{eq4}
 \Vert (Fu)(t)-(F v)(t)\Vert \leq M L(t-t_{0})\Vert u-v\Vert_{\infty},
 \end{eqnarray}
where $M$ is a bound of $\Vert S(t)\Vert$ on $[t_{0},T].$ Using $\eqref{eq3}$, $\eqref{eq4}$ and induction on $n$ it follows easily that
\begin{eqnarray}
\Vert (F^{n}u)(t)-(F^{n}v)(t)\Vert \leq\frac{(M L(t-t_{0}))^{n}}{n!}\Vert u-v\Vert_{\infty}.
 \end{eqnarray}
Hence,
\begin{eqnarray}
\Vert (F^{n}u - F^{n}v)\Vert \leq \frac{(M L T)^{n}}{n!}\Vert u-v\Vert_{\infty}
 \end{eqnarray}
For $n$ large enough $(MLT)^{n}/n! < 1$ and by a well-known extension of the contraction principle $F$ has a unique fixed point is the desired solution of the integral equation (\ref{eq3}).
The uniqueness of $u$ and the Lipschitz continuity of the map $u_{0}\longrightarrow u$ are consequences of the following argument. Let $v$ be a mild solution of \eqref{R1dhanen} on $[t_{0},T]$ with the initial value $v_{_{0}}$. Then,
\begin{align}
 \Vert u(t)- v(t)\Vert & \leq \Vert S(t-t_{0})u_{0}-S(t-t_{0}v_{0})\Vert + \int_{t_0}^{t}\Vert S(t-s)(f(s,u(s))-f(s,v(s))ds   \nonumber \\
 &\leq  M \Vert u_{0}-v_{0}\Vert + ML \int_{t_0}^{t}\Vert u(s)- v(s)\Vert ds.
\end{align}
This implies, by Gronwall's inequality, that
$$\Vert u(t)-v(t)\Vert \leq Me^{ML(T-t_{0})}\Vert u_{0}-v_{0}\Vert,$$
and therefore
$$\Vert u - v\Vert_{\infty} \leq Me^{ML(T-t_{0})}\Vert u_{0}-v_{0}\Vert.$$
Which yields both the uniqueness of $u$ and the Lipschitz continuity of the map $u_{0}\longrightarrow u$.
It is not difficult to see that if $g\in C([t_{0},t]:X)$ and in the proof of theorem \ref{teoyasmine} modify the definition of $F$ to
$$(Fu)(t)= g(t)+\int_{t_0}^{t}S(t-s)f(s,u(s))ds,$$
\end{proof}
we obtain the following slightly more general result.
\begin{cor}\cite[corollary 1.3]{Paz83}
If $A$ and $f$ satisfy the conditions of theorem \ref{teoyasmine}, then for every $g\in C([t_{0},T]:X)$ the integral equation
\begin{eqnarray}
w(t)= g(t)+\int_{t_0}^{t}S(t-s)f(s,w(s))ds
\end{eqnarray}
has a unique solution $w\in C([t_{0},T]:X)$.
\end{cor}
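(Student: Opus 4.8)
The plan is to repeat verbatim the fixed-point argument used in the proof of Theorem~\ref{teoyasmine}, but with the inhomogeneous term $g$ absorbed into the solution operator. First I would define the map $F:C([t_0,T],X)\to C([t_0,T],X)$ by
\[
(Fu)(t)=g(t)+\int_{t_0}^{t}S(t-s)f(s,u(s))\,ds,\qquad t_0\le t\le T,
\]
and check that it is well defined. Since $g\in C([t_0,T],X)$, since $(S(t))_{t\ge0}$ is a $C_0$-semigroup (so $\|S(t)\|\le M$ on $[t_0,T]$ for some $M\ge1$), and since $f$ is continuous in $t$ and uniformly Lipschitz in the state, the integrand $s\mapsto S(t-s)f(s,u(s))$ is Bochner integrable and $t\mapsto\int_{t_0}^{t}S(t-s)f(s,u(s))\,ds$ depends continuously on $t$; adding the continuous function $g$ preserves continuity, so indeed $Fu\in C([t_0,T],X)$.

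The key observation is that in the difference $(Fu)(t)-(Fv)(t)$ the term $g(t)$ cancels, so the estimate is word-for-word the one obtained in the proof of Theorem~\ref{teoyasmine}:
\[
\|(Fu)(t)-(Fv)(t)\|_X\le ML(t-t_0)\,\|u-v\|_\infty,
\]
where $L$ is a Lipschitz constant for $f$. Induction on $n$ then gives
\[
\|(F^nu)(t)-(F^nv)(t)\|_X\le \frac{(ML(t-t_0))^n}{n!}\,\|u-v\|_\infty,
\]
and hence $\|F^nu-F^nv\|_\infty\le \frac{(MLT)^n}{n!}\,\|u-v\|_\infty$. Choosing $n$ large enough that $(MLT)^n/n!<1$ makes $F^n$ a strict contraction on the Banach space $C([t_0,T],X)$, so by the standard extension of the Banach fixed-point theorem to iterates, $F$ has a unique fixed point $w\in C([t_0,T],X)$; by construction $w$ is precisely the unique solution of the integral equation in the statement.

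I do not expect a genuine obstacle here: the whole content is that replacing $S(t-t_0)u_0$ by an arbitrary fixed $g(t)\in C([t_0,T],X)$ affects none of the contraction estimates, because $g$ does not depend on $u$. The only point requiring a moment of care is the continuity (and Bochner measurability) of the convolution $t\mapsto\int_{t_0}^{t}S(t-s)f(s,u(s))\,ds$ for $u\in C([t_0,T],X)$, which follows from strong continuity of $S(\cdot)$ together with the boundedness of $s\mapsto f(s,u(s))$ on the compact interval $[t_0,T]$; since this is exactly the reasoning already carried out for the homogeneous equation, it can simply be quoted rather than redone.
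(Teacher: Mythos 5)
Your proposal is correct and is exactly the argument the paper intends: it modifies the fixed-point map of Theorem~\ref{teoyasmine} to $(Fu)(t)=g(t)+\int_{t_0}^{t}S(t-s)f(s,u(s))\,ds$ and observes that $g$ cancels in all difference estimates, so the iterated-contraction argument goes through unchanged. You merely spell out the details (well-definedness and continuity of $Fu$) that the paper dismisses with ``it is not difficult to see.''
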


The uniform Lipschitz condition of the function $f$ in Theorem \ref{teoyasmine} assures the existence of a global (i.e. defined on all of $[t_0, T]$) mild solution of (\ref{R1dhanen}). If we assume that $f$ satisfies only a local Lipschitz condition in $x,$ uniformly in $t$ on bounded intervals, that is, for every $\tilde{t}\geq0$ and constant $c\geq 0,$ there is a constant $M(c,\tilde{t}),$ such that
\begin{eqnarray}\label{eq7}
\|f(t,x)-f(t,y)\|\leq  M(c,\tilde{t}) \|x-y\|
\end{eqnarray}
holds for all $x,y\in X$ with $\|x\|\leq c,$ and $\|y\|\leq c$ and $t\in [0,\tilde{t}],$ then we have the following local version of Theorem \ref{teoyasmine}.

\addcontentsline{toc}{chapter}{Bibliography}
\bibliographystyle{abbrv}
\bibliography{Mir_LitList_NoMir,MyPublications}
\end{document}